\numberwithin{equation}{subsection}
\theoremstyle{plain}
\newtheorem{proposition}{Proposition}[subsection]
\newtheorem{corollary}[proposition]{Corollary}
\newtheorem{lem}[proposition]{Lemma}
\newtheorem{theorem}[proposition]{Theorem}
\newtheorem{example}[proposition]{Example}
\theoremstyle{definition}
\newtheorem{definition}[proposition]{Definition}
\newtheorem{remark}[proposition]{Remark}
\newcommand{\bs}{\backslash}
\newcommand{\wt}{\widetilde}
\newcommand{\BA}{\mathbb A}
\newcommand{\BC}{\mathbb C}
\newcommand{\BR}{\mathbb R}
\newcommand{\BQ}{\mathbb Q}
\newcommand{\BL}{\mathbb L}
\newcommand{\CS}{\mathcal{S}}
\renewcommand{\AA}{\mathbb A}
\newcommand{\CC}{\mathbb C}
\newcommand{\cH}{\mathcal H}
\newcommand{\gl}{\mathfrak{gl}}
\newcommand{\fh}{\mathfrak h}
\newcommand{\fm}{\mathfrak m}
\newcommand{\fp}{\mathfrak p}
\newcommand{\fs}{\mathfrak s}
\newcommand{\fu}{\mathfrak u}
\newcommand{\fx}{\mathfrak x}
\newcommand{\fy}{\mathfrak y}
\newcommand{\LL}{\mathbb L}
\renewcommand{\O}{\mathcal O}
\newcommand{\ol}{\overline}
\renewcommand{\P}{\mathcal P}
\renewcommand{\S}{\textnormal{S}}
\newcommand{\cS}{\mathcal S}
\newcommand{\W}{\mathcal W}
\DeclareMathOperator{\As}{As}
\DeclareMathOperator{\End}{End}
\DeclareMathOperator{\Gal}{Gal}
\DeclareMathOperator{\GL}{GL}
\DeclareMathOperator{\Hom}{Hom}
\DeclareMathOperator{\Mat}{Mat}
\DeclareMathOperator{\Orb}{Orb}
\DeclareMathOperator{\ord}{ord}
\DeclareMathOperator{\Res}{Res}
\DeclareMathOperator{\SHerm}{SHerm}
\DeclareMathOperator{\Stab}{Stab}
\DeclareMathOperator{\Supp}{Supp}
\DeclareMathOperator{\Tr}{Tr}
\DeclareMathOperator{\U}{U}
\DeclareMathOperator{\Vol}{Vol}
\newcommand{\FJ}{\textnormal{FJ}}
\newcommand{\good}{\textnormal{good}}
\newcommand{\inert}{\textnormal{in}}
\renewcommand{\Re}{\textnormal{Re}}
\newcommand{\simrightarrow}{\xrightarrow{\raisebox{-0.7ex}[0ex][0ex]{$\sim$}}}
\newcommand{\transp}{\prescript{\top}{}{}}
\newcommand{\rss}{\textnormal{rss}}
\newcounter{keepeqno}
\newenvironment{num}
 {\setcounter{keepeqno}{\value{equation}}%
  \begin{list}{(\theequation)}{\usecounter{equation}}%
  \setcounter{equation}{\value{keepeqno}}}
 {\end{list}}
\title[Twisted GGP conjecture]{Central values of Asai L-functions and twisted Gan--Gross--Prasad conjecture}
\author[Weixiao Lu]{Weixiao Lu}
\address{Department of Mathematics, Massachusetts Institute of Technology}
\email{weixiao.LU@univ-amu.fr}
\author[Danielle Wang]{Danielle Wang}
\address{Department of Mathematics, Duke University}
\email{danielle.wang@duke.edu}
\author[Zhiyu Zhang]{Zhiyu Zhang}
\address{Department of Mathematics, Stanford University}
\email{zyuzhang@stanford.edu}
\date{\today} 
\begin{document}
\maketitle

\begin{abstract}
We study certain new relative trace formulas on (non-reductive) period integrals involving Weil representations, in the context of the relative Langlands program. We study normal representatives using Galois theory, and establish geometric decompositions of relative trace formulas using normal representatives for good test functions. By comparing global representatives, local distributions and orbital integrals, we prove the twisted Gan--Gross--Prasad (GGP) conjecture on Asai L-functions, in any dimension under some local assumptions, allowing ramifications of number fields.
\end{abstract}

\setcounter{tocdepth}{1}
\tableofcontents

\section{Introduction}

The Waldspurger formula \cite{waldspurger1985valeurs} gives a striking relation between twisted versions of Hecke periods and central L-values of modular forms. The choice of quadratic twists is crucial for many applications. See \cite{duke1988hyperbolic, duke1990representation} for its applications to sum of three squares and Heegner points. In this paper, we establish a relation between twisted versions of Rankin--Selberg period integrals and L-functions for twisted tensor products of automorphic forms, predicted by the twisted Gan--Gross--Prasad conjecture \cite{TwistedGGP}. Similarly, we allow the choice of a general quadratic twist over the base number field.

The study of period integrals and L-functions is a central topic in number theory and representation theory, nowadays in the framework of the relative Langlands program, see e.g. \cite{BZSV}. A refined fundamental example is the Gan--Gross--Prasad (GGP) conjecture \cite{GGP-conjectures} for classical groups, with Ichino--Ikeda type refinements \cite{GGP-conjecture-refinements}. There has been recent significant progress:
\begin{enumerate}
    \item proof of GGP conjecture on Bessel periods \cite{GGP-stable,GGP-endoscopic} and Fourier--Jacobi periods \cite{GGP-FourierJacobi} for unitary groups, based on the relative trace formula approach of Jacquet and Rallis by comparisons with Rankin-Selberg periods.
    \item formulations and results on arithmetic analogs and applications to Beilinson--Bloch--Kato conjecture on Rankin-Selberg ($\GL_n \times \GL_{n+1}$) motives  \cite{LTXZZ, liu2024iwasawa, disegni2024gan}. 
\end{enumerate}

See \cite{Zhang2018ICM,gross2022road,BP2022ICM,AGGP-survey-2024} for nice summaries. The relative trace formula is a powerful geometric tool for studying period integrals over number fields. A key point is that the non-vanishing of period integrals is often equivalent to the non-vanishing of a spectral distribution of test functions. On the one hand, this distribution can often be packaged together (over nice automorphic representations) into a relative trace formula, with geometric meaning and a decomposition into sums of products of local orbital integrals of test functions. Then we may establish a geometric comparison of two relative trace formulas, by solving local harmonic analysis questions on comparison of orbital integrals, e.g. fundamental lemma and smooth transfer, and global questions on automorphic spectrum. On the other hand, this distribution often admits a spectral decomposition into products of local relative characters of test functions, and Langlands functoriality predicts a relation between automorphic representations on different groups. Thus by proving a comparison of geometric decompositions, we deduce the GGP conjecture under some local assumptions. And comparisons of local relative characters and the local GGP conjecture lead to refined equalities between period integrals and L-functions.

However, we do not have such a refined theory of relative trace formulas in general. This paper concerns about Asai L-functions \cite{Asai1977,Flicker}, which occur naturally in the study of
\begin{enumerate}
    \item twisted symmetric square L-functions of modular forms.
    \item automorphic base changes for unitary groups and general linear groups.
    \item classical Yoshida liftings to $\mathrm{GSp}_4$.
    \item Hilbert modular surfaces and unitary Shimura varieties.
\end{enumerate}

Let $F$ be a number field, $V$ be a skew-Hermitian space over a quadratic extension $E/F$, and $K/F$ be another quadratic extension. This paper proves the twisted GGP conjecture \cite{TwistedGGP} under local assumptions, relating twisted Asai L-function (for $K/F$) and (non-polarizable) period integrals for the embedding $H=\U(V) \to G=\Res_{K/F} \U(V_K)$ and the Weil representation of $H$ on Lagrangians of $V$. We summarize our main results:

\begin{enumerate}
    \item new relative trace formulas and their geometric decompositions into orbital integrals after choosing representatives. Roughly we have (see Section \ref{section: decomp and orbits} for details) 
\[
\mathrm{RTF}_{(G, H,\rho)}(f, \phi_1, \phi_2)=\sum_{a \in [H \backslash (R \times V)](F)_{\mathrm{rs}}} \mathrm{Orb}(a, f, \phi_1, \phi_2).
\]
Here $R$ is a nice representative of $G/H$, and we transform a product of two theta series into one theta series via partial Fourier transforms \cite{Li1992,Liu2014relative,Xue-GGP}.
    \item a proof of the twisted Gan--Gross--Prasad conjecture \cite{TwistedGGP} under certain local assumptions, using such decompositions and comparisons of orbital integrals. 
\end{enumerate}

\subsection{Relative trace formulas in polarized reductive cases}

A good way to study the non-vanishing of period integrals is via relative trace formulas (RTF). Let us recall the classical set up. For simplicity, we will ignore any convergence and rationality issue. Let $G$ be a reductive group over $F$. Let $\BA=\BA_F$ be the ring of adeles. Denote by $[G]=G(F) \backslash G(\BA)$ the adelic quotient with suitable Haar measure.

In the polarized reductive case, we consider a smooth affine variety $X=G/H$ where $H \leq G$ a reductive subgroup over $F$. Let $\pi$ be a cuspidal automorphic representation of $G(\mathbb A)$. We define the period integral $P_X(\varphi) = \int_{[G]} \varphi(h) dh$ for $\varphi \in \pi$. Consider the relative trace formula
\[
\mathrm{RTF}_{X}(f):= \int_{[H] \times [H]} K_f(h_1, h_2)  dh _1 dh_2, \quad f \in \CS(G(\BA)).
\]\
Here $K_f(x, y)=\sum_{\gamma \in G(F)} f(x^{-1}\gamma y)$ is the kernel function on $[G] \times [G]$ for the right translation action $R(f)$ on the spectrum $L^2([G])$. When the center of $G$ is anistropic, the kernel function for $R(f)$ on $\pi \subseteq L^2_{\mathrm{cusp}}([G])$ is 
\[
K_{f,\pi}(x,y)= \sum_{\phi \in \mathrm{OB}(\pi)} (R(f)\phi(x)) \overline{\phi(y)}
\]
where $\mathrm{OB}(\pi)$ is an orthonormal basis of $\pi$. Then
\[
\mathrm{RTF}_{X,\pi}(f):= \int_{[H] \times [H]} K_{f,\pi}(h_1, h_2)  dh_1 dh_2 = \sum_{\phi \in \mathrm{OB}(\pi)} P_X(R(f)\phi) \overline{P_X(\phi)}
\]
is directly related to the non-vanishing of $P_X$ on $\pi$, and appear as a direct summand of $\mathrm{RTF}_{X}(f)$. 

The key for the relative trace formula approach is a geometric decomposition of the relative trace formula into orbital integrals (for a good test function $f$ with regular support)
\[
\mathrm{RTF}_{X}(f)=\sum_{\gamma \in [H(F)\backslash G(F) / H(F)]_{rs}} \Vol([(H \times H)_\gamma]) \Orb(\gamma, f)
\]
where we consider orbits and orbital integrals for the natural action of $H \times H$ on $G$. And $\Vol([(H \times H)_\gamma])$ is the volume of adelic quotient of the stabilizer $(H \times H)_\gamma$ over $F$. When $f=\otimes_v f_v$ is a pure tensor and the stabilizer of $\gamma$ is trivial, we have a decomposition:
\[
\Orb(\gamma, f)= \prod_v \Orb_v(\gamma, f_v)
\]
where $v$ runs over all places of $F$, $\Orb_v(\gamma, f_v)=\int_{(H \times H)(F_v)} f_v(h_1^{-1} \gamma h_2) dh_1 dh_2$ is the local orbital integral at $v$. By local comparison of orbital integrals (via fundamental lemmas and smooth transfers) for $(G, X)$ and $(G', X')$, we can compare two relative trace formulas and transfer period integrals from $G$ to $G'$, hence deduce numerical relations of $L$-functions and period integrals for $(G, X)$ from $(G', X')$. A good example is the original proof of the Gan--Gross--Prasad conjecture for unitary groups by W. Zhang \cite{WeiIchinoIkeda14,WeiGGP14} under certain local assumptions. 

However, a general study of relative trace formulas for non-reductive period integrals (e.g., those involving Weil representations) is unclear. For the Fourier--Jacobi GGP conjecture, \cite{GGP-FourierJacobi} uses RTFs to compare with Rankin-Selberg L-functions, where the orbital integrals could be naturally defined. However, the essential non-splitting feature makes twisted GGP conjecture \cite{TwistedGGP} and Asai L-functions more mysterious and difficult, which is a major motivation of this paper. For comparison, we discover and utilize more complicated geometric decompositions.

\subsection{Results on the twisted GGP conjecture}

Let us explain our results in more detail. Let $E/F$ and $K/F$ be quadratic
extensions of number fields. Let $L = E \otimes_F K$. Let $(V, \langle -, - \rangle)$ be an $n$-dimensional
$E/F$ skew-Hermitian space. Choose a pure imaginary element $j \in E$, then we could transform between skew-Hermitian spaces and Hermitian spaces as in \cite[Remark 13.1]{GGP-FourierJacobi}.

Consider embeddings of reductive groups over $F$:
\[
H=\U(V) \to G=\Res_{K/F} \U(V_K).
\]

We view $\Res_{E/F} V^\vee$ as a symplectic space over $F$ with the symplectic form 
$\Tr_{E/F}(\langle -, - \rangle)$. Let $\Res_{E/F} V^\vee=\LL + \LL^\vee$ be a polarization of Lagrangians, and $\psi$ be a non-trivial additive character of $\AA / \mathbb F$. 
Let $\omega=\omega_{\psi, \mu}$ be the Weil representation
of the unitary group $\U(V)(\AA)$, which is realized on
$\cS(\LL(\AA))$.
 For $\phi \in \cS(\LL(\AA))$, define the theta series (which is left $H(F)$-invariant)
\[
    \Theta(h,\phi) = 
    \sum_{x \in \LL(F)} (\omega(h)\phi)(x).
\]

Let $\pi$ be a cuspidal automorphic representation of $\U(V_K)(\AA_K)$, consider the period integral
\begin{equation}
  \P(f,\phi) = \int_{[\U(V)]}
         f(h)\ol{\Theta(h,\phi)}\, dh  
\end{equation}
for $f \in \pi$, $\phi \in \cS(\LL(\AA))$. Here $[\U(V)]=\U(V)(F) \backslash \U(V)(\AA)$.\

We say a cuspidal automorphic representation $\Pi$ of $\GL_n(\AA_L)$ is \emph{Hermitian} if
\begin{itemize}
    \item $\Pi$ is conjugate self dual (i.e. $\Pi^\vee \cong \Pi^c$, where $\Pi^c$ is defined by $\Pi^c(g)=\Pi(g^c)$).
    \item The Asai $L$-function $L(s,\Pi,\operatorname{As}_{L/K}^{(-1)^{n+1}})$ has a pole at $s=1$.
\end{itemize}

In general, suppose $V$ and $V'$ are two skew-Hermitian spaces of dimension $n$ over $E$. For almost all places $v$
of $F$, we have $\U(V)(K_v) \simeq \U(V')(K_v)$. We say that a representation
$\pi$ of $\U(V)(\AA_K)$ and a representation
$\pi'$ of $\U(V')(\AA_K)$ are \emph{nearly equivalent}
if $\pi_v \simeq \pi_v'$ for all but finitely many
places $v$. If the weak base change of $\pi$ and $\pi'$ to $\GL_n(\BA_L)$ agree, 
$\pi$ and $\pi'$ are nearly equivalent. 

We prove the global twisted GGP conjecture \cite{TwistedGGP} under some local assumptions, which says that the non-vanishing of $L(\frac12, \pi, \As_{L/E} \otimes \mu^{-1})$ is equivalent to the non-vanishing of above period integral $\mathcal{P}$ on some $\pi'$ that is nearly equivalent to $\pi$. 

For any extension $L_1/L_2$ of number fields, we denote by $\mathrm{Spl}(L_1/L_2)$, $\mathrm{Inert}(L_1/L_2)$, $\mathrm{Ram}(L_1/L_2)$ the set of finite places of $L_2$ which is completely split, inert, ramified in $L_1$ respectively.

Let $\Pi$ be a Hermitian cuspidal tempered automorphic representation of $\GL_n(\AA_L)$. Let $\mathrm{Ram}_F(\Pi)$ be the set of finite place $v$ of $F$ where $\Pi$ is ramified at some place of $L$ above $v$. 

\begin{restatable}{theorem}{mainthm}
\label{thm:mainthm} 
Assume that
\begin{enumerate}
    \item \label{item:fieldsunr}
   $\mathrm{Ram}(E/F) \subseteq \mathrm{Spl}(K/F)$, $\mathrm{Ram}(K/F) \subseteq \mathrm{Spl}(E/F)$.
    \item \label{item:charsunr}
    For $v \in \mathrm{Inert}(E/F) \cap \mathrm{Inert}(K/F)$, $\mu_v$, $\psi_{v}$ are unramified 
    and $j \in \O_{E,v}^\times$.
    \item Any archimedean and $2$-adic place of $F$ is split in $L$.
    \item \label{item:piunr}
   $\mathrm{Ram}_F(\Pi) \cap \mathrm{Inert}(E/F) \cap \mathrm{Inert}(K/F) = \emptyset$.
    \item There exist $v_1 \in \mathrm{Spl}(E/F), v_2 \in \mathrm{Spl}(L/F)$ such that $\Pi$ is supercuspidal at all places above $v_1$ and $v_2$.
\end{enumerate}
Then the following are equivalent as predicted by \cite{TwistedGGP}:
\begin{enumerate}
    \item The Asai $L$-value $L(\frac 12,\Pi,\operatorname{As}_{L/E} \times \mu^{-1}) \ne 0$.
    
    \item There exists a skew-Hermitian space $V$ of
    dimension $n$ over $E$, and a cuspidal automorphic 
    representation $\pi$ of $\U(V)(\AA_K)$ such that $\operatorname{BC}(\pi) = \Pi$, and the period
    integral  $\P(f,\phi) = \int_{[\U(V)]}
         f(h)\ol{\Theta(h,\phi)}\, dh\ne 0$
for some $f \in \pi$, $\phi \in \cS(\LL(\AA))$.
\end{enumerate}
\end{restatable}

If $E/F$ is fixed with split $2$-adic places and archimedean places, under a suitable choice of $\psi$ and $\mu$, we can find infinitely many examples of such $K$ satisfying $(1)(2)(3)$. In this way, we may view the twisted GGP conjecture as a family of relations between period integrals and L-functions. Such relations \cite{TwistedGGP} would be useful to study analytic and arithmetic properties of Asai L-functions, including twisted triple product L-functions for Hilbert modular forms, see e.g. \cite{liu2016hirzebruch,grossi2024asai}. 

This global conjecture for Asai L-functions in \cite{TwistedGGP} is not known previously, even for $n=2$. We plan to study the refined conjecture \cite{TwistedGGP} further in the future.

One essential difficulty in the proof is to find correct relative trace formulas with nice geometric properties. Our new geometric decomposition (based on normal representatives) for good test functions relate these relative trace formulas to local orbital integrals involving partial Fourier transforms, which are hard to work with in general. Under local assumptions, we manage to relate these orbital integrals to more standard orbital integrals and use previous results as inputs. Another new difficulty is that the definition of smooth transfer only works for good test functions. Moreover, we need to establish new results on local distributions to obtain relative trace identities. Moreover, we use the local twisted GGP conjecture \cite{TwistedGGP}, which is known by \cite{le2025local} in our case.

\subsection{The base change case}

In the base change case, Asai L-functions are related to symmetric square and exterior square L-functions. Let us briefly explain. Let $\Pi_0$ be a cuspidal tempered automorphic representation of $\GL_n(\mathbb A_E)$ with an associated $n$-dimensional $\ell$-adic Galois representation $\rho_\ell: \Gal_{E} \to \GL(V_\rho)$, with a cuspidal base change $\Pi=\mathrm{BC}_{L/E}(\Pi_0)$ to $\GL_n(\mathbb A_L)$. Then the Asai L-function for $\Pi$ is the standard L-function for the Galois representation $\operatorname{As}_{L/E}(\rho|_{L})$. We have an isomorphism of Galois representations 
\[
\operatorname{As}_{L/E}(\rho|_{L}) \cong \mathrm{Sym}^2 \rho \oplus (\wedge^2 \rho \otimes \eta_{L/E}),
\]
where $\eta_{L/E}$ is the quadratic Hecke character of $E$ associated to $L/E$ by global class field theory. In particular, if we fix $E/F$ and let $K$ vary, our main theorem on the twisted GGP conjecture may give new information on central L-values of symmetric square and exterior square L-functions.

\subsection{Proof strategy of the main theorem}

We summarize our proof as follows.

\begin{center}
\begin{tikzpicture}

\node[align=center] (mainthm) at (6, 6) {Main Theorem \ref{thm:mainthm}};
\node[align=center] (fl) at (0, 4) {Proposition \ref{prop:FL} \\ Fundamental lemma};
\node[align=center] (globalreps) at (4, 4) {Proposition \ref{prop:globalkottwitz} \\ Global
representatives};
\node[align=center] (rtfid) at (8, 4) {Proposition \ref{prop:identity} \\ RTF identity};
\node[align=center] (asai) at (12, 4) {Theorem \ref{thm:asai} \\ Asai $L$-functions};
\node[align=center] (uspectral) at (4, -1) {Proposition \ref{prop:uspectral} \\ Unitary spectral \\
decomposition};
\node[align=center] (glspectral) at (10, -1) {Proposition \ref{prop:glspectral} \\ General linear spectral \\ decomposition};
\node[align=center](matching) at (0, 1) {Proposition \ref{lem:matching} \\ Orbit matching};
\node[align=center](udecomp) at (3.7, 1) {Proposition \ref{prop:udecomp} \\ Unitary geometric \\
decomposition};
\node[align=center](gldecomp) at (7.5, 1) {Proposition \ref{prop:gldecomp} \\ General linear geometric \\
decomposition};
\node[align=center](splittransfer) at (12, 1) {Proposition \ref{prop:splittransfer} \\ Existence of transfers};

\draw[-stealth] (matching) -- (rtfid);
\draw[-stealth] (udecomp) -- (rtfid);
\draw[-stealth] (gldecomp) -- (rtfid);
\draw[-stealth] (splittransfer) -- (rtfid);
\draw[-stealth] (uspectral) -- (rtfid);
\draw[-stealth] (glspectral) -- (rtfid);
\draw[-stealth] (fl) -- (mainthm);
\draw[-stealth] (globalreps) -- (mainthm);
\draw[-stealth] (rtfid) -- (mainthm);
\draw[-stealth] (asai) -- (mainthm);
\end{tikzpicture}
\end{center}

\subsubsection{Geometric decomposition of relative trace formulas}

We design new relative trace formulas and compare them via comparisons of orbital integrals, generalizing the approach \cite{Liu2014relative,Xue-GGP,Xue-Ichino,TGGP-Wang} using \cite{JPSS1983}. The non-vanishing of Asai L-functions is naturally related to the non-vanishing of certain Rankin–Selberg type period integral \cite{Flicker}.

The geometric decomposition into orbital integrals is more involved. We use the following tools on the unitary side:
\begin{enumerate}
    \item A partial Fourier transform connecting $\CS(\BL(\mathbb A)) \otimes \CS(\BL(\mathbb A))$ to $\CS(V^\vee(\mathbb A))$.
    \item A reduction formula of products of two theta series as theta series of Weil translations.
    \item A choice of normal representatives of $H(F) \times H(F)$ orbits in $G(F)$. Here we use \cite[Lemma 8.8]{Kottwitz1980}.
\end{enumerate} 
The general linear side is similar, where we consider 
\[
H_1 = \Res_{E/F} \GL_n, \quad G' = \Res_{L/F} \GL_n, \quad H_2 = \Res_{K/F} \GL_n.
\]
And the resulting orbital integrals involve a transfer factor.

 Due to the lack of a natural action of $H \times H$ (resp. $H_1 \times H_2$) on $V^\vee$  (resp. $F_n \times F^{-,n}$), we have to work with normal representatives of $H \times H$-orbits in $G$ (resp. $H_1 \times H_2$-orbits in $G'$) to deal with the involved Weil representation for $H$ (resp. $H_1$), which is essentially different to the the untwisted case \cite{Xue-GGP}. Here we develop a new theory of normal representatives, using a third subfield $M$ in $L=E \otimes_F K$.

\subsubsection{Comparison of representatives, orbital integrals and relative trace formulas}

We compare the geometric sides of two relative trace formulas. The comparison is via a base change from unitary groups to general linear groups (for the quadratic extension $L/K$) using Flicker-Rallis periods. We relate the base change of the Weil representation of $H(\mathbb A)$ on $\CS(\BL(\mathbb A))$ to the Weil representation of $H_1(\mathbb A)$ on $\CS(\mathbb A_E^n)$. Another subtle point is that we need to choose compatible normal representatives with sufficiently nice local behaviors, which is done in Section \ref{section: representatives}. 

Then we reduce the comparison to local comparisons of orbital integrals, namely, fundamental lemmas and the existence of smooth transfers at a place $v$ of $F$. 
\begin{enumerate}
\item If $v$ is archimedean, by assumption $v$ is split in $K$ and we apply Xue's     (partial) smooth transfer theorem.
\item If $v$ is split in $K$, we do reductions to prove fundamental lemmas and smooth transfer via the Fourier--Jacobi case \cite{Xue-GGP}. 
\item If $v$ is inert in $K$, by assumption $v$ is either split or inert in $E$. If $v$ is split in $E$, both sides are the same, so fundamental lemmas and smooth transfer are known. If $v$ is inert in $E$, then we do reductions and use the fundamental lemmas in \cite{TGGP-Wang}.
\item If $v$ is ramified in $K$, then by assumption $v$ is split in $F$, so fundamental lemmas and smooth transfer are reduced to known cases.
\end{enumerate}

Then by choosing good test functions and using the local twisted GGP conjecture \cite{le2025local}, we finish the proof of the twisted GGP conjecture under these local assumptions.

\subsection{Further applications and relative Langlands program}

We mention some potential applications of our results and methods.

\begin{enumerate}
    \item It is important to prove the existence of smooth transfers for all test functions at any place, which would in particular remove local assumptions (1)(3) in Theorem \ref{thm:mainthm}.
    \item Period integrals involving Weil representations provide a large class of new examples, and our geometric decomposition method shall be useful for other examples, as in \cite{BZSV, MWZ24} for split groups. Following \cite{MWZ24}, we may formulate variants for unitary groups and more GGP type conjectures on period integrals and L-functions. It would be interesting to study similar geometric decompositions of relative trace formulas. Besides Whittaker and non-reductive cases, period integrals in \cite{BZSV, MWZ24} are constructed from a tuple $(G, H, \rho, 0)$: here $H \leq G$ is a reductive subgroup and $\rho: H \to \mathrm{Sp}(V)$ is a symplectic representation which induces a Weil representation $\omega$ of $H$ on $\CS(\BL(\BA))$ where $\BL$ is a Langragian of $V$. The Weil representation $\omega$ is independent of the choice of Lagrangian.
For $\phi \in\CS(\BL(\BA))$, we have the $\rho$-theta series on $[H]$
\[
\Theta_{\BL}(h, \phi)=\sum_{x \in \BL(F)}  \omega(h)\phi(x).
\]
For a cuspidal automorphic form $\varphi$ on $G(\BA)$, consider its period integral (we use complex conjugation following \cite[Section 2.2]{TwistedGGP})
\[
P_{(G, H, \rho)}(\varphi, \phi)=\int_{[H]} \varphi(h) \overline{\Theta_{\BL}(h, \phi)} dh.
\]
The relative trace formula on $f \in \CS(G(\BA)), \phi_1, \phi_2 \in \CS(\BL(\BA))$ is 
\[
\mathrm{RTF}_{(G, H,\rho)}(f, \phi_1, \phi_2):= \int_{[H] \times [H]} \sum_{\gamma \in G(F)} f(h_1^{-1}\gamma h_2) \overline{\Theta_\BL(h_1, \phi_1)} \Theta_\BL(h_2, \phi_2) dh_1 dh_2,
\]
which is related to period integrals via the spectral decomposition:
\[
\mathrm{RTF}_{(G, H,\rho)}(f, \phi_1, \phi_2)=\sum_{\pi} \sum_{\varphi \in OB(\pi)} P_{(G,H,\rho)}(R(f)\varphi, \phi_1) \overline{P_{(G,H,\rho)}(\varphi, \overline{\phi}_2)}.
\]
However, the geometric decomposition into orbital integrals is non-trivial. Similarly, consider a pair $(G', H_1, \rho_1)$ and a reductive subgroup $H_2 \leq G'$ and form a relative trace formula
\[
\mathrm{RTF}_{(G', H_1, \rho_1, H_2)}(f', \phi_1'):= \int_{[H_1] \times [H_2]} \sum_{\gamma \in G(F)} f'(h_1^{-1}\gamma h_2) \Theta_{\BL'}(h_1, \phi_1') dh_1 dh_2.
\]
Again, the geometric decomposition into orbital integrals is non-trivial. 
    \item 
Moreover, our theorems have applications to arithmetic analogs of twisted GGP conjectures and the study of Asai motives, see e.g. \cite{TAFL} where the twisted AFL is proved. In the case $n=2$, the arithmetic conjecture is related to twisted triple product formula of Ichino and its arithmetic analogs. 
\end{enumerate}

Finally, we give a summary of our paper. In Section \ref{section: representatives}, we develop the theory of normal representatives. In Section \ref{section: decomp and orbits}, we give relative trace formulas and decompositions into orbital integrals under the choice of representatives. In Section \ref{section: matching orbits and integrals}, we establish matching of local orbital integrals. In Section \ref{section: comparison RTF}, we match relative trace formulas using results in previous sections. In Section \ref{section: proof of the main theorem}, we finish the proof of the twisted GGP conjecture under our local assumptions.

\subsection{Acknowledgement}
We thank Wee Teck Gan, Xue Hang, Yifeng Liu and Wei Zhang for helpful suggestions.

\subsection{Notations and Conventions}

\begin{itemize}

\item We often use the right action of a reductive group $G$ on a smooth affine variety $X$ over a field. And $x \in X$ is called semisimple if $G.x$ is closed, and regular if the stabilizer $G_x$ has minimal dimension among all orbits. 

\item We often use $\zeta$ to mean an element in the double coset,  $\gamma$ to mean the corresponding element in the group under corresponding anti-symmetric maps, and $\delta$ to mean further simplifications of $\gamma$. On the unitary side, we use the notation
\[
[\zeta, z] \in R^{V} \times V^\vee.
\]
On the general linear side, we use the notation
\[
[\gamma, x, y] \in S \times F_n \times F^{-,n}.
\]

\item Let $E/F$ be a quadratic extension of number fields. Let $\BA=\BA_F$ and $\BA_E$ be their rings of adeles respectively. Denote by $\sigma_{E/F}$ the generator of $\Gal(E/F)$. 

\item We use $j \in E$ (resp. $j_M \in M$) to be a totally imaginary element in $E/F$ (resp. $M/F$).

\item By a skew-hermitian space $V$ over $E$, we mean a 
$E$-vector space $V$ equipped with an skew-Hermitian form $\left<-,-\right>: V \times V \to E$, 
i.e. it is $\sigma_{E/F}$-linear in the first variable, 
linear in the second variable, and 
$\left<x,y\right>=-\sigma_{E/F}(\left<y,x\right>)$. 

We choose a non-zero totally imaginary element $j$ of $E$. Then $(V, j\left<-,-\right>)$ is a Hermitian space.  

\item Let $[\SHerm_n^\times(F)]$ denote the set
of isomorphism classes of $n$-dimensional
nondegenerate skew-Hermitian spaces over $E$.

\item Let $\eta_{E/F}$ be the quadratic character of
$\AA^\times/F^\times$ associated to $E/F$ by
global class field theory, and $\mu$ be a character
of $\AA_E^\times/E^\times$ such that 
$\mu|_{\AA^\times} = \eta_{E/F}$. Similarly, we denote by $\eta_{L/K}$ the quadratic character of $\AA^\times_K$ associated to $L/K$.

\item Let $\psi: \BA_F / F \to \BC^\times$ be a non-trivial additive character, which gives a character $\psi_E(x)=\psi(\Tr_{E/F}(x)): \BA_E / E \to \BC^\times$.

\item Let $\Res_{E/F} V^\vee =\LL + \LL^\vee$ be a polarization of Lagrangians. Let $\omega=\omega_{\psi, \mu}$ be the Weil representation
of the unitary group $\U(V)(\AA)$, which is realized on
$\cS(\LL(\AA))$. We have $\overline{\omega}=\omega_{\psi^{-1}, \mu^{-1}}$. Here we follow the notations of \cite{Xue-GGP}, which are slightly different from \cite{GGP-FourierJacobi}.

\item Let $K/F$ be another quadratic extension of number fields. Let $\sigma_{K/F}$ be the generator of $\Gal(K/F)$. 
Set 
$$
H=\U(V) \to G=\Res_{K/F}\U(V_K),
$$
\[
H_1 = \Res_{E/F} \GL_n, \quad G' = \Res_{L/F} \GL_n, \quad H_2 = \Res_{K/F} \GL_n.
\]

\item Let $M$ be the subfield of $L$ fixed by $\sigma_{L/K} \circ \sigma_{L/E}$.
We fix an element $j_M \in M$ satisfying 
$\sigma_{M/F}(j_M) = - j_M$.

\item We choose $\eta'$ a character of $\BA^\times_L$ such that $\eta'|_{\BA^\times_K}=\eta_{L/K}$. 

\item For any connected reductive group $G$ over a number field $F$, let $A_G^\infty$ be the connected component of the $\BR$-point of the split center of the $\BQ$-group $\mathrm{Res}_{F/\BQ} G$.

A property is that
\[
    G(\BA) = G(\BA)^1 \times A_{G}^\infty.
\]

\item We use 
$f' \in C_c^\infty(G'(\AA))$ and $\phi' \in
\cS(\AA_{E,n})$ on the general linear side. We use $f \in C_c^\infty(G(\AA))$ and
$\phi_1 \otimes \phi_2 \in \cS(\LL(\mathbb A)) \otimes \cS(\LL(\mathbb A))$ on the unitary side.

\end{itemize}

\subsection{Measures}
Using our fixed character $\psi: \BA_F/F \to \BC^{\times}$, we can naturally provide Haar measure $dg$ on $G(\BA_F)$ for any linear algebraic group $G$ over $F$, see \cite[Section 2.3]{GGP-endoscopic}.

The Tamagawa measure $dg$ on $G(\BA_F)$ is of the form $dg = (\Delta_G^*)^{-1} \prod dg_v$, where $dg_v$ is a Haar measure on $G(F_v)$ such that the hyperspecial subgroup has Haar measure 1 for almost all $v$ and $\Delta_G^*$ is a constant. When $G = \GL_n$, then $\Delta_G^* = \zeta_F^*(1) \zeta_F(2) \cdots \zeta_F(n)$, and
\[
    dg_v(\GL_n(\O_{F_v}))
    = 1
\]
for $\psi_v$ unramified.
We also have
\[
    \Delta_{\U(V)}^* = L(1,\eta_{E/F}) L(2,\eta_{E/F}^2) \cdots L(n,\eta^n_{E/F})
\]
and
\[
    dg_v(\U(V)(\O_{F_v}))
    = 1
\]
for when $\psi_v$ and $E_v/F_v$ is unramified.

\section{Normal representatives}
\label{section: representatives}

In this section, we deal with the existence of normal representatives on regular semisimple orbits (Definition \ref{normal reps: unitary} and \ref{normal reps: GL side}). The main result is Proposition \ref{prop:normalrep}. In Section \ref{subsec:orbits_and_matching}, we discuss the matching of regular semisimple representatives.

On the unitary side, we have a symmetric pair $H \leq G$ with involution $\theta$. We know that
 
\begin{num} 
    \item \label{eq:centralizer_in_diagonal} Given $\zeta \in G$, if the centralizer $C_G(\theta(\zeta^{-1})\zeta)$ of $\theta(\zeta^{-1}) \zeta$ in $G$ is a torus and $\zeta \in C_G(\theta(\zeta^{-1})\zeta)$, then $H \times H$-stabilizer of $\zeta$ is in the diagonal subgroup $H \leq H \times H$.
\end{num}

Indeed, under the assumption, we have $C(\theta(\zeta)^{-1}\zeta) \subseteq C(\zeta)$. Note that $g^{-1}\zeta h =\zeta$ for $g, h \in H$ implies that $g^{-1}\theta(\zeta) h =\theta(\zeta)$, hence $h^{-1} \zeta^{-1} \theta(\zeta) h= \zeta^{-1} \theta(\zeta)$, hence $h \in C(\theta(\zeta)^{-1}\zeta)$.

On the general linear side, for two subgroups $H_1, H_2$ of $G'$, we need to choose $\xi \in G'/H_2$ such that its stabilizer in $H_1$ is contained in certain subgroup $H_{10} \leq H_1$ so there is a natural action of $H_{10}$ on Weil representation for $F_n \times F^{-,n}$ through partial Fourier transforms.

\subsection{General formulation}

Consider the action of a group $T$ on a set $S$, with a subgroup $T_0 \leq T$. Our interest is on the locus of $x \in S$ such that $\mathrm{Stab}_{T}(x) \leq T_0$, which is stable under the action of $T_0$. We call such $x$ a $T_0$-good element in $S$. The following lemma is clear.

\begin{lem}
Consider the action of a group $T$ on a set $\widetilde{S}$. Let $S \subseteq \widetilde{S}$ be a subset, and $H=\mathrm{Stab}(S) \leq T$. Assume that
\begin{enumerate}
    \item For any $\widetilde{y} \in \widetilde{S}$, there exists $y \in S$ and $g \in T$ such that $g\widetilde{y}=y$.
    \item If $gy_1=y_2$ for some $g \in T$ and $y_1, y_2 \in S$, then there exists $h \in H$ such that $hy_1=y_2$.
\end{enumerate}
Then the natural inclusion $H \backslash S \to G \backslash \widetilde{S}$ is a bijection.
\end{lem}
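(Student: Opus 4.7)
The plan is to directly verify that the natural map $\iota\colon H\backslash S \to T\backslash \widetilde S$ induced by the inclusion $S \hookrightarrow \widetilde S$ is a well-defined bijection (reading $G\backslash\widetilde S$ in the statement as $T\backslash\widetilde S$, which appears to be a typographical slip). Since $H = \mathrm{Stab}(S) \leq T$ acts on $S$ by restriction, the quotient $H\backslash S$ makes sense, and because $H \leq T$ the composition $S \hookrightarrow \widetilde S \twoheadrightarrow T\backslash\widetilde S$ factors through $H\backslash S$, giving $\iota$. The remaining content splits cleanly into surjectivity and injectivity, each corresponding to exactly one of the two hypotheses.

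For surjectivity, given any $\widetilde y \in \widetilde S$, hypothesis (1) produces $g \in T$ and $y \in S$ with $g\widetilde y = y$. Then $\widetilde y$ and $y$ represent the same class in $T\backslash\widetilde S$, so $[\widetilde y] = \iota([y])$ and the map hits every $T$-orbit.

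For injectivity, suppose $y_1, y_2 \in S$ satisfy $\iota([y_1]) = \iota([y_2])$, i.e.\ there exists $g \in T$ with $gy_1 = y_2$. Applying hypothesis (2) to this pair produces $h \in H$ with $hy_1 = y_2$, so $[y_1] = [y_2]$ in $H\backslash S$.

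There is no real obstacle here: the lemma is a formal reformulation of its hypotheses at the level of orbits, and the argument is essentially a bookkeeping exercise. The only thing worth pausing on is confirming that the action of $H$ on $S$ is well-defined, which is immediate from the definition $H = \mathrm{Stab}(S)$, and that the target of $\iota$ should be interpreted as $T\backslash\widetilde S$. In the later geometric applications, hypotheses (1) and (2) will be realized respectively as the existence of normal representatives and as a conjugation result in the style of \cite[Lemma 8.8]{Kottwitz1980}, which is where the actual work lies.
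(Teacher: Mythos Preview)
Your proof is correct and matches the paper's own treatment, which simply declares the lemma ``clear'' without giving details. You have also correctly identified the typographical slip: the target should be $T\backslash\widetilde S$.
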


Let $G$ be a linear algebraic group over a field $F$. 
Let $\theta_1$ and $\theta_2$ be commuting
involutions of $G$,
and let $H_1$ and $H_2$ be the subgroups of $G$ fixed
by $\theta_1$ and $\theta_2$, respectively.
Consider the right action of $H_1 \times H_2$
on $G$ given by 
\[
    \xi.(h_1, h_2) = h_1^{-1} \xi h_2.
\]

We have a $H_1$-equivariant embedding 
\[
G/H_2 \hookrightarrow G,  \, \, \xi \mapsto \gamma=\xi \theta_2(\xi)^{-1}
\]
whose image lies in $G^{\theta_2=(-)^{-1}}:=\{ \gamma \in G : \gamma \theta_2(\gamma)=1\}$. Here $H_1$ acts on $\gamma \in G^{\theta_2=(-)^{-1}}$ by
\[
    \gamma.h_1 = h_1^{-1}\gamma \theta_2(h_1).
\]

\begin{definition}
An element $\gamma \in G^{\theta_2=(-)^{-1}}$ is 
\emph{$\theta_1\theta_2$-normal} if 
$\gamma$ commutes with $\theta_1\theta_2(\gamma) = \theta_1(\gamma)^{-1}$.  
\end{definition}

Let $\delta=\gamma \theta_1\theta_2(\gamma) = \gamma \theta_1(\gamma)^{-1} \in G$. Then 
\[
\theta_1(\delta) \delta =id.
\] 

\begin{lem}\label{lemma: normal implies good}
\begin{enumerate}
    \item An element $\gamma$ is normal iff $\theta_1 \theta_2(\delta)=\delta$.
    \item If $\gamma$ is $\theta_1\theta_2$-normal and
the centralizer $C_G(\delta)$ is a
torus in $G$, then $\Stab_{H_1}(\gamma) \subseteq H_1^{\theta_2=\mathrm{id}}$.
\end{enumerate}
In particular, if $\delta$ is regular semisimple and $C_G(\delta)$ is a torus and $\gamma$ is $\theta_1\theta_2$-normal, then $\delta$ is good.
\end{lem}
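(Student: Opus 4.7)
The plan is to reduce both parts to direct group-theoretic manipulations with $\gamma$, $\theta_1(\gamma)$, and $\delta = \gamma\theta_1(\gamma)^{-1}$, using the two key identities $\theta_2(\gamma)=\gamma^{-1}$ (since $\gamma \in G^{\theta_2=(-)^{-1}}$) and $\theta_1 \theta_2 = \theta_2\theta_1$. Once part (2) is in hand, the ``in particular'' clause is immediate, since regular semisimple elements have toral centralizer, and so the hypothesis that $C_G(\delta)$ is a torus becomes automatic.

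For part (1), I would simply compute $\theta_1\theta_2(\delta)$. Using $\theta_2(\gamma)=\gamma^{-1}$ and that $\theta_1,\theta_2$ commute (so $\theta_1\theta_2\theta_1 = \theta_2$), one finds
\[
    \theta_1\theta_2(\delta)
    = \theta_1\theta_2(\gamma) \cdot \theta_1\theta_2\theta_1(\gamma)^{-1}
    = \theta_1(\gamma^{-1}) \cdot \theta_2(\gamma)^{-1}
    = \theta_1(\gamma)^{-1}\gamma .
\]
Thus $\theta_1\theta_2(\delta) = \delta = \gamma\theta_1(\gamma)^{-1}$ if and only if $\gamma$ commutes with $\theta_1(\gamma)^{-1}$, which is the definition of $\theta_1\theta_2$-normality.

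For part (2), suppose $h_1 \in H_1$ stabilizes $\gamma$, i.e.\ $h_1^{-1}\gamma\,\theta_2(h_1) = \gamma$, equivalently $\gamma\,\theta_2(h_1) = h_1\gamma$. Applying $\theta_1$ to this equation and using $\theta_1(h_1)=h_1$ and $\theta_1\theta_2(h_1) = \theta_2(h_1)$ yields $\theta_1(\gamma)\,\theta_2(h_1) = h_1\,\theta_1(\gamma)$. Solving both equations for $\theta_2(h_1)$ and setting them equal gives $\gamma^{-1}h_1\gamma = \theta_1(\gamma)^{-1}h_1\theta_1(\gamma)$, which rearranges to
\[
    h_1 \cdot \gamma\theta_1(\gamma)^{-1} = \gamma\theta_1(\gamma)^{-1} \cdot h_1,
\]
i.e.\ $h_1 \in C_G(\delta)$. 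Now normality of $\gamma$ means $\gamma$ commutes with $\theta_1(\gamma)^{-1}$, hence $\gamma$ also lies in $C_G(\delta)$. Since $C_G(\delta)$ is a torus, hence abelian, $h_1$ and $\gamma$ commute, so $h_1\gamma = \gamma h_1$. Combined with $\gamma\,\theta_2(h_1) = h_1\gamma$, this forces $\theta_2(h_1) = h_1$, i.e.\ $h_1 \in H_1^{\theta_2=\mathrm{id}} = H_1 \cap H_2$, proving the containment of stabilizers.

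There is no real obstacle here; the main care needed is to track which involution is applied to which factor and to use the identity $\theta_2(\gamma)=\gamma^{-1}$ at the right moments. For the final assertion, if $\delta$ is regular semisimple then $C_G(\delta)$ is automatically a torus, so the hypothesis of part (2) is satisfied and $\Stab_{H_1}(\gamma)\subseteq H_1\cap H_2$, which is precisely the ``good'' condition in the sense of \eqref{eq:centralizer_in_diagonal}.
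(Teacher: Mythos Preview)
Your proof is correct and follows essentially the same route as the paper's. The only cosmetic difference is that for part (2) you apply $\theta_1$ to the stabilizer equation whereas the paper applies $\theta_1\theta_2$; in either case one obtains a second relation which, combined with the original one, forces $h_1 \in C_G(\delta)$, and then the abelian-torus argument and the conclusion $h_1 = \theta_2(h_1)$ are identical.
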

\begin{proof}
The first part follows from the definition. Suppose $h \in H_1$ satisfies $h^{-1}\gamma \theta_2(h)
= \gamma$. Then 
\[
    \theta_2(h)^{-1}\theta_1\theta_2(\gamma)
    h = \theta_1\theta_2(\gamma),
\]
so $h \in C_G(\gamma\theta_1\theta_2(\gamma))$.
By assumption 
$\gamma \in C_G(\gamma\theta_1\theta_2(\gamma))$
as well, so $\gamma$ and $h$ commute.
This combined with $h^{-1}\gamma \theta_2(h) 
= \gamma$ implies that $h = \theta_2(h)$ as desired.
\end{proof}

We may hope that the $H_1$-orbit of a given element $\gamma \in G^{\theta_2=(-)^{-1}}$ contains a $\theta_1\theta_2$-normal element. This is not true in general. In the next two subsections, we will prove related results in our cases by Lie algebra methods.

\subsection{Normal representatives on the unitary side}

Recall that $V$ is a $n$-dimensional skew-hermitian space over a quadratic extension $E/F$, and
\[
    H=\U(V) \leq G=\U(V_K)   
\]
is the fixed subgroup under the Galois involution $\sigma_{L/E}$ on $\U(V_K)$.

\begin{definition}
For $\zeta \in G(F)$, let
\[
    T_\zeta \coloneqq \{(h_1, h_2) \in H \times H
    : h_1^{-1}\zeta h_2 = \zeta\}
\]
denote the stabilizer of $\zeta$ in $H \times H$.
We say that $\zeta$ is
\emph{good} if $T_\zeta$ is contained in the
diagonal $H \le H \times H$.
\end{definition}

\begin{example}
If $K/F$ is split, i.e., $G=H \times H$, then any $\zeta=(1,\zeta_2)$ is good.
\end{example}

Let $G_{\good}$ be the locus of good element of $G$.

\begin{lem}
$G_{\good} \subseteq G$ is a non-empty closed subset, and stable under $H$-conjugacy action.   
\end{lem}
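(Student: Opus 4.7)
I plan to verify the three asserted properties — non-emptiness, closedness, and $H$-conjugation stability — separately. Stability is essentially formal, non-emptiness follows immediately from the criterion \eqref{eq:centralizer_in_diagonal} recalled at the start of the section, and closedness is the substantive point.

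For the $H$-conjugation stability, a direct computation shows that for $h \in H$ and $\zeta \in G$ one has $T_{h\zeta h^{-1}} = (h,h)\,T_\zeta\,(h,h)^{-1}$ as subgroups of $H\times H$: indeed, $k_1^{-1}(h\zeta h^{-1})k_2 = h\zeta h^{-1}$ iff $(h^{-1}k_1 h)^{-1}\zeta(h^{-1}k_2 h) = \zeta$. Since conjugation by the diagonal element $(h,h)$ preserves $H_{\mathrm{diag}}$, the condition $T_\zeta \subseteq H_{\mathrm{diag}}$ is equivalent to $T_{h\zeta h^{-1}} \subseteq H_{\mathrm{diag}}$.

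For non-emptiness, the criterion \eqref{eq:centralizer_in_diagonal} states that any $\zeta$ commuting with $\delta := \theta(\zeta^{-1})\zeta$ for which $C_G(\delta)$ is a torus is automatically good. To exhibit one, I pick a regular semisimple element $\delta$ lying in the symmetric variety $\{g \in G : \theta(g)g = 1\}$; its centralizer $C_G(\delta)$ is then a maximal torus, and the map $g \mapsto \theta(g^{-1})g$ on this torus surjects onto the $\theta$-anti-invariant part (after passing to a separable closure and descending). Any lift $\zeta$ inside $C_G(\delta)$ commutes with $\delta$ by construction and hence is good. Rationality is built in by Proposition \ref{prop:normalrep} on normal representatives, established later in the section, which provides $F$-rational good elements.

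The main obstacle is closedness. I plan to form the stabilizer group scheme $\mathcal T \subset (H\times H) \times G$, whose fiber over $\zeta$ is $T_\zeta$; this is cut out by the closed condition $h_1^{-1}\zeta h_2 = \zeta$. The morphism $\alpha:(h_1,h_2,\zeta) \mapsto h_1 h_2^{-1}$ realizes goodness of $\zeta$ as the condition $\alpha(\mathcal T_\zeta) = \{1\}$, so $G \setminus G_{\good}$ is the image under the projection $\mathcal T \setminus \alpha^{-1}(1) \to G$ of a locally closed subscheme. The main difficulty is to show this image is open: naively, $\dim T_\zeta$ and $\dim C_H(\zeta)$ are both upper semi-continuous, so their difference cannot be controlled by raw semi-continuity. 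My plan is to stratify $G$ by the $G$-conjugacy class of the pair $(\zeta,\delta)$ using normal representatives, so that on each stratum the structure of $T_\zeta$ is locally constant, reducing goodness to a clopen condition that one then glues. The theory of normal representatives developed in Section \ref{section: representatives} — in particular the reduction of an orbit to a normal form in which the stabilizer structure is explicit — is the essential input that converts the question into one of constancy on finitely many strata rather than of arbitrary semi-continuity.
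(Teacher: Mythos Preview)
Your argument for $H$-conjugation stability is fine and matches the paper.

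For non-emptiness you are working far too hard. The identity $1 \in G$ has stabilizer $T_1 = \{(h,h) : h \in H\}$, which is exactly the diagonal, so $1 \in G_{\good}$ immediately. There is no need to invoke the criterion \eqref{eq:centralizer_in_diagonal}, produce regular semisimple elements, or appeal to Proposition \ref{prop:normalrep}; the latter is proved later in the section and citing it here is at best awkward and at worst circular.

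For closedness your proposal is not a proof but a sketch of a plan that is never executed. You correctly set up the stabilizer scheme $\mathcal T \subset H \times H \times G$ and observe that $G \setminus G_{\good}$ is the image under the projection $\mathcal T \to G$ of the open locus $\{h_1 \neq h_2\}$; but then you propose to stratify $G$ by conjugacy type and invoke the theory of normal representatives to control $T_\zeta$ stratum by stratum, without carrying any of this out. It is not clear that such a stratification argument can be made to work, and the reference to normal representatives (which concern $\S_{n,L/K}$ and $H_1$-orbits, not the $H \times H$-action on $G$ directly) does not obviously apply.

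The paper's argument is entirely different and one line: it asserts that the projection $\mathcal T \to G$ is flat, hence open, so the image of the open subset $\mathcal T \cap \{h_1 \neq h_2\}$ is open in $G$, giving closedness of $G_{\good}$ at once. You missed this idea. That said, the flatness assertion itself is delicate: the fibers $T_\zeta \cong H \cap \zeta^{-1} H \zeta$ visibly jump in dimension (over $\zeta = 1$ the fiber is all of $H$, while over a generic $\zeta$ it is a torus), so flatness in the literal sense fails and one would need to think about what is really meant.
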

\begin{proof}
The action of $H \times H$ on $G$ induces a map
$$
f: \{ (h_1,h_2, \zeta) \in H \times H \times G : h_1^{-1}\zeta h_2=\zeta \} \to G
$$
which is flat hence open. Then $G - G_{\good}$ is the image of the open subset $H \times H \times G - \Delta H \times G$, hence open. Therefore, $G_{\good}$ is closed and non-empty ($1 \in G_{\good}$).

If $\zeta$ is good, then $h^{-1}\zeta h$ is also good for any $h \in H$ by definition.
\end{proof}

\begin{definition}
\label{def:normalgeneral}
We say that an element $\xi \in G(F)$ is
\emph{normal} if $\xi$ commutes with $\xi\sigma_{L/E}(\xi)^{-1}$.
This is equivalent to $\xi\sigma_{L/E}(\xi)^{-1} = \sigma_{L/E}(\xi)^{-1}\xi$.
\end{definition}

Note that for any $\xi \in G(F)$ and $h_1, h_2 \in H_1$,
we have
\begin{equation}
\label{eq:norm}
    (h_1^{-1}\xi h_2)\sigma_{L/E}(h_1^{-1}\xi h_2)^{-1}
    = h_1^{-1}(\xi \sigma_{L/E}(\xi)^{-1}) h_1.
\end{equation}

From Lemma \ref{lemma: normal implies good}, we have

\begin{lem}
\label{lem:normalgeneral}
Suppose $\xi \in G(F)$ is normal and the centralizer
of $\xi\sigma_{L/E}(\xi)^{-1}$ is a (commutative) torus.
Then for any $h_1, h_2 \in H_1$ such
that $h_1^{-1}\xi h_2 = \xi$, we have $h_1 = h_2$. 
\end{lem}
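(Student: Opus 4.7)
The plan is to deduce Lemma \ref{lem:normalgeneral} as a direct specialization of the general Lemma \ref{lemma: normal implies good}: take $\theta_1 = \theta_2 = \sigma_{L/E}$, so $H_1 = H_2 = H$, and set $\gamma = \xi\sigma_{L/E}(\xi)^{-1}$, playing the role of the ``$\delta$'' in the earlier framework. The normality hypothesis is then precisely that $\xi$ commutes with $\gamma$, i.e. $\xi \in C_G(\gamma)$, and by assumption $C_G(\gamma)$ is a commutative torus. Since the statement of Lemma \ref{lem:normalgeneral} asks for the stabilizer condition $h_1^{-1}\xi h_2 = \xi$ rather than the ``twisted'' condition used in Lemma \ref{lemma: normal implies good}, I would redo the short calculation directly in this cleaner setting rather than invoke the general statement as a black box.

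The key computational step is to show that the hypothesis $h_1^{-1}\xi h_2 = \xi$ forces $h_1 \in C_G(\gamma)$. For this I apply $\sigma_{L/E}$ to both sides: since $h_1, h_2 \in H = \U(V)$ are fixed by $\sigma_{L/E}$, the resulting identity is $h_1^{-1}\sigma_{L/E}(\xi) h_2 = \sigma_{L/E}(\xi)$. Multiplying the original relation on the right by the inverse of this second relation cancels $h_2$ and produces $h_1^{-1}\gamma h_1 = \gamma$, establishing $h_1 \in C_G(\gamma)$.

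From here the conclusion is immediate: since $C_G(\gamma)$ is commutative and contains both $\xi$ and $h_1$, we have $h_1 \xi = \xi h_1$. Substituting $\xi^{-1} h_1^{-1} \xi = h_1^{-1}$ into $h_1^{-1}\xi h_2 = \xi$ yields $h_2 = h_1$, as required. I do not anticipate any significant obstacle; the proof is essentially a two-line manipulation, and the only bookkeeping subtlety is verifying that the two commuting involutions in the general framework collapse correctly in the symmetric pair $H \subseteq G$ so that the relevant centralizer that one needs to be a torus is indeed $C_G(\xi\sigma_{L/E}(\xi)^{-1})$ and not some larger group like $C_G(\gamma^2)$.
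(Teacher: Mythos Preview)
Your proof is correct and is essentially the paper's own argument. The paper simply writes ``From Lemma~\ref{lemma: normal implies good}'', but as you correctly observe, a literal specialization of that lemma with $\theta_1=\theta_2=\sigma_{L/E}$ would involve $C_G(\gamma^2)$ rather than $C_G(\gamma)$; what the paper really has in mind is the short direct computation recorded at \eqref{eq:centralizer_in_diagonal}, which is exactly the manipulation you carry out.
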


\begin{definition} \label{normal reps: unitary}
Choose a set $R$ of normal representatives of
the regular semisimple $H(F) \times H(F)$-orbits
of $G(F)$ that contain a normal element. Let 
\[
    Y =R^V\times V^\vee.
\]
For $\zeta \in R^V$, consider the action of $T_\zeta \subseteq H$ on $z \in V^\vee$ by right multiplication.  We say that $z$ is regular semisimple if it is under the action of $T_\zeta$.  Let $(R^V\times V^\vee)_\rss$ be the set of elements of $Y$ such that $z$ is regular semisimple. 

Form the equivalence relation 
$[\zeta, z] \sim [\zeta, zt]$ for $t \in T_\zeta$ on $(R^V\times V^\vee)_{\rss}$. Let 
\begin{equation}
[Y_\rss]=(R^V\times V^\vee)_{\rss} / \sim
\end{equation}
be the set of regular semisimple equivalence classes.
\end{definition}

\begin{remark}
Here we choose $R$ a good representative of $F$-orbits, 
which is usually smaller than a good representative of $F_v$-orbits for local place $v$ of $F$. Note that $H$ acts on $G_{\good} \times V \subseteq G \times V$, and we could consider orbital integrals for this restricted action. But we do not know how to describe the locus $G_{good} \leq G$ in general. 
\end{remark}

\subsection{Normal representatives on the base change side} 
Recall 
\[
H_1 = \Res_{E/F} \GL_{n,E}, \quad G' = \Res_{L/F} \GL_{n,L}, \quad H_2 = \Res_{K/F} \GL_{n,K}.
\]
Moreover, let $M$ be the third quadratic extension of $F$ inside $L=E \otimes_F K$. Let $H_{10}=\GL_{n,F} \leq H_1$. Note that 
\[
    G'(F)/H_2(F) = \GL_n(L)/\GL_n(K)  \cong \S_{n,L/K} \coloneqq \{\gamma \in  \GL_{n}(L) : \gamma \sigma_{L/K}(\gamma) = 1\} 
\] 
via the anti-symmetrization map 
\[
    \xi \mapsto \xi\sigma_{L/K}(\xi)^{-1}.
\]
Consider the right action of $g \in G'(F)$ on $\gamma \in \S_{n,L/K}$ by
\begin{equation}
    \gamma.g = g^{-1}\gamma \sigma_{L/K}(g).    
\end{equation}

\begin{definition}
For $\xi \in G'(F)$, let
\[
    T_\xi \coloneqq \{(h_1, h_2) \in H_1 \times H_2
    : h_1^{-1}\xi h_2 = \xi\}
\]
denote the stabilizer of $\xi$ in $H_1 \times H_2$.
We say that $\xi \in G'(F)$ is
\emph{good} if for any $(h_1, h_2) \in T_\xi$, 
we have $h_1 \in H_{10}$. 
\end{definition}

\begin{definition}
\label{def:glnormal}
For $\gamma \in S_{n, L/K}$, let
\[
    T_\gamma = \{h_1 \in H_1 : h_1\gamma
    \sigma_{E/F}(h_1) = \gamma\}
\]
denote the stabilizer of $\gamma$ in $H_1$.
\end{definition}

\begin{lem}
\label{lem:goodsn}
An element $\xi \in G'(F)$ is good if and only if
$T_\gamma \subseteq H_{10}$, where
$\gamma = \xi\sigma_{L/K}(\xi)^{-1}$.
\end{lem}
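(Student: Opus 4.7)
My plan is to establish, via the identification $G'(F)/H_2(F) \simeq \S_{n,L/K}$ induced by $\xi \mapsto \gamma = \xi\sigma_{L/K}(\xi)^{-1}$, a bijection $T_\xi \simeq T_\gamma$ given by projection onto the first factor, $(h_1, h_2) \mapsto h_1$. Once this is in place, the statement is tautological: $T_\xi \subseteq H_{10} \times H_2$ iff the image of $T_\xi$ under this projection lies in $H_{10}$, which is precisely $T_\gamma \subseteq H_{10}$.

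The first step is to compute the induced $H_1$-action on $\S_{n,L/K}$ from the $(H_1 \times H_2)$-action on $G'$. The key Galois-theoretic inputs are $\sigma_{L/K}|_K = \mathrm{id}$ (so $\sigma_{L/K}(h_2) = h_2$ for $h_2 \in H_2$) and $\sigma_{L/K}|_E = \sigma_{E/F}$ (so $\sigma_{L/K}(h_1) = \sigma_{E/F}(h_1)$ for $h_1 \in H_1$); both are immediate from $L = E \otimes_F K$ and the description of $\sigma_{L/K}$ as the nontrivial element of $\Gal(L/K)$. Using these, the analogue of \eqref{eq:norm} simplifies to
\[
(h_1^{-1}\xi h_2)\sigma_{L/K}(h_1^{-1}\xi h_2)^{-1} = h_1^{-1}\gamma \sigma_{E/F}(h_1),
\]
so if $(h_1, h_2) \in T_\xi$ then $h_1^{-1}\gamma\sigma_{E/F}(h_1) = \gamma$, i.e.\ $h_1$ satisfies the defining relation of $T_\gamma$ (up to the convention of Definition \ref{def:glnormal}).

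The second, slightly more delicate, step is to check surjectivity by exhibiting a canonical inverse. Given $h_1 \in T_\gamma$, I would set $h_2 := \xi^{-1} h_1 \xi \in G'(F)$; the identity $h_1^{-1}\xi h_2 = \xi$ is then automatic, and the only thing left is $h_2 \in H_2$, i.e.\ $\sigma_{L/K}(h_2) = h_2$. Using the identity $\sigma_{L/K}(\xi) = \gamma^{-1}\xi$, which is immediate from $\gamma = \xi\sigma_{L/K}(\xi)^{-1}$, together with the $T_\gamma$-relation rewritten as $\gamma\sigma_{E/F}(h_1) = h_1\gamma$, a short manipulation yields the required identity. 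This gives the two-sided inverse $h_1 \mapsto (h_1, \xi^{-1}h_1\xi)$ to the projection.

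I do not anticipate a substantive obstacle: the content is formal bookkeeping of Galois actions and of the inversion map $\xi \mapsto \xi\sigma_{L/K}(\xi)^{-1}$. The only care required is in keeping the conventions of $\sigma_{L/K}$, $\sigma_{L/E}$, $\sigma_{E/F}$ and $\sigma_{K/F}$ straight, and in tracking the position of inverses in the stabilizer relation that defines $T_\gamma$.
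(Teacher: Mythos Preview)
Your proposal is correct and follows essentially the same approach as the paper: both arguments show that the projection $(h_1,h_2)\mapsto h_1$ identifies $T_\xi$ with $T_\gamma$, by the same Galois computation for the forward direction and the same explicit inverse $h_1\mapsto (h_1,\xi^{-1}h_1\xi)$ together with the verification $\sigma_{L/K}(h_2)=h_2$ for the backward direction. The only cosmetic difference is that you phrase it as a bijection while the paper phrases it as ``there exists $h_2$ with $(h_1,h_2)\in T_\xi$ iff $h_1\in T_\gamma$''; these are equivalent since injectivity of the projection is immediate from $h_1^{-1}\xi h_2=\xi$.
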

\begin{proof}
This follows from the fact that for any $h_1 \in H_1(F)$, 
there exist $h_2 \in H_2(F)$ such that
$(h_1, h_2) \in T'_\xi$ if and only if $h_1 \in T_\gamma$.
Indeed, if $(h_1, h_2) \in T_\xi'$, then 
\[
    \gamma = \xi \sigma_{L/K}(\xi)^{-1}
    = (h_1^{-1} \xi h_2)(h_2^{-1} \sigma_{L/K}(\xi)^{-1} \sigma_{E/F}(h_1))
    = h_1^{-1} \gamma \sigma_{E/F}(h_1),
\]
so $h_1 \in T_\gamma$. On the other hand, if $h_1 \in T_\gamma$,
then $(h_1, h_2) \in T_\xi'$ for $h_2 = \xi^{-1}h_1\xi$, which
is an element of $H_2(F)$ since
\[
    \sigma_{L/K}(h_2) = \sigma_{L/K}(\xi)^{-1} \sigma_{E/F}(h_1)
    \sigma_{L/K}(\xi) = \xi^{-1} h_1 \xi = h_2,
\] 
where the middle equality follows from $h_1 \in T_\gamma$.
\end{proof}

\begin{lem}
\label{lem:normalequivdefs}
Let $\gamma \in \S_{n, L/K}$.
The following are equivalent.
\begin{enumerate}[(1)]
\item $\gamma$ is normal in the sense of Definition
\ref{def:normalgeneral}.
\item $\gamma\sigma_{L/E}(\gamma)^{-1} \in \GL_n(M)$.
\item $\gamma\sigma_{L/E}(\gamma)^{-1} \in \S_{n, L/K}$.
\end{enumerate}
\end{lem}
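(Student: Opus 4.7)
The plan is to reduce all three conditions to the single statement that $\gamma$ commutes with $\sigma_{L/E}(\gamma)$. Write $\tau = \sigma_{L/E}$ and $\sigma = \sigma_{L/K}$, so that $\tau$ and $\sigma$ commute and $\tau\sigma$ generates $\Gal(L/M)$. Put $\delta \coloneqq \gamma\tau(\gamma)^{-1}$. The hypothesis $\gamma \in \S_{n,L/K}$ is the identity $\sigma(\gamma) = \gamma^{-1}$, which I will use repeatedly to move $\sigma$ past $\gamma$.

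First I will show (1) $\Leftrightarrow$ commutativity of $\gamma$ and $\tau(\gamma)$. By definition, (1) says $\gamma\delta = \delta\gamma$, i.e.\ $\gamma^2\tau(\gamma)^{-1} = \gamma\tau(\gamma)^{-1}\gamma$. Left-multiplying by $\gamma^{-1}$ and right-multiplying by $\tau(\gamma)$ gives $\gamma\tau(\gamma) = \tau(\gamma)\gamma$, and the steps are reversible.

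Next I will show (3) $\Leftrightarrow$ the same commutativity. Compute
\[
    \sigma(\delta) \;=\; \sigma(\gamma)\,\sigma\tau(\gamma)^{-1}
    \;=\; \gamma^{-1}\,\tau\sigma(\gamma)^{-1}
    \;=\; \gamma^{-1}\,\tau(\gamma),
\]
using $\sigma\tau = \tau\sigma$ and $\sigma(\gamma) = \gamma^{-1}$, hence $\tau\sigma(\gamma) = \tau(\gamma^{-1})$. On the other hand $\delta^{-1} = \tau(\gamma)\gamma^{-1}$. Condition (3), i.e.\ $\sigma(\delta) = \delta^{-1}$, therefore reads $\gamma^{-1}\tau(\gamma) = \tau(\gamma)\gamma^{-1}$, which is again $\gamma\tau(\gamma) = \tau(\gamma)\gamma$.

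Finally I will show (2) $\Leftrightarrow$ the same commutativity. Since $M$ is the fixed field of $\tau\sigma$, having $\delta \in \GL_n(M)$ is equivalent to $\tau\sigma(\delta) = \delta$. Applying $\tau$ to the formula for $\sigma(\delta)$ computed above,
\[
    \tau\sigma(\delta) \;=\; \tau(\gamma)^{-1}\gamma,
\]
so (2) becomes $\tau(\gamma)^{-1}\gamma = \gamma\tau(\gamma)^{-1}$, equivalent once more to $\gamma\tau(\gamma) = \tau(\gamma)\gamma$. Combining the three equivalences completes the proof.

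I do not anticipate any serious obstacle here; the content is purely formal manipulation of the two commuting involutions together with the defining relation $\sigma(\gamma) = \gamma^{-1}$. The only point to be careful about is consistent use of $\sigma\tau = \tau\sigma$ and of the fact that $M = L^{\tau\sigma}$, which is the only place where the third subfield enters.
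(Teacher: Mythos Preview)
Your proof is correct and is exactly the routine verification the paper has in mind; the paper's own proof is simply ``This follows from the definition,'' and your write-up unpacks that one-liner by reducing each condition to $\gamma\,\sigma_{L/E}(\gamma) = \sigma_{L/E}(\gamma)\,\gamma$ via the relations $\sigma_{L/K}(\gamma)=\gamma^{-1}$, $\sigma_{L/E}\sigma_{L/K}=\sigma_{L/K}\sigma_{L/E}$, and $M=L^{\sigma_{L/E}\sigma_{L/K}}$.
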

\begin{proof}
This follows from the definition.
\end{proof}

\begin{lem}
    Suppose $\xi \in G'(F)$ has the property that 
    $\gamma = \xi \sigma_{L/K}(\xi)^{-1}$ is normal and
    regular semisimple. Then $\xi$ is good.
\end{lem}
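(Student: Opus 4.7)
The plan is to apply Lemma \ref{lem:goodsn} to reduce the claim to showing $T_\gamma \subseteq H_{10}$, and then to exploit the commutativity of $C_{G'}(\delta)$ (forced by regular semisimplicity of $\gamma$) together with the normality of $\gamma$ to force any stabilizing element $h_1 \in T_\gamma$ to commute with $\gamma$, hence to lie in $H_{10}$.

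Concretely, let $h_1 \in T_\gamma$, so that $h_1\gamma = \gamma\sigma_{E/F}(h_1)$. Applying $\sigma_{L/E}$ to this relation, and using that $h_1, \sigma_{E/F}(h_1) \in \GL_n(E)$ are $\sigma_{L/E}$-fixed, I get $h_1\sigma_{L/E}(\gamma) = \sigma_{L/E}(\gamma)\sigma_{E/F}(h_1)$. Eliminating $\sigma_{E/F}(h_1)$ between the two equations shows that $h_1$ commutes with $\delta = \gamma\sigma_{L/E}(\gamma)^{-1}$. By normality of $\gamma$ (Definition \ref{def:normalgeneral}), $\gamma \in C_{G'}(\delta)$ as well. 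Granted that $C_{G'}(\delta)$ is a maximal torus of $\GL_{n,L}$ (and in particular commutative), $h_1$ and $\gamma$ must commute, and combined with $h_1\gamma = \gamma\sigma_{E/F}(h_1)$ this forces $h_1 = \sigma_{E/F}(h_1)$, i.e., $h_1 \in \GL_n(F) = H_{10}$. Lemma \ref{lem:goodsn} then concludes that $\xi$ is good. Equivalently, once normality of $\gamma$ and the torus property of $C_{G'}(\delta)$ are in hand, the containment $T_\gamma \subseteq H_{10}$ is a direct consequence of the general Lemma \ref{lemma: normal implies good}(2), applied with $\theta_1 = \sigma_{L/E}$ and $\theta_2 = \sigma_{L/K}$ (noting that $H_1^{\theta_2 = \mathrm{id}} = \GL_n(F) = H_{10}$, because $\sigma_{L/K}$ restricts to $\sigma_{E/F}$ on $E$).

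The main obstacle is justifying that regular semisimplicity of $\gamma$ for the $H_1$-action on $\S_{n,L/K}$ implies that $C_{G'}(\delta)$ is a torus. This is an invariant-theoretic translation: the coefficients of the characteristic polynomial of $\delta$ viewed in $\GL_n(L)$ are $H_1$-invariants of $\gamma$, and the minimal-stabilizer condition on $\gamma$ should correspond precisely to $\delta$ having distinct eigenvalues, so that $C_{G'}(\delta)$ is a maximal torus in $\GL_{n,L}$. The extra structure $\delta \in \GL_n(M)$ with $\sigma_{M/F}(\delta) = \delta^{-1}$ (from Lemma \ref{lem:normalequivdefs}) is compatible with this and does not introduce additional centralizer components. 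Once this translation is made precise, the proof is complete.
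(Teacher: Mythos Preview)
Your proposal is correct and follows essentially the same approach as the paper: reduce via Lemma~\ref{lem:goodsn} to $T_\gamma \subseteq H_{10}$, then use normality of $\gamma$ together with the fact that $C_{G'}(\delta)$ is a torus to force $h_1 = \sigma_{E/F}(h_1)$ (the paper packages this last step as a citation of Lemma~\ref{lem:normalgeneral}, which is exactly the argument you spell out by hand and also recognize as an instance of Lemma~\ref{lemma: normal implies good}(2)). The ``main obstacle'' you flag---that regular semisimplicity of $\gamma$ forces $C_{G'}(\delta)$ to be a torus---is likewise asserted without further comment in the paper's proof, so you have not missed anything the paper supplies.
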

\begin{proof}
By Lemma \ref{lem:goodsn}, $\xi$ is good if and only if
$T_\gamma$ is contained in $H_{10}(F)$. 
Since $\gamma$ is regular semisimple, the centralizer
of $\gamma\sigma_{L/E}(\gamma)^{-1}$ is a torus.
Suppose $h_1 \in T_\gamma$, so $h_1 \in H_1$ and  
$h_1^{-1}\gamma \sigma_{E/F}(h_1) = \gamma$. 
By Lemma \ref{lem:normalgeneral}, we have
$h_1 = \sigma_{E/F}(h_1)$, so $h_1 \in H_{10}(F)$.
\end{proof}

To prove the results on the existence of good
representatives and the matching of orbits, we recall
the Cayley transform. 

We consider the Lie algebras
\[
    \fs_{n, L/K} \coloneqq \{x \in \gl_n(L) :
    x + \sigma_{L/K}(x) = 0\}
\]
and
\[
    \fs_{n, L/E} \coloneqq \{x \in \gl_n(L) : x + \sigma_{L/E}(x) = 0\}
\]
of $\S_{n, L/K}$ and $\S_{n, L/E}$, respectively. Recall $\langle -, -\rangle$ is the skew-Hermitian
form on $V_K$. For $g \in \End(V_K)$, let $g^\ast \in \End(V_K)$ be
such that $\langle gv, w\rangle =
\langle v, g^\ast w\rangle$. Let
\[
    \fh(V) \coloneqq \{g \in \End(V) : g^\ast = g\},
\]
and
\[
    \fu(V_K) \coloneqq \{x \in \End(V_K) : x + x^\ast = 0\}
\]
be the Lie algebra of $\U(V_K)$.

\begin{definition}
For any $\alpha \in L^\times$, let 
\[
D_\alpha = \{x \in \gl_n(L) : \det(\alpha - x) = 0 \}. 
\]
The Cayley transform is the map
\begin{align*}
    c_\alpha \colon \gl_n(L) - D_\alpha &\simrightarrow \gl_n(L) - D_1 \\
    \delta &\mapsto - (\alpha + \delta )(\alpha - \delta)^{-1},
\end{align*}
which has inverse $c_\alpha^{-1}(x) = -\alpha(1 + x)(1 - x)^{-1}$.
\end{definition}

The Cayley transform is $G'$-equivariant, and
if $\alpha \sigma_{L/E}(\alpha) = 1$, it induces an isomorphism
\[
    c_\alpha \colon \S_{n, L/E} - D_\alpha \simrightarrow \fs_{n, L/E} - D_1.
\]
If $\alpha\sigma_{L/K}(\alpha) = 1$, it induces
isomorphisms
\[
    c_\alpha \colon \S_{n, L/K} - D_\alpha \simrightarrow
    \fs_{n, L/K} - D_1,
\]
\[
    c_\alpha \colon \U(V_K) - D_\alpha \simrightarrow
    \fu(V_K) - D_1.
\]

\begin{definition}
Let $\gamma \in S_{n,L/K}$, then the ``norm'' of $\gamma$ is 
$\delta = \gamma\sigma_{L/E}(\gamma)^{-1}$.
\end{definition}

\begin{proposition}
    \label{prop:normalrep}
Every $H_1$-orbit of
$\S_{n,L/K}$ contains a normal element (\ref{def:normalgeneral})
\end{proposition}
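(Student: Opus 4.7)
My plan is to reduce the statement to a Galois descent problem for the ``norm'' element $\delta = \gamma\sigma_{L/E}(\gamma)^{-1} \in \S_{n,L/E}$, then solve it with Hilbert 90. By Lemma \ref{lem:normalequivdefs}, $\gamma$ is normal iff $\delta \in \GL_n(M) \cap \S_{n,L/E} = \S_{n,M/F}$; and a short computation using $\sigma_{L/E}(h) = h$ for $h \in H_1(F) = \GL_n(E)$ shows that the induced $H_1$-action on $\delta$ is just conjugation $\delta \mapsto h^{-1}\delta h$. Applying $\sigma_{L/K}$ and $\sigma_{L/M}$ to the defining equation of $\delta$, and using $\sigma_{L/K}(\gamma) = \gamma^{-1}$ together with $\sigma_{L/M}\sigma_{L/E} = \sigma_{L/K}$, yields the key identity $\sigma_{L/M}(\delta) = \gamma^{-1}\delta\gamma$. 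Thus $\gamma$ witnesses that $\delta$ and $\sigma_{L/M}(\delta)$ are $\GL_n(L)$-conjugate, and in particular the characteristic polynomial of $\delta$ lies in $M[T]$.

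A further direct calculation shows that $\gamma' = h^{-1}\gamma\sigma_{L/K}(h)$ is normal iff $v := \sigma_{L/K}(h)h^{-1}$ satisfies $v\delta v^{-1} = \sigma_{L/M}(\delta)$. Such a $v$ automatically lies in $\{v \in \GL_n(E) : v\sigma_{E/F}(v) = 1\}$, and by Hilbert 90 for the cyclic extension $E/F$ every such $v$ arises in this form. Since $\gamma$ already conjugates $\delta$ to $\sigma_{L/M}(\delta)$ over $L$, the set of all conjugating elements is the coset $\gamma \cdot C_L(\delta)$, where $C_L(\delta)$ denotes the centralizer of $\delta$ in $\GL_n(L)$. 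Writing $v = \gamma c$, the condition $v \in \GL_n(E)$ becomes the cocycle equation $\sigma_{L/E}(c) = \delta c$ in $C_L(\delta)$; the cocycle identity $\delta \cdot \sigma_{L/E}(\delta) = 1$ holds since $\delta \in \S_{n,L/E}$.

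Finally, I would solve this cocycle equation by Hilbert 90. For regular semisimple $\delta$, the centralizer $C_L(\delta) = L[\delta]^\times$ is the unit group of an étale $L$-algebra that is quadratic over $E[\delta+\delta^{-1}]$ with Galois involution induced by $\sigma_{L/E}$; the standard Hilbert 90 for this quadratic extension, or equivalently Shapiro's lemma applied to the factorization $L[\delta] \cong \prod_i L \otimes_M M_i$ over the irreducible factors of the $M$-rational characteristic polynomial, shows the relevant $H^1(\Gal(L/E), C_L(\delta))$ vanishes, producing $c$ and hence $v$. The non-regular case then follows by a density argument, since the regular semisimple locus is Zariski dense and normality is a closed condition. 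The main obstacle will be ensuring the produced $v$ simultaneously satisfies the second constraint $v\sigma_{E/F}(v) = 1$: the condition $v \in \GL_n(E)$ uses Hilbert 90 for $\Gal(L/E)$ while the norm-one condition involves $\Gal(E/F)$, so compatibility requires a careful analysis of the full $\Gal(L/F) \cong (\mathbb Z/2)^2$-action on $C_L(\delta)$, with the symmetry $\sigma_{L/E}(\delta) = \delta^{-1}$ playing the role of a compatibility identity between the two descents.
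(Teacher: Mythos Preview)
Your reduction is sound and parallels the paper: by \eqref{eq:norm} the norm $\delta = \gamma\sigma_{L/E}(\gamma)^{-1}$ transforms under $H_1$ by conjugation, and the key identity $\sigma_{L/M}(\delta) = \gamma^{-1}\delta\gamma$ (which the paper also derives) reduces the problem to finding $v \in \GL_n(E)$ with $v\sigma_{E/F}(v) = 1$ and $v\delta v^{-1} = \sigma_{L/M}(\delta)$. However, two genuine gaps remain.

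First, the density argument for non-regular $\delta$ does not work. Normality of a fixed $\gamma$ is indeed a closed condition, but the relevant condition on $\gamma$ is ``the $H_1(F)$-orbit of $\gamma$ meets the normal locus'', i.e.\ $\gamma$ lies in the image of the action map restricted to normal elements. Images of morphisms are only constructible, not closed, and an $F$-rational limit of orbits each containing a normal $F$-point need not itself contain one. So even granting the regular semisimple case, the general statement (which the proposition asserts for \emph{all} orbits) would not follow.

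Second, the regular semisimple case is not completed: you correctly isolate the obstacle that the two constraints $\sigma_{L/E}(v) = v$ and $v\sigma_{E/F}(v) = 1$ must be solved simultaneously inside the coset $\gamma^{\pm 1} C_L(\delta)$, but stop there. A single application of Hilbert 90 handles one constraint; arranging both requires controlling an $H^1$ of the full biquadratic $\Gal(L/F)$ acting on the centralizer torus, which you have not carried out.

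The paper sidesteps both issues by linearizing with a Cayley transform. Pick $\alpha \in M^\times$ with $\alpha\sigma_{M/F}(\alpha) = 1$ not an eigenvalue of $\delta$, and set $A = j_M c_\alpha(\delta)$; then $A \in \Mat_n(E)$ since $c_\alpha(\delta) \in \fs_{n,L/E}$, and the identity $\sigma_{L/M}(\delta) = \gamma^{-1}\delta\gamma$ becomes $\sigma_{E/F}(A) = \gamma^{-1}A\gamma$. Hence $A$ and $\sigma_{E/F}(A)$ are conjugate, so the elementary divisors of $A$ lie in $F[T]$, and any $g \in \GL_n(E)$ putting $A$ into rational canonical form gives $g^{-1}Ag \in \gl_n(F)$, whence $g^{-1}\delta g = c_\alpha^{-1}(j_M^{-1}g^{-1}Ag) \in \GL_n(M)$. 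This works uniformly for every $\delta$, regular or not, with no cohomology: after linearization the descent becomes the elementary fact that a matrix over $E$ with $F$-rational invariant factors is $\GL_n(E)$-conjugate to a matrix over $F$.
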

\begin{proof}
We want to show that there exists
$g \in H_1$ such that 
$g^{-1}\delta g \in \GL_n(M)$, since
for such $g$, the element $\gamma.g$ would be normal,
since
\[
    (\gamma.g)\sigma_{L/E}(\gamma.g)^{-1}
    = g^{-1}\delta g
\]
by \eqref{eq:norm}.

It is clear that $\delta$ is an element of $\S_{n, L/E}$.
Let $\alpha \in M$ be such that
$\alpha \sigma_{M/F}(\alpha) = 1$ and $\alpha$ is not
an eigenvalue of $\delta$. Let $A = j_M c_\alpha(\delta)$.
We have $c_\alpha(\delta) \in \fs_{n, L/E}$, so $A \in \Mat_n(E)$.

Note that 
\[
    \sigma_{L/M}(\delta) = 
    \sigma_{L/E}\circ \sigma_{L/K}(\gamma)\sigma_{L/K}(\gamma)^{-1}
    = \sigma_{L/E}(\gamma)^{-1} \gamma
    = \gamma^{-1} \delta \gamma.
\]
Since $\sigma_{L/M}(\alpha) = \alpha$, we also have
\begin{align*}
    c_\alpha(\sigma_{L/M}(\delta))
    &= - (\alpha + \sigma_{L/M}(\delta))(\alpha - \sigma_{L/M}(\delta))^{-1}\\
    &= \sigma_{L/M}(-(\alpha + \delta)(\alpha - \delta)^{-1}) = \sigma_{L/M}(c_\alpha(\delta)).
\end{align*}
Combining the previous two displayed equations, we obtain
\[
    \sigma_{L/M}(c_\alpha(\delta)) = \gamma c_\alpha(\delta) \gamma^{-1}.
\]
Since $A = j_M c_\alpha(\delta)$ and $j_M \in M$, 
this implies
\[
    \sigma_{E/F}(A) = \sigma_{L/M}(A) = \gamma^{-1} A \gamma.
\]

This implies that the elementary divisors of
$A$ have coefficients in $F$, so there exists $g \in H_1$ such
that $g^{-1} A g \in \gl_n(F)$. Now we can transform it back using Cayley maps.

This means $g^{-1} c_\alpha(\delta) g \in \gl_n(M)$, so 
\[
    g^{-1}\delta g = c_\alpha^{-1}(g^{-1} c_\alpha(\delta) g) \in \GL_n(M). \qedhere
\]
\end{proof}

\begin{lem}[cf. {\cite[Lemma~1.1(ii)]{Arthur-Clozel}}]
\label{lem:injnorm}
If $\gamma$ and $\gamma'$ are elements of $\S_{n, L/K}$ such that
$\gamma\sigma_{L/E}(\gamma)^{-1}$ and $\gamma'\sigma_{L/E}(\gamma')^{-1}$ are conjugate
by an element of $H_1$, 
then $\gamma$ and $\gamma'$ are in the same $H_1$-orbit
of $\S_{n,L/K}$.
\end{lem}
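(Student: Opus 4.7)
The plan is to reduce the question to Hilbert's Theorem 90, in the spirit of Arthur--Clozel's treatment of cyclic base change \cite[Lemma 1.1(ii)]{Arthur-Clozel}.

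First, observe that the norm map $N\colon \gamma \mapsto \gamma\sigma_{L/E}(\gamma)^{-1}$ (from $\S_{n,L/K}$ to $\S_{n,L/E}$) is $H_1$-equivariant. Indeed, for $h \in H_1(F) = \GL_n(E)$, using $\sigma_{L/E}(h) = h$ and $\sigma_{L/E}(\sigma_{E/F}(h)) = \sigma_{E/F}(h)$, a short calculation yields
\[
    N(h^{-1}\gamma\sigma_{E/F}(h)) = h^{-1}N(\gamma)h,
\]
where $H_1$ acts on the target by ordinary conjugation. Hence if $\delta' = g^{-1}\delta g$ for some $g \in H_1$, replacing $\gamma'$ by $\gamma'.g^{-1} = g\gamma'\sigma_{E/F}(g)^{-1}$ brings us to the case $N(\gamma) = N(\gamma') = \delta$.

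Next, set $u = \gamma^{-1}\gamma' \in \GL_n(L)$. The equality of norms forces $\sigma_{L/E}(u) = u$, so $u \in \GL_n(E) = H_1(F)$. Unwinding $\gamma' \in \S_{n,L/K}$ via $\sigma_{L/K}(\gamma) = \gamma^{-1}$ and $\sigma_{L/K}|_E = \sigma_{E/F}$ yields the cocycle identity
\[
    \sigma_{E/F}(u) = \gamma u^{-1}\gamma^{-1}.
\]
Introduce the involution $\tau$ of $\GL_n(L)$ defined by $\tau(x) = \gamma\sigma_{L/K}(x)\gamma^{-1}$; the identity $\sigma_{L/K}(\gamma) = \gamma^{-1}$ ensures $\tau^2 = \mathrm{id}$. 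For $h \in \GL_n(E)$, the desired relation $h^{-1}\gamma\sigma_{E/F}(h) = \gamma u$ is equivalent to $\tau(h) = hu'$, where $u' = \gamma u\gamma^{-1}$; and the cocycle identity above rephrases as $\tau(u') = u'^{-1}$, so that $u'$ is a $1$-cocycle for $\langle\tau\rangle$ acting on $\GL_n(L)$.

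Since $\tau$ is an inner twist of $\sigma_{L/K}$, nonabelian Hilbert 90 gives $H^1(\langle\tau\rangle, \GL_n(L)) = 1$, producing some $h_0 \in \GL_n(L)$ with $\tau(h_0) = h_0 u'$. The main obstacle is to arrange that $h_0$ actually lies in $\GL_n(E)$ rather than merely in $\GL_n(L)$. This descent step amounts to analyzing the compatibility of $\tau$ with the second Galois involution $\sigma_{L/E}$: exploiting $\sigma_{L/E}(u) = u$ and the relation $\sigma_{L/E}(\gamma) = \delta^{-1}\gamma$, one compares $h_0$ with $\sigma_{L/E}(h_0)$, and by a second, compatible application of Hilbert 90 for the quadratic extension $L/E$ one can modify $h_0$ within its $\tau$-conjugacy class of solutions to force $\sigma_{L/E}(h_0) = h_0$, yielding the required $h \in H_1(F)$.
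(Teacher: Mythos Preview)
Your reduction to the case $N(\gamma)=N(\gamma')=\delta$ and the verification that $u=\gamma^{-1}\gamma'\in\GL_n(E)$ satisfies the twisted cocycle identity $\sigma_{E/F}(u)=\gamma u^{-1}\gamma^{-1}$ are correct, and the first Hilbert 90 (for the inner twist $\tau=\Ad(\gamma)\circ\sigma_{L/K}$ acting on $\GL_n(L)$) does produce $h_0\in\GL_n(L)$ with $h_0^{-1}\gamma\sigma_{L/K}(h_0)=\gamma'$. The genuine gap is the descent from $\GL_n(L)$ to $\GL_n(E)$. You claim one can adjust $h_0$ by an element $t\in\GL_n(L)^\tau$ so that $\sigma_{L/E}(th_0)=th_0$, invoking a ``second, compatible'' Hilbert 90. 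But $\sigma_{L/E}$ and $\tau$ do \emph{not} commute on $\GL_n(L)$: a direct computation gives $\sigma_{L/E}\circ\tau=\Ad(\delta^{-1})\circ\tau\circ\sigma_{L/E}$, so $\sigma_{L/E}$ does not preserve $\GL_n(L)^\tau$ unless $\delta$ is central. Consequently the element $w=\sigma_{L/E}(h_0)h_0^{-1}$ need not lie in $\GL_n(L)^\tau$, there is no $\Gal(L/E)$-action on $\GL_n(L)^\tau$ to which Hilbert 90 could apply, and the descent step as written does not go through.

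The paper's proof avoids precisely this difficulty by first invoking Proposition \ref{prop:normalrep} to replace $\gamma$ and $\gamma'$ by \emph{normal} representatives in their $H_1$-orbits. Normality means $\gamma$ commutes with $\delta$, and then the element $g=\gamma'\gamma^{-1}$ is seen to lie in the centralizer $\mathfrak t_\gamma(E)$ of $\delta$ in $\gl_n(E)$. The crucial point is that, under normality, one can prove $\mathfrak t_\gamma(E)\simeq\mathfrak t_\gamma(F)\otimes_F E$ as $F$-algebras, with the induced $\Gal(E/F)$-action on $\mathfrak t_\gamma(E)$ given by $g\mapsto\gamma\sigma_{E/F}(g)\gamma^{-1}$. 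A single application of Hilbert 90 for the finite-dimensional $F$-algebra $\mathfrak t_\gamma(F)$ (Serre, \emph{Local Fields}, Ch.~X, \S1, Exer.~2) then trivializes the cocycle $g$ directly inside $\GL_n(E)$, with no further descent needed. In other words, the normality reduction is not cosmetic: it is exactly what makes the twisted $\sigma_{E/F}$-action compatible with an honest $F$-structure, collapsing your two-step argument into one that actually closes.
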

\begin{proof}

By Proposition \ref{prop:normalrep},
we can assume that $\gamma$ and $\gamma'$ are normal
(see Definition \ref{def:glnormal}).
By \eqref{eq:norm}, 
we can assume that $\gamma\sigma_{L/E}(\gamma)^{-1}
= \gamma'\sigma_{L/E}(\gamma')^{-1} = \delta$.

Let
\[
    \mathfrak t_\gamma(F) = \{g \in \gl_n(E) :
    \gamma \sigma_{E/F}(g) = g\gamma\}.
\]
This is an $F$-algebra. Let 
\[
    \mathfrak t_\gamma(E) = \{g \in \gl_n(E) :
    \delta g = x \delta \}.
\]

We have an isomorphism
\[
    x \otimes e \mapsto xe\colon  \mathfrak t_\gamma(F)
\otimes_F E \simeq \mathfrak t_\gamma(E).
\]
Indeed, this map is injective because every element
of $\mathfrak t_\gamma(F) \otimes_F E$ is of the
form $g_1 \otimes 1 + g_2 \otimes j_E$, and if
$g_1 + j_Eg_2 = 0$ for 
$g_1, g_2 \in \mathfrak t_\gamma(F)$, then
\[
    \gamma\sigma_{E/F}(g_1)
    = g_1\gamma 
    = -j_Eg_2 \gamma
    = -j_E\gamma\sigma_{E/F}(g_2)
    = - \gamma\sigma_{E/F}(g_1),
\]
so $g_1 = g_2 = 0$.
It is surjective because 
if $g \in \mathfrak t_\gamma(E)$, then
$\gamma \sigma_{E/F}(g) \gamma^{-1} \in \mathrm{Mat}_n(E)$
since
\[
    \sigma_{L/K} \circ \sigma_{L/E}(\gamma \sigma_{E/F}(g) \gamma^{-1})
    = \sigma_{L/E}(\gamma)^{-1} g
    \sigma_{L/E}(\gamma) = \gamma^{-1} g \gamma
    = \sigma_{L/K}(\gamma \sigma_{E/F}(g) \gamma^{-1}),
\]
where we have used the fact that 
$\gamma\sigma_{L/K}(\gamma) = 1$,
and the middle equality holds since $\delta g = g\delta$.
Then
\[
   \sigma_{E/F}(\gamma \sigma_{E/F}(g) \gamma^{-1})
   = \sigma_{L/K}(\gamma \sigma_{E/F}(g) \gamma^{-1})
   = \gamma^{-1} g \gamma,
\]
so
\[
    \gamma \sigma_{E/F}(g + \gamma 
    \sigma_{E/F}(g) \gamma^{-1})
    = \gamma(\sigma_{E/F}(g) + \gamma^{-1} g \gamma)
    = \gamma \sigma_{E/F}(g) + g \gamma
    = (g + \gamma \sigma_{E/F} \gamma^{-1})\gamma,
\]
which means
\[
    g + \gamma \sigma_{E/F}(g)\gamma^{-1} \in \mathfrak t_\gamma(F). 
\]
Similarly, we find
\[
    j_E(g - \gamma \sigma_{E/F}(g) \gamma^{-1}) \in
    \mathfrak t_\gamma(F),
\]
so the map $g \otimes e \mapsto ge$ is surjective since
\[
    g = \frac{1}{2}(g + \gamma \sigma_{E/F}(g) \gamma^{-1})
    + \frac{1}{2j_E} \cdot j_E(g - \gamma \sigma_{E/F}(g)\gamma^{-1}).
\]

Under the isomorphism $\mathfrak t_\gamma(F)
\otimes_F E \simeq \mathfrak t_\gamma(E)$,
the corresponding action of $\Gal(E/F)$ on
$\mathfrak t_\gamma(E)$ is given by
\[
    \sigma_{E/F}.g = \gamma \sigma_{E/F}(g) \gamma^{-1}.
\]
Then by Hilbert's Theorem 90 
\cite[Chapter~X,~Section~1,~Exercise~2]{Serre-local}, 
we have 
\begin{equation}
\label{eq:Hone}
    H^1(\Gal(E/F), \mathfrak t_\gamma(E)^\times) = 0.
\end{equation}
This is equivalent to the statement that if
$g \in H_1$ satisfies
\[
    g^{-1}\delta g = \delta, \quad
    g(\gamma \sigma_{E/F}(g) \gamma^{-1}) = 1,
\]
then 
\[
    g = h(\gamma \sigma_{E/F}(h) \gamma^{-1})^{-1}
\]
for some $h \in H_1$ satisfying $h^{-1}\delta h = \delta$.

Now let $g = \gamma'\gamma^{-1}$. Note that
$g \in H_1$. This is because since $\gamma$ and $\gamma'$ are normal, we have
\[
    \sigma_{L/E}(\gamma)^{-1}\gamma
    = \gamma\sigma_{L/E}(\gamma)^{-1} 
    = \gamma'\sigma_{L/E}(\gamma')^{-1}
    = \sigma_{L/E}(\gamma')^{-1}\gamma',
\]
so $\sigma_{L/E}(\gamma'\gamma^{-1})
= \gamma'\gamma^{-1}$. 
We also have 
\[
    g^{-1}\delta g 
    = \gamma\gamma'^{-1}
    (\gamma'\sigma_{L/E}(\gamma')^{-1}) \gamma'\gamma^{-1}
    = \gamma (\sigma_{L/E}(\gamma')^{-1}\gamma')
    \gamma^{-1}
    = \delta.
\]
In addition, since $\gamma\sigma_{L/K}(\gamma) = \gamma'\sigma_{L/K}(\gamma') = 1$,
we have $\sigma_{E/F}(g) = \sigma_{L/K}(g)
= \gamma'^{-1}\gamma$, so
\[
    g(\gamma \sigma_{E/F}(g) \gamma^{-1})
    = \gamma'\gamma^{-1} (\gamma (\gamma'^{-1}\gamma)\gamma^{-1}) = 1.
\]
Then by \eqref{eq:Hone}, there exists 
$h \in H_1$ such that
$\gamma'\gamma^{-1} = h(\gamma \sigma_{E/F}(h) \gamma^{-1})^{-1}$, which is equivalent to
$\gamma' = h \gamma \sigma_{E/F}(h)^{-1}$.
\qedhere
\end{proof}

\begin{definition}[cf. Definition \ref{normal reps: unitary}]  \label{normal reps: GL side}
Let $S$ be a set of normal representatives
of the regular semisimple $H_1$-orbits
of $\S_{n,L/K}$. Let 
\[
    X = S \times F_n \times F^{-,n}.
\]
For $\gamma \in S$, consider the action of $T_\gamma \subseteq H_{10}$ on
$[x, y] \in F_n \times F^{-,n}$ given by
$[x, y].t = [xt, t^{-1} y]$.
We say $[x, y]$ is regular semisimple if it is under this
action of $T_\gamma$.

Let $(S \times F_n \times F^{-,n})_\rss$ be the set of elements of $X$
such that $[x, y]$ is regular semisimple. Form the equivalence relation
$[\gamma, x, y] \sim [\gamma, xt, t^{-1}y]$ on $(S \times F_n \times F^{-,n})_\rss$ 
for $t \in T_\gamma$.
Let 
\[
    [X_\rss] = (S \times F_n \times F^{-,n})_\rss/\sim
\]
be the set of regular semisimple equivalence classes.
\end{definition}

\subsection{Orbits and orbit matching}
\label{subsec:orbits_and_matching}

Consider the right action of $\GL_n(L)$ on 
\[
    \Mat_n(L) \times L_n \times L^n
\] 
given by
\[
    [\xi, x, y].h = [h^{-1}\xi h, xh, h^{-1} y].
\]
Recall the invariants
\begin{equation}
\label{eq:invar}
    a_i([\xi, x, y]) = \Tr \wedge^i \xi, \quad
    b_i[\xi, x, y]) = x(\xi^{i-1} y)
\end{equation}
for $1 \le i \le n$.
We say that an element of $\Mat_n(L) \times L_n \times L^n$ is 
\emph{strongly regular semisimple} if and only if
\begin{enumerate}
    \item $\xi$ is regular semisimple as an element
    of $\Mat_n(L)$;
    \item the vectors $\{x, x\xi, \dots, x\xi^{n-1}\}$ 
    span the $L$-vector space $L_n$;
    \item the vectors $\{y, \xi y, \dots, \xi^{n-1} y\}$
    span the $L$-vector space $L^n$.
\end{enumerate}

\begin{lem}[{\cite[Lemma~5.6]{Liu2014relative},
\cite[Proposition~6.2~and~Theorem~6.1]{rallis2007multiplicity}}] 
Strongly regular semisimple elements of $\Mat_n(L) \times
L_n \times L^n$
have trivial stabilizers, and two strongly 
regular semisimple
elements are in the same $\GL_n(L)$-orbit if and only
if they have the same invariants.
\end{lem}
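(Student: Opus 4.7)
The plan is to exploit the cyclic vector structure that strong regular semisimplicity provides, reducing both claims to elementary statements about the centralizer algebra $L[\xi]$.

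For the triviality of stabilizers, suppose $h \in \GL_n(L)$ fixes a strongly regular semisimple $[\xi,x,y]$. Then $h$ commutes with $\xi$, $xh = x$, and $hy = y$. Combining the first two relations gives $x\xi^i h = xh\xi^i = x\xi^i$ for every $i \ge 0$. Since $\{x\xi^i\}_{i=0}^{n-1}$ spans the row space $L_n$, the element $h$ fixes every row vector, forcing $h = 1$.

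For the orbit classification, given two strongly regular semisimple tuples sharing the same invariants $(a_i, b_i)$, I would proceed in three steps. First, match the $\xi$-components: since $\xi$ is regular semisimple its minimal polynomial equals its characteristic polynomial $f(T) \in L[T]$, so the $a_i$'s pin down $f$, and both copies of $L^n$ acquire the structure of a rank-one module over $L[T]/f(T)$; picking an isomorphism of such modules yields a $\GL_n(L)$-conjugation taking $\xi'$ to $\xi$, so we may assume $\xi = \xi'$. Second, the centralizer of $\xi$ in $\GL_n(L)$ is $A^\times$ where $A = L[\xi]$ is an étale $L$-algebra of dimension $n$; strong regularity says that $p \mapsto xp$ and $p \mapsto py$ define $A$-module isomorphisms $A \simeq L_n$ and $A \simeq L^n$. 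Hence there exists a unique $h \in A$ with $x' = xh$, and since $x'$ is itself cyclic, $h \in A^\times$. Third, match the $y$-components using the $b_i$: the equality $b_i = x\xi^{i-1} y = x'\xi^{i-1} y' = xh\xi^{i-1}y'$ yields $x\xi^{i-1}(y-hy')=0$ for $1 \le i \le n$, and since $\{x\xi^{i-1}\}$ spans $L_n$ and the standard pairing $L_n \times L^n \to L$ is nondegenerate, $y = hy'$. Thus $[\xi,x,y].h = [\xi',x',y']$ as desired.

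The main obstacle I anticipate is the first step of the orbit matching, namely showing that regular semisimple elements of $\Mat_n(L)$ with identical characteristic polynomial are $\GL_n(L)$-conjugate even when $f(T)$ fails to split over $L$. The étale algebra viewpoint circumvents case-by-case arguments over extensions and makes the conjugation canonical up to $A^\times$, which then drives the remaining two steps; after that, the proof reduces to bookkeeping with cyclic modules and the nondegeneracy of the natural pairing.
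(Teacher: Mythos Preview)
Your argument is correct and is the standard proof of this fact. The paper does not supply its own proof of this lemma; it simply cites it from \cite[Lemma~5.6]{Liu2014relative} and \cite[Proposition~6.2 and Theorem~6.1]{rallis2007multiplicity}, so there is nothing to compare against beyond noting that your cyclic-vector / \'etale-centralizer approach is exactly how such statements are proved in the literature.
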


We also consider the space
\[
    \S_{n, L/E} \times E_n \times E^n
\]
which has right action of $\GL_n(E)$ induced 
by the $\GL_n(L)$-action on $\mathrm{Mat}_n(L) \times L_n \times L^n$.
We say that an element of this space is strongly
regular semisimple
if it is strongly regular semisimple as an element of 
$\Mat_n(L) \times L_n \times L^n$.

\begin{lem}
We have embeddings
\[
   [\gamma, x, y] \mapsto [\gamma \sigma_{L/E}(\gamma)^{-1}, x, y] \colon X \to 
   \S_{n, L/E} \times E_n \times E^n
\]
and
\[
    [\zeta, z] \mapsto [\zeta \sigma_{L/E}(\zeta)^{-1}, z, z^\ast] \colon
    Y \to \S_{n, L/E} \times E_n \times E^n
\]
such that an element of $X$ or $Y$ is regular
semisimple if and only if its image is strongly regular
semisimple.
\end{lem}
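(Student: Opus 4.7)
The plan is to verify three properties for each of the two embeddings: that the image lies in $\S_{n,L/E}\times E_n \times E^n$, that the map is injective, and that regular semisimplicity in the source corresponds to strong regular semisimplicity in the target. The machinery built up earlier in this section---the Cayley transform, Proposition \ref{prop:normalrep}, and Lemma \ref{lem:injnorm}---does most of the heavy lifting, so the proof will largely consist of importing these results and doing bookkeeping.

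First, well-definedness of the image in $\S_{n,L/E}\times E_n \times E^n$ is a direct computation: for $\gamma \in \S_{n,L/K}$ one has
\[
    \gamma\sigma_{L/E}(\gamma)^{-1}\cdot\sigma_{L/E}(\gamma\sigma_{L/E}(\gamma)^{-1})
    = \gamma\sigma_{L/E}(\gamma)^{-1}\sigma_{L/E}(\gamma)\gamma^{-1} = 1,
\]
so $\gamma\sigma_{L/E}(\gamma)^{-1} \in \S_{n,L/E}$, and the remaining coordinates $x, y$ land in $E_n, E^n$ via the inclusions $F_n \hookrightarrow E_n$ and $F^{-,n} \hookrightarrow E^n$; on the unitary side, fixing an $E$-basis of $V$ identifies $V^\vee$ with $E^n$, and the skew-Hermitian pairing interprets $z^\ast \in E_n$. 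For injectivity of the first embedding, if $(\gamma,x,y)$ and $(\gamma',x',y')$ have the same image, then $\gamma\sigma_{L/E}(\gamma)^{-1} = \gamma'\sigma_{L/E}(\gamma')^{-1}$; Lemma \ref{lem:injnorm} then places $\gamma$ and $\gamma'$ in the same $H_1$-orbit of $\S_{n,L/K}$, and hence $\gamma = \gamma'$ because $S$ is a set of orbit representatives, while the remaining coordinates match trivially. Injectivity of the second embedding requires a unitary-side counterpart of Lemma \ref{lem:injnorm}: two normal $\zeta, \zeta' \in G(F)$ with equal norms $\zeta\sigma_{L/E}(\zeta)^{-1} = \zeta'\sigma_{L/E}(\zeta')^{-1}$ lie in the same $H(F)\times H(F)$-orbit. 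I would prove this by repeating the Cayley-transform plus Hilbert 90 argument of Lemma \ref{lem:injnorm}, now for the symmetric pair $H \leq G$ under the involution $\sigma_{L/E}$.

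Finally, to identify the regular semisimple condition on $X$ (resp.\ $Y$) with strong regular semisimplicity of the image, the key structural input is that for regular semisimple $\xi \in \Mat_n(L)$ the centralizer in $\GL_n(L)$ is the commutative \'etale $L$-algebra $L[\xi]$, whose units form an $L$-torus. The isomorphism $\mathfrak{t}_\gamma(F)\otimes_F E \simeq \mathfrak{t}_\gamma(E)$ from the proof of Lemma \ref{lem:injnorm} exhibits $T_\gamma$ as the $F$-points of an $F$-form of this torus carved out by a twisted Galois action, and analogously for $T_\zeta$; the actions of $T_\gamma$ on $F_n \times F^{-,n}$ and of $T_\zeta$ on $V^\vee$ are the restrictions of the diagonal $L[\xi]^\times$-action on $L_n \times L^n$. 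Under this identification, unwinding the definitions relates regularity of $[x,y]$ under $T_\gamma$ to the two cyclic-vector (spanning) conditions in the definition of strong regular semisimplicity, via the standard theory of cyclic vectors over $L[\xi]$. The main obstacle I anticipate is the unitary-side analog of Lemma \ref{lem:injnorm}---setting up the symmetric-pair $(G,H)$-analog of $\mathfrak{t}_\gamma$, verifying that the relevant Galois cohomology vanishes, and then carefully tracking how the Hermitian pairing $z \mapsto z^\ast$ translates the spanning conditions back and forth between the unitary and general linear frameworks.
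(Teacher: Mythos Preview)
The paper gives no proof of this lemma; it is stated and immediately followed by a definition, so there is nothing to compare your argument against directly. Your outline is essentially sound, but the place you flag as the ``main obstacle'' is in fact the easiest part, and it is worth seeing why.

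For the unitary-side injectivity you do \emph{not} need a Hilbert~90 argument. If $\zeta,\zeta'\in R^V$ satisfy $\zeta\sigma_{L/E}(\zeta)^{-1}=\zeta'\sigma_{L/E}(\zeta')^{-1}$, then $h\coloneqq \zeta^{-1}\zeta'$ satisfies $\sigma_{L/E}(h)=h$, so $h\in \U(V_K)^{\sigma_{L/E}}=\U(V)(F)=H(F)$. Thus $\zeta'=1^{-1}\cdot\zeta\cdot h$ lies in the $H(F)\times H(F)$-orbit of $\zeta$, and since $R^V$ is a set of orbit representatives, $\zeta=\zeta'$. The reason this is so much simpler than Lemma~\ref{lem:injnorm} is structural: on the unitary side the involution cutting out $H\le G$ is the \emph{same} $\sigma_{L/E}$ that appears in the norm $\zeta\mapsto\zeta\sigma_{L/E}(\zeta)^{-1}$, whereas on the general linear side the symmetric-pair involution is $\sigma_{L/K}$ but the norm uses $\sigma_{L/E}$, and it is precisely this mismatch of involutions that forces the $\mathfrak t_\gamma$-and-Hilbert~90 maneuver in Lemma~\ref{lem:injnorm}.

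Your sketch of the regular-semisimplicity equivalence is on the right track but thin. The clean way to pin it down is to note that for normal $\gamma\in S$ one has $T_\gamma=C_{\GL_n(F)}(\delta)$ with $\delta=\gamma\sigma_{L/E}(\gamma)^{-1}$ (both inclusions follow because $C_{\GL_n(L)}(\delta)$ is abelian and contains $\gamma$), so $T_\gamma$ is the $F$-points of an $F$-form of the maximal torus $L[\delta]^\times$. Regularity of $[x,y]$ under this torus is then exactly the pair of cyclic-vector conditions over $L[\delta]$, which is the spanning part of strong regular semisimplicity; the first condition (that $\delta$ itself is regular semisimple) is automatic since every $\gamma\in S$ is already regular semisimple. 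The unitary side is analogous with $T_\zeta=C_{\U(V)(F)}(\varsigma)$.
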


\begin{definition}
We define the invariants of an element
$[\gamma, x, y] \in X$ or $[\zeta, z] \in Y$
to be equal to the invariants $a_i$ and $b_i$,
as defined in \eqref{eq:invar}, of
its image under the above embeddings.
\end{definition}

\begin{lem}
\label{lem:sv}
Two strongly regular semisimple elements of $[\delta, x, y]$ and
$[\delta', x', y']$ of $\S_{n, L/E} \times E_n
\times E^n$ are in the same $\GL_n(L)$-orbit
if and only if they are in the same $\GL_n(E)$-orbit.
\end{lem}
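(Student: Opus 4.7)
The plan is to use Galois descent driven by the triviality of stabilizers of strongly regular semisimple elements. One direction is immediate: since $\GL_n(E) \subseteq \GL_n(L)$, any element $h \in \GL_n(E)$ sending $[\delta, x, y]$ to $[\delta', x', y']$ also exhibits them as being in the same $\GL_n(L)$-orbit.

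For the harder direction, suppose $h \in \GL_n(L)$ satisfies
\[
    h^{-1}\delta h = \delta', \qquad xh = x', \qquad h^{-1}y = y'.
\]
I would apply $\sigma_{L/E}$ to all three equations and use the defining identities $\sigma_{L/E}(\delta) = \delta^{-1}$ and $\sigma_{L/E}(\delta') = \delta'^{-1}$ (since $\delta, \delta' \in \S_{n, L/E}$), together with $\sigma_{L/E}(x) = x$ and $\sigma_{L/E}(y) = y$ (since $x \in E_n$ and $y \in E^n$). The first equation becomes $\sigma_{L/E}(h)^{-1}\delta^{-1}\sigma_{L/E}(h) = \delta'^{-1}$, which after inversion reads $\sigma_{L/E}(h)^{-1}\delta\sigma_{L/E}(h) = \delta'$. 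The other two equations become $x\sigma_{L/E}(h) = x'$ and $\sigma_{L/E}(h)^{-1}y = y'$. Thus $\sigma_{L/E}(h)$ also sends $[\delta, x, y]$ to $[\delta', x', y']$.

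By the preceding lemma (Liu, Rallis--Schiffmann), strongly regular semisimple elements of $\Mat_n(L) \times L_n \times L^n$ have trivial stabilizer in $\GL_n(L)$, so if two elements of $\GL_n(L)$ send one to the other, they must coincide. Therefore $h = \sigma_{L/E}(h)$, which is precisely the condition $h \in \GL_n(E)$, completing the descent.

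I do not expect any real obstacle here: the argument is entirely formal once one notes that $x, y$ lie in the $\sigma_{L/E}$-fixed subspaces while $\delta, \delta'$ are anti-fixed, and that strong regular semisimplicity upgrades ``existence'' of a transporter to ``uniqueness.'' The only thing to be slightly careful about is making sure the anti-symmetric identity is handled correctly after applying $\sigma_{L/E}$, which is why I would invert the first transformed equation before comparing with the original.
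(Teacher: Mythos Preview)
Your proof is correct and follows essentially the same approach as the paper's: both apply $\sigma_{L/E}$ to the transporter $h$, use the anti-symmetry $\sigma_{L/E}(\delta)=\delta^{-1}$ together with $x,y,x',y'$ being $\sigma_{L/E}$-fixed to see that $\sigma_{L/E}(h)$ is again a transporter, and then invoke triviality of stabilizers of strongly regular semisimple elements to conclude $h=\sigma_{L/E}(h)\in\GL_n(E)$.
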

\begin{proof}
Suppose there exists $h \in \GL_n(L)$ such that
\[
    [\delta, x, y].h = [h^{-1}\delta h, xh, h^{-1} y]
    = [\delta', x', y'].
\]
Since $\delta \sigma_{L/E}(\delta) 
= \delta'\sigma_{L/E}(\delta') = 1$, we have
\begin{align*}
    [\delta, x, y].\sigma_{L/E}(h) 
    &= [\sigma_{L/E}(h)^{-1} \delta \sigma_{L/E}(h), x\sigma_{L/E}(h),
    \sigma_{L/E}(h)^{-1}y]\\
    &= \sigma_{L/E}([h^{-1}\delta^{-1} h, xh, h^{-1}y])\\
    &= \sigma_{L/E}([\delta'^{-1}, x', y']) \\
    &= [\delta', x', y'].
\end{align*}
By the triviality of stabilizers of strongly
regular semisimple elements, we have 
$\sigma_{L/E}(h) = h$, so $h \in \GL_n(E)$.
\end{proof}

\begin{lem}
\label{lem:embed}
\leavevmode
\begin{enumerate}[(1)]
\item
Two elements $[\gamma, x, y]$ and $[\gamma', x', y']$
of $X_\rss$ are equivalent if and only if they
have the same invariants.

\item 
Two elements $[\zeta, z]$ and $[\zeta', z']$ of
$Y_\rss^V$ are equivalent if and only if
they have the same invariants.
\end{enumerate}
\end{lem}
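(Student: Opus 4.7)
The $(\Leftarrow)$ direction is a direct computation. For (1), any $t \in T_\gamma$ commutes with $\delta = \gamma\sigma_{L/E}(\gamma)^{-1}$: applying $\sigma_{L/E}$ to $t^{-1}\gamma\sigma_{E/F}(t) = \gamma$ (using $\sigma_{L/E}$ fixing $t \in H_1$) gives $t^{-1}\sigma_{L/E}(\gamma)\sigma_{E/F}(t) = \sigma_{L/E}(\gamma)$, and multiplying the two relations shows $t^{-1}\delta t = \delta$. Hence $b_i([\gamma, xt, t^{-1}y]) = (xt)\delta^{i-1}(t^{-1}y) = x\delta^{i-1}y = b_i([\gamma, x, y])$, while $a_i$ depends only on $\gamma$. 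Case (2) is analogous, with $T_\zeta$ commuting with $\zeta\sigma_{L/E}(\zeta)^{-1}$.

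For the $(\Rightarrow)$ direction of (1), suppose $[\gamma, x, y]$ and $[\gamma', x', y']$ have the same invariants. Via the embedding $X \hookrightarrow \S_{n,L/E} \times E_n \times E^n$, their images $[\delta, x, y]$ and $[\delta', x', y']$ are strongly regular semisimple with equal invariants, so they lie in the same $\GL_n(L)$-orbit; by Lemma \ref{lem:sv} they in fact lie in the same $H_1$-orbit. Hence there exists $h \in H_1$ with $h^{-1}\delta h = \delta'$, $xh = x'$, $h^{-1}y = y'$. Lemma \ref{lem:injnorm} then puts $\gamma$ and $\gamma'$ in the same $H_1$-orbit of $\S_{n,L/K}$; since $S$ is a set of orbit representatives, $\gamma = \gamma'$ and therefore $\delta = \delta'$.

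It remains to promote $h$ into $T_\gamma$. Apply $\sigma_{L/K}$ to $[\delta, x, y].h = [\delta, x', y']$: using $\sigma_{L/K}(\delta) = \delta^{-1}$ (from $\delta \in \S_{n,L/K}$), $\sigma_{L/K}(h) = \sigma_{E/F}(h)$ for $h \in H_1$, and $\sigma_{L/K}$ fixing $x$ and $y$ (since $x \in F_n$, $y \in F^{-,n}$), one finds $[\delta, x, y].\sigma_{E/F}(h) = [\delta, x', y']$ as well. Then $\sigma_{E/F}(h)h^{-1}$ stabilizes $[\delta, x, y]$ in $\GL_n(L)$; by triviality of stabilizers of strongly regular semisimple elements, $\sigma_{E/F}(h) = h$, so $h \in H_{10}$. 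Normality of $\gamma$ puts $\gamma \in C_{\GL_n(L)}(\delta)$, which is a maximal torus (since $\delta$ is regular semisimple) and hence commutative; therefore $h \in C(\delta)$ commutes with $\gamma$, and combined with $\sigma_{E/F}(h) = h$ this yields $h^{-1}\gamma\sigma_{E/F}(h) = h^{-1}\gamma h = \gamma$, i.e., $h \in T_\gamma$, proving (1).

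Part (2) follows by the same scheme on the unitary side: embed into $\S_{n,L/E} \times E_n \times E^n$, apply Lemma \ref{lem:sv} to match with some $h \in H_1$, reduce to $\zeta = \zeta'$ via the unitary analog of Lemma \ref{lem:injnorm} (provable by the same Hilbert 90 argument applied to $\U(V)$), and promote $h$ into $T_\zeta \subseteq H$ by the same Galois descent, using that $z^\ast$ is determined by $z$ and the skew-Hermitian form. The main technical obstacle in both parts is the descent step that promotes $h \in H_1$ into the correct stabilizer; its success rests crucially on the normality of the representatives, which ensures $\gamma$ (resp. $\zeta$) lies in the commutative centralizer of $\delta$.
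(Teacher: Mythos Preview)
Your argument for part (1) is correct and follows the paper's proof closely: both use Lemma \ref{lem:sv} to descend to $H_1$, invoke Lemma \ref{lem:injnorm} to force $\gamma = \gamma'$, then apply $\sigma_{L/K}$ and trivial stabilizers to descend $h$ to $\GL_n(F)$, and finally use normality to land in $T_\gamma$.

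Part (2), however, has a genuine gap. After Lemma \ref{lem:sv} you only have $g \in \GL_n(E)$, and the passage to $\U(V)$ is \emph{not} ``the same Galois descent'' as in part (1). On the unitary side the relevant involution is the Hermitian adjoint $\ast$, not a field automorphism. The paper handles this explicitly: from $[\varsigma, z, z^\ast].g = [\varsigma', z', z'^\ast]$ and $\varsigma\varsigma^\ast = \varsigma'\varsigma'^\ast = 1$, one computes that $[\varsigma, z, z^\ast].g^{\ast,-1} = [\varsigma', z', z'^\ast]$ as well, whence $g = g^{\ast,-1}$, i.e.\ $g \in \U(V)$. Your hint ``using that $z^\ast$ is determined by $z$'' points in the right direction but does not articulate this step.

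Your appeal to a ``unitary analog of Lemma \ref{lem:injnorm} provable by the same Hilbert 90 argument'' does not fill the gap either. Any such analog would require the conjugating element to lie in $\U(V)(F)$, which is exactly what has not yet been shown; and the cohomology in question is no longer $H^1$ of $\GL_1$ of an \'etale algebra, so the Hilbert 90 step does not transplant automatically. The paper sidesteps this entirely: once $g \in \U(V)$, it sets $h = \zeta^{-1} g \zeta'$, checks directly that $\sigma_{L/E}(h) = h$ (using $g^{-1}\varsigma g = \varsigma'$) and $h \in \U(V_K)$, giving $\zeta' = g^{-1}\zeta h$ with $g, h \in \U(V)(F)$, hence $\zeta = \zeta'$. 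Normality then forces $g = h \in T_\zeta$ exactly as in part (1).
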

\begin{proof}
\leavevmode
\begin{enumerate}[(1)]
\item
Let $\delta = \gamma\sigma_{L/E}(\gamma)^{-1}$ and
$\delta' = \gamma'\sigma_{L/E}(\gamma')^{-1}$.
Note that $[\gamma, x, y]$ and 
$[\gamma', x', y']$ have the same invariants
if and only if $[\delta, x, y]$ and
$[\delta', x', y']$ are in the same $\GL_n(L)$-orbit, 
and thus by Lemma \ref{lem:sv} in the same 
$\GL_n(E)$-orbit.

The only if direction is straightforward, by the
definition of the equivalence relation of $X$.
For the other direction, suppose 
\[
   [\delta, x, y].h = [h^{-1}\delta h, xh, h^{-1}y] =
   [\delta', x', y']
\]
for $h \in \GL_n(E)$.
By Lemma \ref{lem:injnorm}, we have $\gamma = \gamma'$.

Similarly to the proof of Lemma \ref{lem:sv}, 
since $\delta\sigma_{L/K}(\delta) 
= \delta'\sigma_{L/K}(\delta') = 1$, we have
\begin{align*}
    [\delta, x, y].\sigma_{L/K}(h) 
    &= [\sigma_{L/K}(h)^{-1} \delta \sigma_{L/K}(h), x\sigma_{L/K}(h),
    \sigma_{L/K}(h)^{-1}y]\\
    &= \sigma_{L/K}([h^{-1}\delta^{-1} h, xh, -h^{-1}y])\\
    &= \sigma_{L/K}([\delta'^{-1}, x', -y']) \\
    &= [\delta', x', y'].
\end{align*}
By triviality of stabilizers, we have $\sigma_{L/K}(h) = h$,
so $h \in \GL_n(F)$. 
Since $\gamma$ is normal and $\delta$ is regular
semisimple, this implies $h \in T_\gamma$, so
$[\gamma, x, y]$ and $[\gamma', x', y']$ are equivalent
since $[x, y].h = [x', y']$.

\item
Let $\varsigma = \zeta\sigma_{L/E}(\zeta)^{-1}$
and $\varsigma' = \zeta'\sigma_{L/E}(\zeta')^{-1}$.
As in the previous part, $[\zeta, z]$ and
$[\zeta', z']$ have the same invariants if and only
if $[\varsigma, z, z^\ast]$ and
$[\varsigma', z', z'^\ast]$ are in the same $\GL_n(E)$-orbit.

Suppose that 
\[
    [\varsigma, z, z^\ast].g = [\varsigma', z', z'^\ast]
\]
for $g \in \GL_n(E)$. Since $\varsigma\varsigma^\ast
= \varsigma'\varsigma'^\ast$, we have
\begin{align*}
    [\varsigma, z, z^\ast].g^{\ast, -1} 
    &= [g^\ast \varsigma g^{\ast,-1}, zg^{\ast, -1}, g^\ast z^\ast] \\
    &= [g^{-1}\varsigma'^{-1} g, g^{-1} z^\ast, zg]^\ast\\ 
    &= [\varsigma^{-1}, z'^\ast, z']^\ast\\
    &= [\varsigma, z', z'^\ast],
\end{align*}
so $g \in \U(V)$.
Let $h = \zeta^{-1} g \zeta'$. 
We check that  
\[
    \sigma_{L/E}(h) = \sigma_{L/E}(\zeta)^{-1} g \sigma_{L/E}(\zeta')
    = \zeta^{-1} g \zeta' = h,
\]
where the middle equality holds since 
$g^{-1}\varsigma g = \varsigma'$.
Since $g$, $\zeta$, $\zeta' \in \U(V_K)$, we also have
$h \in \U(V)$.
Thus $\zeta = \zeta'$ since $\zeta' = g^{-1}\zeta h$.

Since $\zeta$ is normal and $\varsigma$ is regular
semisimple, we have $g = h \in T_\zeta$, so $[\zeta, z]$
and $[\zeta', z']$ are equivalent since $z' h = z$. 
\qedhere
\end{enumerate}
\end{proof}

\begin{definition}
\label{def:lieorbits}
Let $\fx = \gl_n \times F_n \times F^{-,n}$,
and let $\fy(V) = \mathfrak h(V) \times V^\vee$.
The groups $\GL_n$ and $\U(V)$ act on $\fx$ and
$\fy(V)$, respectively, via
\[
    [A, x, y].h = [h^{-1} A h, xh, h^{-1}y],\quad
    [A', z].h = [h^{-1}A' h, zh].
\]
Let $\fx_\rss(F)/\GL_n(F)$ and $\fy(V)_\rss(F)/\U(V)(F)$
denote the regular semisimple orbits of $\fx(F)$
and $\fy(V)(F)$ under these actions.
\end{definition}

\begin{lem}[{\cite[Lemma~3.1]{WeiGGP14}}]
\label{lem:liematch}
There is a bijection
\[
    \fx_\rss(F)/\GL_n(F) \simeq \coprod_{V}
    \fy(V)_\rss(F)/\U(V)(F)
\]
such that $[A, x, y]$ and $[A', z]$ match
if and only if $[A, x, y]$ and $[A', z, z^\ast]$
are in the same $\GL(V)$-orbit.
\end{lem}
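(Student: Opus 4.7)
The plan is to establish the bijection by invariants, following the strategy of \cite{WeiGGP14} and \cite{Liu2014relative}. The previous lemma already asserts that strongly regular semisimple orbits in $\Mat_n(L) \times L_n \times L^n$ under $\GL_n(L)$ are classified by the invariants $(a_i, b_i)$ of \eqref{eq:invar}. So the main work is to check that the $\fy(V)$-side, as $V$ ranges over isomorphism classes of skew-Hermitian spaces of dimension $n$, parametrizes exactly these invariants under the embedding $[A', z] \mapsto [A', z, z^\ast]$.

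First, I would verify that the invariants of $[A', z, z^\ast]$ lie in $F$: the $a_i$ are coefficients of the characteristic polynomial of $A'$, which lies in $F$ since $(A')^\ast = A'$; and the $b_i = z (A')^{i-1} z^\ast$ are $\sigma_{E/F}$-fixed by the same self-adjointness, using the property of the Hermitian form. Thus each $[A', z] \in \fy(V)_\rss(F)$ determines a well-defined tuple of invariants in $F$.

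Second, I would prove injectivity of the map from $\coprod_V \fy(V)_\rss(F)/\U(V)(F)$ to invariants. Given $[A', z]$ and $[A'', z']$ with the same invariants (possibly on different skew-Hermitian spaces $V$ and $V'$), the corresponding tuples are $\GL_n(\bar F)$-conjugate, hence $\GL_n(E)$-conjugate by the analog of Lemma \ref{lem:sv}. The conjugating $h \in \GL_n(E)$ then intertwines the two Hermitian forms, so $V \cong V'$ as skew-Hermitian spaces and $h \in \U(V)$; in particular, only one $V$ can occur and the orbit is unique. This step uses triviality of stabilizers at strongly regular semisimple points, applied as in Lemma \ref{lem:embed}(2).

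Third, for surjectivity, given $[A, x, y] \in \fx_\rss(F)$, I would construct a matching $(V, [A', z])$ by Galois descent. Since $\{y, Ay, \dots, A^{n-1}y\}$ is an $F$-basis of $F^n$, I would extend scalars to $E$ to get $V = F^n \otimes_F E$, put $A' = A \otimes 1$, $z = x \otimes 1$, and define a pairing on $V$ by the matrix with entries determined by the $b_i$'s (twisted by the chosen $j \in E$ to obtain a skew-Hermitian rather than Hermitian form). The identities $(A')^\ast = A'$ and $z^\ast = y \otimes 1$ will follow from the formulas $b_{i+j-1} = x A^{i+j-2} y$ being symmetric in $i, j$ in the appropriate sense, and nondegeneracy of the form is forced by strong regular semisimplicity. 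The resulting isomorphism class of $V$ is uniquely determined by the invariants.

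The main obstacle is the descent step in the third stage: one must verify that the skew-Hermitian form on $V$ produced from the $F$-invariants is well-defined and nondegenerate, and that every strongly regular semisimple orbit over $\bar F$ descends to a unique $F$-orbit on some $\fy(V)$. This is a Galois cohomology argument using Hilbert 90 for $\GL_n$ over $E/F$, structurally parallel to the proof of Lemma \ref{lem:injnorm} above; the key input is that the centralizer of the regular semisimple $A$ is a product of field extensions of $F$, so $H^1$ of its unit group vanishes and the relevant torsor is trivial.
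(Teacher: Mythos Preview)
The paper does not give its own proof of this lemma; it is quoted directly from \cite[Lemma~3.1]{WeiGGP14} as a known input. Your outline is a faithful reconstruction of the standard argument there: classify regular semisimple orbits on both sides by the invariants $(a_i,b_i)$, verify these invariants lie in $F$, and match.

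One small remark: your third step is more elaborate than necessary. Rather than framing surjectivity as a Galois descent/Hilbert~90 problem, one writes down the skew-Hermitian form on $E^n$ directly: in the basis $\{y, Ay, \dots, A^{n-1}y\}$ of $F^n$ (extended to $E^n$), take the Gram matrix with $(i,j)$-entry $j\cdot b_{i+j-1}$ (using the fixed imaginary element $j\in E$). Self-adjointness of $A$ and the identity $z^\ast = y$ then follow from the symmetry $b_{i+j-1}=xA^{i+j-2}y$ in $i,j$, and nondegeneracy of the form is exactly the invertibility of the matrix $(b_{i+j-1})$, which is part of strong regular semisimplicity. No descent obstruction appears because the relevant torsor is for $\GL_n$, and the isomorphism class of the resulting $V$ is read off from the discriminant. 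So your plan is correct, just slightly overengineered at the end.
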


\begin{lem}
\label{lem:normmatch}
\leavevmode
\begin{enumerate}[(1)]
\item Suppose $\gamma \in \S_{n,L/K}$ is regular
semisimple and normal,
and let $\delta = \gamma \sigma_{L/E}(\gamma)^{-1}$.
Suppose $h^{-1}\delta h = \varsigma$ for $h \in \GL(V)$
and $\varsigma \in \U(V)(K)$.
Then $\zeta \coloneqq h^{-1}\gamma h$ is a normal element
of $\U(V)(K)$ which satisfies $\varsigma = \zeta\sigma_{L/E}(\zeta)^{-1}$. 

\item Suppose $\zeta \in \U(V)(K)$ is regular semisimple
and normal, and
let $\varsigma = \zeta\sigma_{L/E}(\zeta)^{-1}$.
Suppose $h^{-1} \varsigma h = \delta$ for $h \in \GL(V)$
and $\delta \in \S_{n,L/K}$. 
Then $\gamma \coloneqq h^{-1}\zeta h$ is a normal element
of $\S_{n,L/K}$ which satisfies 
$\delta = \gamma\sigma_{L/E}(\gamma)^{-1}$.
\end{enumerate}
\end{lem}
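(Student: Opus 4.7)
For both parts the plan is to translate each defining group-theoretic condition ($\zeta \in \U(V)(K)$ in part (1), or $\gamma \in \S_{n,L/K}$ in part (2)) into a matrix identity of the form $AP = PA^T$ for a specific matrix $P$ depending on $h$ and the matrix $J \in \GL_n(E)$ of the skew-Hermitian form on $V$, and then use normality together with regular semisimplicity of the norm to transport the identity between $\gamma$ and $\delta$ (resp.\ between $\zeta$ and $\varsigma$) via a polynomial commutativity argument.

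I focus on part (1); part (2) is symmetric. The identity $\varsigma = \zeta\sigma_{L/E}(\zeta)^{-1}$ is immediate since $h \in \GL(V) = \GL_n(E) \subset \GL_n(L)$ is fixed by $\sigma_{L/E}$, so $\zeta\sigma_{L/E}(\zeta)^{-1} = h^{-1}\gamma\sigma_{L/E}(\gamma)^{-1}h = h^{-1}\delta h = \varsigma$. The normality of $\zeta$ then follows because $\zeta$ commutes with $\varsigma = h^{-1}\delta h$ iff $\gamma$ commutes with $\delta$, which is the assumed normality of $\gamma$.

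The main content is the claim $\zeta \in \U(V)(K)$. With adjoints $g^* = J^{-1}\sigma_{L/K}(g)^T J$ and the abbreviation $\bar h := \sigma_{L/K}(h) = \sigma_{E/F}(h) \in \GL_n(E)$, a direct matrix manipulation using $\sigma_{L/K}(\gamma) = \gamma^{-1}$ (from $\gamma \in \S_{n,L/K}$) shows that $\zeta\zeta^* = I$ is equivalent to the matrix identity $\gamma P = P\gamma^T$ for the element $P := hJ^{-1}\bar h^T \in \GL_n(E)$. Normality of $\gamma$ in turn implies $\delta \in \S_{n,L/K}$, since $\delta\sigma_{L/K}(\delta) = \gamma\sigma_{L/E}(\gamma)^{-1}\gamma^{-1}\sigma_{L/E}(\gamma) = 1$ exactly when $\gamma$ commutes with $\sigma_{L/E}(\gamma)$; so the same computation with $\gamma$ replaced by $\delta$ turns the hypothesis $\varsigma \in \U(V)(K)$ into $\delta P = P\delta^T$ with the same $P$. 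To deduce $\gamma P = P\gamma^T$ from $\delta P = P\delta^T$, I use that $\gamma$ commutes with $\delta$ (normality) and that $\delta$ is regular semisimple in $\GL_n(L)$ (via the embedding $X \hookrightarrow \S_{n, L/E} \times E_n \times E^n$ discussed earlier in this section): the centralizer of $\delta$ in $\Mat_n(L)$ is the commutative algebra $L[\delta]$, so $\gamma = p(\delta)$ for some $p \in L[T]$, and induction on $k$ gives $\delta^k P = P(\delta^k)^T$, hence $\gamma P = p(\delta)P = Pp(\delta)^T = P\gamma^T$.

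Part (2) proceeds identically: using the relation $\sigma_{L/K}(\zeta) = J^{-T}\zeta^{-T}J^T$ (equivalent to $\zeta \in \U(V)(K)$), the desired condition $\gamma\sigma_{L/K}(\gamma) = I$ translates to an identity $\zeta Q = Q\zeta^T$ for $Q := h\bar h^{-1}J^{-T}$, while the hypothesis $\delta \in \S_{n,L/K}$ translates to $\varsigma Q = Q\varsigma^T$ with the same $Q$; the polynomial transport is then run with $\zeta, \varsigma$ playing the roles of $\gamma, \delta$, using normality of $\zeta$ and regular semisimplicity of $\varsigma$ in $\GL_n(L)$. The hard part in both arguments is setting up $P$ (resp.\ $Q$) so that the two conditions on the two sides produce identical-looking matrix equations, which requires using the defining equation of the \emph{opposite} element ($\gamma \in \S_{n,L/K}$ in part (1), or $\zeta \in \U(V)(K)$ in part (2)); once this alignment is in place, the polynomial commutativity argument is a routine induction.
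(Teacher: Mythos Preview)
Your proof is correct, and it shares the same core idea as the paper's: both pivot on the fact that normality plus regular semisimplicity let you write $\gamma$ as a polynomial in $\delta$ (resp.\ $\zeta$ as a polynomial in $\varsigma$) with coefficients in $L$. The difference is in packaging. The paper works directly with the explicit expansion $\gamma = c_{n-1}\delta^{n-1} + \cdots + c_0$, transports it to $\zeta = c_{n-1}\varsigma^{n-1} + \cdots + c_0$, and then computes $\sigma_{L/E}(\zeta^\ast)$ termwise using the observation $\sigma_{L/E}(\varsigma^\ast) = \varsigma$ (which encodes $\varsigma \in \U(V)(K) \cap \S_{n,L/E}$) to conclude $\zeta^\ast = h^{-1}\sigma_{L/K}(\gamma)h$. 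Your version instead identifies a single matrix $P$ (resp.\ $Q$) so that both the hypothesis on $\varsigma$ (resp.\ $\delta$) and the desired conclusion on $\zeta$ (resp.\ $\gamma$) take the identical form $XP = PX^T$, and then the polynomial transport is a one-line induction. Your framing is arguably cleaner because it makes the symmetry between hypothesis and conclusion manifest; the paper's version is more explicit and makes visible the role of the intermediate field $M$ via $\sigma_{L/M}$. Both arguments silently rely on the auxiliary fact that $\delta \in \S_{n,L/K}$ (in part (1)) and $\varsigma \in \U(V)(K)$ (in part (2)), which you correctly verify.
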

\begin{proof}
\leavevmode
\begin{enumerate}[(1)]
\item First note that $\gamma$ normal implies
$\sigma_{L/M}(\delta) = \delta$. Also, since
$h \in \GL(V)$, it is clear that $\varsigma \in \S_{n, L/E}$.
This combined with $\varsigma \in \U(V)(K)$ implies
$\sigma_{L/E}(\varsigma^\ast) = \varsigma$.

The fact that $\varsigma = \zeta\sigma_{L/E}(\zeta)^{-1}$
and commutes with $\zeta$ is obvious, since
$\sigma_{L/E}(\zeta) = h^{-1}\sigma_{L/E}(\gamma) h$,
so we just
need to check that $\zeta\zeta^\ast = 1$. 
To this end, note that since $\gamma$ is normal,
\[
    \gamma = c_{n-1}\delta^{n-1} + \cdots + c_0
\]
for some $c_0, \dots, c_{n-1} \in L$.
This implies
\[
    \sigma_{L/M}(\gamma) = \sigma_{L/M}(c_{n-1})
    \delta^{n-1} + \cdots + \sigma_{L/M}(c_0),
\]
and
\[
    \zeta = c_{n-1}\varsigma^{n-1} + \cdots + c_0,
\]
since $\zeta = h^{-1}\gamma h$ and $\varsigma = h^{-1}\delta h$.
Note that $\sigma_{L/E}(\varsigma^\ast) = \varsigma$ then implies
\begin{align*}
    \sigma_{L/E}(\zeta^\ast) = \sigma_{L/M}(c_{n-1}) \varsigma^{n-1}
    + \cdots + \sigma_{L/M}(c_0) = h^{-1}\sigma_{L/M}(\gamma) h.
\end{align*}
This implies $\zeta^\ast = h^{-1}\sigma_{L/K}(\gamma) h$,
so $\zeta\zeta^\ast = 1$ since
$\gamma\sigma_{L/K}(\gamma) = 1$.

\item As before, we have $\sigma_{L/E}(\varsigma^\ast) = \varsigma$,
and $\delta \in \S_{n, L/E}$, so $\delta_{L/M}(\delta) 
= \delta$. Since $\zeta$ commutes with $\varsigma$,
we have
\[
    \zeta = c_{n-1} \varsigma^{n-1} + \cdots + c_0
\]
for some $c_0, \dots, c_{n-1} \in L$.
Then
\[
    \sigma_{L/E}(\zeta^\ast)
    = \sigma_{L/M}(c_{n-1}) \varsigma^{n-1} + \cdots + 
    \sigma_{L/M}(c_0),
\]
\[
    \gamma = c_{n-1} \delta^{n-1} + \cdots + c_0,
\]
and
\[
    \sigma_{L/M}(\gamma) =
    \sigma_{L/M}(c_{n-1})\delta^{n-1} + \cdots +
    \sigma_{L/M}(c_0)
    = h^{-1}\sigma_{L/E}(\zeta^\ast)h,
\]
which implies 
$\sigma_{L/K}(\gamma) = h^{-1}\zeta^\ast h$,
so $\gamma\sigma_{L/K}(\gamma) = 1$. As before,
it is clear that $\gamma$ is normal
and satisfies $\delta = \gamma\sigma_{L/E}(\gamma)^{-1}$.
\qedhere
\end{enumerate}
\end{proof}

\begin{lem}
\label{lem:rsscayley}
Let $[\gamma, x, y] \in X_\rss$ and
$[\zeta, z] \in Y_\rss^V$. Suppose $\alpha \in M$
satisfies $\alpha \sigma_{M/F}(\alpha) = 1$
and is not an eigenvalue of $\delta = \gamma\sigma_{L/E}(\gamma)^{-1}$
or $\varsigma = \zeta\sigma_{L/E}(\zeta)^{-1}$. 
Then we have
$[j_M c_\alpha(\delta), x, y] \in \mathfrak x_\rss(F)$
and $[j_M c_\alpha(\varsigma), z] \in \fy(V)_\rss(F)$.
\end{lem}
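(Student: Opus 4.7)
The lemma has two parallel parts, one on each side, and the plan for each is to verify: (i) that the transformed element lands in the correct $F$-rational Lie algebra, namely $\gl_n(F)$ on the general-linear side and $\fh(V)$ on the unitary side; and (ii) that the regular-semisimplicity hypothesis on the original triple transfers to regular semisimplicity of the Cayley-transformed triple.

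For the GL-side $F$-rationality, normality of $\gamma$ gives $\delta \in \GL_n(M)$ via Lemma \ref{lem:normalequivdefs}(2), so $c_\alpha(\delta) \in \Mat_n(M)$ since $\alpha \in M$. Noting that $\sigma_{L/E}|_M = \sigma_{M/F}$, the hypothesis $\alpha\sigma_{M/F}(\alpha) = 1$ gives $\sigma_{L/E}(\alpha) = \alpha^{-1}$, and $\delta\sigma_{L/E}(\delta) = 1$ gives $\sigma_{L/E}(\delta) = \delta^{-1}$. A direct manipulation of $-(\alpha + \delta)(\alpha - \delta)^{-1}$ then yields $\sigma_{L/E}(c_\alpha(\delta)) = -c_\alpha(\delta)$ (this is just the assertion $c_\alpha(\delta)\in\fs_{n,L/E}$ used already in the proof of Proposition~\ref{prop:normalrep}). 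Combined with $\sigma_{L/E}(j_M) = \sigma_{M/F}(j_M) = -j_M$, this shows $j_Mc_\alpha(\delta)$ is $\sigma_{M/F}$-fixed, hence lies in $\Mat_n(F) = \gl_n(F)$ by Galois descent. For regular semisimplicity, observe that $L[\delta] = L[c_\alpha(\delta)] = L[j_Mc_\alpha(\delta)]$ as commutative $L$-subalgebras of $\Mat_n(L)$ (using the inverse Cayley formula for the first equality and $j_M \in L^\times$ for the second), so the cyclic-vector conditions on $x \in L_n$ and $y \in L^n$ are automatically preserved; moreover, the Möbius map $\lambda \mapsto -j_M(\alpha+\lambda)/(\alpha-\lambda)$ is injective on eigenvalues, so the distinctness of eigenvalues of $\delta$ passes to $j_Mc_\alpha(\delta)$. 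This exhibits $[j_Mc_\alpha(\delta),x,y]$ as strongly regular semisimple in $\Mat_n(L) \times L_n \times L^n$, hence as an element of $\fx_\rss(F)$.

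The unitary side is entirely parallel but now also involves the adjoint $\ast$. From $\varsigma = \zeta\sigma_{L/E}(\zeta)^{-1}$ and $\sigma_{L/E}^2 = 1$ one gets $\sigma_{L/E}(\varsigma) = \varsigma^{-1}$; and from $\zeta^\ast = \zeta^{-1}$ together with $\sigma_{L/E}$-equivariance of the form one gets $\varsigma^\ast = \varsigma^{-1}$. These relations, combined with $\sigma_{L/E}(\alpha) = \sigma_{L/K}(\alpha) = \alpha^{-1}$ and $\alpha^\ast = \sigma_{L/K}(\alpha)$, yield by the same Cayley manipulations both $\sigma_{L/E}(c_\alpha(\varsigma)) = -c_\alpha(\varsigma)$ and $c_\alpha(\varsigma)^\ast = -c_\alpha(\varsigma)$. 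Multiplying by $j_M$, for which $\sigma_{L/E}(j_M) = j_M^\ast = \sigma_{L/K}(j_M) = -j_M$, flips both signs, so $j_Mc_\alpha(\varsigma)$ is $\sigma_{L/E}$-fixed (hence in $\End_E(V)$) and $\ast$-self-adjoint, i.e.\ lies in $\fh(V)$. The regular-semisimplicity conclusion $[j_Mc_\alpha(\varsigma),z] \in \fy(V)_\rss(F)$ then follows from the same Möbius-plus-subalgebra argument applied to the strong regular semisimplicity of $[\varsigma,z,z^\ast] \in \S_{n,L/E}\times E_n\times E^n$.

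The main obstacle is the sign bookkeeping on the unitary side: three commuting involutions on $\End_L(V_K)$ — Galois by $\sigma_{L/E}$, Galois by $\sigma_{L/K}$, and the adjoint $\ast$ — must all interact cleanly with the scalar $j_M$. The crucial arithmetic input is that $M$ is the fixed subfield of $\sigma_{L/E}\sigma_{L/K}$ and $j_M$ is totally imaginary for $M/F$, so $j_M$ flips sign under both $\sigma_{L/E}|_M$ and $\sigma_{L/K}|_M$, absorbing precisely the two sign changes produced by the Cayley transform so as to land in $\fh(V)$. Once these signs are tracked, both regular-semisimplicity claims reduce to the identity of $L$-subalgebras $L[\delta] = L[j_Mc_\alpha(\delta)]$ (respectively $L[\varsigma] = L[j_Mc_\alpha(\varsigma)]$) together with the injectivity of the Möbius transform on eigenvalues.
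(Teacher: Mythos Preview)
The paper states Lemma~\ref{lem:rsscayley} without proof, so there is nothing to compare against directly. Your argument is correct and supplies exactly the routine verification the paper omits: the $F$-rationality claims follow from the sign-flip behavior of $c_\alpha$ on $\S_{n,L/E}$ and $\U(V_K)$ combined with $\sigma_{L/E}(j_M)=\sigma_{L/K}(j_M)=-j_M$, and the regular-semisimplicity claims follow from $L[\delta]=L[j_Mc_\alpha(\delta)]$ (resp.\ $L[\varsigma]=L[j_Mc_\alpha(\varsigma)]$) together with injectivity of the M\"obius map on eigenvalues. Your sign bookkeeping on the unitary side is accurate, including the check that $j_Mc_\alpha(\varsigma)$ is $\sigma_{L/E}$-fixed (hence has entries in $E$) and $\ast$-self-adjoint (hence lies in $\fh(V)$).
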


\begin{proposition}[Matching of orbits]
    \label{lem:matching}
There is a natural bijection 
\[
  [X_\rss] \simeq \coprod_{V} [Y_\rss^V]
\]
such that $[\gamma, x, y]$ and $[\zeta, z]^V$ match 
if and only if they have the same invariants.
\end{proposition}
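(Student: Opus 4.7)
The plan is to combine Lemma~\ref{lem:embed}, which says classes on both sides are determined by their invariants $(a_1,\ldots,a_n,b_1,\ldots,b_n)$, with the Lie algebra matching of Lemma~\ref{lem:liematch}, transported to the group level via the Cayley transform and with normality controlled by Lemma~\ref{lem:normmatch}. Since each side is classified by invariants, defining the ``natural'' bijection amounts to checking that the set of invariants realized by $[X_\rss]$ coincides with that realized by $\coprod_V [Y_\rss^V]$, and that the space $V$ is uniquely determined by the invariants.

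For the forward direction, start with $[\gamma, x, y] \in [X_\rss]$ and set $\delta = \gamma\sigma_{L/E}(\gamma)^{-1} \in \S_{n,L/E}$. Choose $\alpha \in M$ with $\alpha\sigma_{M/F}(\alpha) = 1$ and $\alpha$ not an eigenvalue of $\delta$. Normality of $\gamma$ gives $\sigma_{L/M}(\delta) = \delta$, and a direct Galois computation using $\sigma_{M/F}(j_M) = -j_M$ shows $A := j_M c_\alpha(\delta) \in \gl_n(F)$; by Lemma~\ref{lem:rsscayley}, $[A, x, y] \in \fx_\rss(F)$. Apply Lemma~\ref{lem:liematch} to obtain a unique $V$ and a unique $\U(V)(F)$-orbit $[A', z]^V \in \fy(V)_\rss(F)/\U(V)(F)$, with some $h \in \GL(V)$ realizing $[A', z, z^\ast] = [A, x, y].h$. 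Set $\varsigma := h^{-1}\delta h \in \S_{n,L/E}$. Since $A' \in \mathfrak h(V)$ and $\sigma_{L/K}(j_M) = -j_M$, the element $j_M^{-1} A'$ lies in $\fu(V_K)$, so by Cayley compatibility $\varsigma = c_\alpha^{-1}(j_M^{-1} A') \in \U(V)(K)$. Lemma~\ref{lem:normmatch}(1) then produces a normal $\zeta := h^{-1}\gamma h \in \U(V)(K)$ with $\zeta\sigma_{L/E}(\zeta)^{-1} = \varsigma$; after replacing $\zeta$ by the unique representative of its $H \times H$-orbit in $R^V$ and transporting $z \in V^\vee$ accordingly, we obtain $[\zeta, z]^V \in [Y_\rss^V]$ whose invariants match those of $[\gamma, x, y]$ by construction.

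The reverse direction is entirely symmetric, applying Lemma~\ref{lem:normmatch}(2) in place of Lemma~\ref{lem:normmatch}(1). Well-definedness (independence of $\alpha$ and of the representative $h$) is automatic: the output lives in $[Y_\rss^V]$, which by Lemma~\ref{lem:embed}(2) is classified by invariants depending only on the input class. Bijectivity then follows from combining Lemma~\ref{lem:embed} on both sides with the bijection of Lemma~\ref{lem:liematch}.

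The principal technical obstacle is the Cayley bookkeeping across the three Galois involutions $\sigma_{L/E}$, $\sigma_{L/K}$, $\sigma_{L/M}$: one must verify that $A = j_M c_\alpha(\delta)$ descends from $\mathrm{Mat}_n(E)$ all the way down to $\gl_n(F)$ (using the normality identity $\sigma_{L/M}(\delta) = \delta$), and dually that $j_M^{-1} A' \in \fu(V_K)$ so that $\varsigma \in \U(V)(K)$, enabling the invocation of Lemma~\ref{lem:normmatch}. Both steps exploit the specific Galois-theoretic data of the third subfield $M \subset L$ and the choices $\alpha\sigma_{M/F}(\alpha) = 1$ and $\sigma_{M/F}(j_M) = -j_M$---precisely the data built into the theory of normal representatives developed in Proposition~\ref{prop:normalrep}.
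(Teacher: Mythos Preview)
Your proposal is correct and follows essentially the same approach as the paper's proof: both reduce to the Lie algebra matching of Lemma~\ref{lem:liematch} via the Cayley transform $c_\alpha$ with $\alpha \in M$, use Lemma~\ref{lem:normmatch} to transport normality between $\S_{n,L/K}$ and $\U(V)(K)$, and invoke Lemma~\ref{lem:embed} to conclude uniqueness from invariants. Your additional Galois bookkeeping (verifying $j_M c_\alpha(\delta) \in \gl_n(F)$ and $j_M^{-1}A' \in \fu(V_K)$) spells out exactly what the paper packages into Lemma~\ref{lem:rsscayley}.
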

\begin{proof}
As usual, we let 
$\delta = \gamma\sigma_{L/E}(\gamma)^{-1}$, and
$\varsigma = \zeta\sigma_{L/E}(\zeta)^{-1}$.
Note that $[\gamma, x, y]$ and $[\zeta, z]$
have the same invariants if and only if
$[\delta, x, y]$ and $[\varsigma, z, z^\ast]$ 
are in the same $\GL(V)$-orbit, which is true
if and only if for any $\alpha$
as in Lemma \ref{lem:rsscayley},
$[j_M c_\alpha(\delta), x, y] \in \mathfrak x_\rss(F)$ and 
$[j_M c_\alpha(\varsigma), z] \in \mathfrak y(V)_\rss(F)$ 
match in the sense of Lemma \ref{lem:liematch}.
Thus the proposition follows from Lemmas \ref{lem:embed},
\ref{lem:liematch}, 
\ref{lem:normmatch}, and \ref{lem:rsscayley}.

Indeed, suppose $[\gamma, x, y] \in X_\rss$.
Let $\alpha \in M$ be as in Lemma \ref{lem:rsscayley}.
By Lemma \ref{lem:liematch}, there exists
$[A', z'] \in \fy(V)_\rss(F)$, for a unique $V$, such
that 
\[
    [j_M c_\alpha(\delta), x, y].h =
    [A', z', z'^\ast]
\]
for some $h \in \GL(V)$.
Let $\varsigma = c_\alpha^{-1}(j_M^{-1} A')$.
Then $\varsigma = h^{-1}\delta h$, so by Lemma
\ref{lem:normmatch}, we have $\varsigma = 
\zeta\sigma_{L/E}(\zeta)^{-1}$ for some $\zeta'
\in \U(V)(K)$. Suppose 
$\zeta' = h_1^{-1}\zeta h_2$ for $\zeta \in R^V$
and $h_1, h_2 \in \U(V)(F)$.
Let $z = z' h_1^{-1}$.
Then $[\zeta, z] \in Y_\rss^V$, and $[\zeta, z]$
and $[\gamma, x, y]$ 
have the same invariants.
Furthermore, $[\zeta, z] \in Y_\rss^V$ is unique by Lemma
\ref{lem:embed}.

The same argument shows that for any $[\zeta, z]
\in Y_\rss^V$, there exists a unique $[\gamma, x, y]
\in X_\rss$ such that $[\gamma, x, y]$ and
$[\zeta, z]$ have the same invariants.
\end{proof}

Recall that not every $\U(V)(F) \times \U(V)(F)$-orbit
of $\U(V)(K)$ contains a normal element.
Using Proposition \ref{lem:matching}, in Lemma
\ref{lem:unitarynormalexistence} below, we give
an equivalent definition of the regular semisimple
orbits which do contain a normal element.
Furthermore, in Lemma \ref{lem:localglobal}, we 
show that the local--global principle holds
for checking whether a regular semisimple orbit contains a normal element.

\begin{lem}
\label{lem:unitarynormalexistence}
Let $\zeta \in \U(V)(K)$ be an element which
is regular semisimple with respect to the 
$\U(V)(F) \times \U(V)(F)$ action. The orbit
$\U(V)(F) \zeta \U(V)(F)$ contains a normal element if and
only if
\[
    \zeta\sigma_{L/E}(\zeta)^{-1}
    = \gamma\sigma_{L/E}(\gamma)^{-1}
\]
for some $\gamma \in \S_{n, L/K}$.
\end{lem}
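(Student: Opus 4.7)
The plan is to establish both directions by combining Lemma~\ref{lem:normmatch} with the Cayley transform as in Proposition~\ref{prop:normalrep}.

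For the direction $(\Leftarrow)$, suppose $\varsigma := \zeta\sigma_{L/E}(\zeta)^{-1} = \gamma\sigma_{L/E}(\gamma)^{-1}$ for some $\gamma \in \S_{n,L/K}$. By Proposition~\ref{prop:normalrep} I would first choose $g \in H_1 = \GL(V)$ so that $\gamma' := g^{-1}\gamma\sigma_{E/F}(g)$ is normal; by~\eqref{eq:norm}, $\gamma'\sigma_{L/E}(\gamma')^{-1} = g^{-1}\varsigma g$. Since $\sigma_{L/E}$ preserves $\U(V)(K) = G(F)$ (as the Galois action for $K/F$ on a group whose defining form has coefficients in $E$), we have $\varsigma \in \U(V)(K)$. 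Apply Lemma~\ref{lem:normmatch}(1) with ambient ``$h$'' equal to $g^{-1}$: the hypothesis reads $g \cdot (g^{-1}\varsigma g) \cdot g^{-1} = \varsigma \in \U(V)(K)$, and the conclusion yields a normal element $\zeta_0 := \gamma\sigma_{E/F}(g)g^{-1} \in \U(V)(K)$ with $\zeta_0\sigma_{L/E}(\zeta_0)^{-1} = \varsigma$. Then $\zeta_0\zeta^{-1}$ is $\sigma_{L/E}$-fixed, hence lies in $\U(V)(K)^{\sigma_{L/E}} = \U(V)(F)$, which places $\zeta_0$ in the orbit $\U(V)(F)\zeta \subseteq \U(V)(F)\zeta\U(V)(F)$.

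For the direction $(\Rightarrow)$, suppose $\zeta_0 := h_1^{-1}\zeta h_2$ is normal with $h_1, h_2 \in \U(V)(F)$, and set $\varsigma_0 := \zeta_0\sigma_{L/E}(\zeta_0)^{-1} = h_1^{-1}\varsigma h_1$. Pick $\alpha \in M$ with $\alpha\sigma_{M/F}(\alpha) = 1$ and $\alpha$ not an eigenvalue of $\varsigma_0$. Because $\alpha \in M$, it satisfies both $\alpha\sigma_{L/E}(\alpha) = 1$ and $\alpha\sigma_{L/K}(\alpha) = 1$, so $c_\alpha(\varsigma_0) \in \fs_{n,L/E} \cap \fu(V_K)$. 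The identities $\sigma_{L/E}(j_M) = \sigma_{L/K}(j_M) = -j_M$ (since $j_M$ is totally imaginary in the third quadratic subfield $M$ of $L$) then yield $\sigma_{L/E}(A) = A$ and $A^\ast = A$ for $A := j_M c_\alpha(\varsigma_0)$, so $A \in \fh(V)$. Choose $z_0 \in V^\vee$ so that $[A, z_0] \in \fy(V)_\rss(F)$ (possible because $A$ is regular semisimple). By Lemma~\ref{lem:liematch} there exists $[A'', x, y] \in \fx_\rss(F)$ matching $[A, z_0]$, witnessed by some $h \in \GL(V)$ with $h^{-1}Ah = A''$. Since $A'' \in \gl_n(F)$ and $\sigma_{L/K}(j_M^{-1}A'') = -j_M^{-1}A''$, one has $j_M^{-1}A'' \in \fs_{n,L/K}$ and $\delta := c_\alpha^{-1}(j_M^{-1}A'') \in \S_{n,L/K}$; Cayley equivariance under conjugation gives $\delta = h^{-1}\varsigma_0 h$. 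Lemma~\ref{lem:normmatch}(2) then produces a normal $\gamma := h^{-1}\zeta_0 h \in \S_{n,L/K}$ with $\gamma\sigma_{L/E}(\gamma)^{-1} = \delta$.

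To conclude, observe that $\delta = (h_1 h)^{-1}\varsigma(h_1 h)$. Setting $k := h_1 h \in H_1$, form $\gamma' := \gamma.k^{-1} = k\gamma\sigma_{E/F}(k)^{-1} \in \S_{n,L/K}$; by~\eqref{eq:norm} we have $\gamma'\sigma_{L/E}(\gamma')^{-1} = k\delta k^{-1} = \varsigma$, which is the required element.

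The main technical obstacle is the bookkeeping around the Cayley transform: verifying $A = j_M c_\alpha(\varsigma_0) \in \fh(V)$ requires simultaneously tracking the three involutions $\sigma_{L/E}$, $\sigma_{L/K}$, and the form adjoint $\ast$ through the scaling by $j_M$, and the same interplay drives the passage from $A'' \in \gl_n(F)$ back to $\delta \in \S_{n,L/K}$. Once these compatibilities are in hand, both directions reduce cleanly to Lemmas~\ref{lem:normmatch} and~\ref{lem:liematch}.
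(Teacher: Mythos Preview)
Your proof is correct and follows essentially the same route as the paper's: Proposition~\ref{prop:normalrep} plus Lemma~\ref{lem:normmatch}(1) for $(\Leftarrow)$, and for $(\Rightarrow)$ you unwind via the Cayley transform and Lemma~\ref{lem:liematch} what the paper packages as a single citation of Proposition~\ref{lem:matching}. One minor slip in the $(\Leftarrow)$ direction: from $\zeta_0\sigma_{L/E}(\zeta_0)^{-1} = \zeta\sigma_{L/E}(\zeta)^{-1}$ it is $\zeta^{-1}\zeta_0$, not $\zeta_0\zeta^{-1}$, that is $\sigma_{L/E}$-fixed, so $\zeta_0 \in \zeta\,\U(V)(F)$ rather than $\U(V)(F)\,\zeta$; the conclusion is of course unaffected.
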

\begin{proof}
Suppose $h_1^{-1} \zeta h_2$ is normal for some
$h_1, h_2 \in \U(V)(F)$. By
Proposition \ref{lem:matching}, there exists
$\gamma' \in \S_{n, L/K}$ such that
\[
    h_1^{-1}\zeta\sigma_{L/E}(\zeta) h_1^{-1}
    = h^{-1}\gamma'\sigma_{L/E}(\gamma')^{-1} h
\]
for some $h \in \GL_n(E)$.
This is equivalent to $\zeta\sigma_{L/E}(\zeta)^{-1}
= \gamma\sigma_{L/E}(\gamma)^{-1}$ for
$\gamma = h_1 h^{-1}\gamma' \sigma_{E/F}(h h_1^{-1})$.

On the other hand, suppose that we are given
$\zeta\sigma_{L/E}(\zeta)^{-1} =
\gamma\sigma_{L/E}(\gamma)^{-1}$ for $\gamma
\in \S_{n, L/K}$. 
By Proposition \ref{prop:normalrep}, there exists
$h \in \GL_n(E)$ such that $h^{-1}\gamma
\sigma_{E/F}(h)$ is normal. 
Then by Lemma \ref{lem:normmatch} (1), there exists
normal $\zeta' \in \U(V)(K)$ with
$\zeta'\sigma_{L/E}(\zeta')^{-1}
= \zeta\sigma_{L/E}(\zeta)^{-1}$, which implies
that $\zeta'$ is in the orbit $\U(V)(F)\zeta \U(V)(F)$.
\end{proof}

\begin{lem}
\label{lem:normtimesF}
Let $M/F$ and $K/F$ be quadratic extensions of number
fields. Let $L = M \otimes_F K$.
Let $N_{L/M} \colon L \to M$ and
$N_{L_v/M_v} \colon L_v \to M_v$ denote the norm maps. 
Let $x \in M^\times$ be such that
\[
    x \in N_{L_v/M_v}(L_v^\times) \cdot F_v^\times
\]
for all places $v$ of $F$. Then
$x \in N_{L/M}(L^\times)\cdot F^\times$.
\end{lem}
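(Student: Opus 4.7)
The plan is to reduce the lemma to the Hasse norm principle for the cyclic quadratic extension $L/M$, combined with weak approximation for the quadratic character $\eta_{K/F}$. Since $L/M$ is cyclic of degree two, the Hasse norm principle says that $y \in M^\times$ lies in $N_{L/M}(L^\times)$ if and only if $y$ is a local norm at every place $w$ of $M$. Thus proving $x \in N_{L/M}(L^\times) \cdot F^\times$ is equivalent to producing a single $f \in F^\times$ such that $\eta_{L_w/M_w}(x/f) = 1$ for every place $w$ of $M$, where $\eta_{L_w/M_w}$ denotes the quadratic character of $M_w^\times$ attached to $L_w/M_w$ (trivial if $L_w/M_w$ is split). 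I will construct such an $f$ from the local hypothesis.

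The key local computation is to determine the restriction $\chi_w := \eta_{L_w/M_w}|_{F_v^\times}$, where $v$ is the place of $F$ below $w$. Writing $K = F(\sqrt{e})$, so that $L = M(\sqrt{e})$, one has $\eta_{L_w/M_w}(a) = (e, a)_{M_w}$ (Hilbert symbol), and the projection formula applied to $a \in F_v^\times$ gives $(e, a)_{M_w} = (e, N_{M_w/F_v}(a))_{F_v} = (e, a)_{F_v}^{[M_w : F_v]}$. Hence $\chi_w$ is trivial whenever $v$ is inert or ramified in $M$ (where $[M_w : F_v] = 2$), and equals $\eta_{K_v/F_v}$ whenever $v$ splits in $M$ (where $M_w = F_v$). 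Substituting into the hypothesis at each $v$: at $v$ inert or ramified in $M$, the existence of $f_v$ forces $\eta_{L_w/M_w}(x) = 1$ at the unique $w \mid v$; at $v$ split in $M$, writing $x_v = (x^{(1)}, x^{(2)}) \in F_v^\times \times F_v^\times = M_v^\times$, it forces the relation $\eta_{K_v/F_v}(x^{(1)}) = \eta_{K_v/F_v}(x^{(2)})$. The global condition $\eta_{L_w/M_w}(x/f) = 1$ is then automatic at every $w$ above a place inert or ramified in $M$, and at $v$ split in $M$ it reduces to the scalar requirement $\eta_{K_v/F_v}(f) = \eta_{K_v/F_v}(x^{(1)})$.

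The main obstacle is the final step: producing a single $f \in F^\times$ realizing the prescribed signs at the finite set $T$ of places $v$ that are split in $M$, nonsplit in $K$, and with $\eta_{K_v/F_v}(x^{(1)}) = -1$ (finite because $x$ is a unit and $K/F$ is unramified at almost all $v$). This is a weak-approximation assertion: the map $F^\times \to \prod_{v \in T} F_v^\times$ has dense image, so its composition with the continuous surjection $\prod_{v \in T} F_v^\times \to \prod_{v \in T} F_v^\times/N_{K_v/F_v}(K_v^\times) \cong \{\pm 1\}^T$ has dense, hence surjective, image in the discrete target. The global reciprocity law $\prod_{v} \eta_{K_v/F_v}(f) = 1$ does not obstruct the construction, because by Chebotarev density there are infinitely many auxiliary places outside $T$ that are nonsplit in $K$ (and even nonsplit in $M$) where $\eta_{K_v/F_v}(f)$ is unconstrained and can be adjusted to balance the global product. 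The resulting $f$ satisfies $\eta_{L_w/M_w}(x/f) = 1$ at every $w$, so $x/f \in N_{L/M}(L^\times)$ by Hasse norm, and thus $x \in N_{L/M}(L^\times) \cdot F^\times$.
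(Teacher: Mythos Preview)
Your overall strategy is sound and matches the paper's: reduce to the Hasse norm theorem for the cyclic extension $L/M$ by producing $f\in F^\times$ with $x/f$ a local norm at every place of $M$. Your local analysis is also right: the condition on $f$ is vacuous at $v$ nonsplit in $M$ (since $\eta_{L_w/M_w}|_{F_v^\times}$ is trivial there), and at $v$ split in $M$ it becomes $\eta_{K_v/F_v}(f)=\eta_{K_v/F_v}(x^{(1)})$.

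The gap is in the final step. Weak approximation at the finite set $T$ produces $f$ with the prescribed signs on $T$, but gives no control over $\eta_{K_v/F_v}(f)$ at the infinitely many places $v$ split in $M$, nonsplit in $K$, lying \emph{outside} $T$; at each such $v$ you also need $\eta_{K_v/F_v}(f)=+1$, and a generic $f$ from weak approximation will fail this at whichever of its prime divisors happen to be split in $M$ and inert in $K$. Your remark about auxiliary places addresses only the parity of the global product $\prod_v\eta_{K_v/F_v}(f)$, not this problem of unwanted $-1$'s at $M$-split places. The paper avoids the difficulty by working adelically: it first picks an idele $(f_v)_v\in\AA^\times$ with $x\cdot(f_v)_v\in N_{L/M}(\AA_L^\times)$, then uses $\eta_{K/F}((f_v)_v)=1$ to conclude $(f_v)_v\in F^\times\cdot N_{K/F}(\AA_K^\times)\subseteq F^\times\cdot N_{L/M}(\AA_L^\times)$, and applies Hasse to the resulting global $xf$. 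If you want to repair your direct approach, replace weak approximation by the exact description (from class field theory for $K/F$) of the image of $F^\times$ in $\bigoplus_{v\text{ nonsplit in }K}\{\pm1\}$ as the sum-zero hyperplane; then choose $f$ whose sign pattern is exactly $T$, or $T\cup\{v_0\}$ for a single auxiliary $v_0$ nonsplit in both $M$ and $K$ when $\lvert T\rvert$ is odd.
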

\begin{proof}
The assumption on $x$ tells us that there exists
$(f_v)_v \in \AA^\times$ such that
$x \cdot (f_v)_v \in N_{L/M}(\AA_L^\times)$.
Since $x \in M$, we have
\[
    \eta_{K/F}((f_v)_v) = \eta_{L/M}(x) = 1,
\]
where $\eta_{K/F} \colon \AA^\times/F^\times
\to \Gal(K/F)$ and
$\eta_{L/M} \colon \AA_M^\times/M^\times
\to \Gal(L/M)$
are the maps coming from global class field theory.
Since the kernel of $\eta_{K/F}$ is 
$N_{K/F}(\AA_K^\times)$,
there exists $f \in F^\times$ such that
\[
    f^{-1} \cdot (f_v)_v \in N_{K/F}(\AA_K^\times)
    \subseteq N_{L/M}(\AA_L^\times).
\]
This means that $xf \in N_{L/M}(\AA_L^\times)$,
and since $xf \in M$, by the Hasse norm
theorem, this implies that $xf \in N_{L/M}(L^\times)$.
\end{proof}

\begin{lem}
\label{lem:localglobal}
Suppose $\delta \in \S_{n, M/F} \coloneqq
\S_{n,L/K} \cap \GL_n(M)$ is regular semisimple.
Then $\delta = \gamma\sigma_{L/E}(\gamma)^{-1}$
for some $\gamma \in \S_{n, L/K}$ if and only
if $\delta = \gamma_v\sigma_{L_v/E_v}(\gamma_v)^{-1}$
for some $\gamma_v \in \S_{n, L_v/K_v}$, for every
place $v$ of $F$.
\end{lem}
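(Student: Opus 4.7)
The forward direction is immediate upon localizing $\gamma$. For the reverse direction, assume local $\gamma_v \in \S_{n, L_v/K_v}$ exist at every place, and construct $\gamma$ globally by combining a global Hilbert~90 step with a Hasse principle grounded in Lemma~\ref{lem:normtimesF}.

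Since $\delta \in \S_{n, M/F}$ lies in $\GL_n(M)$, we have $\sigma_{L/E}(\delta) = \sigma_{L/K}(\delta) = \delta^{-1}$, so global Hilbert~90 applied to $\GL_n$ over the cyclic extension $L/E$ furnishes some $\gamma_0 \in \GL_n(L)$ with $\gamma_0 \sigma_{L/E}(\gamma_0)^{-1} = \delta$. Every $\gamma \in \GL_n(L)$ satisfying the same equation is uniquely of the form $\gamma = \gamma_0 h$ with $h \in \GL_n(E)$, and the extra requirement $\gamma \in \S_{n, L/K}$ unwinds (using $\sigma_{L/K}(h) = \sigma_{E/F}(h)$ for $h \in \GL_n(E)$) to the equation $h \sigma_{E/F}(h) = \gamma_0^{-1} \sigma_{L/K}(\gamma_0)^{-1}$. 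After pre-twisting $\gamma_0$ by an element of $H_1(F)$ -- using Proposition~\ref{prop:normalrep} to replace $\gamma_0$ by a normal representative in its $H_1$-orbit -- one arranges that the right-hand side is $\sigma_{L/E}$-invariant, hence lies in $\GL_n(E)$. Thus the admissible $h$ form the $F$-points of an $F$-scheme $Z_\delta$.

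By construction $Z_\delta$ is a torsor over $F$ for an $F$-torus $T$ built from the centralizer $\mathrm{Cent}_{\GL_{n,L}}(\delta)$, which is a maximal $M$-torus by regular semisimplicity of $\delta$. The hypothesis gives $Z_\delta(F_v) \ne \emptyset$ for every place $v$, and we need $Z_\delta(F) \ne \emptyset$, a statement about an obstruction class in $H^1(F, T)$. After base change to a splitting field of $\delta$, the torus $T$ decomposes as a product of norm-one tori of the form $\Res^{(1)}_{L'/M'} \GL_1$ associated to intermediate fields arising from the tower $F \subset M \subset L$ together with the eigenvalue fields of $\delta$. The Hasse principle for $H^1(F, T)$ then follows from Hilbert~90 on each cyclic layer combined with the Hasse norm theorem, which is packaged exactly into the form we need by Lemma~\ref{lem:normtimesF}.

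The main obstacle is to carry out the decomposition of $T$ cleanly and to track the obstruction class in $H^1(F, T)$ through the characters of $T$ so that Lemma~\ref{lem:normtimesF} applies factor by factor. Regular semisimplicity of $\delta$ is essential here: it ensures the centralizer is a torus (rather than a more complicated reductive group), permitting the reduction to the rank-one case where Lemma~\ref{lem:normtimesF} directly matches the setup. Once the Hasse principle for $T$ is verified, the local-global principle yields $Z_\delta(F) \ne \emptyset$, producing the desired $\gamma \in \S_{n, L/K}$ with $\delta = \gamma \sigma_{L/E}(\gamma)^{-1}$.
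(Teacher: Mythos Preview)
Your overall strategy---reduce to the centralizer of $\delta$ and invoke a Hasse principle packaged by Lemma~\ref{lem:normtimesF}---is the same as the paper's, but two steps in your plan do not work as stated.

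First, the appeal to Proposition~\ref{prop:normalrep} is misplaced: that proposition concerns $H_1$-orbits on $\S_{n,L/K}$, whereas your $\gamma_0$ is an arbitrary element of $\GL_n(L)$ produced by Hilbert~90 for $L/E$. What you actually need is that $\gamma_0$ commute with $\delta$, and the clean way to arrange this is to apply Hilbert~90 \emph{inside} the commutative centralizer algebra $L' = \mathrm{Cent}_{\Mat_n(L)}(\delta)$ from the start, not in $\GL_n(L)$. Second, your claim that $Z_\delta \subset \GL_{n,E}$ is a torsor for an $F$-torus is not automatic: the group acting is the centralizer of $A = \gamma_0^{-1}\sigma_{L/K}(\gamma_0)^{-1}$, and there is no reason for $A$ itself to be regular semisimple. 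What saves you is again the centralizer reduction---Lemma~\ref{lem:normalequivdefs} forces any $\gamma \in \S_{n,L/K}$ with $\gamma\sigma_{L/E}(\gamma)^{-1} = \delta$ to be normal, hence to lie in $L'$---so the whole problem lives in the commutative \'etale algebra $L'$ and its fixed subalgebras $E', K', M', F'$. You should make this reduction \emph{before} asserting a torsor structure.

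Once inside $L'$, the paper takes a more direct route than your cohomological framing: rather than tracking a class in $H^1(F',T)$, it applies Hilbert~90 for $M'/F'$ to write $\delta = x\sigma_{M'/F'}(x)^{-1}$ with $x \in M'^\times$, then shows by an explicit computation that the existence of $\gamma \in \S_{n,L'/K'}$ with $\gamma\sigma_{L'/E'}(\gamma)^{-1} = \delta$ is \emph{equivalent} to $x \in N_{L'/M'}(L'^\times)\cdot F'^\times$. This is precisely the shape of Lemma~\ref{lem:normtimesF}, so the local hypothesis feeds directly in. Your description of the torus as a product of $\Res^{(1)}_{L'/M'}\GL_1$'s does not quite match this: the obstruction naturally lives in $M'^\times/(N_{L'/M'}(L'^\times)\cdot F'^\times)$, which is not simply $H^1$ of a norm-one torus, and the extra $F'^\times$ is exactly why Lemma~\ref{lem:normtimesF} (rather than the bare Hasse norm theorem) is needed.
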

\begin{proof}
Let $L'$ be the centralizer of $\delta$ in
$\Mat_n(L)$, and let $M'$ be the centralizer of
$\delta$ in $\Mat_n(M)$. 
Since $\delta\sigma_{M/F}(\delta) = 1$, we have
that $\sigma_{M/F}$ acts on $M'$ and 
$\sigma_{L/K}$ and $\sigma_{L/E}$ act on $L'$. 
Let $F'$ be the
$\sigma_{M/F}$-fixed elements of $M'$, and let
$K'$ and $E'$ be the $\sigma_{L/K}$-fixed and
$\sigma_{L/E}$-fixed elements of $L'$.

Recall from Lemma \ref{lem:normalequivdefs} that
if $\gamma_v\sigma_{L_v/E_v}(\gamma_v)^{-1} =
\delta$ for $\gamma_v \in \S_{n, L_v/K_v}$, then
$\gamma_v$ is normal, so $\gamma_v \in L_v'$.
Thus, we are given that
\[
    \delta = \gamma_v\sigma_{L_v'/E_v'}(\gamma_v)^{-1}
\]
for some $\gamma_v \in L_v'$ with
$\gamma_v\sigma_{L_v'/K_v'}(\gamma_v) = 1$
for each place $v$ of $F$, and we want to show
that $\delta = \gamma\sigma_{L'/E'}(\gamma)^{-1}$
for some $\gamma \in L'$ with
$\gamma\sigma_{L'/K'}(\gamma) = 1$.

Since $\delta$ is regular semisimple, $F'$ is a product
of field extensions on $F$, and
it clearly suffices to prove the claim in the case 
that $F'$ is a field.

Then since $\delta\sigma_{M'/F'}(\delta) = 1$, there exists
$x \in M'$ such that $\delta = x\sigma_{M'/F'}(x)^{-1}$.
Since $\gamma_v\sigma_{L_v'/K_v'}(\gamma_v) = 1$, 
there exists $\xi_v \in L_v'$ such that
$\gamma_v = \xi_v\sigma_{L_v'/K_v'}(\xi_v)^{-1}$, so
\[
    x\sigma_{M'/F'}(x)^{-1} = \delta
    = \gamma_v\sigma_{L_v'/E_v'}(\gamma_v)^{-1}
    = \xi_v\sigma_{L_v'/K_v'}(\xi_v)^{-1}
    \sigma_{L_v'/M_v'}(\xi_v)\sigma_{L_v'/E_v'}(\xi_v)^{-1}.
\]
This implies that $x \in \xi_v\sigma_{L_v'/M_v'}(\xi_v)
\cdot F_v'^\times$, so 
\[ 
    x \in N_{L_v'/M_v'}(L_v'^\times) \cdot F_v'^\times
\]
for all places $v$ of $F$.
Then by Lemma \ref{lem:normtimesF},  
$x \in N_{L'/M'}(L'^\times) \cdot F'^\times$. 
Thus, there exists $\xi \in L'^\times$ and $f \in F'^\times$
such that $x = \xi\sigma_{L'/M'}(\xi) f$. 
Then, if we let $\gamma = \xi\sigma_{L'/K'}(\xi)^{-1}$,
we have
\[
    \delta = x\sigma_{M'/F'}(x)^{-1}
    = \xi\sigma_{L'/M'}(\xi)\sigma_{L'/E'}(\xi)^{-1}
    \sigma_{L'/K'}(\xi)^{-1}
    = \gamma\sigma_{L'/E'}(\gamma)^{-1}.
    \qedhere
\]
\end{proof}

\section{Relative trace formulas and geometric decompositions via representatives} \label{section: decomp and orbits}

In this section, we construct two relative trace formulas, and prove their geometric decompositions for good test functions, into sums of products of local orbital integrals, under the choice of global representatives. We use partial Fourier transforms as isomorphisms of Weil representations in the doubling space as in \cite{Li1992}. We follow the notation of \cite{TGGP-Wang} using right actions.

\subsection{Unitary side}

Let $V$ be a skew-hermitian space over $E$. We choose a 
polarization $\LL + \LL^\vee$ of $(\Res_{E/F} V)^\vee$.
Let $\omega = \omega_{\psi,\mu}$ be the Weil 
representation of $\U(V)(\AA)$ realized on 
$\cS(\BL(\AA))$.

\begin{definition}
Let $v$ be a place of $F$. We define the
partial Fourier transform
\[
^\ddagger \colon \cS(\LL (F_v)) \otimes \cS(\LL (F_v))
\to \cS(V^\vee(F_v))
\]
by
\[
    (\phi_{1,v} \otimes \phi_{2,v})^\ddagger(z)
    = \int_{\LL(F_v)} \phi_{1,v}(x' + x)
    \phi_{2,v}(x - x')
    \psi_v(-2\Tr_{E/F}\langle x', y\rangle)\,dx'
\]
for $z = x + y \in V^\vee$
with $x \in \LL$ and $y \in \LL^\vee$.
Here, $\langle -, -\rangle$ is the skew-hermitian form on $V^\vee$.
We define the global partial Fourier transform
$^\ddagger \colon \cS(\LL(\BA_F)) \otimes \cS(\LL(\BA_F)) \to \cS(V^\vee(\BA_F))$ using the same formula.
\end{definition}

We will consider relative trace formulas for the embedding $H=\U(V) \to G=\Res_{K/F} \U(V_K)$ with theta series. Instead of using explicit formulas, for geometric decompositions of relative trace formulas, we only need two properties on partial Fourier transforms:

\begin{itemize}
    \item The first thing is the equivariance under the diagonal $H(F_v) \subseteq H(F_v) \times H(F_v)$:
\begin{equation}
  ( \overline{\omega(h)\phi_{1,v} } \otimes 
  \omega(h)\phi_{2,v})^\ddagger(z)= (\ol{\phi_{1,v}} 
  \otimes \phi_{2,v})^\ddagger(zh), \quad h \in \U(V)
  (F_v), \quad z \in V^\vee(F_v).
\end{equation}
    \item For $\phi \in\CS(\mathbb L(\AA))$, we have the theta series
\[
\Theta(g, \phi)=\sum_{x \in \mathbb L(F)} \omega(g)\phi(x).
\]
The second property is the following global reduction formula via partial Fourier transform (see also \cite[Proposition 2.2]{HKS-Theta1996} and
\cite[Lemma 5.3]{Liu2014relative}).

\end{itemize}

\begin{theorem}[{\cite[Lemma~4.2.1]{Xue-GGP}}]  \label{key:reduction} For $\phi=(\phi_1 \otimes \phi_2)^\ddagger \in \CS(V^\vee(\BA_F))$ we have
\begin{equation}
\overline{ \Theta(g, \phi_1) }  \Theta(g, \phi_2) = \sum_{z \in V^\vee(E)} \phi(zg).
\end{equation}
Therefore, for $a \in H(F)$, we have 
\[
\overline{\Theta(h_1, \phi_1)}  \Theta(h_2, \phi_2)  = \sum_{z \in V^\vee(E)} ( \overline{ \omega(h_2^{-1}ah_1) \phi_1} \otimes  \phi_2)^\ddagger (zh_2).
\]
\end{theorem}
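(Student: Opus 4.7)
The plan is to prove the first identity by reducing to $g=1$ and invoking Poisson summation over the dual Lagrangians, then derive the second display from the left $H(F)$-invariance of the theta series.

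\textbf{Reduction to $g=1$.} The identity $\Theta(g,\phi_i) = \Theta(1, \omega(g)\phi_i)$ combined with the equivariance
$$(\overline{\omega(g)\phi_1} \otimes \omega(g)\phi_2)^\ddagger(z) \;=\; (\overline{\phi_1} \otimes \phi_2)^\ddagger(zg)$$
shows that the identity for general $g \in H(\BA)$ follows from the special case $g=1$ applied to $(\omega(g)\phi_1, \omega(g)\phi_2)$.

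\textbf{Poisson summation at $g=1$.} At $g=1$, the left-hand side equals $\sum_{x_1, x_2 \in \LL(F)} \overline{\phi_1(x_1)}\phi_2(x_2)$. For the right-hand side, write $z = x+y$ with $x \in \LL(F)$ and $y \in \LL^\vee(F)$, and for each fixed $x$ apply Poisson summation over $y$ to the Schwartz function $F_x(x') := \overline{\phi_1(x+x')}\phi_2(x-x')$ on $\LL(\BA)$. Since $\LL$ and $\LL^\vee$ are dual Lagrangians under $\Tr_{E/F}\langle -,-\rangle$, the pairing $(x',y) \mapsto -2\Tr_{E/F}\langle x', y\rangle$ is non-degenerate, and with respect to the self-dual Haar measure on $\LL(\BA)$ built into the definition of $\omega_{\psi,\mu}$,
$$\sum_{y \in \LL^\vee(F)} \int_{\LL(\BA)} F_x(x')\,\psi\bigl(-2\Tr_{E/F}\langle x', y\rangle\bigr)\,dx' \;=\; \sum_{x' \in \LL(F)} F_x(x').$$
Summing over $x \in \LL(F)$ and then using the change of variables $x_1 = x+x'$, $x_2 = x-x'$, which is a bijection of $\LL(F) \times \LL(F)$ since $2$ is invertible in the number field $F$, the right-hand side becomes $\sum_{x_1, x_2 \in \LL(F)} \overline{\phi_1(x_1)}\phi_2(x_2)$, matching the left-hand side.

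\textbf{The second display.} Using left $H(F)$-invariance of the theta series, for any $a \in H(F)$ we rewrite
$$\overline{\Theta(h_1, \phi_1)}\,\Theta(h_2, \phi_2) \;=\; \overline{\Theta(ah_1, \phi_1)}\,\Theta(h_2, \phi_2) \;=\; \overline{\Theta(h_2, \omega(h_2^{-1}ah_1)\phi_1)}\,\Theta(h_2, \phi_2),$$
and then apply the first identity with $g = h_2$ and first argument $\omega(h_2^{-1}ah_1)\phi_1$ to obtain the claimed formula.

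The only substantive step is the Poisson summation; the main thing to verify is that the Haar measure on $\LL(\BA)$ is self-dual with respect to $(x',y) \mapsto -2\Tr_{E/F}\langle x', y\rangle$ with $\LL(F)$ and $\LL^\vee(F)$ in duality as lattices. Both are built into the standard conventions for $\omega_{\psi,\mu}$ and the partial Fourier transform, so the argument is essentially a seesaw Poisson identity in the doubling space, parallel to the Rallis inner product approach to the Siegel--Weil formula.
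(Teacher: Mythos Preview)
Your argument is correct and is precisely the standard proof that the paper defers to \cite[Lemma~4.2.1]{Xue-GGP}; the paper does not supply its own argument here. The reduction to $g=1$ via the diagonal equivariance of $(-)^\ddagger$, the Poisson summation over $y\in\LL^\vee(F)$, and the bijective change of variables $(x,x')\leftrightarrow(x+x',x-x')$ on $\LL(F)\times\LL(F)$ (valid since $2\in F^\times$) together with the left $H(F)$-invariance step for the second display are exactly what is intended. One cosmetic remark: as written the theorem sets $\phi=(\phi_1\otimes\phi_2)^\ddagger$, but for the first identity to match $\overline{\Theta(g,\phi_1)}\Theta(g,\phi_2)$ one needs $\phi=(\overline{\phi_1}\otimes\phi_2)^\ddagger$, which is how you (correctly) compute and how the second display is stated.
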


Following Definition \ref{normal reps: unitary}, we choose a set $R$ of normal representatives of
the regular semisimple $H(F) \times H(F)$-orbits
of $G(F)$. Let 
\begin{equation}
[Y_\rss]=(R \times V^\vee)_{\rss} / \sim
\end{equation}
be the set of regular semisimple equivalence classes.

\begin{definition}[cf. Definition \ref{def:glgood}]
\label{def:ugood} 
An element $(f = \otimes_{v} f_v, \phi_1 \otimes \phi_2 = \otimes_{v}  (\phi_{1,v} \otimes \phi_{2,v} ))$ of $C_c^\infty(G(\mathbb A)) \times \CS(\mathbb L(\mathbb A))^{\otimes 2}$ is called a
\emph{good test function} if it satisfies the following
properties.
\begin{enumerate}[(1)]
\item  \label{umatrixcoeff}
There exists a finite place $v_1$ of
$F$ which is split in $E$ such that $f_{v_1}$ is a truncated matrix coefficient of a supercuspidal representation, that is, of
the form
\[
    f_{v_1}(g) = \widetilde{f_{v_1}}(g) 
    \cdot \mathbf{1}_{G(F_{v_1})^1}(g),
\]
where $\widetilde{f_{v_1}}$ is a matrix coefficient of
a supercuspidal representation of $G(F_{v_1})$, and
\[
    G(F_{v_1})^1 = \{g \in G(F_{v_1}) : 
    \lvert \det g \rvert_{v_1} =  1  \}.
\]

\item \label{ursssupp} 
There exists a finite place $v_2$ of $F$
which is split in $E$ such that
$f_{v_2}$ is supported on the regular
semisimple $H(F_{v_2}) \times H(F_{v_2})$-orbits 
of $G(F_{v_2})$, and if $[\zeta, z] \in (R^V\times V^\vee)$ and
\[
    f_{v_2}(h_1^{-1}\zeta h_2)
    (\ol{\omega
    (h_2^{-1}h_1)\phi_{1,v_2}} \otimes 
    \phi_{2,v_2})^\ddagger(zh_2)\neq 0
\]
for some $h_1, h_2 \in \U(V)(F_{v_2})$,
then $[\zeta, z] \in (R \times V^\vee)_{\rss}$.

\item For any place $v$ of $F$,
the function $f_v$ is supported on
\[
    \{\zeta \in G(F_v) : H(F_v) \zeta H(F_v)
    \text{ contains a normal element}\}.
\]
\end{enumerate}
\end{definition}

\begin{definition}
For $f \in C_c^\infty(G(\AA))$ and
$\phi_1 \otimes \phi_2 \in 
\cS(\LL(\AA))^{\otimes 2}$, 
consider the relative trace formula distribution
\begin{equation}
    J(f, \phi_1 \otimes \phi_2)
    = \int_{[H]}\int_{[H]}
    K_f(h_1, h_2) \ol{\Theta(h_1, \phi_1)}
    \Theta(h_2, \phi_2)\,dh_1\,dh_2.
\end{equation}
Here $K_f(x, y)=\sum_{\gamma \in G(F)} f(x^{-1}\gamma y)$ is the automorphic kernel function.
\end{definition}

\begin{proposition} \label{prop:udecomp}
For a good test function $(f, \phi_1 \otimes \phi_2)$, we have 
\[
    J(f, \phi_1 \otimes \phi_2)= \sum_{[\zeta, z] \in [Y_\rss] 
    }
    \int_{H(\AA)}\int_{H(\AA)}
    f(h^{-1}_2 \zeta h_1)
    (\ol{\omega(h^{-1}_2h_1)\phi_1} \otimes \phi_2)^\ddagger(zh_2)\,dh_1\,dh_2,
\]
and both sides converge absolutely. 
\end{proposition}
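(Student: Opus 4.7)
The plan is to unfold the kernel $K_f$ along $H(F) \times H(F)$-orbits of $G(F)$, collapse the product of theta series into a single $V^\vee(E)$-sum via Theorem~\ref{key:reduction}, and then re-stratify by the $T_\zeta$-orbits on $V^\vee(E)$ to recover the indexing set $[Y_\rss]$, with all absolute convergence supplied by the good test function hypotheses.

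First I would insert $K_f(h_1,h_2) = \sum_{\gamma \in G(F)} f(h_1^{-1}\gamma h_2)$ into the definition of $J$ and partition $G(F) = \bigsqcup_\zeta H(F)\zeta H(F)$. By hypothesis (3) of Definition~\ref{def:ugood}, $f$ is supported on orbits containing a normal element, so the index runs only over $\zeta \in R$. For each such $\zeta$, normality together with the regular semisimplicity of $\delta = \zeta\sigma_{L/E}(\zeta)^{-1}$ makes $C_G(\delta)$ a torus with $\zeta \in C_G(\delta)$; then \ref{eq:centralizer_in_diagonal}, specialized via Lemma~\ref{lemma: normal implies good}(2) to the symmetric pair $H \subseteq G$, yields $T_\zeta \subseteq \Delta H(F)$. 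The standard unfolding against the $H(F) \times H(F)$-invariance of $\overline{\Theta(h_1,\phi_1)}\Theta(h_2,\phi_2)$ then replaces the $\zeta$-orbit contribution by an integral over $T_\zeta \backslash H(\AA)^2$ with $f(g_1^{-1}\zeta g_2) \overline{\Theta(g_1,\phi_1)}\Theta(g_2,\phi_2)$.

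Next I would apply Theorem~\ref{key:reduction} with $a=1$ to rewrite the theta factor as $\sum_{z \in V^\vee(E)} \bigl(\overline{\omega(g_2^{-1}g_1)\phi_1} \otimes \phi_2\bigr)^\ddagger(zg_2)$. Under the diagonal $(t,t) \in T_\zeta$ action on $H(\AA)^2$, the factor $f(g_1^{-1}\zeta g_2)$ and the argument $g_2^{-1}g_1$ of $\omega$ are invariant while $z$ is sent to $zt$, so the combined $z$-sum and $T_\zeta \backslash H(\AA)^2$-integral reassemble as
\[
\sum_{[z] \in T_\zeta \backslash V^\vee(E)} \int_{(T_\zeta)_z \backslash H(\AA)^2} f(g_1^{-1}\zeta g_2) \bigl(\overline{\omega(g_2^{-1}g_1)\phi_1}\otimes \phi_2\bigr)^\ddagger(z g_2)\,dg_1\,dg_2.
\]
By Definition~\ref{normal reps: unitary}, the stabilizer $(T_\zeta)_z$ is trivial exactly on the regular semisimple locus, compressing the integration domain to $H(\AA)^2$; hypothesis~\ref{ursssupp} of Definition~\ref{def:ugood} at $v_2$ annihilates every non-rss $[\zeta,z]$, so the surviving terms are indexed precisely by $[Y_\rss]$, matching the stated formula (up to the symmetric relabeling $h_1 \leftrightarrow h_2$).

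Absolute convergence is secured by hypothesis~\ref{umatrixcoeff} at $v_1$: the supercuspidal matrix-coefficient condition at a finite place split in $E$ forces $K_f$ to be cuspidally supported and compactly supported modulo center via the Deligne--Kazhdan argument, giving absolute convergence of $J$ on $[H]^2$. Combined with the compact support of $f$ at the other finite places, the Schwartz decay of $(\overline{\phi_1}\otimes \phi_2)^\ddagger$, and the finite-orbit cutoff at $v_2$, every exchange of sum and integral above is legitimate by Fubini. The main obstacle is this interlocking convergence: one must justify simultaneously the exchange of $\sum_\gamma$ with the $[H]^2$-integral, the exchange of $\sum_z$ with the unfolded orbital integral, and the compression of $T_\zeta \backslash H(\AA)^2$ to $H(\AA)^2$. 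Hypothesis~\ref{umatrixcoeff} handles the outer convergence, hypothesis~\ref{ursssupp} prunes the orbit sum to a finite rss union, and hypothesis~(3) together with \ref{eq:centralizer_in_diagonal} supplies the diagonality of $T_\zeta$ needed for the final collapse; once these three ingredients are in place the decomposition follows by formal bookkeeping.
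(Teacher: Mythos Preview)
Your proposal is correct and follows essentially the same unfolding argument as the paper: decompose $G(F)$ into $H(F)\times H(F)$-orbits represented by $R$, invoke Theorem~\ref{key:reduction} to collapse the theta product into a $V^\vee(E)$-sum, re-stratify by $T_\zeta$-orbits, and use the rss support hypothesis~\eqref{ursssupp} to kill the stabilizer and reduce to an $H(\AA)^2$-integral. The only cosmetic differences are that the paper postpones all adelic unfolding to a single final step (keeping the $(a,b)\in T_\zeta\backslash (H\times H)(F)$ sum and combining it with the $t\in T_\zeta$ sum before unfolding), whereas you unfold in two stages; and your appeal to hypothesis~\eqref{umatrixcoeff} for convergence of $J$ is a valid independent argument but is not actually needed here---the paper derives absolute convergence of both sides purely from hypothesis~\eqref{ursssupp}, since the rss support forces finitely many orbits and trivial stabilizers.
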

\begin{proof}
We have
\begin{align*}
    &\sum_{\zeta \in G(F)}
    f(h_1^{-1}\zeta h_2)\ol{\Theta(h_1, \phi_1)}\Theta(h_2, \phi_2) \\
    &\overset{\mathrm{Def.} \ref{def:ugood} (2)}{=} \sum_{\zeta \in R }
    \sum_{(a, b) \in T_\zeta \bs (H\times H)(F)} 
    f(h_1^{-1}a^{-1}\zeta b h_2) \ol{\Theta(h_1, \phi_1)} \Theta(h_2, \phi_2)  \\
    &\overset{\mathrm{Thm.} \ref{key:reduction}}{=} \sum_{\zeta \in R }
    \sum_{(a, b) \in T_\zeta \bs (H\times H)(F)} 
    f(h_1^{-1}a^{-1}\zeta b h_2)
    \sum_{z \in V^\vee(E)}( \ol{\omega((bh_2)^{-1}ah_1)\phi_1} \otimes \phi_2 )^\ddagger(zbh_2)\\
    &= \sum_{\zeta \in R} 
    \sum_{(a, b)\in T_\zeta \bs (H \times H)(F)}    f(h_1^{-1}a^{-1}\zeta bh_2) 
    \sum_{z \in T_\zeta(F) \backslash V(F)}\sum_{t \in T_{[\zeta, z]} \bs T_\zeta}
    ( \ol{\omega (h^{-1}_2b^{-1}ah_1)\phi_1} \otimes \phi_2)^\ddagger(ztbh_2) \\
    &=\sum_{[\zeta, z] \in [Y_\rss]}
    \sum_{(a, b) \in (H\times H)(F)}
    f(h_1^{-1}a^{-1}\zeta bh_2)
    ( \ol{\omega(h^{-1}_2b^{-1}ah_1)\phi_1} \otimes \phi_2)^\ddagger(zbh_2),
\end{align*}
Thus,
\begin{align*}
    J(f, \phi_1 \otimes \phi_2)= \sum_{[\zeta, z] \in [Y_\rss]}
    \int_{H(\AA)}\int_{H(\AA)}
    f(h_1^{-1}\zeta h_2)
    (\ol{\omega(h^{-1}_2h_1)\phi_1} \otimes \phi_2)^\ddagger(zh_2)\,dh_1\,dh_2.
\end{align*}
Both sides converge because of Definition 
\ref{def:ugood}\eqref{ursssupp} .
\end{proof}

This motivates the following definition.

\begin{definition} \label{defn: local orb: unitary side}
For regular semisimple $[\zeta, z] \in G(F_v) \times V_v^\vee$,
$f_v \in C_c^\infty(G(F_v))$, and
$\phi_{1,v} \otimes \phi_{2,v} \in \cS(\LL(F_v))^{\otimes 2}$, 
define the \emph{(local) orbital integral} (cf. \cite[(4.1.2) and (4.1.4)]{Xue-GGP})
\begin{align*}
    &\Orb([\zeta, z], f_v, \phi_{1,v} \otimes \phi_{2,v}) = \int_{H(F_v)}\int_{H(F_v)} f_v(h_1^{-1}\zeta h_2)
    ( \ol{\omega_v(h^{-1}_2h_1)\phi_{1,v}} \otimes \phi_{2,v})^\ddagger(zh_2)\, dh_1\, dh_2.
\end{align*}
\end{definition}

\begin{corollary}
We have the decomposition
\[
    J(f, \phi_1 \otimes \phi_2)
    = \sum_{[\zeta, z] \in [Y_\rss]}
    \Delta_H^{*,-2}
    \prod_v \Orb([\zeta, z], f_v, \phi_{1,v} \otimes \phi_{2,v}).
\]    
\end{corollary}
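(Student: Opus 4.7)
The plan is to deduce this corollary directly from Proposition \ref{prop:udecomp} by factorizing each term in the adelic sum into a product of local orbital integrals, using the pure tensor structure of the test data and the normalization of the Tamagawa measure.

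First, I would start from the identity
\[
    J(f, \phi_1 \otimes \phi_2) = \sum_{[\zeta, z] \in [Y_\rss]}
    \int_{H(\AA)} \int_{H(\AA)} f(h_1^{-1}\zeta h_2)
    \bigl(\ol{\omega(h_2^{-1}h_1)\phi_1} \otimes \phi_2\bigr)^\ddagger (zh_2)\, dh_1\, dh_2
\]
proved in Proposition \ref{prop:udecomp}, noting that absolute convergence is already established there. Next, I would verify that the integrand factorizes as a pure tensor over the places $v$ of $F$. Since $f = \otimes_v f_v$ and $\phi_i = \otimes_v \phi_{i,v}$, the Weil representation factorizes as $\omega = \otimes_v \omega_v$, and the partial Fourier transform $^\ddagger$ is defined placewise, so
\[
    \bigl(\ol{\omega(h_2^{-1}h_1)\phi_1} \otimes \phi_2\bigr)^\ddagger(zh_2)
    = \prod_v \bigl(\ol{\omega_v(h_{2,v}^{-1}h_{1,v})\phi_{1,v}} \otimes \phi_{2,v}\bigr)^\ddagger (zh_{2,v}).
\]

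The next step is to apply Fubini's theorem, justified by absolute convergence, to interchange the double adelic integral with the product over places. Here I would invoke the measure normalization recalled in the Measures subsection: the Tamagawa measure on $H(\AA)$ satisfies $dh = (\Delta_H^*)^{-1} \prod_v dh_v$. Since we are integrating over $H(\AA) \times H(\AA)$, the total normalizing constant is $(\Delta_H^*)^{-2} = \Delta_H^{*,-2}$. This yields
\[
    \int_{H(\AA)} \int_{H(\AA)} \cdots\, dh_1\, dh_2
    = \Delta_H^{*,-2} \prod_v \int_{H(F_v)} \int_{H(F_v)} \cdots\, dh_{1,v}\, dh_{2,v},
\]
and the inner local integral at each $v$ is precisely $\Orb([\zeta, z], f_v, \phi_{1,v} \otimes \phi_{2,v})$ by Definition \ref{defn: local orb: unitary side}.

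I do not anticipate a serious obstacle here; the content is essentially bookkeeping once Proposition \ref{prop:udecomp} is in hand. The only mild subtlety is ensuring that Fubini is applicable, but this follows from the absolute convergence asserted in Proposition \ref{prop:udecomp}, which in turn relies on the support condition in Definition \ref{def:ugood}\eqref{ursssupp} at the split place $v_2$ that cuts the sum over representatives down to a finite one for any fixed $(h_1, h_2)$ modulo $H(F_{v_2})$-translations, and on the compactness of the support of $f$ away from $v_2$.
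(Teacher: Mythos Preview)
Your proposal is correct and matches the paper's approach: the paper states this corollary without proof immediately after Definition \ref{defn: local orb: unitary side}, since it follows directly from Proposition \ref{prop:udecomp} by factorizing the adelic integrand via the pure tensor structure and the measure normalization $dh = (\Delta_H^*)^{-1}\prod_v dh_v$. Your bookkeeping and justification of Fubini are exactly the intended content.
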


\subsection{General linear side}

Recall $H_1=\Res_{E/F}\GL_n$.  

\begin{definition}[Weil representation] \label{defn theta series: GL side}

Let $R_\mu$ be the Weil representation of $H_1(\BA_F)=\GL_n(\AA_E)$, realized on $\cS(\AA_{E,n})$ by
\begin{equation}
   (R_\mu(g)\phi')(z) = \mu(\det g) \lvert \det g \rvert^{\frac12}
    \phi'(zg). 
\end{equation}    
For $\phi' \in \cS(\AA_{E,n})$,
define the theta series by   
\begin{equation}
\Theta_\mu(s, g, \phi') = \sum_{z \in E_n \setminus \{0\}} (R_\mu(g) \phi')(z) \lvert \det g \rvert^{s-\frac 12}
\end{equation}
for $g \in \GL_n(\AA_E)$, and let 
$\Theta_\mu(g, \phi') = \Theta_\mu(\frac12, g, \phi')$.
\end{definition}

Recall we fix a nonzero totally imaginary element $j$ of $E$. Following \cite[Section 3.1]{Xue-GGP} 
and \cite[(7.3)]{GGP-FourierJacobi}, we define partial Fourier transforms.

\begin{definition} Let $v$ be a place of $F$. Define the partial Fourier transform
\[
^\dagger \colon \cS(E_{v,n}) \to \cS(F_{v,n} \times
F_v^{-,n})
\]
by
\[
    \phi_v'^\dagger(x, y) =
    \lvert j \rvert_v \int_{F_{v,n}} \phi_v'(x + jw^\vee) 
    \psi_v(jw^\vee y)\, dw^\vee.
\]
Here, the Haar measure is chosen to be the self-dual measure with respect to $\psi_v$. 

\end{definition}

We define the global partial Fourier transform
$$
^\dagger \colon \cS(\AA_{E,n}) \to  \cS(\AA_n \times \AA^{-,n})
$$ using the same formulas.
Instead of using the explicit formula, for geometric decomposition we need two things: let $H_{10}=\GL_n \leq H_1$. The first thing is for any $h \in H_{10}(F_v)=\GL_n(F_v)$
\[
(R_\mu(h)\phi')^\dagger(x, y) = \eta_{E/F}(\det h) \phi'^\dagger(xh, h^{-1}y).
\]
The second thing is the following global reduction formula proved by the Possion summation formula.
\begin{theorem}
\label{key:reduction-GL}
For any $\phi' \in\CS(\BA_{E, n})$, we have 
\[
\sum_{v \in E_n} \phi'(v) = \sum_{(x, y) \in F_n \times F^{-,n}} \phi'^\dagger (x,y).
\]
\end{theorem}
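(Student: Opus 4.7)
The plan is to reduce the theorem to the classical Poisson summation formula on $\BA_F^n$, using the $F$-linear decomposition $E = F \oplus jF$ coming from the totally imaginary element $j \in E$. Every $v \in E_n$ can be written uniquely as $v = x + jw^\vee$ with $x, w^\vee \in F_n$, giving
\[
\sum_{v \in E_n} \phi'(v) \;=\; \sum_{x \in F_n}\sum_{w^\vee \in F_n} \phi'(x + jw^\vee).
\]
For each fixed $x \in F_n$, the function $g_x(w^\vee) := \phi'(x + jw^\vee)$ is a Schwartz--Bruhat function on $\BA_F^n$, because $\phi' \in \CS(\BA_{E,n})$ and $w^\vee \mapsto x + jw^\vee$ is a closed $F$-affine embedding $\BA_F^n \hookrightarrow \BA_{E,n}$.

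Next I would apply Poisson summation to $g_x$ with respect to the non-degenerate pairing $(w^\vee, y) \mapsto \psi(jw^\vee y)$ between $\BA_F^n$ and $\BA_F^{-,n}$ --- this is exactly the pairing already built into the definition of the partial Fourier transform $^\dagger$. Under this pairing, $F^{-,n}$ is the Pontryagin dual of the discrete subgroup $F_n \subset \BA_F^n$. The local factor $|j|_v$ appearing in the definition of $\phi_v'^\dagger$ is precisely the ratio between the self-dual Haar measure on $F_v^n$ for this twisted pairing and the self-dual measure for the standard pairing $(w^\vee, y) \mapsto \psi_v(w^\vee y)$; adelically the product $\prod_v |j|_v = 1$ by the product formula, so standard Poisson summation cleanly yields
\[
\sum_{w^\vee \in F_n} g_x(w^\vee) \;=\; \sum_{y \in F^{-,n}} \widehat{g_x}(y) \;=\; \sum_{y \in F^{-,n}} \phi'^\dagger(x, y),
\]
the last equality being the definition of $^\dagger$. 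Summing over $x \in F_n$ gives the theorem.

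I do not expect a genuine obstacle here: the content is just ``Poisson summation along the direction $jF_n \subset E_n$.'' The one point requiring brief bookkeeping is the measure normalization --- verifying that the twisted pairing $(w^\vee, y) \mapsto \psi_v(jw^\vee y)$ combined with the $|j|_v$ factor in the definition of $^\dagger$ produces exactly the local self-dual measure on $F_v^n$ --- but this is a routine unwinding of definitions, and is in fact the reason the factor $|j|_v$ appears in the formula for $^\dagger$ in the first place.
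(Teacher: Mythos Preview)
Your proposal is correct and is exactly the argument the paper has in mind: the paper simply says the identity is ``proved by the Poisson summation formula,'' and your write-up is the standard unpacking of that --- decompose $E_n = F_n \oplus jF_n$ and apply adelic Poisson summation in the $jF_n$-direction, with the $\lvert j\rvert_v$ factor accounting for the change of self-dual measure. There is nothing to add.
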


Hence for any $a \in \GL_n(\BA_E)$ and $h \in \GL_n(\BA_F)$, we have
\begin{align*}
\Theta_\mu(ah, \phi') &=
\eta_{E/F}(\det a) \lvert \det h a \rvert^{\frac 12} \sum_{v \in E_n} \phi'(v h a) \\
&= \eta_{E/F}(\det h)^{-1} \sum_{(x, y)
\in F_n \times F^{-,n}} (R_\mu(ha) \phi')^\dagger (x, y)\\
&=
\sum_{(x, y) \in F_n \times F^{-,n}} (R_\mu(a) \phi')^\dagger (xh, h^{-1}y).
\end{align*}

\begin{definition}[cf. Definition \ref{def:ugood}]
\label{def:glgood}
An element $(f' = \otimes_v f_v', \phi' = \otimes_v \phi'_v)$
of $C_c^\infty(G'(\AA_F)) \times \cS(\AA_{E,n})$ is
called a \emph{good test function} if it satisfies the 
following properties.
\begin{enumerate}[(1)]
\item \label{glmatrixcoeff}
There exists a finite place $v_1$ of
$F$ which is split in $E$ such that $f_{v_1}'$ is of
the form
\[
    f_{v_1}'(g) = \widetilde{f_{v_1}'}(g) 
    \cdot \mathbf{1}_{G^{\prime}(F_{v_1})^1}(g),
\]
where $\widetilde{f_{v_1}'}$ is a matrix coefficient of a
supercuspidal representation of $G'(F_{v_1})$, and
\[
    G'(F_{v_1})^1 = \{g \in G'(F_{v_1}) : \lvert \det g \rvert_{v_1} = 1 \}.
\]
\item \label{glrsssupp} 
There exists a finite
place $v_2$ of $F$ which is split in $E$ such that
$\widetilde f_{v_2}'$ is supported on the regular
semisimple $\GL_n(E_{v_2})$-orbits of $\S_n(K_{v_2})$,
and if $[\gamma, x, y] \in X$ and
\[
    \widetilde f_{v_2}'(g^{-1}\gamma \sigma_{E/F}(g))
    \ol{(R_{\mu_{v_2}}(g)\phi_{v_2}')^\dagger(x, y)} \neq 0
\]
for some $g \in \GL_n(E_{v_2})$, then $[\gamma, x, y] \in X_\rss$.

\item \label{glarch}
For all archimedean places $v$ of $F$, we have
$E_v = F_v \times F_v$, and $\phi_v'$ is a finite 
linear combination of functions of the form 
$\phi_{1,v}' \otimes \phi_{2,v}'$ where
$\phi_{1,v}', \phi_{2,v}' \in \cS(F_{v,n})$.
\end{enumerate}
\end{definition}

We choose the following
distribution to compare with unitary sides. Recall we have subgroups
$H_1=\Res_{E/F} \GL_n \to G’=\Res_{L/F} \GL_n \leftarrow H_2=\Res_{K/F} \GL_n.$ Let $\eta_{L/K} : K^\times \backslash \BA_K^\times \to \BC$ be the quadratic character associated to $L/K$ given by global class field theory. For $h_2' \in H_2(F)$, we write $\eta_{L/K}(h)=\eta_{L/K}(\det h)$.

\begin{definition}
For $f' \in C_c^\infty(G'(\AA))$ and $\phi' \in
\cS(\AA_{E,n})$, define the distribution
\begin{equation}
   I(f', \phi') =
    \int_{[H_1]}\int_{[H_2]} K_{f'}(h_1, h_2) \ol{\Theta_\mu(h_1, \phi')}\eta_{L/K}(h_2)^{n+1}\, dh_1\,dh_2. 
\end{equation}
where 
$K_{f'}(h_1, h_2) = \sum_{\gamma \in G'(F)} f'(h_1^{-1} \gamma h_2).$
\end{definition}

We have the natural anti-symmetrization map 
$$
\xi \mapsto \gamma=\xi \sigma_{L/K}(\xi)^{-1} \colon G'(F)/H_2(F) \to S_{n,L/K} \subseteq G'(F).
$$

Following Definition \ref{normal reps: GL side}, let $S$ be a
set of normal representative of the $H_1(F)$-orbits 
on $S_{n,L/K}$. Let
\[
[X_\rss]=(S \times F_n \times F^{-,n})_\rss / \sim
\]
be the set of equivalence classes of regular semisimple pairs in $S \times F_n \times F^{-,n}$.

\begin{definition}\label{defn: local orb GL side}
For $[\xi, x, y] \in G'(F) \times F_n \times F^{-,n}$,
$f_v' \in C_c^\infty(G'(F_v))$ with $\xi\sigma_{L/K}(\xi)^{-1} \in S$, and $\phi_v' \in \cS(E_{v,n})$,
define the \emph{(local) orbital integral}
\begin{equation}
    \Orb([\xi, x, y], f_v', \phi_v'):= \int_{H_1(F_v)}\int_{H_2(F_v)}
        f_v'(h_1^{-1}\xi h_2) 
        \ol{(R_{\mu_v}(h_1) \phi_v')^\dagger(x, y)}
        \eta_{L_v/K_v}(h_2)^{n+1}\, dh_1 \, dh_2.
\end{equation}        
\end{definition}

Note that we have
\[
    \Orb([h_1^{-1}\xi h_2, xh_1, h_1^{-1}y],
    f_v', \phi_v')
    = \eta_{E_v/F_v}(h_1)\eta_{L_v/K_v}(h_2)^{n+1}
   \Orb([\xi, x, y], f_v', \phi_v') .
\]

To make the orbital integrals only depend on $\gamma$
rather than a lifting $\xi$, we fix a character $\eta_v' \colon L_v^\times \to \BC^\times$
satisfying $\eta_v'|_{K_v^\times} 
= \eta_{L_v/K_v}^{n+1}$. Then $\xi \in G'(F)/H_2(F)$ only depends on $\gamma \in \mathrm{S}_{n,L/K}(F)$. Define a new function $\widetilde {f_v'}$ on $\mathrm{S}_{n,L/K}(F_v)$ by
\begin{equation}
\label{eq:ftilde}
 \widetilde {f_v'} (\gamma) \coloneqq
    \eta_v'(\xi)\int_{H_2(F_v)} f_v'(\xi h_2)
    \eta_{L_v/K_v}(h_2)^{n+1}\, dh_2.    
\end{equation}   

We define a simplified version of the orbital
integral in terms of $\wt f'_v$.

\begin{definition} \label{defn: semip orb int on GL side}
For $[\gamma, x, y] \in [X_\rss]$, define the orbital integral for $\widetilde f_v' \in C_c^\infty(\mathrm{S}_{n,L/K})$ and $\phi_v' \in \cS(E_{v,n})$ as 
\[
    \Orb([\gamma, x, y], \widetilde f_v', \phi_v')
    = \int_{H_1(F_v)}
    \widetilde f_v'(h_1^{-1}\gamma \sigma_{L/K}(h_1) ) \ol{(R_{\mu_v}(h_1)\phi_v')^\dagger(x, y)}
    \eta_v'(h_1)\, dh_1.
\]
Note as $h \in H_1(F_v)$, we have $\sigma_{L/K}(h_1)=\sigma_{E/F}(h_1)$.

\end{definition}

By definition, for $\xi \in G'(F)$ with
$\xi\sigma_{L/K}(\xi)^{-1} = \gamma$, we have
\[
    \Orb([\xi, x, y], f_v', \phi_v')
    = \eta_v'(\xi)^{-1}\cdot \Orb([\gamma, x, y], \widetilde f_v', \phi_v').
\]    

\begin{remark}
We simplified the orbital integrals into the action of $H_{1}$ on $G'/H_2$. In the split case ($K=F \times F$), we are consider the $\sigma_{E/F}$-conjugacy action of $\GL_n(E)$ on $\mathrm{S}_{n,E/F} \times \mathrm{S}_{n, E/F}$. As $\GL_n(E)$ acts transitively on $\mathrm{S}_{n,E/F}$, we may choose a good representative $\xi \in \{1\} \times \mathrm{S}_{n,E/F}$, then we reduce to the action of $\GL_n(F)$ on $\mathrm{S}_{n,E/F}$ by conjugacy. 
\end{remark}

\begin{proposition}
\label{prop:gldecomp}
For good test functions $(f', \phi')$, we have
\[
    I(f', \phi') = \Delta_{H_1}^{*,-1} \Delta_{H_2}^{*,-1} \sum_{[\gamma, x, y] \in [X_\rss]}
    \prod_v \Orb([\xi, x, y], f_v', 
    \phi_v'),
\]
where $\xi$ is an element of $\GL_n(L)$ such
that $\gamma = \xi\sigma_{L/K}(\xi)^{-1}$, and
the product of local orbital integrals does
not depend on the choice of $\xi$.
\end{proposition}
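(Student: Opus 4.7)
I'll follow the structure of Proposition \ref{prop:udecomp}, adapted to the one-theta-one-character setting. The first step is to decompose the kernel using normal representatives: by Proposition \ref{prop:normalrep} and the anti-symmetrization bijection $G'(F)/H_2(F) \simeq \S_{n,L/K}(F)$ as $H_1(F)$-sets, every double coset in $H_1(F) \backslash G'(F)/H_2(F)$ admits a representative $\xi \in G'(F)$ with $\gamma := \xi\sigma_{L/K}(\xi)^{-1} \in S$, and by Lemma \ref{lem:goodsn} the stabilizer $T'_\xi$ projects isomorphically onto $T_\gamma \subseteq H_{10}(F) = \GL_n(F)$. Using the $H_1(F)$-automorphy of $\Theta_\mu$ and the $H_2(F)$-invariance of $\eta_{L/K}^{n+1}$ (triviality of the Hecke character on principal ideles), the standard manipulation gives
\[
I(f',\phi') = \sum_{\xi\in S} \int_{[H_1]\times[H_2]} \sum_{(a,b)\in T'_\xi(F)\backslash (H_1\times H_2)(F)} f'(h_1^{-1}a^{-1}\xi b h_2)\, \ol{\Theta_\mu(h_1,\phi')}\, \eta_{L/K}(h_2)^{n+1}\, dh_1\, dh_2.
\]

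Next I apply Theorem \ref{key:reduction-GL} to expand $\ol{\Theta_\mu(ah_1,\phi')} = \sum_{(x,y) \in F_n \times F^{-,n}} \ol{(R_\mu(ah_1)\phi')^\dagger(x,y)}$, where I have inserted $h_1 \to ah_1$ by $H_1(F)$-automorphy to mirror the insertion used on the unitary side. The $(x,y) = (0,0)$ contribution and any other non-regular-semisimple term vanish by the good-test-function condition \eqref{glrsssupp}, so the sum effectively ranges over $[X_\rss]$. Splitting the $(x,y)$-sum by $T_\gamma(F)$-orbits, $\sum_{(x,y)} = \sum_{[x,y]} \sum_{t \in T_\gamma(F)}$ (trivial stabilizer for regular semisimple $[x,y]$), and combining $(a,b,t) \mapsto (a', b') := (ta,\, \xi^{-1}t\xi\, b)$ turns the triple sum into a single sum over $(a',b') \in (H_1\times H_2)(F)$. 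The combining step equates integrands on the nose thanks to two character identities: (i) the partial Fourier transform equivariance $(R_\mu(t)\phi')^\dagger(x,y) = \eta_{E/F}(\det t)\,\phi'^\dagger(xt, t^{-1}y)$ contributes $\eta_{E/F}(\det t) = 1$ for $\det t \in F^\times \subseteq \BA_F^\times$; and (ii) the transformation $\eta_{L/K}(\xi^{-1}t\xi b h_2) = \eta_{L/K}(\det t)\,\eta_{L/K}(bh_2)$ contributes $\eta_{L/K}(\det t)^{n+1} = 1$ for $\det t \in F^\times \subseteq K^\times$. Both identities are immediate from the triviality of a Hecke character on principal ideles.

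Unfolding the resulting full $(a',b')$-sum against $\int_{[H_1]\times[H_2]}$ converts the outer integral into one over $(H_1 \times H_2)(\AA)$:
\[
I(f',\phi') = \sum_{[\gamma,x,y] \in [X_\rss]} \int_{(H_1\times H_2)(\AA)} f'(h_1^{-1}\xi h_2)\, \ol{(R_\mu(h_1)\phi')^\dagger(x,y)}\, \eta_{L/K}(h_2)^{n+1}\, dh_1\, dh_2,
\]
and the Tamagawa decomposition $dh_1\,dh_2 = (\Delta_{H_1}^*)^{-1}(\Delta_{H_2}^*)^{-1}\prod_v dh_{1,v}\, dh_{2,v}$ factorizes each global integral into the stated product of local orbital integrals. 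Independence of $\prod_v \Orb([\xi, x, y], f_v', \phi_v')$ from the choice of lift $\xi$ of $\gamma$ follows from the local transformation formula $\Orb([\xi h_2^0, x, y], f_v', \phi_v') = \eta_{L_v/K_v}(h_2^0)^{n+1} \Orb([\xi, x, y], f_v', \phi_v')$ for $h_2^0 \in H_2(F)$, whose global product is $\eta_{L/K}(h_2^0)^{n+1} = 1$. Convergence of all sums and integrals is guaranteed by the good-test-function conditions \eqref{glmatrixcoeff} and \eqref{glrsssupp}, just as on the unitary side; the main obstacle beyond bookkeeping is verifying the integrand identity after the combining map $(a,b,t) \to (a',b')$, which reduces as above to the Hecke-character triviality on principal ideles.
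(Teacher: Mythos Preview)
Your proof is correct and takes essentially the same approach as the paper. The only cosmetic difference is that the paper first unfolds the $H_2$-integral completely (passing from $[H_2]$ to $H_2(\AA)$ via $G'(F)/H_2(F)\simeq \S_{n,L/K}(F)$) before decomposing the $H_1$-side and expanding the theta series, whereas you keep $H_1$ and $H_2$ together and unfold against the joint stabilizer $T'_\xi$; the combining map $(a,b,t)\mapsto (ta,\xi^{-1}t\xi\, b)$ you use is precisely the identification $T'_\xi\simeq T_\gamma$ of Lemma~\ref{lem:goodsn}, so the two arguments are the same computation in a different order.
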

\begin{proof}
First note that
\begin{align*}
    &\int\limits_{\GL_n(K) \bs \GL_n(\AA_K)}
    \sum_{\xi \in \GL_n(L)}
    f'(g^{-1}\xi h) \eta_{L/K}(h)^{n+1}\,dh\\ 
    &= \int\limits_{\GL_n(K)\bs \GL_n(\AA_K)}
    \sum_{\xi \in \GL_n(L)/\GL_n(K)}
    \sum_{a \in \GL_n(K)} f'(g^{-1}\xi a h)
    \eta_{L/K}(ah)^{n+1}\, dh \\
    &= \sum_{\xi \in \GL_n(L)/\GL_n(K)}
    \int_{\GL_n(\AA_K)} f'(g^{-1}\xi h)\eta_{L/K}(h)^{n+1}\, dh\\
    &= \sum_{\gamma \in \S_n(K)}
    \int_{\GL_n(\AA_K)} f'(g^{-1}\xi h)\eta_{L/K}(h)^{n+1}\, dh,
\end{align*}
where in the last sum, $\xi \in \GL_n(L)$ such that $\gamma = \xi\xi^{\sigma, -1}$,
and the integral does not depend on the choice of
$\xi$.
Then
\begin{align*}
    &\sum_{\gamma \in \S_n(K)}
    \int_{\GL_n(\AA_K)} f'(g^{-1}\xi h)\eta_{L/K}(h)^{n+1}
    \ol{\Theta_\mu(g, \phi')} \, dh \\
    &= \sum_{\gamma \in S} \sum_{a \in T_\gamma \bs \GL_n(E)}
    \int_{\GL_n(\AA_K)}
    f'(g^{-1}a^{-1}\xi h) \eta_{L/K}(h)^{n+1}
    \ol{\Theta_\mu(ag, \phi')} \, dh\\
    &= \sum_{\gamma \in S} \sum_{a \in T_\gamma\bs \GL_n(E)}
    \int_{\GL_n(\AA_K)}
    f'(g^{-1}a^{-1}\xi h)\eta_{L/K}(h)^{n+1}\sum_{[x, y] \in F_n \times F^{-,n}}
    \ol{(R_\mu(ag)\phi')^\dagger(x, y)} \, dh\\
    &= \sum_{\gamma \in S} \sum_{a \in T_\gamma \bs \GL_n(E)}
    \int_{\GL_n(\AA_K)} f'(g^{-1}a^{-1}\xi h)
    \eta_{L/K}(h)^{n+1}\\
    &\hspace{15em}\times \sum_{[x, y] \in (F_n \times F^{-,n})/T_\gamma} 
    \sum_{t \in T_{[\gamma, x, y]} \bs T_\gamma}
    \ol{(R_\mu(tag)\phi')^\dagger(x, y)}\, dh \\
    &= \sum_{[\gamma, x, y] \in [X_\rss]}
    \sum_{a \in \GL_n(E)} 
    \int_{\GL_n(\AA_K)}
    f'(g^{-1}a^{-1}\xi h) \eta_{L/K}(h)^{n+1}
    \ol{(R_\mu(ag)\phi')^\dagger(x, y)}\, dh,
\end{align*}
where $T_{[\gamma, x, y]}$ denotes the
stabilizer of $[x, y]$ in $T_\gamma$.
Combining the above calculations gives
\begin{align*}
    I(f', \phi') &= 
    \sum_{[\gamma, x, y] \in [X_\rss]}
    \int_{\GL_n(\AA_E)}
    \int_{\GL_n(\AA_K)} 
    f'(g^{-1}\xi h) \ol{(R_\mu(g)\phi')^\dagger(x, y)}\eta_{L/K}(h)^{n+1}\,dh\, dg. \qedhere
\end{align*}
\end{proof}

\subsection{Definition of smooth transfer} 
For $[\delta, x, y] \in \S_{n, L/E} \times V^\vee \times V$,
define
\[
    T_{[\delta, x, y]} 
    = j_M^{n(n-1)/2} \cdot \det \begin{bmatrix}
        x \\ x\delta \\ \vdots \\ x\delta^{n-1}
    \end{bmatrix}.
\]
Note that
\[
    \sigma_{L/E}(T_{[\delta, x, y]})
    = (\det \delta)^{-(n-1)} T_{[\delta, x, y]}.
\]
For $[\xi, x, y] \in \GL_n(L) \times F_n \times F^{-,n}$,
let $\gamma = \xi\sigma_{L/K}(\xi)^{-1}$
and $\delta = \gamma\sigma_{L/E}(\gamma)^{-1}$.
We have
\[
    \sigma_{L/E}(\det(\xi\sigma_{L/M}(\xi))^{-1}) = \det \delta\cdot \det(\xi\sigma_{L/M}(\xi))^{-1},
\]
so
\[
    \det(\xi\sigma_{L/M}(\xi))^{-(n-1)}T_{[\delta, x, y]}
    \in E.
\]
\begin{definition}[Transfer factor]
If $\gamma$ is normal, then $\delta \in \GL_n(M)$,
so it is easy to see that $\det(\xi\sigma_{L/M}(\xi))^{-(n-1)}T_{[\delta, x, y]}$
is also an element of $M$, so it is an element of $F$.
Thus, we may define
\begin{align*}
    \Omega([\xi, x, y]) &= \eta_{E/F}(\det(\xi\sigma_{L/M}(\xi))^{-(n-1)} 
    T_{[\delta, x, y]}).
\end{align*}    
\end{definition}

This transfer factor satisfies 
\begin{equation}
\label{eq:Omegainvariance}
    \Omega([g^{-1}\xi h, xg, g^{-1}y]) 
    = \eta_{E/F}(g)\eta_{L/K}(h)^{n+1} \Omega([\xi, x, y])
\end{equation}
for $g \in \GL_n(F)$, $h \in H_2(F)$.

\begin{definition}
For $[\gamma, x, y] \in \S_{n,L/K} \times F_n
\times F^{-,n}$ with $\gamma$ normal, define normalized transfer factor
\[
    \omega([\gamma, x, y]) =
    \eta'(\xi)\Omega([\xi, x, y]),
\]
where $\xi$ is an element of $\GL_n(L)$ such
that $\gamma = \xi\sigma_{L/K}(\xi)^{-1}$.
This does not depend on the choice of $\xi$, and
satisfies
\[
    \omega([h^{-1}\gamma h, xh, h^{-1}y])
    = \eta_{E/F}(h)\omega([\gamma, x, y])
\]
for $h \in \GL_n(F)$.
\end{definition}

\begin{definition}[Smooth transfer] \label{defn: smooth transfer}
We say that $(f', \phi')$ and $\{(f^V, \phi_1^V \otimes 
\phi_2^V)\}_{V \in \SHerm_n^\times(F)}$ are
\emph{$(S, R^V)$-smooth transfer} of each other if 
\[
    \Orb([\gamma, x, y], \widetilde f', \phi')
    = \omega([\gamma, x, y])
    \Orb([\zeta, z]^V, f^V, \phi_1^V\otimes \phi_2^V).
\]
for all matching $[\gamma, x, y] \in X_\rss$
and $[\zeta, z]^V \in Y_\rss^V$.
This is equivalent to
\[
    \Orb([\xi, x, y], f', \phi')
    = \Omega([\xi, x, y])
    \Orb([\zeta, z]^V, f^V, \phi_1^V\otimes \phi_2^V),
\]
where $\gamma = \xi\sigma_{L/K}(\xi)^{-1}$. 
In particular, it does not depend on the choice of
$\eta'$.
\end{definition}

\section{Matching of local orbital integrals} \label{section: matching orbits and integrals}

Given geometric decompositions in previous section, in this section we show matchings of local orbital integrals under compatible representatives (Definition \ref{def:compatible}) for the purpose of global comparison of relative trace formulas.

In this section, all objects are over $F_v$ for a local place $v$ of $F$, and we suppress $v$ from the notation.

\subsection{Kottwitz representatives}
Suppose that $E$ and $K$
are both quadratic \'etale $F$-algebras. 
In this subsection, we give definitions of Kottwitz,
$k$-Kottwitz, and compatible representatives.

In the subsequent subsections, we will prove the
fundamental lemma and existence of smooth
transfer for certain choices of representatives,
in the following cases.

\begin{itemize}
    \item In Section \ref{sec:FL}, we prove the 
    fundamental lemma when $E/F$ is inert 
    and $K/F$ is inert or split, under the assumption
    that every element of $S$ and $R^V$ is
    Kottwitz.

    \item In Section \ref{sec:splittransfer},
    we prove both the fundamental lemma and the
    existence of smooth transfer when $E/F$
    is split, and $K/F$ is ramified, inert, or split,
    under the assumption that $S$ and $R^V$
    are compatible.
    
    \item In Section \ref{sec:Ksplittransfer},
    we prove the existence of smooth transfer  
    when $E/F$ ramified, inert, or split, 
    and $K/F$ split, under the assumption
    that every element of $S$ and $R^V$ is
    $k$-Kottwitz, for a suitable choice of $k \ge 0$
    depending on the test functions.
\end{itemize}

\begin{center}
\begin{tabular}{|c|c|c|c|}
\hline
    \diagbox[width=7em]{$E/F$}{$K/F$} & inert & split & ramified \\\hline
    inert & FL for Kottwitz reps. & 
    \begin{tabular}{@{}c@{}}
        FL for Kottwitz reps. \\ Transfer for $k$-Kottwitz reps.
    \end{tabular} & \\\hline
    split & \begin{tabular}{@{}c@{}}
        FL and transfer for \\ compatible reps.
    \end{tabular} & \begin{tabular}{@{}c@{}}
        FL and transfer for \\ compatible
        reps.
    \end{tabular}& \begin{tabular}{@{}c@{}}
        Transfer for \\ compatible
        reps.
    \end{tabular}\\\hline
    ramified & \begin{tabular}{@{}c@{}}
          \hspace{1em} \\ \hspace{1em} 
    \end{tabular} & Transfer for $k$-Kottwitz reps. & \\\hline
\end{tabular}
\end{center}

\begin{definition}
\label{def:kottwitzgeneral}
We say that an element $\xi \in \GL_n(L)$ is \emph{Kottwitz}
if either of the following holds.
\begin{enumerate}[(1)]
\item We have
\[
    \xi = c_{n-1}(\xi\sigma_{L/E}(\xi)^{-1})^{n-1}
    + \cdots + c_0
\]
for some $c_0, \dots, c_{n-1} \in \O_L$.
\item 
There does not exist $g \in \GL_n(L)$ such that
$g^{-1}\xi\sigma_{L/E}(\xi)^{-1} g \in \GL_n(\O_L)$.
\end{enumerate}
\end{definition}

\begin{definition}
\label{def:kkottwitz}
Let $\varpi_F$ be a uniformizer of $F$, and let
$k \ge 0$ be an integer. We say that an element
$(\xi_1, \xi_2) \in \GL_n(E) \times \GL_n(E)$
is \emph{$k$-Kottwitz} if 
\[
    \xi_1 = c_{n-1}(\xi_1\xi_2^{-1})^{n-1}
    + \cdots + c_0
\]
for some $c_0 \in 1 + \varpi_F^k \O_E$ 
and $c_1, \dots, c_{n-1} \in \varpi_F^k \O_E$.
\end{definition}

\begin{lem}
\label{lem:kottwitzproperties}
Suppose $\xi \in \GL_n(L)$ is Kottwitz, and
an element of either $\S_{n,L/K}$ or $\U(V)(K)$.
\begin{enumerate}[\normalfont(1)]
    \item 
    \label{kot}
    If $g^{-1}\xi\sigma_{L/E}(\xi)^{-1} g
    \in \GL_n(\O_L)$ for $g \in \GL_n(L)$, then
    $g^{-1}\xi g \in \GL_n(\O_L)$.
    
    \item
    \label{kot2}
    If $h_1^{-1}\xi h_2 \in \GL_n(\O_L)$ for 
    $h_1, h_2 \in \GL_n(E)$, then $h_2^{-1}h_1 \in 
    \GL_n(\O_E)$.
    
    \item
    \label{glkot1} Assume that $E/F$ is unramified.
 If $g^{-1}\xi\sigma_{E/F}(g)
    \in \GL_n(\O_L)$ for $g \in \GL_n(E)$, then
    $g \in \GL_n(F)\GL_n(\O_E)$.
\end{enumerate}
\end{lem}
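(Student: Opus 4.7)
My first move will be to reduce all three parts to clause (1) of Definition \ref{def:kottwitzgeneral}. The key observation is that in each part the hypothesis can be used to exhibit a $g \in \GL_n(L)$ with $g^{-1}\delta g \in \GL_n(\O_L)$, where $\delta = \xi\sigma_{L/E}(\xi)^{-1}$: indeed applying $\sigma_{L/E}$ to the given relation (which fixes the auxiliary $\GL_n(E)$-elements) and multiplying with the original produces exactly such a conjugation. For part (1) this is immediate; for part (2) we get $h_1^{-1}\delta h_1 \in \GL_n(\O_L)$ via
\[
    (h_1^{-1}\xi h_2)(h_1^{-1}\sigma_{L/E}(\xi)h_2)^{-1} = h_1^{-1}\delta h_1,
\]
and for part (3) the analogous manipulation gives $g^{-1}\delta g \in \GL_n(\O_L)$. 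Clause (2) of the Kottwitz definition is therefore ruled out, so clause (1) holds: $\xi = \sum_{i=0}^{n-1} c_i \delta^i$ with $c_i \in \O_L$. A separate small ingredient I will record is that $\det \xi \in \O_L^\times$; this follows from $\xi\sigma_{L/K}(\xi) = 1$ (or $\xi\xi^* = 1$ on the unitary side), which gives $\det\xi \cdot \sigma(\det\xi) = 1$, and hence $v_L(\det\xi)=0$ since $v_L$ is Galois-invariant.

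With these two facts in hand, part (1) is immediate: $g^{-1}\xi g = \sum c_i (g^{-1}\delta g)^i$ lies in $\Mat_n(\O_L)$ and has unit determinant, hence lies in $\GL_n(\O_L)$. For part (2), I will apply part (1) with $g = h_1$ to deduce $h_1^{-1}\xi h_1 \in \GL_n(\O_L)$, and then write
\[
    h_1^{-1} h_2 = (h_1^{-1}\xi h_1)^{-1}(h_1^{-1}\xi h_2) \in \GL_n(\O_L).
\]
The conclusion $h_2^{-1}h_1 \in \GL_n(\O_E)$ then follows from the identification $\GL_n(E) \cap \GL_n(\O_L) = \GL_n(\O_E)$, which in turn reduces to $E \cap \O_L = \O_E$; the latter holds in all of the biquadratic configurations of $L = E \otimes_F K$ relevant to us (split, inert, or with $K/F$ ramified against unramified $E/F$), as a direct calculation in a basis $L = K \oplus j_E K$ shows.

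For part (3), I will similarly use part (1) to obtain $g^{-1}\xi g \in \GL_n(\O_L)$ and then compute
\[
    g^{-1}\sigma_{E/F}(g) = (g^{-1}\xi g)^{-1}(g^{-1}\xi \sigma_{E/F}(g)) \in \GL_n(\O_L) \cap \GL_n(E) = \GL_n(\O_E).
\]
Setting $u = g^{-1}\sigma_{E/F}(g)$, the cocycle identity $u\sigma_{E/F}(u) = 1$ is automatic. The main (mildly nontrivial) input is then integral Hilbert 90 for the unramified quadratic extension $E/F$, i.e., $H^1(\Gal(E/F), \GL_n(\O_E)) = 1$; this follows from Lang's theorem for $\GL_n$ over the residue field together with smoothness of $\GL_n$ over $\O_F$ (Hensel lifting of cocycles). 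This yields $h \in \GL_n(\O_E)$ with $u = h^{-1}\sigma_{E/F}(h)$, so $gh^{-1}$ is $\sigma_{E/F}$-fixed and hence lies in $\GL_n(F)$, giving $g \in \GL_n(F)\GL_n(\O_E)$.

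The only real step that is more than bookkeeping is the invocation of integral Hilbert 90 in part (3); everything else is the polynomial identity from clause (1) of the Kottwitz definition, combined with the norm-one determinant observation. I expect no further obstacles.
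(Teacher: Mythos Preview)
Your proof is correct and follows essentially the same route as the paper: reduce to clause (1) of the Kottwitz definition via the hypothesis, use the polynomial expression $\xi = \sum c_i\delta^i$ to get integrality, and for (2), (3) reduce to $g^{-1}\sigma_{E/F}(g)\in\GL_n(\O_E)$ and invoke the integral Hilbert 90 statement (which the paper cites as \cite[Lemma~8.7]{Kottwitz1980}). One small remark: your justification ``$v_L(\det\xi)=0$ since $v_L$ is Galois-invariant'' is literally correct only when $L$ is a field; in the product cases you should instead say that $\det\xi\in\O_L$ (from $g^{-1}\xi g\in\Mat_n(\O_L)$) together with $(\det\xi)\,\sigma_{L/K}(\det\xi)=1$ forces $(\det\xi)^{-1}=\sigma_{L/K}(\det\xi)\in\O_L$, hence $\det\xi\in\O_L^\times$ --- which is exactly the paper's argument.
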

\begin{proof}
Since $\xi$ is Kottwitz,
$g^{-1}\xi\sigma_{L/E}(\xi)^{-1} g \in \GL_n(\O_L)$ 
implies $g^{-1}\xi g \in \Mat_n(\O_L)$.
If $\xi \in \S_{n,L/K}$, then $\sigma_{L/E}(\xi)^{-1}
= \sigma_{L/M}(\xi)$, and if $\xi \in \U(V)(K)$,
we have $\sigma_{L/E}(\xi)^{-1} = \sigma_{L/E}(\xi^\ast)$.
In either case, $\det \xi \in \O_L$ implies
$\det(\sigma_{L/E}(\xi)^{-1}) \in \O_L$ as well.
Since $\det\xi \det(\sigma_{L/E}(\xi)^{-1}) 
\in \O_L^\times$, we have $\det \xi \in \O_L^\times$,
so $g^{-1}\xi g \in \GL_n(\O_L)$.
This proves \eqref{kot}.

\eqref{kot2} follows from the fact 
$h_1^{-1}\xi h_2 \in \GL_n(\O_L)$ implies
$h_1^{-1}\xi\sigma_{L/E}(\xi)^{-1}
h_1 \in \GL_n(\O_L)$, which implies $h_1^{-1}\xi h_1
\in \GL_n(\O_L)$ by \eqref{kot}.
\eqref{glkot1} follows from \eqref{kot2}
and the fact that if $g^{-1}\sigma_{E/F}(g)
\in \GL_n(\O_E)$ for $g \in \GL_n(E)$, then
$g \in \GL_n(F)\GL_n(\O_E)$ by 
\cite[Lemma~8.7]{Kottwitz1980}.
\end{proof}

\begin{lem}
\label{lem:kottwitzexist}
Assume that $E/F$ and $K/F$ are both inert.
Let $\gamma_0 \in \S_{n,L/K}$ be normal, and let
$\delta = \gamma_0\sigma_{L/E}(\gamma_0)^{-1}$.
Suppose there exists $g \in \GL_n(L)$ such that
$g^{-1}\delta g \in \GL_n(\O_{L})$. 
Then there exists $\gamma \in \S_{n,L/K}$ with 
$\gamma\sigma_{L/E}(\gamma)^{-1} = \delta$ such
that $\gamma$ is Kottwitz.
\end{lem}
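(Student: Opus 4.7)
The strategy is to exploit the unramifiedness of $L/F$, which follows from the hypothesis that both $E/F$ and $K/F$ are inert, to run a variant of the proof of Proposition \ref{prop:normalrep} with integral control, and then invoke \cite[Lemma~8.8]{Kottwitz1980} to pass from integrality to the Kottwitz polynomial form.

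\emph{Parametrizing the normal fiber.} Normality forces any $\gamma \in \S_{n,L/K}$ with $\gamma\sigma_{L/E}(\gamma)^{-1} = \delta$ to commute with $\delta$, so $\gamma$ lies in the centralizer $Z \coloneqq Z_{\Mat_n(L)}(\delta)$. Writing $\gamma = t\gamma_0$ for another such normal element, a direct computation shows $t \in Z^\times$, $\sigma_{L/E}(t) = t$ (so $t \in Z \cap \Mat_n(E)$), and $t\sigma_{L/K}(t) = 1$, where $\sigma_{L/K}$ restricts to $\sigma_{E/F}$ on $Z \cap \Mat_n(E)$. The set of normal elements in the fiber over $\delta$ is thus a torsor under the unramified norm-one torus $T \coloneqq \{t \in (Z \cap \Mat_n(E))^\times : t\sigma_{E/F}(t) = 1\}$, and contains $\gamma_0$.

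\emph{Linearization and integral conjugation.} Following the proof of Proposition \ref{prop:normalrep}, choose $\alpha \in \O_M^\times$ with $\alpha\sigma_{M/F}(\alpha) = 1$, adjusted modulo the residue field of the unramified extension $M/F$ to avoid the eigenvalues of $\delta_0 \coloneqq g^{-1}\delta g \in \GL_n(\O_L)$. Set $A \coloneqq j_M c_\alpha(\delta) \in \Mat_n(E)$; then $g^{-1}Ag \in \Mat_n(\O_L)$, and the identity $\sigma_{E/F}(A) = \gamma^{-1} A\gamma$ (established in the proof of Proposition \ref{prop:normalrep}) shows that producing a normal $\gamma$ amounts to conjugating $A$ into $\gl_n(F)$ by some $h \in \GL_n(E)$ and applying $c_\alpha^{-1}$. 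Because the ambiguity of $h$ is governed by the unramified torus $T$, an application of \cite[Lemma~8.8]{Kottwitz1980} supplies such an $h$ in $\GL_n(\O_E)$ with $h^{-1}Ag \cdot h \in \gl_n(\O_F)$; tracking this through the Cayley inverse produces a normal $\gamma$ in the fiber with $g_1^{-1}\gamma g_1 \in \GL_n(\O_L)$ for $g_1 \coloneqq gh$.

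\emph{Upgrading to Kottwitz form.} Since $\gamma$ is normal, $\gamma \in L[\delta] \subseteq Z$, so we may write $\gamma = \sum_{i=0}^{n-1} c_i \delta^i$ with $c_i \in L$. Conjugating by $g_1$ yields $g_1^{-1}\gamma g_1 = \sum_i c_i \delta_1^i$ where $\delta_1 \coloneqq g_1^{-1}\delta g_1 \in \GL_n(\O_L)$; the left-hand side is integral, and Vandermonde inversion against the characteristic polynomial of $\delta_1$ (integral, with unit constant term since $\det\delta \in \S_{1,L/E} \cap \O_L^\times$) forces $c_i \in \O_L$. This is exactly Definition \ref{def:kottwitzgeneral}(1), so $\gamma$ is Kottwitz.

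\emph{Main obstacle.} The crux is the final step: promoting integrality of $\gamma$ (after $g_1$-conjugation) to the polynomial form with $\O_L$-coefficients. When $\delta$ is regular semisimple with unit discriminant this is a direct Vandermonde argument, but in general one must track the integral structure of the \'etale $\O_L$-subalgebra generated by $\delta_0$ and import this into $Z$ via $g_1$. Moreover, the invocation of \cite[Lemma~8.8]{Kottwitz1980} must be adapted from a single Galois involution to the present double-involution setting $(\sigma_{L/E}, \sigma_{L/K})$, which amounts to checking the vanishing of the relevant Galois cohomology for the norm-one torus $T$ in the unramified biquadratic situation --- this is where the hypothesis that both $E/F$ and $K/F$ are inert enters essentially.
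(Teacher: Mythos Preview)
You have missed the decisive structural fact: over a local field, there is a \emph{unique} unramified quadratic extension, so ``$E/F$ and $K/F$ both inert'' forces $K = E$, and hence $L = E \otimes_F K \cong E \times E$ is split. The paper's proof exploits this immediately: writing $\gamma_0 = (\gamma_{01}, \sigma_{E/F}(\gamma_{01})^{-1})$ and $\delta = (\delta_1, \delta_1^{-1})$ with $\delta_1 = \gamma_{01}\sigma_{E/F}(\gamma_{01}) \in \GL_n(F)$, the problem reduces to finding $\gamma_1 \in \GL_n(E)$ with $\gamma_1\sigma_{E/F}(\gamma_1) = \delta_1$ expressed as an $\O_E$-polynomial in $\delta_1$, which is exactly what \cite[Lemma~8.8]{Kottwitz1980} (as packaged in \cite[Lemma~5.4]{TGGP-Wang}) provides in one line. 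There is no biquadratic situation here at all, so your framing of the ``main obstacle'' as a cohomology vanishing for a double-involution torus is addressing a nonexistent difficulty.

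Beyond this, your argument has two concrete gaps even on its own terms. First, your invocation of \cite[Lemma~8.8]{Kottwitz1980} to conjugate $A$ into $\gl_n(\O_F)$ is a misapplication: that lemma concerns producing a \emph{twisted} square root of an integral element as an integral polynomial, not improving an ordinary conjugation to an integral one. Second, your ``Upgrading to Kottwitz form'' step --- deducing $c_i \in \O_L$ from integrality of $g_1^{-1}\gamma g_1$ and $g_1^{-1}\delta g_1$ via Vandermonde --- fails whenever the discriminant of $\delta$ is not a unit, as you yourself note; this is not a technicality but the entire content of Kottwitz's lemma, which you have circumvented rather than used. The Cayley-transform linearization, while correct as a device in the proof of Proposition~\ref{prop:normalrep}, buys nothing here and should be discarded in favor of the direct splitting $L = E \times E$.
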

\begin{proof}
  Since $K = E$, we have $L = E \times E$ 
  and $\gamma_0 = (\gamma_{01}, \sigma_{E/F}(\gamma_{01})^{-1})$
  for some $\gamma_{01} \in \GL_n(E)$.
  Then $\delta = (\delta_1, \delta_1^{-1})
  = (\gamma_{01}\sigma_{E/F}(\gamma_{01}),
  \sigma_{E/F}(\gamma_{01})^{-1}\gamma_{01}^{-1})$,
  and $\delta_1 \in \GL_n(F)$ since $\gamma_0$ is normal.
  The assumption that 
  $g^{-1}\delta g \in \GL_n(\O_{L})$ for $g = (g_1, g_2) \in
  \GL_n(E) \times \GL_n(E)$ implies that
  $g_1^{-1} \delta_1 g_1
  \in \GL_n(\O_{E})$.
  By {\cite[Lemma 5.4]{TGGP-Wang}} (which is essentially
  {\cite[Lemma 8.8]{Kottwitz1980}}), there exists   
  $\gamma_1 \in \GL_n(E)$ with 
  $\gamma_1\sigma_{E/F}(\gamma_1) = \delta_1$
  such that
  \[
    \gamma_1 = 
    c_{n-1}(\gamma_1\sigma_{E/F}(\gamma_1))^{n-1} 
    + \cdots + c_0
  \]
  for $c_0, \dots, c_{n-1} \in \O_{E}$.
  Since the characteristic polynomial of
  $\gamma_{01}\sigma_{E/F}(\gamma_{01})$ has coefficients
  in $\O_{E}$, this implies that
  $\sigma_{E/F}(\gamma_1)^{-1}$ also has the above
  property,
  so $\gamma = (\gamma_1, \sigma_{E/F}(\gamma_1)^{-1})$
  is Kottwitz as an element of $\GL_n(L)$.
\end{proof}

In the case that $K = F \times F$, we fix an
identification of $L$ with $E \times E$ such
that $\sigma_{L/E}(\xi_1, \xi_2) = (\xi_2, \xi_1)$.

\begin{lem}
\label{lem:kkottwitzexist}
Assume that $K/F$ is split ($E/F$ may be ramified), 
and let $k \ge 0$ be an integer. 
Suppose $\gamma_0 \in \S_{n,L/K}$ is normal,
and let $\delta = \gamma_0\sigma_{L/E}(\gamma_0)^{-1}$.
Then there exists $\gamma \in \S_{n,L/K}$ with
$\gamma\sigma_{L/E}(\gamma)^{-1} = \delta$ such
that $\gamma$ is both Kottwitz and $k$-Kottwitz.
\end{lem}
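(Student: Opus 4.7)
The plan is to write $\gamma$ down explicitly using the identification $L=E\times E$ fixed just before the statement (under which $\sigma_{L/E}$ is the swap) and verify each requirement by inspection; in the split case every difficulty encountered in Lemma~\ref{lem:kottwitzexist} simply disappears.

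First I would unpack the hypotheses in coordinates. Writing $\gamma_0=(\gamma_{01},\gamma_{02})$, the defining relation $\gamma_0\sigma_{L/K}(\gamma_0)=1$ becomes $\gamma_{0i}\sigma_{E/F}(\gamma_{0i})=1$ for $i=1,2$, and normality of $\gamma_0$ amounts to $\gamma_{01}\gamma_{02}=\gamma_{02}\gamma_{01}$. One then has $\delta=(\delta_1,\delta_1^{-1})$ with $\delta_1=\gamma_{01}\gamma_{02}^{-1}$, and a one-line computation using $\sigma_{E/F}(\gamma_{0i})=\gamma_{0i}^{-1}$ together with commutativity gives $\delta_1\sigma_{E/F}(\delta_1)=1$, i.e.\ $\delta_1\in \S_{n,E/F}$.

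Next I would set
\[
    \gamma \coloneqq (1,\delta_1^{-1}) \in \GL_n(E)\times\GL_n(E)=\GL_n(L).
\]
Since $\sigma_{E/F}(1)=1$ and $\delta_1^{-1}\sigma_{E/F}(\delta_1^{-1})=\delta_1^{-1}\delta_1=1$, each component lies in $\S_{n,E/F}$, so $\gamma\in\S_{n,L/K}$. The antisymmetrization identity is then immediate:
\[
    \gamma\,\sigma_{L/E}(\gamma)^{-1}=(1,\delta_1^{-1})\,(\delta_1,1)=(\delta_1,\delta_1^{-1})=\delta.
\]

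Finally I would verify the two Kottwitz-type conditions by exhibiting coefficients. For Definition~\ref{def:kottwitzgeneral}(1), take $c_0=(1,0)$, $c_1=(0,1)\in\O_L$ and $c_i=0$ for $i\ge 2$; then $c_0+c_1\delta=(1,0)+(0,\delta_1^{-1})=\gamma$. (For the degenerate case $n=1$, either $\delta_1\in\O_E^\times$ so that clause~(1) holds trivially, or $\delta$ has no integral conjugate and clause~(2) applies.) For Definition~\ref{def:kkottwitz}, the first component $\xi_1=\gamma_1=1$ is already a polynomial in $\xi_1\xi_2^{-1}=\delta_1$ with constant term $1\in 1+\varpi_F^k\O_E$ and all higher coefficients $0\in\varpi_F^k\O_E$. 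There is no real obstacle: the reason the split case is so much easier than the inert one is that the two norm-$1$ equations $\gamma_i\sigma_{E/F}(\gamma_i)=1$ are now decoupled, so one may freely take $\gamma_1=1$, whereas in Lemma~\ref{lem:kottwitzexist} the analogous coupling $\gamma_2=\sigma_{E/F}(\gamma_1)^{-1}$ is precisely what forces the use of Kottwitz's approximation lemma~\cite[Lemma~8.8]{Kottwitz1980}.
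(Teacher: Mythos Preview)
Your proof is correct and follows essentially the same approach as the paper: both set $\gamma=(1,\gamma_{02}\gamma_{01}^{-1})=(1,\delta_1^{-1})$ and verify the Kottwitz condition via $\gamma=(0,1)\delta+(1,0)$, with normality of $\gamma_0$ supplying the commutativity needed to land in $\S_{n,L/K}$. Your treatment is slightly more detailed—you make the $k$-Kottwitz verification explicit and handle the $n=1$ edge case separately—but there is no substantive difference in the argument.
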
 
\begin{proof}
In this case, we have $L = E \times E$ and 
$\gamma_0 = (\gamma_{01}, \gamma_{02})$ for some
$\gamma_{01}, \gamma_{02} \in \GL_n(E)$ satisfying
$\gamma_{01}\sigma_{E/F}(\gamma_{01}) 
= \gamma_{02}\sigma_{E/F}(\gamma_{02}) = 1$,
and $\delta = (\gamma_{01}\gamma_{02}^{-1},
\gamma_{02}\gamma_{01}^{-1})$.
Let $\gamma = (1, \gamma_{02}\gamma_{01}^{-1})$,
which is Kottwitz because
\[
    \gamma = (0, 1)\delta + (1, 0).
\]
In addition, $\gamma \in \S_{n,L/K}$ because
$\gamma_{01}$ and $\gamma_{02}$ commute, since
$\gamma$ is normal.
\end{proof}

\begin{definition}
\label{def:compatible}
    We say that sets of normal representatives (c.f. \ref{normal reps: unitary}, \ref{normal reps: GL side})
    $S$ and $R^V$ are \emph{compatible} if
    for all $\gamma \in S$, $\zeta \in R^V$,
    and $h \in \GL_n(E)$ such that
    \[
        h^{-1}\gamma\sigma_{L/E}(\gamma)^{-1} h =
        \zeta\sigma_{L/E}(\zeta)^{-1},
    \]
    we also have
    \[
        h^{-1}\gamma h = \zeta.
    \]
\end{definition}

It is clear that if $S$ and $R^V$ are compatible sets
of representatives,
and every element of $S$ is Kottwitz (resp. $k$-Kottwitz),
then every element of $R^V$ is Kottwitz (resp.
$k$-Kottwitz) as well.

\begin{proposition}
\label{lem:compatibleexist}
For every set of representatives $S$, there exists
a set of representatives $R^V$ which is
compatible with $S$.
\end{proposition}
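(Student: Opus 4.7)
The plan is to build $R^V$ one orbit at a time, transporting each $\gamma \in S$ through the orbit matching of Proposition~\ref{lem:matching} into the correct unitary group. Concretely, for a fixed $\gamma \in S$, set $\delta = \gamma\sigma_{L/E}(\gamma)^{-1}$. The construction of the matching bijection (via the Cayley transform, Lemma~\ref{lem:liematch}, and Lemma~\ref{lem:normmatch}(1)) supplies a unique skew-Hermitian space $V$ together with an element $h_0 \in \GL(V) = \GL_n(E)$ such that $\varsigma_0 \coloneqq h_0^{-1}\delta h_0$ belongs to $\U(V)(K)$. I would then place $\zeta \coloneqq h_0^{-1}\gamma h_0$ into $R^V$. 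By Lemma~\ref{lem:normmatch}(1), this $\zeta$ is automatically a normal element of $\U(V)(K)$ with $\zeta\sigma_{L/E}(\zeta)^{-1} = \varsigma_0$. Ranging over all $\gamma \in S$ and collecting the resulting $\zeta$'s for each $V$ produces $R^V$; the bijectivity in Proposition~\ref{lem:matching}, together with Lemma~\ref{lem:unitarynormalexistence}, ensures that $R^V$ is exactly a set of representatives of the regular semisimple $H(F) \times H(F)$-orbits of $\U(V)(K)$ which contain a normal element.

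The compatibility condition of Definition~\ref{def:compatible} is then a short calculation. Suppose $h \in \GL_n(E)$ satisfies $h^{-1}\delta h = \varsigma_0$. Setting $g = h h_0^{-1}$, we find $g^{-1}\delta g = \delta$, so $g \in C_{\GL_n(E)}(\delta) \subseteq C_{\GL_n(L)}(\delta)$. Since $\delta$ is regular semisimple, $C_{\GL_n(L)}(\delta) = L[\delta]^\times$, a maximal commutative subalgebra of $\Mat_n(L)$. Normality of $\gamma$ means $\gamma\delta = \delta\gamma$, so $\gamma \in L[\delta]$, and therefore $g$ commutes with $\gamma$. Hence $h^{-1}\gamma h = h_0^{-1}\gamma h_0 = \zeta$, which is precisely what compatibility requires.

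The conceptual heart of the argument is the observation that a normal $\gamma$ lies in the $L$-algebra generated by its norm $\delta$, so the centralizer of $\delta$ in $\GL_n(E)$ automatically centralizes $\gamma$. Given that the orbit matching of Section~\ref{subsec:orbits_and_matching} has already been established, I do not anticipate any serious obstacle; the only mild technical subtlety is that different choices of the matching element $h_0$ for the same $\gamma$ yield different (but orbit-equivalent) $\zeta$'s, so one must simply fix one choice per $\gamma \in S$ and then compatibility with all other $h \in \GL_n(E)$ producing the same $\varsigma_0$ holds automatically by the centralizer argument above.
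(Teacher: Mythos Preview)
Your proposal is correct and takes essentially the same approach as the paper. The only cosmetic difference is direction: the paper starts from a given normal orbit on the unitary side, pulls back to find the matching $\gamma \in S$ via Proposition~\ref{lem:matching}, and then sets $\zeta = h^{-1}\gamma h$; you start from $\gamma \in S$ and push forward. In both cases the compatibility check is exactly your centralizer argument, namely that normality forces $\gamma$ to lie in the commutative algebra $L[\delta]$, so any $h$ conjugating $\delta$ to $\varsigma_0$ must also conjugate $\gamma$ to the same $\zeta$.
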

\begin{proof}
Let $S$ be a set of normal representatives of the
regular semisimple $\GL_n(E)$-orbits of $\S_{n, L/K}$.
Recall that $R^V$ is a set of representatives
of the regular semisimple $\U(V)(F) \times
\U(V)(F)$-orbits of $\U(V)(K)$ which
contain a normal element.

Suppose $\zeta'$ is a normal element
of $\U(V)(K)$. 
We choose the representative of
$\U(V)(F)\zeta' \U(V)(F)$ as follows.
By Proposition \ref{lem:matching},
there exists $\gamma \in S$ such that 
\[
    h^{-1}\gamma\sigma_{L/E}(\gamma)^{-1} h
    = \zeta' \sigma_{L/E}(\zeta')^{-1}
\]
for some $h \in \GL_n(E)$.
Then by Lemma \ref{lem:normmatch} (1), 
$\zeta \coloneqq h^{-1}\gamma h$ is a normal
element of $\U(V)(K)$ which satisfies
$\zeta\sigma_{L/E}(\zeta)^{-1} =
\zeta'\sigma_{L/E}(\zeta')^{-1}$, so
$\zeta \in \U(V)(F)\zeta' \U(V)(F)$.

If $h'$ is another element of $\GL_n(E)$ such that
\[
     h'^{-1}\gamma\sigma_{L/E}(\gamma)^{-1} h'
    = \zeta\sigma_{L/E}(\zeta)
    = h^{-1}\gamma\sigma_{L/E}(\gamma)^{-1} h,
\]
then $h'^{-1}\gamma h' = h^{-1}\gamma h = \zeta$ because
the centralizer of $\gamma\sigma_{L/E}(\gamma)^{-1}$
is a torus which contains $\gamma$, by normality.
Thus, taking $\zeta$ to be the representative
of the orbit $\U(V)(F)\zeta' \U(V)(F)$, we obtain
a set of representatives $R^V$ compatible with $S$.
\end{proof}

\subsection{Fundamental lemma when $E/F$ and $K/F$ are unramified}
\label{sec:FL}
In this subsection, suppose that $E$ is an unramified 
quadratic extension of  $F$ and $K$ is an unramified 
quadratic \'etale  $F$-algebra. 
Let $V^+$ denote the split skew-Hermitian space of dimension
$n$ over $E$. 

Let $\Lambda$ be a self-dual lattice of $(V^+)^\vee$.
Let $\LL^+ + \LL^{+, \vee}$ be any polarization of
$(\Res_{E/F} V^+)^\vee$. Let 
\[
    \sum_{i=1}^r \ol{\phi_1^{(j)}}\otimes \phi_2^{(j)} 
\in \cS(\LL^+)^{\otimes 2}
\]
be the inverse partial Fourier transform of
$\mathbf{1}_{\Lambda} \in \cS((V^+)^\vee)$. 
Note that $\sum_{j=1}^r \ol{\phi_1^{(j)}}
\otimes \phi_2^{(j)}$ is fixed by 
$\U(V^+)(\O_F) \times \U(V^+)(\O_F)$ under the
representation $\ol{\omega}\otimes \omega$ of
$\U(V^+) \times \U(V^+)$

Below, we take $(f', \phi')$ and $(f, \phi_1 \otimes \phi_2)$
to be the \emph{unramified test functions}
\[
    (f', \phi') = (\mathbf{1}_{\GL_n(\O_L)}, \mathbf{1}_{\O_{E,n}})
\]
and
\[
    (f, \phi_1 \otimes \phi_2) =
    \left(\mathbf{1}_{\U(V^+)(\O_K)}, \sum_{j=1}^r
    \phi_1^{(j)} \otimes \phi_2^{(j)}\right).
\]
If $\eta'$ is unramified, then we have
\[
    (\widetilde f', \phi') = (\mathbf{1}_{\S_{n,\O_L/\O_K}},
    \mathbf{1}_{\O_{E,n}}),
\]
where $\S_{n,\O_L/\O_K} \coloneqq \S_{n, L/K}
\cap \GL_n(\O_L)$.

\begin{lem}
\label{lem:GLFLred}
    Suppose that $[\gamma, x, y] \in X_\rss$ and 
    $\gamma$ is Kottwitz.
    Suppose $\eta'$ is unramified. Then
    \begin{align*}
        &\Orb([\gamma, x, y], \widetilde f', \phi')\\
        &= 
        \int_{\GL_n(F)}
        \mathbf{1}_{\S_{n, \O_L/\O_K} \cap \S_{n, L/E}}
        (g^{-1}\gamma\sigma_{L/E}(\gamma)^{-1} g)
        (\phi')^\dagger(xg, g^{-1}y)\eta_{E/F}(g)\,dg.
    \end{align*}
\end{lem}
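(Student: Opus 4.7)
The plan is to reduce the integration over $\GL_n(E)$ to an integration over $\GL_n(F)$ by invoking the Kottwitz property of $\gamma$ in two different ways, and then to rewrite the resulting indicator in terms of $\delta = \gamma\sigma_{L/E}(\gamma)^{-1}$.

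First, since $\gamma$ is Kottwitz and $E/F$ is unramified, Lemma~\ref{lem:kottwitzproperties}~(3) implies that the support condition $h_1^{-1}\gamma\sigma_{E/F}(h_1) \in \GL_n(\O_L)$ forces $h_1 \in \GL_n(F)\cdot \GL_n(\O_E)$. Using the normalizations $\Vol(\GL_n(\O_F)) = \Vol(\GL_n(\O_E)) = 1$ together with $\GL_n(F) \cap \GL_n(\O_E) = \GL_n(\O_F)$, I can write $h_1 = gk$ with $g \in \GL_n(F)$ and $k \in \GL_n(\O_E)$, and factor the measure accordingly. The $k$-integral is then trivial: the indicator is $k$-invariant because $k,\sigma_{E/F}(k) \in \GL_n(\O_L)$; the Weil-representation action gives $R_\mu(k)\phi' = \phi'$, since $\mu$ is unramified, $|\det k| = 1$, and $\mathbf{1}_{\O_{E,n}}$ is $\GL_n(\O_E)$-fixed; and $\eta'(k) = 1$ by unramifiedness of $\eta'$.

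For the resulting integral over $g \in \GL_n(F)$, I would apply the $\GL_n(F_v)$-equivariance of the partial Fourier transform
\[
    (R_\mu(g)\phi')^\dagger(x,y) = \eta_{E/F}(\det g)\,\phi'^\dagger(xg, g^{-1}y),
\]
together with the facts that $\phi'^\dagger$ is real-valued (a direct computation for $\phi' = \mathbf{1}_{\O_{E,n}}$ in the unramified setting, because $\psi$ is unramified and the relevant Gauss sums collapse) and that $\eta'|_{\GL_n(F)} = 1$. The latter is a case-by-case check: in each unramified configuration of $(E/F, K/F)$, the restriction of $\eta_{L/K}^{n+1}$ to $F^\times \subseteq K^\times$ is trivial, either because $L/K$ is split (this occurs when both $E/F$ and $K/F$ are inert, since then $E \cong K$ as unramified quadratic extensions, so $L \cong K \times K$) or because $F$ embeds diagonally into $K$ so that $\eta_{L/K}^{n+1}(\det g)$ becomes a square.

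Finally, I would invoke Lemma~\ref{lem:kottwitzproperties}~(1) to replace the indicator condition $g^{-1}\gamma g \in \GL_n(\O_L)$ with the equivalent condition $g^{-1}\delta g \in \GL_n(\O_L)$. Since $\gamma$ is normal, $\delta$ lies in $\S_{n, L/K} \cap \S_{n, L/E}$ (because $\sigma_{L/E}(\delta) = \delta^{-1}$ and normality gives $\sigma_{L/M}(\delta) = \delta$, hence $\sigma_{L/K}(\delta) = \delta^{-1}$ as well), and conjugation by $g \in \GL_n(F) \subseteq \GL_n(K) \cap \GL_n(E)$ preserves both involutions, so the claimed reformulation goes through. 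The main obstacle I foresee lies in the third paragraph, namely the careful bookkeeping of the character $\eta'$ and the verification that it restricts trivially to $\GL_n(F)$ in every unramified subcase for $(E/F, K/F)$, combined with the reality check for $\phi'^\dagger$. These are routine unramified computations but must be performed carefully to ensure the character factors collapse exactly to the claimed $\eta_{E/F}(g)$.
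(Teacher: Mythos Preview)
Your proof is correct and follows essentially the same approach as the paper's: reduce the $\GL_n(E)$-integral to $\GL_n(F)$ via Lemma~\ref{lem:kottwitzproperties}~\eqref{glkot1}, kill the $\GL_n(\O_E)$-part by invariance of the unramified data, and then use Lemma~\ref{lem:kottwitzproperties}~\eqref{kot} together with the $\GL_n(F)$-equivariance of~$^\dagger$ to rewrite everything in terms of $\delta = \gamma\sigma_{L/E}(\gamma)^{-1}$. Your case analysis for $\eta'|_{F^\times}=1$ and your explicit remark on the reality of $(\phi')^\dagger$ fill in two details the paper leaves implicit; the paper simply asserts $\eta'|_{F^\times}=1$ ``for all choices of $\eta'$'' and silently drops the complex conjugation in the final line.
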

\begin{proof}
Note that since $\gamma \in \S_{n,L/K}$ is normal, we have
\[
    g^{-1}\gamma\sigma_{L/E}(\gamma)^{-1} g \in
    \S_{n,L/K} \cap \S_{n, L/E}
\]
for all $g \in \GL_n(F)$.
By Lemma \ref{lem:kottwitzproperties} \eqref{glkot1}
and \eqref{kot}, for $g \in \GL_n(E)$, we have 
$g^{-1}\gamma\sigma_{E/F}(g) \in \S_{n, \O_L/\O_K}$
if and only if $g \in \GL_n(F)\GL_n(\O_E)$
and $g^{-1}\gamma\sigma_{L/E}(\gamma)^{-1} g
\in \GL_n(\O_L)$.

Since $\phi'$ is fixed by $\GL_n(\O_E)$, we have
\begin{align*}
    \Orb([\gamma, x, y], \widetilde f', \phi')
    &= \int_{\GL_n(E)} \mathbf{1}_{\S_{n, \O_L/\O_K}}
    (g^{-1}\gamma \sigma_{E/F}(g)) \ol{(R_{\mu}(g)\phi')^\dagger(x, y)}
    \eta'(g)\, dg \\
    &= \int_{\GL_n(F)} \mathbf{1}_{\S_{n, \O_L/\O_K} \cap \S_{n, L/E}}
    (g^{-1} \gamma\sigma_{L/E}(\gamma)^{-1} g)
    \ol{(R_\mu(g)\phi')^\dagger(x, y)}\, dg \\
    &= \int_{\GL_n(F)} \mathbf{1}_{\S_{n, \O_L/\O_K} \cap
    \S_{n, L/E}}
    (g^{-1}\gamma\sigma_{L/E}(\gamma)^{-1} g)
    (\phi')^\dagger(xg, g^{-1} y) \eta_{E/F}(g) \,dg.
\end{align*}
In the second line, we used the fact that
$\eta'|_{F^\times} = 1$ for all choices of $\eta'$,
and $\eta'|_{\O_E^\times} = 1$
since $\eta'$ is unramified, so $\eta'|_{E^\times} = 1$.
In the third line, we used the fact that
\[
    (R_\mu(g)\phi')^\dagger(x, y) = (\phi')^\dagger(xg,
    g^{-1}y)\eta_{E/F}(g)
\]
for all $g \in \GL_n(F)$.
\end{proof}

\begin{lem}
\label{lem:UFLred}
    Suppose that $[\zeta, z]^{V^+} \in Y_\rss^{V^+}$
    and $\zeta$ is Kottwitz. 
    Then
    \begin{align*}
        &\Orb([\zeta, z]^{V^+}, f, \phi_1 \otimes \phi_2)
        \\&= \int_{\U(V^+)(F)}
        \mathbf{1}_{\U(V^+)(\O_K) \cap \S_{n, L/E}}(h^{-1}
        \zeta\sigma_{L/E}(\zeta)^{-1} h)
        (\ol{\phi_1} \otimes \phi_2)^{\ddagger}(zh)\, dh.
    \end{align*}
\end{lem}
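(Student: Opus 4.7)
The plan is to mirror the argument for Lemma \ref{lem:GLFLred} on the unitary side, exploiting the Kottwitz property of $\zeta$ twice: first to collapse the double $H \times H$ integral to a single integral over $H(F) = \U(V^+)(F)$, and second to replace the condition $h^{-1}\zeta h \in \U(V^+)(\O_K)$ with the condition $h^{-1}\zeta\sigma_{L/E}(\zeta)^{-1} h \in \U(V^+)(\O_K) \cap \S_{n, L/E}$.

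Starting from Definition \ref{defn: local orb: unitary side} with $f = \mathbf{1}_{\U(V^+)(\O_K)}$, I would change variables by setting $h = h_2$ and $k = h_2^{-1} h_1$. Since $\U(V^+)(\O_K) \subseteq \GL_n(\O_L)$ and $\zeta$ is Kottwitz, Lemma \ref{lem:kottwitzproperties}\eqref{kot2} forces $k \in \GL_n(\O_E) \cap \U(V^+)(F) = \U(V^+)(\O_F)$ whenever the integrand is nonzero, so the $h_1$-integral becomes an integration of $k$ over the compact open subgroup $\U(V^+)(\O_F)$. Because $k \in \U(V^+)(\O_F) \subseteq \U(V^+)(\O_K)$, the indicator $\mathbf{1}_{\U(V^+)(\O_K)}(k^{-1}h^{-1}\zeta h)$ equals $\mathbf{1}_{\U(V^+)(\O_K)}(h^{-1}\zeta h)$ and no longer depends on $k$. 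The assumed $\U(V^+)(\O_F) \times \U(V^+)(\O_F)$-invariance of $\sum_j \ol{\phi_1^{(j)}} \otimes \phi_2^{(j)}$ under $\ol{\omega} \otimes \omega$, specialized to the element $(k, 1)$, then gives $(\ol{\omega(k)\phi_1} \otimes \phi_2)^\ddagger(zh) = (\ol{\phi_1} \otimes \phi_2)^\ddagger(zh)$, so the $k$-integral contributes only the volume $\Vol(\U(V^+)(\O_F)) = 1$ coming from our measure normalization.

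What remains is to verify that for $h \in \U(V^+)(F)$, we have $h^{-1}\zeta h \in \U(V^+)(\O_K)$ if and only if $h^{-1}\zeta\sigma_{L/E}(\zeta)^{-1} h \in \U(V^+)(\O_K) \cap \S_{n, L/E}$. The forward implication is straightforward: $\sigma_{L/E}$ restricted to $K$ equals $\sigma_{K/F}$, hence preserves $\U(V^+)(\O_K)$, and $\zeta'\sigma_{L/E}(\zeta')^{-1}$ automatically lies in $\S_{n, L/E}$. The reverse direction is the second use of the Kottwitz hypothesis: Lemma \ref{lem:kottwitzproperties}\eqref{kot} says that $h^{-1}\zeta\sigma_{L/E}(\zeta)^{-1}h \in \GL_n(\O_L)$ forces $h^{-1}\zeta h \in \GL_n(\O_L)$, and this intersected with $\U(V^+)(K)$ gives $\U(V^+)(\O_K)$.

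The main obstacle is keeping track of how $\sigma_{L/E}$ interacts with the various unitary and integral subgroups, together with confirming that the test function invariance is strong enough to apply to the ``one-sided'' action $(k, 1)$ rather than just the diagonal. Both points are handled by the setup already provided for the unramified test functions, so the argument is otherwise a formal parallel to the general linear computation.
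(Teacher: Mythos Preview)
Your proposal is correct and follows essentially the same approach as the paper's proof: both collapse the double integral to a single integral over $\U(V^+)(F)$ using Lemma \ref{lem:kottwitzproperties}\eqref{kot2} together with the $\U(V^+)(\O_F) \times \U(V^+)(\O_F)$-invariance of the test function, and then replace the integrality condition on $h^{-1}\zeta h$ by the condition on $h^{-1}\zeta\sigma_{L/E}(\zeta)^{-1}h$ via Lemma \ref{lem:kottwitzproperties}\eqref{kot}. Your presentation spells out the change of variables and the one-sided invariance more explicitly, but the logic is identical.
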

\begin{proof}
Note that 
\[
    h^{-1}\zeta\sigma_{L/E}(\zeta)^{-1} h \in 
    \U(V^+)(K) \cap \S_{n, L/E}
\]
for all $h \in \U(V^+)(F)$.
By Lemma \ref{lem:kottwitzproperties} \eqref{kot2}, 
for $g, h \in \U(V^+)(F)$, we have
$g^{-1}\zeta h \in \U(V^+)(\O_K)$ if and only if
$h^{-1} g \in \U(V^+)(\O_F)$ and 
$h^{-1}\zeta h \in \U(V^+)(\O_K)$.
By Lemma \ref{lem:kottwitzproperties} \eqref{kot},
$h^{-1}\zeta h \in \U(V^+)(\O_K)$ if and only if
$h^{-1} \zeta \sigma_{L/E}(\zeta)^{-1} h \in
\U(V^+)(\O_K) \cap \S_{n, L/E}$.

Thus, since $\ol{\phi_1} \otimes \phi_2$ is fixed by 
$\U(V^+)(\O_F) \times \U(V^+)(\O_F)$,
we have
\begin{align*}
    \Orb([\zeta, z]^{V^+}, f, \phi_1 \otimes \phi_2) &= 
    \int_{\U(V^+)(F)} \mathbf{1}_{\U(V^+)(\O_K)}
    (h^{-1}\zeta h) 
    (\ol{\phi_1} \otimes \phi_2)^\ddagger(zh)\, dh \\
    &= \int_{\U(V^+)(F)} \mathbf{1}_{\U(V^+)(\O_K) \cap \S_{n, L/E}}
    (h^{-1} \zeta\sigma_{L/E}(\zeta)^{-1} h) 
    (\ol{\phi_1} \otimes \phi_2)^\ddagger(zh)\, dh.
    \qedhere
\end{align*}
\end{proof}

\begin{proposition}[Fundamental lemma]
\label{prop:FL}
    Suppose $[\gamma, x, y] \in X_\rss$
    matches with $[\zeta, z] \in Y_\rss^{V^+}$,
    and that $\gamma$ and $\zeta$ are Kottwitz.
    Then
    \[
        \Orb([\gamma, x, y], \widetilde f', \phi') =
        \omega([\gamma, x, y])
        \Orb([\zeta, z], f, \phi_1 \otimes \phi_2).
    \]
    If $[\gamma, x, y]$ matches with
    $[\zeta, z] \in Y_\rss^{V^-}$, with
    $\gamma$ and $\zeta$ Kottwitz, then
    \[
        \Orb([\gamma, x, y], \widetilde f', \phi') = 0.
    \] 
\end{proposition}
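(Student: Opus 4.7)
My plan is to start from the reductions in Lemmas \ref{lem:GLFLred} and \ref{lem:UFLred}, then pass to the Lie algebra level via a Cayley transform and invoke the Fourier--Jacobi Lie algebra fundamental lemma already available in the literature.

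After the two reduction lemmas, both orbital integrals are integrals over $\GL_n(F)$ and $\U(V^+)(F)$ respectively, with integrand equal to the characteristic function of the intersection of a hyperspecial lattice with $\S_{n,L/E}$, weighted by the partial-Fourier-transformed Weil function. I would pick $\alpha\in \O_M^\times$ with $\alpha\sigma_{M/F}(\alpha)=1$ avoiding the eigenvalues of $\delta = \gamma\sigma_{L/E}(\gamma)^{-1}$ and $\varsigma = \zeta\sigma_{L/E}(\zeta)^{-1}$; such $\alpha$ exist because $M/F$ is unramified (as the third subextension of the unramified biquadratic $L/F$), so the residue field of $M$ contains enough norm-one units. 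Writing $L = E\oplus j_M E$ as an $E$-vector space, the map $y\mapsto j_M c_\alpha(y)$ gives a $\GL_n(F)$-equivariant (respectively $\U(V^+)(F)$-equivariant) isomorphism from $\S_{n,L/E}\setminus D_\alpha$ onto $\gl_n(F)\setminus D_1$ (respectively from $\U(V^+)(K)\cap \S_{n,L/E}\setminus D_\alpha$ onto $\mathfrak h(V^+)\setminus D_1$); since $\alpha, j_M\in \O_M^\times$, this identifies the integrality conditions on the group side with the natural hyperspecial-lattice conditions on the Lie algebra side. The resulting integrals become precisely the Fourier--Jacobi Lie algebra orbital integrals for the matching $[A,x,y]\in \mathfrak x_\rss(F)$ and $[A',z]\in\mathfrak y(V^+)_\rss(F)$ of Lemma \ref{lem:liematch}, with $A = j_M c_\alpha(\delta)$ and $A' = j_M c_\alpha(\varsigma)$.

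The Lie-algebra Fourier--Jacobi fundamental lemma, proved in \cite{Xue-GGP} and used in \cite{TGGP-Wang} for related subcases, then yields the required matching up to the Lie-algebra transfer factor. I would verify this coincides with $\omega([\gamma,x,y])$: unwinding the definition, $\omega([\gamma,x,y])$ equals $\eta_{E/F}(T_{[\delta,x,y]})$ times factors that are powers of $\det(\alpha-\delta)$ and $\det(\xi\sigma_{L/M}(\xi))$; these factors lie in $\O_F^\times$ because $\gamma$ is Kottwitz (so $\det\xi\in\O_L^\times$) and $\alpha\in\O_M^\times$, and are therefore killed by the unramified quadratic character $\eta_{E/F}$. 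For the vanishing statement when $[\gamma,x,y]$ matches an orbit on $V^-$, Lemma \ref{lem:liematch} implies that the $\GL_n(F)$-orbit of $A$ does not meet $\mathfrak h(V^+)(F)$, so the GL integrand vanishes identically.

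The main obstacle I expect is the careful bookkeeping needed to identify the two partial Fourier transforms $^\dagger$ and $^\ddagger$ together with the auxiliary character $\eta'$ with the Weil representation conventions used in \cite{Xue-GGP, TGGP-Wang}. Once this verification is done, the rest of the argument amounts to a change of variables combined with the invocation of the known Lie algebra fundamental lemma.
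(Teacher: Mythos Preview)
Your approach diverges from the paper's and contains a genuine gap. After Lemmas~\ref{lem:GLFLred} and~\ref{lem:UFLred}, the paper does \emph{not} pass to the Lie algebra at all: it observes that the reduced integrals are already group-level orbital integrals of a known shape and cites existing results directly. When $K=F\times F$, the identifications $\S_{n,L/K}\cap\S_{n,L/E}\cong \S_{n,E/F}$ and $\U(V^+)(K)\cap\S_{n,L/E}\cong \U(V^+)(F)$ turn the two reduced integrals into exactly the Fourier--Jacobi fundamental lemma of \cite[Proposition~5.14,~Theorem~5.15]{Liu2014relative}. When $K$ is the inert quadratic field (necessarily $K=E$), the reduced integrals are precisely those handled in \cite[Proposition~4.18,~Theorem~4.19]{TGGP-Wang}. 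Any Cayley-transform or Lie-algebra work has already been absorbed into those references; none is reproduced here.

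Your Cayley-transform route could in principle be made to work for the matching identity, but the integrality claim needs more care: for $c_\alpha$ to carry integral points to integral points and back you need $\det(\alpha-\delta)\in\O_L^\times$ (i.e.\ $\alpha$ avoids the eigenvalues of $\delta$ in the residue field, not merely in $\overline L$) and residue characteristic $\neq 2$, since $\det(1-c_\alpha(\delta))=(2\alpha)^n\det(\alpha-\delta)^{-1}$. More seriously, your vanishing argument is incorrect as written: the fact that the $\GL_n(F)$-orbit of $A$ misses $\mathfrak h(V^+)(F)$ says nothing about the GL-side integrand, which is $\mathbf 1_{\Mat_n(\O_F)}(g^{-1}Ag)\,(\phi')^\dagger(xg,g^{-1}y)\,\eta_{E/F}(g)$ and never sees $\mathfrak h(V^+)$ at all. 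The vanishing of this orbital integral when the match lies on $V^-$ is a nontrivial assertion---it is precisely the second clause of the (Lie algebra) fundamental lemma you want to invoke---and must be cited as such, not deduced from the orbit bijection of Lemma~\ref{lem:liematch}.
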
   
\begin{proof}
If $K = F \times F$, we identify
$\S_{n,L/K} \cap \S_{n,L/E}$ with $\S_{n, E/F}$
and $\U(V^+)(K) \cap \S_{n, L/E}$ with $\U(V^+)(F)$.
Then by Lemmas \ref{lem:GLFLred} and \ref{lem:UFLred}, 
the proposition
follows from \cite[Proposition~5.14]{Liu2014relative} and
\cite[Theorem~5.15]{Liu2014relative},
the fundamental lemma in the Fourier--Jacobi case.

If $K = E$, the proposition follows from
\cite[Proposition~4.18]{TGGP-Wang} and
\cite[Theorem~4.19]{TGGP-Wang},
the fundamental lemma in the $E = K$ case.
\end{proof}

\subsection{Existence of smooth transfer when $E/F$ is split}
\label{sec:splittransfer}
Suppose $E = F \times F$. Assume that the skew-Hermitian
form on $V = F^n \times F^n$ is given by
\[
    \langle v, w \rangle =
    (\transp{w}v, -\transp{v}w).
\]
We have $\GL_n(L) = \GL_n(K) \times \GL_n(K)$,
and we identify
\[
    \U(V)(K) = \{(g, \transp{g^{-1}}) : g
    \in \GL_n(K)\}
\]
with $\GL_n(K)$ via projection to the first coordinate.
We choose the polarization of
$V^\vee \cong F_n \times F_n$ with
$\LL = F_n \times \{0\}$ and
$\LL^\vee = \{0\} \times F_n$,
and identify $\LL$ with $F_n$ in the obvious way.

\begin{definition}
\label{def:splittransfer}
We say that a test function $(f', \phi') 
\in C_c^\infty(\GL_n(K) \times
\GL_n(K)) \otimes \cS_n(E_n)$ and
a test function $(f, \phi_1 \otimes \phi_2) \in
C_c^\infty(\GL_n(K)) \otimes \cS(F_n)^{\otimes 2}$
are \emph{split smooth transfer} of each other 
if there exists $f_1', f_2' \in C_c^\infty(\GL_n(K))$
such that 
\[
    f = f_1' \ast f_2'^\vee, \quad
    f' = f_1' \otimes f_2', \quad
    \phi' = \phi_1 \otimes \ol{\phi_2},
\]
where $f^\vee$ is the function given by
$f^\vee(g) \coloneqq f(g^{-1})$ and $\ast$
denotes convolution.
\end{definition}

\begin{proposition}
\label{prop:splittransfer}
Suppose that $(f', \phi')$ and $(f, \phi_1 \otimes
\phi_2)$ are split smooth transfer of each
other as in Definition \ref{def:splittransfer}.
Assume that $S$ and $R^V$ are compatible as in
Definition \ref{def:compatible}. For $[\gamma, x, y] \in X_\rss$
and $[\zeta, z]^V \in Y_\rss^V$ which match
in the sense of Proposition \ref{lem:matching}, 
we have
\[
    \Orb([\xi, x, y], f', \phi')
    = \Orb([\zeta, z]^V, f, \phi_1 \otimes \phi_2)
\]
for any $\xi = (\xi_1, \xi_2) \in \GL_n(L)$ such
that $\gamma = (\gamma_1, \gamma_1^{-1}) = 
\xi\sigma_{L/K}(\xi)^{-1} = (\xi_1\xi_2^{-1},
\xi_2\xi_1^{-1})$.
\end{proposition}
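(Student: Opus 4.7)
The plan is to expand both orbital integrals in coordinates using the split structure of $E = F \times F$, and match them by Fubini. Writing $h_1 = (h_{11}, h_{12}) \in \GL_n(F)^2 = H_1(F)$ and $\xi = (\xi_1, \xi_2) \in \GL_n(K)^2 = G'(F)$, the test functions factor as $f' = f_1' \otimes f_2'$ and $\phi' = \phi_1 \otimes \ol{\phi_2}$, and the character $\eta_{L/K}$ is trivial. Since the left-hand side is independent of the choice of lifting $\xi$ of $\gamma$ (any change by $h \in H_2(F)$ is absorbed by a translation in the $h_2$-integration), I would take the specific lifting $\xi = (\gamma_1, 1)$, giving
\[
  \Orb([\xi,x,y],f',\phi') = \iiint f_1'(h_{11}^{-1}\gamma_1 h_2)\,f_2'(h_{12}^{-1}h_2)\,\ol{(R_\mu(h_{11},h_{12})\phi')^\dagger(x,y)}\,dh_{11}\,dh_{12}\,dh_2.
\]

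On the unitary side, identify $\U(V)(K) \cong \GL_n(K)$ via projection to the first factor, and expand $f = f_1' \ast f_2'^\vee$ as
\[
  f(h_1^{-1}\zeta h_2) = \int_{\GL_n(K)} f_1'(h_1^{-1}\zeta h_2')\,f_2'(h_2^{-1}h_2')\,dh_2'
\]
by the change of variable $g = h_1^{-1}\zeta h_2'$ inside the convolution. Proposition \ref{lem:matching} together with the compatibility of $S$ and $R^V$ (Proposition \ref{lem:compatibleexist}) allows me to arrange $\zeta_0 = \gamma_1$ after absorbing a $\GL_n(F)$-conjugation into the $h_{11}$-integration. Relabeling $(h_1, h_2, h_2') \mapsto (h_{11}, h_{12}, h_2)$, the unitary orbital integral becomes
\[
  \iiint f_1'(h_{11}^{-1}\gamma_1 h_2)\,f_2'(h_{12}^{-1}h_2)\,(\ol{\omega(h_{12}^{-1}h_{11})\phi_1} \otimes \phi_2)^\ddagger(zh_{12})\,dh_{11}\,dh_{12}\,dh_2.
\]

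Matching the two triple integrals thus reduces to the identity of partial Fourier transforms
\[
  \ol{(R_\mu(h_{11},h_{12})\phi')^\dagger(x,y)} = (\ol{\omega(h_{12}^{-1}h_{11})\phi_1} \otimes \phi_2)^\ddagger(zh_{12})
\]
under the appropriate split identification of $(x,y)$ with $z \in V^\vee$. I would prove this by unwinding the defining integral formulas for $^\dagger$ and $^\ddagger$ in the split case, where $j = (a,-a) \in E$, using $\mu|_{F^\times} = \eta_{E/F} = 1$ to cancel the normalizing characters and using the equivariance $(R_\mu(h)\phi')^\dagger(x,y) = \phi'^\dagger(xh,h^{-1}y)$ for $h \in \GL_n(F)$ (which makes the $h_{11}$- and $h_{12}$-translations compatible on both sides). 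The main obstacle is the careful alignment of the self-dual Haar measures, the factor $|j|$, and the evaluation conventions for $\psi$ on $E$ between the two formulas; once these normalizations are reconciled, the proposition follows immediately from Fubini.
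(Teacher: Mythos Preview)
Your approach is essentially the same as the paper's: expand the convolution $f = f_1' \ast f_2'^\vee$ to rewrite both orbital integrals as triple integrals over $\GL_n(F)\times\GL_n(F)\times\GL_n(K)$, then reduce the comparison to the pointwise identity of partial Fourier transforms, which the paper imports from \cite[Proposition~4.14]{TGGP-Wang} rather than re-deriving. The only substantive imprecision in your sketch is the phrase ``absorbing a $\GL_n(F)$-conjugation into the $h_{11}$-integration'': compatibility gives $h^{-1}\gamma h = \zeta$ for some $h=(h^{(1)},h^{(2)})\in\GL_n(F)^2$, and the substitution must be made simultaneously in $h_{11}$, $h_{12}$, and $h_2$ (not just $h_{11}$), which also translates $z$ to $z(h^{(1)})^{-1}$. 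The paper makes this explicit by recording the relations $h_1^{-1}\gamma_1 h_1=\zeta_1$, $xh_1=z_1$, ${}^t y_0\,{}^t h_1^{-1}=z_2$ (so that after the change of variables $z$ becomes $(x,{}^t y_0)$), which is exactly the ``appropriate split identification of $(x,y)$ with $z$'' you leave unspecified. Once this is pinned down, your Fourier identity becomes the one in the paper and the argument goes through.
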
   

\begin{proof}
Write $x \in F_n$, $y = (y_0, -y_0)$ for $y_0 \in F^n$.
By the same calculation as in the proof of \cite[Proposition~4.14]{TGGP-Wang}, we have 
\begin{align*}
    \ol{(R_\mu(g_1, g_2)\phi')^\dagger(x, y)}
    = (\ol{\omega(g_2^{-1}g_1, \transp(g_1^{-1}g_2))\phi_1}
    \otimes \phi_2)^\ddagger((x, \transp y_0)(g_2, \transp g_2^{-1})),
\end{align*}
for any $g_1, g_2 \in \GL_n(F)$.
\begin{align*}
    &\Orb([\xi, x, y], f', \phi') \\
    &= \iint\limits_{\GL_n(F) \times \GL_n(F)} 
    \int\limits_{\GL_n(K)}
    f'((g_1^{-1}, g_2^{-1})(\xi_1, \xi_2)(h, h))
    \ol{(R_\mu(g_1, g_2)\phi')^\dagger(x, y)}\, dh \,dg_1\,dg_2 \\
    &= \iint\limits_{\GL_n(F) \times \GL_n(F)}
    \int\limits_{\GL_n(K)} f_1'(g^{-1}\xi_1\xi_2^{-1}g_2 h)f_2'(h)
    \ol{(R_\mu(g_1, g_2)\phi')^\dagger(x, y)}\, dh \,dg_1\,dg_2\\
    &= \iint\limits_{\GL_n(F) \times \GL_n(F)}
    f(g_1^{-1}\gamma_1 g_2)
    (\ol{\omega(g_2^{-1}g_1, \transp(g_1^{-1}g_2))\phi_1} \otimes \phi_2)^\ddagger((x, \transp y_0)(g_2, \transp g_2^{-1})) \,dg_1\,dg_2.
\end{align*}

On the other hand,
let $z = (z_1, z_2) \in F_n \times F_n$.
Since $[\gamma, x, y]$ and $[\zeta, z]^V$, match,
there exists 
$h = (h_1, h_2) \in \GL_n(F) \times \GL_n(F)$
such that
\[
    [\gamma\sigma_{L/E}(\gamma)^{-1}, x, y].h
    = [\zeta\sigma_{L/E}(\zeta)^{-1}, z, z^\ast].
\]
Since $S$ and $R^V$ are compatible, we in fact have
\[
    [\gamma, x, y].h = [\zeta, z, z^\ast],
\]
which implies
\begin{equation}
\label{eq:changevar}
    h_1^{-1}\gamma_1 h_1 = \zeta_1, \quad
    xh_1 = z_1, \quad \transp{y_0}\transp{h_1^{-1}} = z_2.
\end{equation}
Then the local obrital integral \eqref{defn: local orb: unitary side}
\begin{align*}
    &\Orb([\zeta, z], f, \phi_1 \otimes \phi_2) \\
    &= \iint\limits_{\GL_n(F) \times \GL_n(F)}
    f(g_1^{-1}\zeta_1 g_2)
    (\ol{\omega(g_2^{-1}g_1,
    \transp(g_1^{-1}g_2))\phi_1} \otimes \phi_2)^\ddagger((z_1, z_2)(g_2,
    \transp g_2^{-1}))\, dg_1\,dg_2
\end{align*}
is easily seen to be equal to our above expression
for $\Orb([\xi, x, y], f', \phi')$,
by \eqref{eq:changevar}.
\end{proof}

From now on, when we say that test functions
$(f', \phi')$ and $(f, \phi_1 \otimes \phi_2)$ are
smooth transfer of each other at a place $v$ which
is split in $E$, we mean that they are split smooth
transfer of each other, as in Definition 
\ref{def:splittransfer}.
For such test functions,
it is easy to see that $(f', \phi')$ satisfies
condition \eqref{glmatrixcoeff} or \eqref{glrsssupp}
of Definition \ref{def:glgood}
if and only if
$(f, \phi_1 \otimes \phi_2)$ satisfies condition
\eqref{umatrixcoeff} or \eqref{ursssupp}
of Definition \ref{def:ugood}, respectively.

\subsection{Existence of smooth transfer when $K/F$ is split at non-archimedean places}
\label{sec:Ksplittransfer}

In the case $K = F \times F$, we have
\[
    G' = \Res_{E/F}(\GL_n\times \GL_n), \quad 
    G = \U(V) \times \U(V)\quad
    \S_{n, L/K} = \S_{n, E/F} \times \S_{n, E/F}.
\]

First, we recall the definitions of
the Fourier--Jacobi orbital integrals, and the
existence of smooth transfer in the Fourier--Jacobi
case.
Let $(G'(F) \times F_n \times F^{-,n})_\rss$ denote the
set of elements $[(\xi_1, \xi_2), x, y]$ such that
\[
    [\xi_1^{-1}\xi_2\sigma_{E/F}(\xi_1^{-1}\xi_2)^{-1}, x, y]
\]
is a strongly regular semisimple element of $\Mat_n(E) \times E_n \times E^n$.
Let $(G(F) \times V^\vee)_\rss$ denote the set
of elements $[(\zeta_1, \zeta_2), z]$ such that
\[
    [\zeta_1^{-1}\zeta_2, z, z^\ast]
\] 
is a strongly regular semisimple element of
$\Mat_n(E) \times E_n \times E^n$.

\begin{definition}[{\cite[(3.1.1)]{Xue-GGP}}, cf. Definition \ref{defn: local orb GL side}]

For $[(\xi_1, \xi_2), x, y] \in (G'(F)
\times F_n \times F^{-,n})_\rss$, 
$f' \in C_c^\infty(G'(F))$, 
and $\phi' \in \cS(E_n)$, define
\begin{align*}
    &\Orb^{\FJ}([(\xi_1, \xi_2), x, y], f', \phi')\\
    &= \int_{\GL_n(E)}
    \iint_{\GL_n(F)\times\GL_n(F)}
    f'(g^{-1}(\xi_1, \xi_2)(h_1, h_2))
    \ol{(R_\mu(\xi_1^{-1} g)\phi')^\dagger
    (x, y)}
    \eta_{E/F}(h_1h_2)^{n+1}\,dh_1\,dh_2\,dg.
\end{align*}
Note that this orbital integral satisfies
\[
    \Orb^{\FJ}([g^{-1}(\xi_1, \xi_2)(h_1, h_2), xh_1,
    h_1^{-1}y])
    = \eta_{E/F}(h_1)^n\eta_{E/F}(h_2)^{n+1}
    \Orb^{\FJ}([(\xi_1, \xi_2), x, y])
\]
for $g \in H_1(F)$, $(h_1, h_2) \in H_2(F)$.
\end{definition}

\begin{definition}[{\cite[(4.1.2)]{Xue-GGP}}, cf.
Definition \ref{defn: local orb: unitary side}]
For $[(\zeta_1, \zeta_2), z] 
\in (G(F) \times V^\vee)_\rss$,
$f \in C_c^\infty(G(F))$, and $\phi_1 \otimes
\phi_2 \in \cS(\LL)^{\otimes 2}$, define
\begin{align*}
    \Orb^{\FJ}([\zeta, z], f, \phi_1 \otimes
    \phi_2) =
    \iint_{\U(V)(F) \times \U(V)(F)}
    f(g^{-1}\zeta h)
    (\ol{\omega(h^{-1}
    \zeta_1^{-1} g)\phi_1} \otimes \phi_2)^\ddagger(zh)\,dh\,dg.
\end{align*}
\end{definition}

\begin{definition}
For $[(\xi_1, \xi_2), x, y] \in G'(F) \times
F_n \times F^{-,n}$, define transfer factor
\[
    \Omega^{\FJ}([(\xi_1, \xi_2), x, y])
    = \Omega([(1, \xi_1^{-1}\xi_2), x, y]).
\]
Note that by \eqref{eq:Omegainvariance}, this
transfer factor satisfies
\begin{align*}
    \Omega^{\FJ}([g^{-1}(\xi_1, \xi_2)(h_1, h_2),
    xh_1, h_1^{-1}y])
    &= \Omega([(1, h_1^{-1}\xi_1^{-1}\xi_2 h_2),
    xh_1, h_1^{-1} y]) \\
    &= \Omega([h_1^{-1}(1, \xi_1^{-1}\xi_2)
    (h_1, h_2), xh_1, h_1^{-1}y]) \\
    &= \eta_{E/F}(h_1)^n\eta_{E/F}(h_2)^{n+1}
    \Omega^{\FJ}([(\xi_1, \xi_2), x, y])
\end{align*}
for $g \in H_1(F)$, $(h_1, h_2) \in H_2(F)$.
\end{definition}

\begin{theorem}[{\cite[Proposition 5.2.1]{Xue-GGP}},
non-simplified version]

\label{thm:FJtransfer}
For any test function $(f', \phi')$,
there exists a collection of test functions
$\{(f^V, \phi_1^V \otimes \phi_2^V)\}_{V \in \SHerm_n^\times(F)}$ such that
\[
    \Orb^{\FJ}([(\xi_1, \xi_2), x, y], f', \phi')
    = 
    \Omega^{\FJ}([(\xi_1, \xi_2), x, y])
    \Orb^{\FJ}([(\zeta_1, \zeta_2), z]^V, f^V, 
    \phi_1^V \otimes \phi_2^V)
\]
for all $[(\xi_1, \xi_2), x, y] \in 
(G'(F) \times F_n \times F^{-,n})_\rss$
and $[(\zeta_1, \zeta_2), z]^V 
\in (G(F) \times V^\vee)_\rss$ which match 
in the sense that
\[
    [(\xi_1^{-1}\xi_2)\sigma_{E/F}(\xi_1^{-1}\xi_2)^{-1},
x, y].h = [\zeta_1^{-1}\zeta_2, z, z^\ast]
\]
for some $h \in \GL_n(E)$.
The other direction also holds.
\end{theorem}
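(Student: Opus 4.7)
The strategy is to show that Theorem \ref{thm:FJtransfer} is a direct translation of Xue's simplified Fourier--Jacobi smooth transfer \cite[Proposition~5.2.1]{Xue-GGP}, which is formulated on the symmetric space $\S_{n,E/F}$ (resp.\ the unitary group $\U(V)$), rather than on $G' = \Res_{E/F}(\GL_n\times\GL_n)$ (resp.\ $G = \U(V)\times\U(V)$). The proof proceeds in three steps.

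First, I would reduce to the case $\xi_1 = 1$ and $\zeta_1 = 1$. On the general linear side, substitute $g = \xi_1 g'$ in the outer $\GL_n(E)$-integral: since Haar measure is invariant and $\xi_1^{-1}(\xi_1,\xi_2) = (1,\xi_1^{-1}\xi_2)$ under the diagonal embedding $H_1 = \Res_{E/F}\GL_n \hookrightarrow G'$, the factor $\overline{(R_\mu(\xi_1^{-1}g)\phi')^\dagger(x,y)}$ becomes $\overline{(R_\mu(g')\phi')^\dagger(x,y)}$, giving
\[
    \Orb^{\FJ}([(\xi_1,\xi_2),x,y],f',\phi')
    = \Orb^{\FJ}([(1,\xi_1^{-1}\xi_2),x,y],f',\phi').
\]
The analogous substitution $g = \zeta_1 g'$ on the unitary side yields the parallel identity, and by definition $\Omega^{\FJ}([(\xi_1,\xi_2),x,y]) = \Omega([(1,\xi_1^{-1}\xi_2),x,y])$, so this reduction is compatible with the transfer factor.

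Second, I would interpret the resulting reduced orbital integrals as Xue's simplified Fourier--Jacobi orbital integrals. With $\xi_1 = 1$, the pair $(1,\xi_2)$ is determined modulo $H_2(F)$ by $\delta = \xi_2\sigma_{E/F}(\xi_2)^{-1} \in \S_{n,E/F}$, and the inner $\iint_{\GL_n(F)\times\GL_n(F)}$ integral over $(h_1,h_2)$ against the character $\eta_{E/F}(h_1h_2)^{n+1}$ collapses $f'$ to a function $\widetilde{f'} \in C_c^\infty(\S_{n,E/F})$ (equivariant under the anti-symmetrization quotient), and similarly $f \mapsto \widetilde{f}$ on the unitary side. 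One then recognizes the resulting expressions
\[
    \int_{\GL_n(E)} \widetilde{f'}(g^{-1}\delta\,\sigma_{E/F}(g))\,\overline{(R_\mu(g)\phi')^\dagger(x,y)}\,dg,
    \qquad
    \int_{\U(V)(F)} \widetilde{f}(g^{-1}\zeta g)\,(\overline{\omega(g)\phi_1}\otimes\phi_2)^\ddagger(zg)\,dg
\]
as precisely the orbital integrals appearing in \cite[\S3--4]{Xue-GGP}.

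Third, I would apply Xue's simplified transfer theorem: for any $\widetilde{f'}$ there exist matching $\{\widetilde{f^V}\}$, and conversely. To deduce the statement for the original non-simplified test functions, I need surjectivity of the maps $f' \mapsto \widetilde{f'}$ and $f^V \mapsto \widetilde{f^V}$ onto the appropriate spaces of smooth compactly supported equivariant functions, which is routine (e.g.\ via a partition of unity on $H_2(F)$). The main obstacle in this reduction is bookkeeping: one must carefully track how the partial Fourier transforms $^\dagger$ and $^\ddagger$, the character $\eta_{E/F}(h_1h_2)^{n+1}$, and the normalization of $\Omega^{\FJ}$ interact under the two changes of variables, and verify they match Xue's normalization exactly. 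Once this is checked, both directions of Theorem \ref{thm:FJtransfer} follow from \cite[Proposition~5.2.1]{Xue-GGP}.
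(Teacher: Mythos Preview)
The paper does not give its own proof of this theorem: it is stated as a direct citation of \cite[Proposition~5.2.1]{Xue-GGP}, with the label ``non-simplified version'' indicating that the authors are recasting Xue's result (formulated on the symmetric space) in the group form needed here. So there is no proof in the paper to compare against; your sketch supplies exactly the kind of reduction that the label leaves implicit.

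Your Step~1 is correct and matches how the paper itself handles things later (see the proof of Proposition~\ref{prop:Ksplittransfer}, where $\Orb^{\FJ}([(1,\xi),x,y],f',\phi')$ appears after the same left translation by $\xi_1$, resp.\ $\zeta_1$). Your Step~2, however, is oversimplified. With $\xi_1=1$ the inner $(h_1,h_2)$-integrals do not collapse $f'$ to a single function on $\S_{n,E/F}$: for $f'=f_1'\otimes f_2'$ you get a product $\widetilde{f_1'}(g^{-1}\sigma_{E/F}(g))\cdot\widetilde{f_2'}(g^{-1}\delta\,\sigma_{E/F}(g))$ (times characters), i.e.\ a function on $\S_{n,E/F}\times\S_{n,E/F}$ under the diagonal $\GL_n(E)$-action. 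As the Remark after Definition~\ref{defn: semip orb int on GL side} spells out, one must then use transitivity of $\GL_n(E)$ on the first $\S_{n,E/F}$ factor to reduce to the conjugation action of $\GL_n(F)$ on a single copy of $\S_{n,E/F}$; only then does one land on the simplified orbital integral appearing in Xue's Proposition~5.2.1. The analogous two-step reduction on the unitary side goes through $\U(V)\times\U(V)\to\U(V)$ via the diagonal. This does not affect the overall correctness of your strategy, but the displayed formulas at the end of your Step~2 are not what one actually obtains after a single collapse.
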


We will reduce the existence of smooth transfer
for our orbital integrals to Theorem
\ref{thm:FJtransfer} for suitable choices of
representatives.

Recall that $\eta'$ is a character of $L^\times$
whose restriction to $K^\times$ is equal
to $\eta_{L/K}^{n+1}$, and that in order to simplify the orbital integral, we defined in \eqref{eq:ftilde} the function $\wt f'
\in C_c^\infty(\S_{n,L/K})$.
Since $K = F \times F$, we have that
$\eta'$ is of the form
$\eta_1' \otimes \eta_2'$ where $\eta_i'$
is a character of $E^\times$ whose restriction to 
$F^\times$ is equal to $\eta_{E/F}^{n+1}$.
If $f' = f_1' \otimes f_2'$ for $f_i' \in 
C_c^\infty(\GL_n(E))$, then
$\wt f' = \wt f_1' \otimes \wt f_2'$, where
$\wt f_i' \in C_c^\infty(\S_{n,E/F})$
is given by
\[
    \wt f_i'(\gamma) 
    =\eta_i'(\xi) \int_{\GL_n(F)} f_i'(\xi h)
    \eta_{E/F}(h)^{n+1}\, dh
\]
for $\xi \in \GL_n(E)$ such that $\gamma
= \xi\sigma_{E/F}(\xi)^{-1}$.

Recall that we have defined in Definition
\ref{def:normalgeneral} the notion
of normal elements of $\GL_n(L)$. In the case
$K = F \times F$, an element $(\xi_1, \xi_2)
\in \GL_n(L) = \GL_n(E) \times \GL_n(E)$ is normal
if and only if $\xi_1$ and $\xi_2$ commute.

\begin{proposition}[Existence of smooth transfer, 
simplified version]
\label{prop:Ksplittransfer}
For any test function $(f', \phi')$,
there exists a collection of test functions 
$\{(f^V, \phi_1^V \otimes \phi_2^V)\}_{V \in \SHerm_n^\times(F)}$
and an integer  $k \ge 0$ such that if 
$[(\gamma_1, \gamma_2), x, y] \in 
(\S_{n, L/K} \times F_n \times F^{-,n})_\rss$ and
$[(\zeta_1, \zeta_2), z]^V \in (G(F) \times V^\vee)_\rss$ 
satisfy
\[
    [(\gamma_1 \gamma_2^{-1}, 
    \gamma_2 \gamma_1^{-1}), x, y].h
    = [(\zeta_1 \zeta_2^{-1}, \zeta_2 \zeta_1^{-1}), 
    z, z^\ast]
\]
for some $h \in \GL_n(E)$, and $(\gamma_1, \gamma_2)$ 
and $(\zeta_1, \zeta_2)$ are normal and $k$-Kottwitz,
then we have
\begin{equation}
\label{eq:transferexistence}
    \Orb([(\gamma_1, \gamma_2), x, y], \wt f', \phi')
    = \omega([(\gamma_1, \gamma_2), x, y])
    \Orb([(\zeta_1, \zeta_2), z]^V, f^V, \phi_1^V \otimes \phi_2^V),
\end{equation}
where the orbital integrals are given by
\eqref{defn: semip orb int on GL side} and
\eqref{defn: local orb: unitary side}.
The other direction also holds.
\end{proposition}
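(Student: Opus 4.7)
\emph{Plan.} My plan is to reduce Proposition~\ref{prop:Ksplittransfer} to Xue's Fourier--Jacobi smooth transfer theorem~\ref{thm:FJtransfer}, exploiting the fact that when $K = F \times F$ our setup becomes structurally very close to the FJ setup. Concretely, $G' = \GL_n(E) \times \GL_n(E)$, $\S_{n,L/K} = \S_{n,E/F} \times \S_{n,E/F}$, $G = \U(V) \times \U(V)$, with $H_1 \cong \GL_n(E)$ embedded diagonally and $H_2 = \GL_n(F) \times \GL_n(F)$ embedded factor-wise; moreover $M = E$ and one may take $j_M = j$, which simplifies the transfer factor. Given $(f', \phi')$, I would first apply Theorem~\ref{thm:FJtransfer} directly to obtain candidate test functions $\{(f^V, \phi_1^V \otimes \phi_2^V)\}_{V \in \SHerm_n^\times(F)}$ which are FJ smooth transfers of $(f', \phi')$.

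The key computational identity, obtained by the change of variable $g \mapsto \xi_1 g$ inside $\GL_n(E)$, is
\[
    \Orb^{\FJ}([(\xi_1, \xi_2), x, y], f', \phi') = \Orb([(1, \xi_1^{-1}\xi_2), x, y], f', \phi'),
\]
where the right-hand side is the unsimplified orbital integral of Definition~\ref{defn: local orb GL side}. An analogous change of variable $g \mapsto \zeta_1 h$ on the unitary side yields the corresponding identity for the unitary orbital integrals. By Lemma~\ref{lem:kkottwitzexist}, every normal orbit in $\S_{n,L/K}$ admits a representative of the form $(1, \gamma_0)$; taking $\xi = (1, \xi_0)$ with $\gamma_0 = \xi_0\sigma_{E/F}(\xi_0)^{-1}$, and combining the above identities with the relation $\Orb([\xi, x, y], f', \phi') = \eta'(\xi)^{-1}\Orb([\gamma, x, y], \wt f', \phi')$, the simplified orbital integral on the GL side reduces explicitly to an FJ orbital integral. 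A direct computation using the definitions $\omega([\gamma, x, y]) = \eta'(\xi)\Omega([\xi, x, y])$ and $\Omega^{\FJ}([(\xi_1, \xi_2), x, y]) = \Omega([(1, \xi_1^{-1}\xi_2), x, y])$ together with the invariance~\eqref{eq:Omegainvariance} shows that $\omega$ matches $\Omega^{\FJ}$ under this reduction, so the FJ transfer identity yields~\eqref{eq:transferexistence} in the special case $\gamma_1 = \zeta_1 = 1$.

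The main obstacle is to upgrade from this special case to a general normal representative $(\gamma_1, \gamma_2) \in S$. Reducing the orbit of $(\gamma_1, \gamma_2)$ to the form $(1, \gamma_0)$ requires translating by some $\xi_1 \in \GL_n(E)$ with $\gamma_1 = \xi_1\sigma_{E/F}(\xi_1)^{-1}$, which a priori may lie far from the identity. This is precisely where the $k$-Kottwitz hypothesis enters: by Definition~\ref{def:kkottwitz}, a $k$-Kottwitz element forces this normalizing $\xi_1$ to lie in a small neighborhood of the identity in $\GL_n(E)$. Choosing $k$ large enough relative to the levels of local constancy of $f'$, $\phi'$, the $f^V$, and the $\phi_i^V$, this translation acts trivially on all the test functions, so the orbital integral identities transfer unchanged. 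Verifying that a single such $k$ works uniformly across the compactly supported family of orbits encountered in the supports of $\wt f'$ and the various $f^V$ is the delicate technical step; once this is settled, the matching of orbital integrals follows directly from the FJ smooth transfer identity.
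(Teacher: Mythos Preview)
Your overall strategy is correct and is exactly what the paper does: reduce to Xue's FJ smooth transfer by showing that, for suitable representatives, both orbital integrals collapse to FJ orbital integrals at an element of the form $(1,\ast)$. Your computational identities
\[
    \Orb^{\FJ}([(\xi_1,\xi_2),x,y],f',\phi') = \Orb([(1,\xi_1^{-1}\xi_2),x,y],f',\phi'),
    \qquad
    \Orb^{\FJ}([(\zeta_1,\zeta_2),z],f,\phi) = \Orb([(1,\zeta_1^{-1}\zeta_2),z],f,\phi)
\]
(the second via $g\mapsto\zeta_1 g$, not $g\mapsto\zeta_1 h$) are right and are the endpoint of the paper's computation as well.

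The gap is in how you use the $k$-Kottwitz hypothesis to pass from a general normal $(\gamma_1,\gamma_2)$ to $(1,\gamma_0)$. Two points:

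\begin{enumerate}
\item Your claim that $k$-Kottwitz forces a lift $\xi_1$ with $\gamma_1=\xi_1\sigma_{E/F}(\xi_1)^{-1}$ to lie near $1$ is not what Definition~\ref{def:kkottwitz} says. It says $\gamma_1$ is close to $1$ \emph{as a polynomial in $\delta=\gamma_1\gamma_2^{-1}$}; this gives no control on $\gamma_1$ itself (let alone on a lift $\xi_1$) until $\delta$ is constrained. The correct statement is: if $g^{-1}\gamma_i\sigma_{E/F}(g)\in\Supp(\wt f_i')$ for $i=1,2$, then $g^{-1}\delta g$ lies in a fixed compact set, and the $k$-Kottwitz condition then forces the \emph{conjugate} $g^{-1}\gamma_1 g\in K_m=1+\varpi_F^m\Mat_n(\O_E)$.
\item Even granting a $\xi_1$ near $1$, your proposed change of variable $g\mapsto\xi_1 g$ alters the Weil term $(R_\mu(g)\phi')^\dagger(x,y)$, and the partial Fourier transform $\dagger$ is equivariant only under $H_{10}=\GL_n(F)$, not under $\GL_n(E)$. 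So ``acting trivially on all the test functions'' does not neutralize this. The whole point of the normal-representative formalism is that the simplified orbital integral is \emph{not} an $H_1$-orbit invariant.
\end{enumerate}

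The paper's fix avoids any change of variable. On the support of the integrand one has $g^{-1}\gamma_1 g\in K_m$; writing
\[
    g^{-1}\gamma_1\sigma_{E/F}(g)=(g^{-1}\gamma_1 g)\cdot g^{-1}\sigma_{E/F}(g),
    \qquad
    g^{-1}\gamma_2\sigma_{E/F}(g)=(g^{-1}\gamma_1 g)\cdot g^{-1}\gamma_1^{-1}\gamma_2\sigma_{E/F}(g),
\]
one uses only left-$K_m$-invariance of $\wt f_1',\wt f_2'$ (and of $f_1^V,f_2^V$ on the unitary side) to drop the first factor, leaving the Weil term untouched. This produces the FJ orbital integrals at $(1,\xi)$ and $(1,\zeta_1^{-1}\zeta_2)$ directly. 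One also needs $\eta_1'|_{K_m}=1$ so that the transfer-factor bookkeeping (your $\omega\leftrightarrow\Omega^{\FJ}$ computation) goes through without an extraneous $\eta_1'(\gamma_1)$; you omitted this condition in your list of ``levels of local constancy.''
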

\begin{proof}
Since \eqref{eq:transferexistence} does not
depend on the choice of $\eta'$ (although
both $\wt f'$ and 
$\omega([(\gamma_1, \gamma_2), x, y])$ do), 
for convenience, we assume that $\eta_1' = \eta_2'$.
Let $\xi_1, \xi_2, \xi \in \GL_n(E)$ be such
that $\gamma_1 = \xi_1\sigma_{E/F}(\xi_1)^{-1}$,
$\gamma_2 = \xi_2\sigma_{E/F}(\xi_2)^{-1}$,
and $\gamma_1^{-1}\gamma_2 = \xi\sigma_{E/F}(\xi)^{-1}$.
The transfer factor then satisfies
\begin{align}
    \omega([(\gamma_1, \gamma_2), x, y])
    &= \eta_1'(\xi_1\xi_2)
    \eta_{E/F}(\xi_2\sigma_{E/F}(\xi_1)\xi^{-1})^{-(n-1)}\Omega([(1, \xi), x, y])\nonumber\\
    &= \eta_1'(\xi_1\sigma_{E/F}(\xi_1)^{-1})
    \eta_1'(\xi)
    \Omega([(1, \xi), x, y])\nonumber\\
    &= \eta_1'(\gamma_1) \eta_1'(\xi)
    \Omega^{\FJ}([(1, \xi), x, y]). \label{eq:fjtf}
\end{align}

Let $\{(f, \phi_1 \otimes \phi_2)\}_{V \in 
\SHerm_n^\times(F)}$ be a collection of test
functions on the unitary side which matches
with $(f', \phi')$ as in Theorem
\ref{thm:FJtransfer}.
Suppose $f^V = f_1^V \otimes f_2^V$ for
$f_i^V \in C_c^\infty(\U(V)(F))$.
For $m \ge 1$ an integer, let
\[
    K_m = 1 + \varpi_F^m \Mat_n(\O_E).
\]
Since $\wt f_i'$ and $f_i^V$ are
compactly supported locally constant functions,
there exists $m \ge 1$ such that
\begin{enumerate}[(1)]
\item
for all $\gamma \in \S_{n,E/F}$ and 
$g \in K_m \cap (\S_{n,E/F})\gamma^{-1}$,
we have
\[
    \wt f_1'(g\gamma) = \wt f_1'(\gamma),\quad
    \wt f_2'(g\gamma) = \wt f_2'(\gamma);
\]   

\item
for all $V \in \SHerm_n^\times(F)$, 
$\zeta \in \U(V)(F)$, and 
$g \in K_m \cap \U(V)(F)$, we have
\[
    f_1^V(g\zeta) = f_1^V(\zeta),\quad
    f_2^V(g\zeta) = f_2^V(\zeta);
\]

\item \label{item:eta}
$\eta_1'(g) = 1$ for all $g \in K_m$.
\end{enumerate}

Then let $k \ge 0$ be an integer with the following
properties.
\begin{enumerate}[(1)]
\item
If 
\[
    g^{-1}\gamma_1 \sigma_{E/F}(g) \in 
    \Supp(\wt f_1'), \quad
    g^{-1}\gamma_2\sigma_{E/F}(g) \in \Supp(\wt f_2')
\]
for $k$-Kottwitz $(\gamma_1, \gamma_2)
\in \S_{n,E/F} \times \S_{n,E/F}$
and $g \in \GL_n(E)$, then 
$g^{-1}\gamma_1 g \in K_m$.

\item 
For all $V \in \SHerm_n^\times(F)$, if 
\[
    g^{-1}\zeta_1 h \in \Supp(f_1^V),\quad
    g^{-1}\zeta_2 h \in \Supp(f_2^V)
\]
for $k$-Kottwitz $(\zeta_1, \zeta_2)
\in \U(V)(F) \times \U(V)(F)$ and
$g, h \in \U(V)(F)$, 
then $g^{-1}\zeta_1 g \in K_m$.
\end{enumerate}
Indeed, since $\Supp(\wt f_i')$ 
and $\Supp(f_i)$ are compact subsets of $\GL_n(E)$,
we may choose $k \ge m$ such that
\[
    \varpi_F^k (\Supp(\wt f_1')\Supp(\wt
    f_2')^{-1})^i \subseteq \varpi_F^m \Mat_n(\O_E)
\]
and
\[
    \varpi_F^k (\Supp(f_1)\Supp(f_2)^{-1})^i
    \subseteq \varpi_F^m \Mat_n(\O_E)
\]
for $1 \le i \le n - 1$.
Then $k$ satisfies the desired properties because
for $k$-Kottwitz $(\gamma_1, \gamma_2)$, we have
\[
    g^{-1}\gamma_1 g
    = c_{n-1}(g^{-1}\gamma_1\gamma_2^{-1} g)^{n-1}
    + \cdots + c_0
\]
with $c_0 \in 1 + \varpi_F^k\O_E$
and $c_1, \dots, c_{n-1} \in \varpi_F^k\O_E$,
and similarly for $k$-Kottwitz $(\zeta_1, \zeta_2)$.

Then $k$ and the test functions 
$\{(f, \phi_1 \otimes \phi_2)\}_{V \in \SHerm_n^\times(F)}$ satisfy the
properties required in the proposition. 
Indeed, suppose that $(\gamma_1, \gamma_2)$
is normal, i.e., $\gamma_1$ and $\gamma_2$
commute, and $k$-Kottwitz. Since
we also assumed that $\eta_1' = \eta_2'$, we have
\begin{align*}
&\Orb([(\gamma_1, \gamma_2), x, y], \wt f', \phi') \\
&= \int_{\GL_n(E)} 
\wt f_1'(g^{-1}\gamma_1 \sigma_{E/F}(g)) 
\wt f_2'(g^{-1}\gamma_2 \sigma_{E/F}(g))
\ol{(R_\mu(g)\phi')^\dagger(x, y)}
\eta_1'(g)\eta_2'(g)\,dg\\
&= \int_{\GL_n(E)}
\wt f_1'(g^{-1}\sigma_{E/F}(g))
\wt f_2'(g^{-1}\gamma_1^{-1}\gamma_2\sigma_{E/F}(g))
\ol{(R_\mu(g)\phi')^\dagger(x, y)}
\eta_1'(g)\eta_2'(g)\, dg \\
&= \eta_2'(\xi)
\int_{\GL_n(E)}\iint_{\GL_n(F) \times \GL_n(F)}
f_1'(g^{-1}h_1) f_2'(g^{-1}\xi h_2)
\ol{(R_\mu(g)\phi')^\dagger(x, y)}
\eta_{E/F}(h_1h_2)^{n+1}\,dh_1\,dh_2\,dg\\
&= \eta_1'(\xi) \cdot
\Orb^{\FJ}([(1, \xi), x, y], f', \phi')
\end{align*}
where $\xi \in \GL_n(E)$ such that
$\gamma_1^{-1}\gamma_2 = \xi\sigma_{E/F}(\xi)^{-1}$.
On the other hand, if $(\zeta_1, \zeta_2)$
is normal and $k$-Kottwitz, then
\begin{align*}
&\Orb([(\zeta_1, \zeta_2), z]^V, f^V, \phi_1^V \otimes \phi_2^V)\\
&= \iint_{\U(V)(F)\times \U(V)(F)}
f_1^V(g^{-1}\zeta_1 h)f_2^V(g^{-1}\zeta_2 h)
(\ol{\omega_{\psi,\mu}(h^{-1}g)\phi_1^V} \otimes \phi_2^V)^\ddagger(zh)\,dg\,dh\\
&= \iint_{\U(V)(F) \times \U(V)(F)}
f_1^V(g^{-1}h)f_2^V(g^{-1}\zeta_1^{-1}\zeta_2 h)
(\ol{\omega_{\psi,\mu}(h^{-1}g)\phi_1^V} \otimes \phi_2^V)^\ddagger(zh)\,dg\,dh\\
&= \Orb^{\FJ}([(1, \zeta_1^{-1}\zeta_2), z]^V, f^V,
\phi_1^V \otimes \phi_2^V).
\end{align*}

It is clear that if $[(\gamma_1, \gamma_2), x, y]$ and
$[(\zeta_1, \zeta_2), z]^V$ are regular semisimple 
and match in the sense of the proposition
statement, then $[(1, \xi), x, y]$ and 
$[(1, \zeta_1^{-1}\zeta_2), z]^V$
match as in Theorem \ref{thm:FJtransfer}.
Thus, since $\{(f, \phi_1 \otimes \phi_2)\}_{V \in \SHerm_n^\times(F)}$ is a smooth
transfer of $(f', \phi')$ for the Fourier--Jacobi
orbital integrals, we have
\begin{align*}
\Orb([(\gamma_1, \gamma_2), x, y], \wt f', \phi') 
&= \eta_1'(\xi)\Omega^{\FJ}([(1, \xi), x, y])
\Orb([(\zeta_1, \zeta_2), z]^V, f^V, \phi_1^V \otimes \phi_2^V)\\
&= \omega([(\gamma_1, \gamma_2), x, y])
\Orb([(\zeta_1, \zeta_2), z]^V, f^V, \phi_1^V \otimes \phi_2^V),
\end{align*}
where the last equality follows from \eqref{eq:fjtf}
and the assumption \eqref{item:eta}.

The proof for the other direction is the same.
\end{proof}

\section{Comparison of relative trace formulas} \label{section: comparison RTF}

In this section, we are in the global situation to compare relative trace formulas, so $E/F$ and $K/F$ are quadratic extensions of number fields, and $L=E \otimes_F K$. We will use compatible (\ref{def:compatible}) global normal representatives on two sides. The main theorem in this section is Theorem \ref{prop:identity}.

\subsection{Choosing global representatives}
In this subsection, we prove that every regular semisimple
$\GL_n(E)$-orbit of $S_{n,L/K}$ contains a normal element which
is Kottwitz at all places of $F$ which are inert in $E$ and unramified in $K$, and $k$-Kottwitz ($k \geq 0$) at a given finite set of $K$-split places.

Let $\{v_1, \dots, v_m\}$ be a finite set of 
nonarchimedean places
of $F$ which are split in $K$. For each $1 \le i \le m$, 
let $u_i$ and $w_i$ be the places of $K$ lying above 
$v_i$. We identify $L_{v_i} = L_{u_i} \times L_{w_i}$
with $E_{v_i} \times E_{v_i}$, and say an
element of $\GL_n(L_{v_i})$ is $k$-Kottwitz if it is
$k$-Kottwitz in the sense of Definition 
\ref{def:kkottwitz}.

\begin{proposition}
\label{prop:globalkottwitz}
Let $k \ge 0$ be an integer.
Suppose $\gamma_0 \in \S_{n,L/K}$ is
regular semisimple and normal, and let $\delta
= \gamma_0\sigma_{L/E}(\gamma_0)^{-1}$. 
Then there exists $\gamma \in \S_{n,L/K}$ with
$\gamma\sigma_{L/E}(\gamma)^{-1} = \delta$,
such that $\gamma$ is Kottwitz at all places $v$ of $F$ 
which are inert in $E$ and unramified in $K$, and
$k$-Kottwitz at $v_1, v_2, \dots, v_m$.
\end{proposition}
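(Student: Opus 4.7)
The plan is to realize all candidate $\gamma$'s as a torsor under a norm torus, produce local Kottwitz representatives via Lemmas~\ref{lem:kottwitzexist} and~\ref{lem:kkottwitzexist}, and then glue them globally using Hilbert~90 and weak approximation. Let $L' \subseteq \Mat_n(L)$ denote the centralizer of $\delta$; since $\delta$ is regular semisimple, $L'$ is a commutative étale $L$-algebra of dimension $n$, and normality of $\gamma_0$ forces $\gamma_0 \in (L')^\times$. The three non-trivial elements of $\Gal(L/F)$ preserve $L'$, giving subalgebras $E' = (L')^{\sigma_{L/E}}$, $K' = (L')^{\sigma_{L/K}}$, $M' = (L')^{\sigma_{L/M}}$ and $F' = (L')^{\Gal(L/F)}$. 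A direct calculation using normality shows that the set of $\gamma \in (L')^\times$ satisfying $\gamma\sigma_{L/E}(\gamma)^{-1} = \delta$ and $\gamma\sigma_{L/K}(\gamma) = 1$ is precisely $\gamma_0 \cdot U(F)$, where $U$ is the $F$-torus with $R$-points $\{c \in (E' \otimes_F R)^\times : c\sigma_{L/K}(c) = 1\}$; the same description holds over every $F_v$. By Hilbert~90 for the quadratic étale extension $E'/F'$, every $c \in U(R)$ (for $R = F$ or $F_v$) is of the form $a/\sigma_{L/K}(a)$ with $a \in (E' \otimes_F R)^\times$.

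Next I apply the local existence results. At each place $v$ inert in $E$ and inert (resp.\ split) in $K$, Lemma~\ref{lem:kottwitzexist} (resp.\ Lemma~\ref{lem:kkottwitzexist}) produces a normal Kottwitz element $\gamma_v$ with $\gamma_v\sigma_{L_v/E_v}(\gamma_v)^{-1} = \delta$; at $v \in \{v_1, \dots, v_m\}$, Lemma~\ref{lem:kkottwitzexist} gives a $\gamma_v$ that is additionally $k$-Kottwitz. Writing $\gamma_v = \gamma_0 c_v$ gives $c_v \in U(F_v)$, and we pick $a_v \in (E' \otimes_F F_v)^\times$ with $c_v = a_v/\sigma_{L/K}(a_v)$. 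Crucially, $\gamma_0$ is already Kottwitz at almost all places: writing $\gamma_0 = P_0(\delta)$ uniquely with $P_0 \in L[X]$ of degree less than $n$, the coefficients of $P_0$ lie in $\O_{L_v}$ outside a finite set of $v$, where $\gamma_0$ then satisfies Definition~\ref{def:kottwitzgeneral}\,(1). Let $S$ be a finite set of places containing $\{v_1, \dots, v_m\}$, all places inert in $E$ and unramified in $K$ where $\gamma_0$ is not Kottwitz, and all places where $\O_{L_v}[\delta]$ is not the maximal order of $L'_v$.

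Finally, by weak approximation for $(E')^\times$ (a product of multiplicative groups of number fields) I choose $a \in (E')^\times$ approximating the chosen $a_v$ at each $v \in S$ closely enough that $c \coloneqq a/\sigma_{L/K}(a) \in U(F)$ lies in any prescribed open neighborhood of $c_v$ in $U(F_v)$. Setting $\gamma = \gamma_0 c$, the required Kottwitz (or $k$-Kottwitz) conditions at $v \in S$ hold by openness of these conditions in $c$. At each $v \notin S$ that is inert in $E$ and unramified in $K$, the element $c$ is a $v$-adic unit in $(E')^\times$ with at most finitely many exceptions, and at such $v$ both $\gamma_0$ and $c$ lie in $\O_{L_v}[\delta]$, so $\gamma = \gamma_0 c$ does too and is Kottwitz. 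The main obstacle is controlling the Kottwitz condition simultaneously at infinitely many places rather than a finite set; this is handled by enlarging $S$ to absorb the finite set of additional bad places produced by the chosen $a$ and iterating the approximation, or equivalently by a combined weak-approximation-with-prescribed-integrality argument whose only obstruction is a Shafarevich--Tate-type group killed by the Hasse norm principle already invoked in Lemma~\ref{lem:localglobal}.
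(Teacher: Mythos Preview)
Your overall architecture matches the paper's proof: work in the centralizer algebra $L'$ of $\delta$, parametrize the candidate $\gamma$'s as $\gamma_0$ times a point of the norm-one torus $U = R^1_{E'/F'}\mathbb{G}_m$, produce local Kottwitz elements via Lemmas~\ref{lem:kottwitzexist} and~\ref{lem:kkottwitzexist}, and then approximate. The paper uses exactly the same parametrization $\gamma = a^{-1}\gamma_0\,\sigma_{E'/F'}(a)$.

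The genuine gap is your final paragraph. Weak approximation for $(E')^\times$ only controls $a$ at the finite set $S$; the resulting $a$ will fail to be a $v$-adic unit at some uncontrolled finite set $T$ of further places, and at those $v \in T$ which happen to be inert in $E$ and unramified in $K$, you have lost the Kottwitz condition for $\gamma = \gamma_0 c$. Your proposed remedy of ``enlarging $S$ and iterating'' does not terminate: each new approximation produces a fresh finite bad set, with no mechanism forcing these to eventually be empty. The alternative appeal to a ``Shafarevich--Tate-type obstruction killed by Lemma~\ref{lem:localglobal}'' is misplaced: Lemma~\ref{lem:localglobal} is a Hasse-norm statement about the \emph{existence} of $\gamma$, which you already have in $\gamma_0$; it says nothing about integrality at infinitely many places. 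The obstruction here is not cohomological but a class-group/ideal-theoretic one.

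What the paper actually does is prove a tailored approximation lemma (Lemma~\ref{lem:aapprox}): one can choose $a \in (E')^\times$ approximating the $a_v$ at finitely many places \emph{and} lying in $\O_{E'_v}^\times$ at every place in a prescribed set $\Sigma_0$, provided $\Sigma_0$ has Dirichlet density $0$. The density hypothesis is satisfied because the places inert in $E$ have density $0$ when pulled up to $E'$. The proof of Lemma~\ref{lem:aapprox} works by moving the non-unit part of $a$ into a single prime ideal chosen (via Chebotarev/ray class considerations) to lie outside $\Sigma_0$. This density argument is the missing idea; with it, your proof would be complete and essentially identical to the paper's.
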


\begin{proof} 
Let $L'$ be the centralizer of $\delta$ in
$\Mat_n(L)$. 
Since $\delta$ is regular semisimple,
$L'$ is the commutative $L$-subalgebra of $\Mat_n(L)$ generated by
$\delta$. 
Note that $\sigma_{L/E}$ and $\sigma_{L/K}$
act on $L'$, since $\delta\sigma_{L/E}(\delta)
= \delta\sigma_{L/K}(\delta) = 1$.
Let $E'$ and $K'$ denote the $\sigma_{L/E}$ and
$\sigma_{L/K}$-fixed elements of
$L'$, respectively, and let $F' = E' \cap K'$. 

Let $\Sigma_{\inert}$ be the set of places $v \neq
v_1, \dots, v_m$ of $F$ which are inert in $E$ and unramified in $K$, such that
\[
    g^{-1}\delta g \in \GL_n(\O_{L_v})
\]
for some $g \in \GL_n(L_v)$.
For $v \in \Sigma_{\inert}$, the characteristic 
polynomial of $\delta$ has coefficients in $\O_{L_v}$,
so $\delta \in \O_{L_v'}$. Note if $v \not \in \Sigma_{\inert}$ is inert in $E$ and unramified in $K$ and $v \not = v_1, \hdots, v_m$, then $\gamma$ is automatically Kottwitz at $v$.

By Lemmas \ref{lem:kottwitzexist} and
\ref{lem:kkottwitzexist}, for each 
$v \in \Sigma_{\inert}$, there exists Kottwitz
$\gamma_v \in L_v'$ with 
$\gamma_v \sigma_{L'/K'}(\gamma_v) = 1$ and
$\gamma_v\sigma_{L'/E'}(\gamma_v)^{-1} = \delta$.
Since $\delta \in \O_{L_v'}$ as noted above,
by definition of $\gamma_v$ being Kottwitz, 
$\gamma_v \in \O_{L_v'}$.
In addition, by Lemma \ref{lem:kkottwitzexist},
for each $1 \le i \le m$,
there exists $\gamma_{v_i} \in L_{v_i}'$ with 
$\gamma_{v_i}\sigma_{L'/K'}(\gamma_{v_i}) = 1$ and
$\gamma_{v_i}\sigma_{L'/E'}(\gamma_{v_i})^{-1} = \delta$,
such that $\gamma_{v_i}$ is both Kottwitz
and $k$-Kottwitz.

By Lemma \ref{lem:kottwitzapprox}, there exists an
integer $m_v \ge 0$ for each $v \in \Sigma_{\inert}$
such that if $\gamma \in L_v'$ satisfies
\[
    \gamma - \gamma_v \in \varpi_{F_v}^{m_v}
    \O_{L_v'},
\]
then $\gamma$ is Kottwitz.
Also, there exist integers $m_{v_1}, \dots, m_{v_i}
\ge 0$ such that if $\gamma \in L_{v_i}'$ such that
\[
    \gamma - \gamma_{v_i} \in \varpi_{F_{v_i}}^{m_{v_i}}
    \O_{L_{v_i}'},
\]
then $\gamma$ is both Kottwitz and $k$-Kottwitz.
Furthermore, from the proof of Lemma \ref{lem:kottwitzapprox}, we see that we can take 
$m_v = 0$ for all but finitely many $v$.

Let $\Sigma = \Sigma_{\inert} \cup
\{v_1, \dots, v_m\}$.
Let $\Sigma_{\text{f}}$ be a finite subset of
$\Sigma$ such that for all $v \in \Sigma - \Sigma_{\text{f}}$,
we have $v \in \Sigma_{\inert}$,
$\gamma_0 \in \O_{L_v'}^\times$, and $m_v = 0$.
Note that for all $v \in \Sigma $, we have 
$\gamma_v\gamma_0^{-1} \in E_v'$ and 
$\gamma_v\gamma_0^{-1}\sigma_{E'/F'}
(\gamma_v\gamma_0^{-1}) = 1$, 
so there exists $a_v \in \O_{E_v'}$ such that
\[
    \gamma_v = a_v^{-1} \gamma_0 \sigma_{E'/F'}(a_v).
\]  
Furthermore, for $v \in \Sigma - \Sigma_{\text{f}}$,
we have $\gamma_v\gamma_0^{-1} \in \O_{E_v'}^\times$
and the extension $E_v'/F_v'$ is unramified, 
so we can assume $a_v \in \O_{E_v'}^\times$.

For each $v \in \Sigma_{\text{f}}$, let $\varepsilon_v > 0$
be an integer such that
\begin{equation}
\label{eq:orda}
    \varpi_{F_v}^{\varepsilon_v} a_v^{-1}
    \in \varpi_{F_v}\O_{E_v'}
\end{equation}
and
\begin{equation}
\label{eq:ordgamma}
   \varpi_{F_v}^{\varepsilon_v} a_v^{-2}\gamma_0
   \in \varpi_{F_v}^{m_v}\O_{L_v'}.
\end{equation}
Since $\Sigma - \Sigma_{\text{f}} \subseteq \Sigma_{\inert}$,
the set of places of $E'$ lying above an element of 
$\Sigma - \Sigma_{\text{f}}$ has Dirichlet density $0$, so
by Lemma \ref{lem:aapprox}, there exists
$a \in E'$ such that 
\begin{equation}
\label{eq:aminusav}
    a - a_v \in \varpi_{F_v}^{\varepsilon_v} \O_{E_v'}
\end{equation}
for all $v \in \Sigma_{\text{f}}$, and $a \in \O_{E_v'}^\times$
for $v \in \Sigma - \Sigma_{\text{f}}$.

Let
\[
    \gamma = a^{-1}\gamma_0\sigma_{E'/F'}(a).
\]
Then for $v \in \Sigma_{\text{f}}$, we have
$aa_v^{-1} \in \O_{E_v'}^\times$ by \eqref{eq:orda}
and \eqref{eq:aminusav}. Also recall that 
$a_v \in \O_{E_v'}$, so by \eqref{eq:aminusav}
and \eqref{eq:ordgamma}, we have
\begin{align*}
    \gamma - \gamma_v &= \gamma_0(a^{-1}\sigma_{E'/F'}(a) 
    - a_v^{-1}\sigma_{E'/F'}(a_v)) \\
    &= a^{-1}a_v^{-1}\gamma_0(a_v\sigma_{E'/F'}(a)
    - a\sigma_{E'/F'}(a_v)) \in \varpi_{F_v}^{\varepsilon_v} 
    a_v^{-2}\gamma_0 \O_{E_v'} \subseteq
    \varpi_{F_v}^{m_v}\O_{L_v'}.
\end{align*}
In addition, for $v \in \Sigma - \Sigma_{\text{f}}$, 
we have
\[
    \gamma - \gamma_v = \gamma_0(a^{-1}\sigma_{E'/F'}(a) 
    - a_v^{-1}\sigma_{E'/F'}(a_v)) \in \varpi_{F_v}^{m_v}\O_{L_v'}
\]
since $m_v = 0$, $\gamma_0 \in \O_{L_v'}^\times$,
and $a, a_v \in \O_{E_v'}^\times$.
Thus, by the construction of the $m_v$, this means
that $\gamma$ is Kottwitz at all $v \in \Sigma_{\inert}$ and
both Kottwitz and $k$-Kottwitz at $v = v_1, \dots, v_m$.
\end{proof}

We now prove some lemmas used in the above proof.

\begin{lem}
\label{lem:kottwitzapprox}
Let $v$ be a finite place of $L$, and let $k \ge 0$ be
an integer. Let $L_v'$ be an \'etale $L_v$-algebra
of dimension $n$.
Suppose $\delta \in L_v'$ generates $L_v'$ as an
$L_v$-algebra. 
There exists an integer $m = m(\delta, k)$ such that if 
\[
    \xi = c_{n-1}\delta^{n-1} + \cdots + c_0
\]
for some $c_0 \in 1 + \varpi_{L_v}^k\O_{L_v}$
and $c_1, \dots, c_{n-1} \in \varpi_{L_v}^k\O_{L_v}$,
and $\xi' \in L_v'$ satisfies 
$\xi' - \xi \in \varpi_{L_v}^m \O_{L_v'}$, then
\[
    \xi' = c_{n-1}'\delta^{n-1} + \cdots + c_0
\]
for some $c_0' \in 1 + \varpi_{L_v}^k\O_{L_v}$
and $c_1', \dots, c_{n-1}' \in \varpi_{L_v}^k\O_{L_v}$ 
as well.

\end{lem}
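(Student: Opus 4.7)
The plan is to exploit the fact that $\{1, \delta, \delta^2, \ldots, \delta^{n-1}\}$ forms an $L_v$-basis of $L_v'$ (since $\delta$ generates $L_v'$ as an $L_v$-algebra and $\dim_{L_v} L_v' = n$), so that the coefficients $c_i'$ of $\xi'$ in this basis are uniquely determined. The task reduces to measuring how close these coefficients are to the given $c_i$ when $\xi'$ is close to $\xi$ in $\O_{L_v'}$.

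First I would compare the two $\O_{L_v}$-lattices $\O_{L_v}[\delta] = \O_{L_v} \cdot 1 + \O_{L_v}\delta + \cdots + \O_{L_v}\delta^{n-1}$ and $\O_{L_v'}$ inside $L_v'$. Both are full $\O_{L_v}$-lattices, hence commensurable: there exists an integer $N = N(\delta) \ge 0$ such that
\[
\varpi_{L_v}^{N}\,\O_{L_v'} \subseteq \O_{L_v}[\delta].
\]
Explicitly, one can take $N$ to be the valuation of the discriminant/index $[\O_{L_v'} : \O_{L_v}[\delta]]$, which is finite because $\delta$ generates $L_v'$ over $L_v$.

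Next, given $\xi' - \xi \in \varpi_{L_v}^{m}\O_{L_v'}$, I would set $m = N + k$ and write
\[
\xi' - \xi \in \varpi_{L_v}^{N+k}\,\O_{L_v'} \subseteq \varpi_{L_v}^{k}\,\O_{L_v}[\delta].
\]
Expanding $\xi' - \xi = d_0 + d_1\delta + \cdots + d_{n-1}\delta^{n-1}$ with $d_i \in \varpi_{L_v}^{k}\O_{L_v}$, and setting $c_i' = c_i + d_i$, one immediately reads off $c_0' \in 1 + \varpi_{L_v}^{k}\O_{L_v}$ and $c_i' \in \varpi_{L_v}^{k}\O_{L_v}$ for $i \ge 1$, as required. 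Uniqueness of the expansion in the basis $\{\delta^i\}$ ensures the $c_i'$ are well-defined.

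The only minor obstacle is verifying the existence of the bound $N$; this is standard because $\delta$ generates $L_v'$ over $L_v$, so $\O_{L_v}[\delta]$ has finite index in $\O_{L_v'}$ (this index is controlled by the different/discriminant of the $\O_{L_v}$-order generated by $\delta$). Everything else is linear algebra in the basis $\{1, \delta, \ldots, \delta^{n-1}\}$. Hence the choice $m(\delta, k) = N(\delta) + k$ works.
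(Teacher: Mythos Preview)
Your proof is correct. Both approaches reduce to the same linear-algebra fact---that the change-of-basis map from $\{1,\delta,\ldots,\delta^{n-1}\}$ to an $\O_{L_v}$-basis of $\O_{L_v'}$ has bounded denominators---but you and the paper realize this differently. The paper works concretely: it passes to an algebraic closure $\ol{L_v}$, uses the $n$ embeddings $\sigma_i\colon L_v'\to\ol{L_v}$ to write the coefficient vector $(c_{n-1}',\ldots,c_0')$ as the inverse Vandermonde matrix in the $\sigma_i(\delta)$ applied to $(\sigma_1(\xi'),\ldots,\sigma_n(\xi'))$, and then chooses $m$ so that $\varpi_{L_v}^m$ times this inverse lands in $\varpi_{L_v}^k\Mat_n(\O_{\ol{L_v}})$. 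You instead invoke directly that the $\O_{L_v}$-span of $1,\delta,\ldots,\delta^{n-1}$ and $\O_{L_v'}$ are two full lattices in $L_v'$, hence commensurable, which is cleaner and avoids the excursion to $\ol{L_v}$. One small wording issue: your remark that $N$ can be taken as ``the valuation of the index $[\O_{L_v'}:\O_{L_v}[\delta]]$'' presumes $\delta\in\O_{L_v'}$, which the statement does not assume; your actual argument only needs commensurability of full lattices, so it stands regardless. Note also that your bound $m=N(\delta)+k$ makes transparent the fact (used later in the paper) that $m$ can be taken to be $0$ at almost all places once $k=0$ and $\delta$ generates the maximal order.
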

\begin{proof}
Let $p(X) \in L_v[X]$ be the minimal polynomial of 
$\delta$, so $L_v' = L_v[X]/(p(X))$. 
Let $\sigma_1, \dots, \sigma_n$ be the $n$
distinct maps of $L_v$-algebras $L_v' \to \ol{L_v}$,
where $\ol{L_v}$ denotes an algebraic closure of $L_v$.

Let $m = m(\delta)$ be an integer such that
\begin{equation}
\label{eq:m} 
    \varpi_{L_v}^m \cdot \begin{pmatrix}
        \sigma_1(\delta)^{n-1} & \cdots & 1 \\
        \vdots & \ddots & \vdots \\
        \sigma_n(\delta)^{n-1} & \cdots & 1
    \end{pmatrix}^{-1} 
    \in \varpi_{L_v}^k \Mat_n(\O_{\ol{L_v}}).
\end{equation} 
Note that the above matrix is invertible because 
$p(X)$ has $n$ distinct roots.

Now suppose $\xi' - \xi \in \varpi_{L_v}^m\O_{L_v'}$.
This implies 
\begin{equation}
\label{eq:sigdiff}
    \sigma_i(\xi') - \sigma_i(\xi) \in \varpi_{L_v}^m
    \O_{\ol{L_v}}
\end{equation}
for $1 \le i \le n$.
Since $\delta$ generates $L_v'$, we know that
\[
    \xi' = c_{n-1}'\delta^{n-1} + \cdots + c_0'
\]
for some $c_0',\dots, c_{n-1}' \in L_v$.
We have
\[
    \sigma_i(\xi') = c_{n-1}'\sigma_i(\delta)^{n-1} + \cdots + c_0'
\]
for $1 \le i \le n$, so
\begin{align*}
    \begin{pmatrix}
        c_{n-1}' \\ \vdots \\ c_0' 
    \end{pmatrix} &= 
    \begin{pmatrix}
        \sigma_1(\delta)^{n-1} & \cdots & 1 \\
        \vdots & \ddots & \vdots \\
        \sigma_n(\delta)^{n-1} & \cdots & 1
    \end{pmatrix}^{-1} 
    \begin{pmatrix}
        \sigma_1(\xi') \\ \vdots \\ \sigma_n(\xi')
    \end{pmatrix} \\
    &= \begin{pmatrix}
        \sigma_1(\delta)^{n-1} & \cdots & 1 \\
        \vdots & \ddots & \vdots \\
        \sigma_n(\delta)^{n-1} & \cdots & 1
    \end{pmatrix}^{-1} 
    \begin{pmatrix}
        \sigma_1(\xi') - \sigma_1(\xi) \\ \vdots \\ \sigma_n(\xi') - \sigma_n(\xi)
    \end{pmatrix} +
    \begin{pmatrix}
        c_{n-1} \\ \vdots \\ c_0,
    \end{pmatrix}
\end{align*}
where the second equality follows from the analogous
formula for $\transp{(c_{n-1}, \dots, c_0)}$.
Then the assumptions that $c_0 \in 1 + \varpi_{L_v}^k
\O_{L_v}$ and $c_1, \dots, c_{n-1} \in 
\varpi_{L_v}^k\O_{L_v}$, combined with \eqref{eq:m} and
\eqref{eq:sigdiff}, imply that 
$c_0' \in (1 + \varpi_{L_v}^k\O_{\ol{L_v}}) \cap L_v
= 1 + \varpi_{L_v}^k\O_{L_v}$ and $c_1', \dots, c_{n-1}'
\in (\varpi_{L_v}^k\O_{\ol{L_v}}) \cap L_v 
= \varpi_{L_v}^k\O_{L_v}$ as well.
\end{proof}

\begin{lem}
\label{lem:aapprox}
Let $E'$ be a number field. Let $\Sigma_0$
be a set of finite primes of $E'$ with Dirichlet density
$0$, and let $\Sigma_{\textnormal{f}}$ be finite set of finite primes
which is disjoint from $\Sigma_0$.
For each $v \in \Sigma_{\textnormal f}$, let $\varepsilon_v$ 
be an integer, and let $a_v$ be an element of 
$E_v'^\times$.
Then there exists $a \in E'^\times$
such that $a \in \O_{E'_v}^\times$ for all $v \in \Sigma_0$, and
\[
    a - a_v \in \varpi_{E'_v}^{\varepsilon_v} \O_{E'_v}
\]
for all $v \in \Sigma_{\textnormal{f}}$.
\end{lem}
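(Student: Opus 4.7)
The strategy is to combine weak approximation at $\Sigma_{\textnormal{f}}$ with Chebotarev's density theorem in a suitable ray class field in order to remove bad valuations of a candidate element at $\Sigma_0$. The hypothesis that $\Sigma_0$ has Dirichlet density zero is exactly what is needed to find a prime lying in a prescribed ray class yet disjoint from $\Sigma_0$.

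The plan is the following. First I would pick an integer $N \geq 1$ with $N + v(a_v) \geq \varepsilon_v$ for every $v \in \Sigma_{\textnormal{f}}$, set the modulus $\mathfrak{m} := \prod_{v \in \Sigma_{\textnormal{f}}} v^N$, and choose by weak approximation in $E'$ an element $b \in E'^\times$ with $v(b - a_v) \geq v(a_v) + N$ at every $v \in \Sigma_{\textnormal{f}}$; in particular $v(b) = v(a_v)$ at such $v$. Then the fractional ideal
\[
    J := \prod_{v \in \Sigma_0} v^{v(b)}
\]
is finitely supported (it divides $(b)$) and coprime to $\mathfrak{m}$. By Chebotarev's density theorem applied to the ray class field of $E'$ of conductor $\mathfrak{m}$, every class in the finite ray class group $\mathrm{Cl}_\mathfrak{m}(\O_{E'})$ has positive Dirichlet density of prime ideals. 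Since $\Sigma_0$ has Dirichlet density zero and $\Sigma_{\textnormal{f}}$ is finite, there exists a prime $w \notin \Sigma_0 \cup \Sigma_{\textnormal{f}}$ whose class in $\mathrm{Cl}_\mathfrak{m}$ equals $[J]^{-1}$. Hence $J \cdot w = (u)$ for some $u \in E'^\times$ with $u \equiv 1 \pmod{\mathfrak{m}}$.

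Finally I would take $a := b/u$. For $v \in \Sigma_0$, we have $v(u) = v(J) = v(b)$, so $v(a) = 0$ and $a \in \O_{E_v'}^\times$. For $v \in \Sigma_{\textnormal{f}}$, the congruence $u \in 1 + \varpi_{E_v'}^N \O_{E_v'}$ gives $v(u) = 0$ and $v(u - 1) \geq N$, and writing
\[
    a - a_v = (b - a_v)/u + a_v(1 - u)/u
\]
shows both summands have $v$-valuation at least $v(a_v) + N \geq \varepsilon_v$, which yields the desired congruence. The only nontrivial step is the density-theoretic one, namely producing the prime $w$; this uses standard Chebotarev for ray class fields together with the density-zero hypothesis on $\Sigma_0$, and everything else is bookkeeping with fractional ideals and weak approximation.
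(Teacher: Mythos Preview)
Your proof is correct and uses essentially the same ingredients as the paper's: weak approximation at $\Sigma_{\textnormal{f}}$ combined with Chebotarev for a ray class field to produce an auxiliary prime outside $\Sigma_0$ in a prescribed class, then dividing off to clear the valuations at $\Sigma_0$. The paper organizes the argument slightly differently---it first uses the ordinary class group to find an element $c$ normalizing the valuations $\ord_v(a_v)$ at $\Sigma_{\textnormal{f}}$, and only afterward invokes the ray class group---whereas you go straight to the ray class group and handle everything in one pass; your version is arguably a bit more streamlined, but the content is the same.
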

\begin{proof}
First, we note that there exists $c \in E'^\times$ 
such that $c^{-1} a_v\in \O_{E'_v}^\times$ for all 
$v \in \Sigma_{\text{f}}$ and $c \in \O_{E'_v}^\times$ 
for all $v \in \Sigma_0$. Indeed, consider the 
fractional ideal
\[
    \mathfrak n = \prod_{v \in \Sigma_{\textnormal{f}}} \mathfrak p_v^{\ord_v(a_v)},
\]
where $\fp_v$ denotes the prime ideal of $\O_{E'_v}$
corresponding to $v$. Since $\Sigma_0 \cup \Sigma_{\text{f}}$ 
has Dirichlet density $0$, there exists a prime ideal
$\mathfrak q$ in the same ideal class as $\mathfrak n$.
Let $c$ be such that $c \mathfrak q = \mathfrak n$.
Then $c$ satisfies the required property.

Since $\Sigma_{\text{f}}$ is finite, by weak
approximation, there exists $a' \in E'^\times$ such that
\[  
    a' - c^{-1} a_v \in \varpi_{E'_v}^{\max\{1,
    \varepsilon_v - \ord_v(a_v)\}} \cdot \O_{E'_v}
\]
for all $v \in \Sigma_{\text{f}}$.
Since $c^{-1} a_v \in \O_{E'_v}^\times$ for
$v \in \Sigma_{\text{f}}$,
clearly this implies $a' \in \O_{E'_v}^\times$
for $v \in \Sigma_{\text{f}}$. 

Let $\fm$ be the ideal
\[
    \fm = \prod_{v \in \Sigma_{\text{f}}} 
    \fp_v^{\max\{0,
    \varepsilon_v - \ord_v(a_v)\}}.
\]
As noted above, the fractional ideal
$(a')$ is coprime to $\fm$. Thus, the set of prime
ideals of $\O_{E'}$ in the same ray class
modulo $\fm$ as $(a')$ has positive Dirichlet density,
so there exists $\fp$ in this ray class
which is not an element of $\Sigma_0$.

Let $b \in E'_{\fm, 1}$ be such that $(ba') = \fp$.
This means $b$ is an element of $E'$ such that
\[
    b - 1 \in \varpi_{E'_v}^{\max\{0,
    \varepsilon_v - \ord_v(a_v)\}} \cdot \O_{E'_v}
\]
for all $v \in \Sigma_{\text{f}}$. 
We claim that $a = cba'$
satisfies the requirements of the lemma.

Indeed, for all $v \in \Sigma_0$,
we have $a \in \O_{E'_v}^\times$ since 
$(c^{-1} a) = \fp$ is a prime ideal not contained in $\Sigma_0$, and $c \in \O_{E'_v}^\times$.
In addition, for $v \in \Sigma_{\text{f}}$, we have
\begin{align*}
    a - a_v &= ca'(b - 1) + c(a' - c^{-1}a_v) \\
    &= ca_v^{-1}(a'a_v(b - 1) + a_v(a' - c^{-1}a_v))
    \in
    \varpi_{E'_v}^{\varepsilon_v} \O_{E'_v},
\end{align*}
since $a_v(b - 1), a_v(a' - c^{-1}a_v) \in 
\varpi_{E'_v}^{\varepsilon_v} \O_{E'_v}$ and 
$a', ca_v^{-1} \in \O_{E'_v}^\times$.
\end{proof}

\subsection{Spectral expansions}

For any reductive group $G$ over $F$, let $R$ denote the right translation action of
$G(\AA)$ on $L^2(G(F)\bs G(\AA))$.

For a place $v$ of $F$ and $f_{v} \in C_c^\infty(G(F_v))$, we say that $f_v$ is  cuspidal if for all parabolic $P=MN$ of $G_{F_v}$, we have
\[
    \int_{N(F_v)} f_v(nx) dn = 0, \quad \forall x \in G(F_v).
\]
Note that any truncated matrix coefficient of a supercuspidal representation is cuspidal. It is well known that  If $f = f_{v_1}f^{v_1} \in C_c^\infty(G(\AA))$ for, such that $f_{v_1}$ is cuspidal on $G(F_{v_1})$, then the image of $R(f)$ lies in $L^2_{\mathrm{cusp}}([G])$. 

Recall that we have the decomposition of $L^2([G])$:
\[
    L^2([G]) = L^2_{\operatorname{cusp}}([G]) \oplus L^2_{\operatorname{cusp}}([G])^\perp,
\]
and
\[
    L^2_{\operatorname{cusp}}([G]) = \bigoplus_{\Pi} L^2_\Pi([G]),
\]
where $\Pi$ runs through cuspidal automorphic representations with central character trivial on $A_{G}^\infty$. Let $\operatorname{proj}_{\Pi}$ denote the projection to the $\Pi$-part of $L^2([G])$. Recall that the $\Pi$-part of the kernel function $K_{f,\Pi}(x,y)$ satisfies the property that for any $\varphi \in L^2([G])$, we have
\[
    \operatorname{proj}_\Pi(R(f)\varphi) = \int_{[G]} K_{f,\Pi}(x,y) \varphi(y) dy.
\]

\begin{definition}
    For any irreducible cuspidal automorphic representation 
    $\pi$ of $G^V(\AA)$ and test function $(f, \phi_1 \otimes \phi_2)$,
    define
    \[
        J^V_\pi(f, \phi_1 \otimes \phi_2) 
        = \sum_{\varphi} \P(R(f)\varphi, \phi_1)\ol{\P(\varphi, \phi_2)},
    \]
    where the sum runs over an orthonormal basis of $\pi$.
\end{definition}

\begin{definition}
\label{def:globalzetaint}
Let $\Pi$ be an irreducible cuspidal automorphic
representation of $G'(\AA)$. 
For $\varphi \in \Pi$ and
$\phi' \in \cS(\AA_{E,n})$, define
\begin{equation}
        \lambda(s, \varphi, \mu, \phi') = \int\limits_{H_1(F)\bs H_1(\AA)} \varphi(g)\ol{\Theta_{\mu}(s, g, \phi')}\, dg
\end{equation}
Here $\Theta_{\mu}(s, g, \phi')$ is the theta series in Definition \ref{defn theta series: GL side}. Define
\begin{equation}
\beta(\varphi) = \int\limits_{H_2(F)A_{G'}^\infty \bs H_2(\AA)}
    \varphi(h)\eta_{L/K}(\det h)^{n+1}\, dh.
\end{equation}
\end{definition}

\begin{definition}
For any irreducible cuspidal automorphic representation $\Pi$ of $G'(\AA)$ and test function $(f', \phi')$, define
\[
    I_\Pi(s, f', \phi') = \sum_\varphi \lambda(s, R(f')\varphi, \mu, \phi')
    \ol{\beta(\varphi)},
\]
   where the sum runs over an orthonormal basis of $\Pi$. 
\end{definition}

\begin{proposition}
\label{prop:uspectral}
     If the test function $(f, \phi_1 \otimes \phi_2)$ is good (Definition \ref{def:ugood}), then 
    \[
        J^V(f , \phi_1 \otimes \phi_2)
        = \sum_\pi J^V_\pi(f, \phi_1 \otimes \phi_2),
    \]
    where the sum runs over all irreducible cupsidal automorphic 
    representations of $G^V(\AA)$.
    The right hand side is absolutely convergent.
    \end{proposition}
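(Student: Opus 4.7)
The plan is to reduce the identity to the standard cuspidal projection of the automorphic kernel, then justify the interchange of the spectral sum with the integration against the two theta kernels. The proof splits into three steps.

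\textbf{Step 1 (cuspidal projection of the kernel).} By condition \eqref{umatrixcoeff} of Definition \ref{def:ugood}, $f_{v_1}$ is a truncated matrix coefficient of a supercuspidal representation at a finite split place; in particular $f_{v_1}$ is cuspidal in the sense that $\int_{N(F_{v_1})} f_{v_1}(nx)\,dn = 0$ for every proper parabolic $P=MN$ of $G_{F_{v_1}}$. A standard argument (going back to the Jacquet--Rallis setup, cf.\ the proofs reproduced in \cite{Liu2014relative,Xue-GGP}) shows that $R(f_{v_1})$ annihilates every non-cuspidal constituent of $L^2([G])$, hence so does $R(f)$. As a consequence the automorphic kernel coincides with its cuspidal projection,
\[
K_f(x,y)\;=\;\sum_{\pi}K_{f,\pi}(x,y),
\]
where $\pi$ runs over cuspidal automorphic representations of $G^V(\AA)$ with trivial central character on $A_{G^V}^\infty$, and the series converges absolutely and uniformly on compact subsets of $[G^V]\times[G^V]$.

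\textbf{Step 2 (absolute convergence against the theta kernel).} For each cuspidal $\pi$ the orthonormal-basis expansion
\[
K_{f,\pi}(h_1,h_2) \;=\; \sum_{\varphi\in \operatorname{OB}(\pi)} (R(f)\varphi)(h_1)\,\overline{\varphi(h_2)}
\]
converges absolutely, and $R(f)\varphi$ is of rapid decay on $[H]$ as it is cuspidal. The theta kernel $\Theta(h,\phi_i)$ is of uniform moderate growth on $[H]$ by the standard estimates for Weil representations. It follows that each period $\P(R(f)\varphi,\phi_1)$ and $\P(\varphi,\phi_2)$ converges absolutely. To interchange the sum over $\pi$ with the double integration I would apply a Sobolev argument: choose an elliptic self-adjoint element $\Delta$ of the enveloping algebra of $G(F_\infty)$ which acts by a scalar $\lambda_\pi$ on $\pi_\infty$ and which produces arbitrary polynomial decay against $\Theta$ after iterated integration by parts; writing $R(f)=R(f')\circ R(\Delta)^{-N}$ for $f'$ another admissible test function and using the usual comparison between $|\lambda_\pi|^{-N}$ and the eigenvalues of Laplacians, one bounds
\[
\sum_{\pi}\sum_{\varphi\in\operatorname{OB}(\pi)} \bigl|\P(R(f)\varphi,\phi_1)\,\overline{\P(\varphi,\phi_2)}\bigr| \;<\; \infty.
\]

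\textbf{Step 3 (Fubini).} The absolute convergence in Step 2 licenses swapping the spectral sum with the integration over $[H]\times[H]$. Combining with Step 1 gives
\[
J^V(f,\phi_1\otimes\phi_2)\;=\;\sum_{\pi}\int_{[H]}\int_{[H]}K_{f,\pi}(h_1,h_2)\,\overline{\Theta(h_1,\phi_1)}\Theta(h_2,\phi_2)\,dh_1\,dh_2\;=\;\sum_\pi J^V_\pi(f,\phi_1\otimes\phi_2),
\]
where the last equality uses the basis expansion of $K_{f,\pi}$ and the definition of $J^V_\pi$.

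The main obstacle is the uniformity in Step 2: since $\Theta(\cdot,\phi_i)$ is genuinely only of moderate growth (it is not square-integrable), one cannot directly apply Cauchy--Schwarz in $L^2([H])$, and the Sobolev reduction must be set up so that the resulting smoothed test function still meets the support hypotheses of Definition \ref{def:ugood}. The clean way to do this is to factor $f = f'\ast f''$ at archimedean components with $f''$ highly smoothing, which keeps condition \eqref{umatrixcoeff} intact at $v_1$ and preserves the regular-semisimple support condition \eqref{ursssupp} at $v_2$.
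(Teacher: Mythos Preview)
Your proposal is correct and follows essentially the same approach as the paper: both use condition \eqref{umatrixcoeff} to reduce $K_f$ to its cuspidal projection $\sum_\pi K_{f,\pi}$, then expand each $K_{f,\pi}$ via an orthonormal basis and integrate against the two theta kernels. The only difference is that where you sketch a Sobolev/Dixmier--Malliavin argument for the absolute convergence in Step~2, the paper simply invokes \cite[Proposition~A.1.2]{MR4332778} as a black box; your concern about preserving the goodness conditions under archimedean smoothing is well-placed but, as you note yourself, is harmless since conditions \eqref{umatrixcoeff} and \eqref{ursssupp} live at the finite places $v_1, v_2$.
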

\begin{proof}
Recall that
\begin{align*}
    J^V(f, \phi_1 \otimes \phi_2)
    = \int\limits_{H(F)\bs H(\AA)}
    \int\limits_{H(F)\bs H(\AA)}
    K_{f}(g, h)
    \ol{\Theta(g, \phi_1)}
    \Theta(h, \phi_2)\,dg\,dh.
\end{align*}
Since $f_{v_1}$ is a truncated matrix coefficient of a supercuspidal representation, the image of $R(f)$ lies in $L^2_{\operatorname{cusp}}([G^V])$.
Therefore,
\[
    K_f(x,y) = \sum_{\pi} K_{f,\pi}(x,y),
\]
where in the sum, $\pi$ runs through all cuspidal automorphic representations of $G^V(\AA)$.
We have
\[
    K_{f, \pi}(g, h) =
    \sum_{\varphi}(R(f)\varphi)(g)
    \ol{\varphi(h)},
\]
where $\varphi$ runs over an orthonormal basis of $\pi$.
The proposition then follows.
The absolute convergence of the sum follows from
\cite[Proposition~A.1.2]{MR4332778}.
\end{proof}

\begin{proposition}
\label{prop:glspectral}
If the test function $(f', \phi')$ is good (Definition \ref{def:glgood}), then
\[
    I(f', \phi') = \sum_{\Pi} I_\Pi(f', \phi')
\]
where the sum runs over all irreducible cuspidal 
automorphic representations of $G'(\AA)$ with
central character trivial on $A_{G'}^\infty$.
The right hand side is absolutely convergent.
\end{proposition}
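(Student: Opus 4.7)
The plan is to follow the proof of Proposition \ref{prop:uspectral} with the appropriate changes for the general linear side. Since $(f', \phi')$ is good, $f'_{v_1}$ is a truncated matrix coefficient of a supercuspidal representation at a place $v_1$ split in $E$ (Definition \ref{def:glgood}(1)). This makes $f'_{v_1}$ cuspidal, so the image of $R(f')$ on $L^2([G'])$ lies in $L^2_{\mathrm{cusp}}([G'])$, and the kernel admits the spectral decomposition
\[
K_{f'}(x, y) = \sum_\Pi K_{f', \Pi}(x, y), \qquad K_{f', \Pi}(x, y) = \sum_{\varphi \in \mathrm{OB}(\Pi)} (R(f')\varphi)(x)\ol{\varphi(y)},
\]
where $\Pi$ ranges over cuspidal automorphic representations of $G'(\AA)$ with central character trivial on $A_{G'}^\infty$.

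Inserting this expansion into the definition of $I(f', \phi')$, interchanging the $\Pi$-sum with the $[H_1] \times [H_2]$-integration, and using Fubini to separate the $h_1$ and $h_2$ variables gives, for each $\Pi$,
\[
\int_{[H_1]}\int_{[H_2]} K_{f',\Pi}(h_1,h_2)\ol{\Theta_\mu(h_1,\phi')}\eta_{L/K}(h_2)^{n+1}\,dh_1\,dh_2 = \sum_{\varphi \in \mathrm{OB}(\Pi)} \lambda(R(f')\varphi, \mu, \phi')\ol{\beta(\varphi)},
\]
which is precisely $I_\Pi(f', \phi')$ as in Definition \ref{def:globalzetaint}; summing over $\Pi$ yields the desired identity.

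The main obstacle is justifying the interchange, i.e.\ the absolute convergence of $\sum_\Pi I_\Pi(f', \phi')$. On cusp forms $\varphi$, the Flicker--Rallis period $\beta(\varphi)$ converges absolutely against the unitary character $\eta_{L/K}^{n+1}$ thanks to the rapid decay of cusp forms on $[H_2]/A_{G'}^\infty$, and the zeta integral $\lambda(R(f')\varphi, \mu, \phi')$ converges absolutely since $R(f')\varphi$ is rapidly decreasing while $\Theta_\mu(\cdot, \phi')$ has only moderate growth on $[H_1]$. The uniform summability as $\Pi$ varies and $\varphi$ ranges over an orthonormal basis is controlled as follows: because $f'_{v_1}$ is a truncated supercuspidal matrix coefficient and $f'^{v_1}$ is smooth and compactly supported, the contributing $\Pi$ are constrained to have $\Pi_{v_1}$ of a fixed supercuspidal isomorphism class and bounded conductor and archimedean type at the other places, so that a Weyl-type count combined with pointwise bounds on $R(f')\varphi$ yields absolute convergence. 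This is a direct adaptation of \cite[Proposition~A.1.2]{MR4332778} to the general linear side, and is where the precise form of Definition~\ref{def:glgood} is used.
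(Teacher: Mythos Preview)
Your overall strategy matches the paper's, but you have glossed over the one point where the general linear case genuinely differs from Proposition~\ref{prop:uspectral}: the group $G' = \Res_{L/F}\GL_n$ has noncompact split center $A_{G'}^\infty \cong \BR_{>0}$, whereas $\U(V_K)$ is anisotropic. This breaks the step where you separate variables. With your displayed formula $K_{f',\Pi}(x,y)=\sum_{\varphi}R(f')\varphi(x)\overline{\varphi(y)}$ (orthonormal basis of $\Pi$ with central character trivial on $A_{G'}^\infty$), the function $h_2 \mapsto \overline{\varphi(h_2)}\,\eta_{L/K}(h_2)^{n+1}$ is $A_{G'}^\infty$-invariant, so its integral over $[H_2]=H_2(F)\backslash H_2(\AA)$ is either zero or infinite --- it is \emph{not} $\overline{\beta(\varphi)}$, which by Definition~\ref{def:globalzetaint} is an integral over the finite-volume quotient $H_2(F)A_{G'}^\infty\backslash H_2(\AA)$.

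The paper's fix, which is in fact the only extra ingredient its proof supplies beyond ``similar to Proposition~\ref{prop:uspectral}'', is to absorb the $A_{G'}^\infty$-integration into the kernel first: one sets
\[
K_\Pi^0(x,y) \coloneqq \int_{A_{G'}^\infty} K_{f',\Pi}(x,ay)\,da,
\]
and it is $K_\Pi^0$, not $K_{f',\Pi}$, that equals $\sum_{\varphi}R(f')\varphi(x)\overline{\varphi(y)}$. Equivalently, factor the $[H_2]$-integral as $A_{G'}^\infty \times \bigl(H_2(F)A_{G'}^\infty\backslash H_2(\AA)\bigr)$ and perform the $A_{G'}^\infty$-integral against the kernel first; this simultaneously selects the representations with trivial central character on $A_{G'}^\infty$ and turns the remaining $h_2$-integral into $\overline{\beta(\varphi)}$. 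Once this step is inserted, the rest of your argument (and your convergence discussion) is along the right lines.
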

\begin{proof}
Let
\[
    K_{\Pi}^0(x,y) = \int_{A_{G'}^\infty} K_{f,\Pi}(x,ay) da.
\]
Then we have
\[
    K_{\Pi}^0(x,y) = \sum_{\varphi} R(f)\varphi(x) 
    \overline{\varphi(y)},
\]
where the sum runs through the orthonormal basis of $\Pi$. The remaining are similar to the proof of Proposition \ref{prop:uspectral}.
\end{proof}

\subsection{Relative trace identity}

We first state a lemma (an exercise in real analysis).
\begin{lem} \label{lem:l^1_linearly_independent}
    Let $X$ be a second countable compact Hausdorff space. Given a sequence of distinct points $x_i$ of $X$ and a sequence $c_i$ of complex numbers such that
    \begin{equation*}
        \sum_{i=1}^{\infty} \lvert c_i \rvert < \infty.
    \end{equation*}
    Suppose for any continuous function $f$ on $X$, we have
    \begin{equation*}
        \sum_{i=1}^{\infty} f(x_i)c_i = 0.
    \end{equation*}
    Then $c_i=0$ for any $i$. 
\end{lem}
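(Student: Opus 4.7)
The plan is to reduce the lemma to a pointwise separation argument via Urysohn's lemma. Fix an arbitrary index $j$; it suffices to show that $c_j = 0$. Given $\varepsilon > 0$, use absolute summability to choose a natural number $N$ (depending on $\varepsilon$ and on $j$, with $N \ge j$) such that
\[
    \sum_{i > N} |c_i| < \varepsilon.
\]
The idea is now to construct a continuous test function $f$ on $X$ that isolates $x_j$ from the other $x_i$ with $i \le N$, plug it into the hypothesis, and observe that the tail of the series is uniformly controlled by $\varepsilon$.

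Since $X$ is a compact Hausdorff space, it is in particular a normal topological space, and finite subsets are closed. Thus the two disjoint closed subsets
\[
    A = \{x_j\}, \qquad B = \{x_i : 1 \le i \le N,\ i \ne j\}
\]
can be separated by Urysohn's lemma: there exists a continuous function $f \colon X \to [0,1]$ with $f \equiv 1$ on $A$ and $f \equiv 0$ on $B$. (Second countability, which by Urysohn's metrization theorem even forces $X$ to be metrizable, is not actually needed here; Hausdorff plus compactness already suffices, although one could equivalently construct $f$ by a partition-of-unity argument in the metric setting.)

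Applying the hypothesis to this $f$ gives
\[
    0 = \sum_{i=1}^{\infty} f(x_i)\, c_i
    = c_j + \sum_{i > N} f(x_i)\, c_i,
\]
because $f(x_i) = 0$ for $1 \le i \le N$ with $i \ne j$ and $f(x_j) = 1$. Since $0 \le f \le 1$, the triangle inequality yields
\[
    |c_j| \le \sum_{i > N} |f(x_i)|\, |c_i| \le \sum_{i > N} |c_i| < \varepsilon.
\]
Letting $\varepsilon \to 0$ forces $c_j = 0$, and since $j$ was arbitrary the claim follows.

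There is essentially no obstacle: the only input beyond the hypothesis is the ability to separate a point from a finite set of other points by a $[0,1]$-valued continuous function, which is automatic from Hausdorffness together with Urysohn's lemma in the compact (hence normal) setting. Conceptually, this is just the statement that the purely atomic complex Borel measure $\mu = \sum_i c_i \delta_{x_i}$, which is of finite total variation by the $\ell^1$ assumption, is zero if it annihilates $C(X)$—a special case of Riesz representation—and the proof above is the elementary direct verification of this fact.
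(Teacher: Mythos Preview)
Your proof is correct and follows essentially the same approach as the paper: isolate the target point from the other first $N$ points by a $[0,1]$-valued continuous function, then bound the remaining tail by $\varepsilon$. The paper phrases the separation via a small neighborhood of $x_1$ avoiding $x_2,\dots,x_N$, while you invoke Urysohn's lemma directly on the two finite closed sets, but the argument is the same; both proofs also note the conceptual Radon measure interpretation.
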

The lemma formally says that the valuation functionals $f \mapsto f(x)$ on $C(X)$ as $x$ varies are “linear independent with $\ell^1$-coefficients”, where $C(X)$ is the Banach space of continuous functions on $X$.

\begin{proof}
    Without loss of generality, we show that $c_1=0$. For any $\varepsilon>0$, choose $N$ such that
    \begin{equation*}
        \sum_{i>N} \lvert c_i \rvert < \varepsilon.
    \end{equation*}
    Then pick a neighborhood $U$ of $x_1$ which only contains $x_i$ for $i>N$ and a function $f$ such that $0 \le f \le 1$, $f(x)=1$ and $f=0$ on $X \setminus U$, then we see that $\lvert c_1 \rvert < \varepsilon$. Since $\varepsilon$ is arbitrary, we conclude that $c_1 = 0$.

    A more conceptual proof: it is easy to check that for a Borel set $Y$, $Y \mapsto \sum_{x_i \in Y} c_i$ defines a Radon measure $\mu$ on $X$, the condition says $\int_X f d \mu = 0$ for all continuous function of $X$. Therefore $\mu=0$. In particular, $\mu \left( \{ x_i\} \right) = c_i = 0$.
\end{proof}

\begin{definition}[Global smooth transfer]
\label{defn: global smooth transfer}
Choose normal representatives $S$ and $R^V$ (Definition \ref{normal reps: unitary}, \ref{normal reps: GL side}). We say that global test functions $(f', \phi') = ( \Delta_{H_1}^{*} \Delta_{H_2}^* \otimes_v f'_v , \otimes_v \phi'_v)$ and
$\{(f^V, \phi_1^V \otimes \phi_2^V) = (\Delta_H^{*,2} \otimes_v f^V_v, \otimes_v (\phi_{1,v}^V \otimes \phi_{2,v}^V) ) \}
_{V \in [\SHerm_n^\times(F)]}$ match, if
\begin{enumerate}
    \item $(f'_v,\phi'_v)$ and $\{ (f^V_v,\phi^V_{1,v} \otimes \phi_{2,v}^V) \}$ are smooth transfer of each
other at each place $v$ of $F$, see Definition \ref{defn: smooth transfer}.
    \item for each place $v$ of $F$, the test function $(f_v^V, \phi_{1,v}^V
\otimes \phi_{2,v}^V)$ depends only on the isomorphism class of $V$ in $[\SHerm_n^\times(F_v)]$.
\end{enumerate}
\end{definition}

See Propositions \ref{prop:FL} (fundamental lemma), \ref{prop:splittransfer} ($E$-split transfer), and
\ref{prop:Ksplittransfer} ($K$-split transfer) for the existence of transfers.

\begin{theorem}
\label{prop:identity}
Let $\Pi$ be the base change of $\pi$.
For good matching test functions $(f',\phi')$
and $\{(f^V,\phi_1^V \otimes \phi_2^V) \}_{V\in [\SHerm_n^\times(F)]}$, we have
    \[
        I_\Pi(f', \phi') =
        \sum_{V \in [\SHerm_n^\times(F)]} \sum_{\pi'} J_{\pi'}^V(f^V, \phi_1^V \otimes \phi_2^V),
    \]
where $\pi'$ runs over all irreducible cuspidal
automorphic representations of $\U(V)(\AA_K)$
whose base change to $\GL_n(\AA_L)$ is also
$\Pi$.
\end{theorem}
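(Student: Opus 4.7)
The plan has three stages: a global geometric identity, spectral expansion on both sides, and separation of the $\Pi$-component via Hecke operators.

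\textbf{Geometric identity.} By Proposition \ref{lem:compatibleexist}, choose compatible normal representatives $S$ and $\{R^V\}_{V \in [\SHerm_n^\times(F)]}$. Propositions \ref{prop:udecomp} and \ref{prop:gldecomp} give absolutely convergent expansions
\[
    J^V(f^V, \phi_1^V \otimes \phi_2^V) = \Delta_H^{\ast, -2}\sum_{[\zeta,z]\in[Y_\rss^V]}\prod_v \Orb([\zeta,z], f_v^V, \phi_{1,v}^V\otimes\phi_{2,v}^V),
\]
\[
    I(f', \phi') = \Delta_{H_1}^{\ast,-1}\Delta_{H_2}^{\ast,-1}\sum_{[\gamma,x,y]\in[X_\rss]}\prod_v \Orb([\xi,x,y], f'_v, \phi'_v).
\]
By Proposition \ref{lem:matching}, there is a bijection $[X_\rss] \simeq \coprod_V [Y_\rss^V]$ matching invariants. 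The smooth transfer hypothesis (Definition \ref{defn: global smooth transfer}, existence being supplied locally by Propositions \ref{prop:FL}, \ref{prop:splittransfer}, \ref{prop:Ksplittransfer}) and the product formula for the transfer factor $\prod_v \omega_v = 1$ (since the global $\omega$ factors as a product of $\eta_{E/F}$-twists) show that matched orbits contribute equal products of local orbital integrals, yielding
\[
    I(f', \phi') = \sum_{V \in [\SHerm_n^\times(F)]} J^V(f^V, \phi_1^V \otimes \phi_2^V).
\]

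\textbf{Spectral expansion.} Because $f_{v_1}$ is a truncated matrix coefficient of a supercuspidal representation, Propositions \ref{prop:uspectral} and \ref{prop:glspectral} give absolutely convergent spectral expansions, and the identity becomes
\[
    \sum_{\Pi'} I_{\Pi'}(f', \phi') = \sum_{V}\sum_{\pi'} J^V_{\pi'}(f^V, \phi_1^V \otimes \phi_2^V),
\]
summed over cuspidal automorphic representations $\Pi'$ of $G'(\AA)$ (resp.\ $\pi'$ of $\U(V)(\AA_K)$) with central character trivial on the relevant split centers.

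\textbf{Spectral separation.} Fix an infinite set $\mathcal{T}$ of finite places of $F$ that are completely split in $L$, at which all data and all representations contributing to the spectral sum are unramified. For $v \in \mathcal{T}$, the place $v$ is split in $E$, so Definition \ref{def:splittransfer} applies: convolving $f'_v$ with a spherical Hecke element $h_v$ corresponds to convolving the factors $f^V_{1,v}, f^V_{2,v}$ on the unitary side, and this preserves the split smooth transfer and the goodness of the test functions. The resulting identity reads
\[
    \sum_{\Pi'} \lambda_{\Pi'_v}(h_v)\, I_{\Pi'}(f', \phi') = \sum_{V}\sum_{\pi'} \lambda_{\pi'_v}(h_v)\, J^V_{\pi'}(f^V, \phi_1^V \otimes \phi_2^V),
\]
where $\lambda$ denotes the spherical Hecke eigenvalue. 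At such $v$, base change from $\U(V)(K_v)$ to $\GL_n(L_v)$ is essentially the identity on Satake parameters, so $\lambda_{\pi'_v}(h_v) = \lambda_{\mathrm{BC}(\pi')_v}(h_v)$. By strong multiplicity one for $\GL_n$, distinct cuspidal $\Pi'$ are distinguished by their Satake parameters at places in $\mathcal{T}$. Applying Lemma \ref{lem:l^1_linearly_independent} to the Satake parameters (viewed as continuous functions on the appropriate compact tori, with the $\ell^1$-summability coming from the absolute convergence of the spectral expansions), we isolate the $\Pi'=\Pi$ coefficient on the left and the sum over those $\pi'$ with $\mathrm{BC}(\pi') = \Pi$ on the right.

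\textbf{Main obstacle.} The delicate point is justifying the $\ell^1$ separation via Lemma \ref{lem:l^1_linearly_independent}: one must verify that, after the Hecke-twist deformation of the test functions, the spectral sum on each side decomposes as an absolutely convergent series indexed by a countable collection of cuspidal representations, with the Satake parameters at places of $\mathcal{T}$ serving as distinguishing points. This uses that goodness of test functions (Definitions \ref{def:ugood}, \ref{def:glgood}) is preserved by spherical Hecke convolution away from $v_1$ and $v_2$, and that the supercuspidality at $v_1$ bounds the uniform level of representations contributing nontrivially. The remaining issues are routine.
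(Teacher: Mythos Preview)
Your proof is correct and follows essentially the same three-stage approach as the paper: combine the geometric decompositions with the smooth-transfer hypothesis and the product formula for the transfer factor to get $I(f',\phi') = \sum_V J^V(f^V,\phi_1^V\otimes\phi_2^V)$, apply the spectral expansions of Propositions~\ref{prop:uspectral} and~\ref{prop:glspectral}, and then separate the $\Pi$-term by twisting with spherical Hecke operators at split places and invoking Lemma~\ref{lem:l^1_linearly_independent}. The only minor difference is that the paper works with the larger set $\Sigma$ of places split in $E$ (rather than completely split in $L$) and cites Ramakrishnan's refinement of strong multiplicity one \cite{ramakrishnan2018theorem} (agreement at degree-one primes of $L$ over $F$ suffices), whereas your choice of completely split places already gives a density-one set of primes of $L$, so ordinary strong multiplicity one is enough.
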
   
\begin{proof}

From the geometric decomposition results in Proposition 
\ref{prop:gldecomp} and \ref{prop:udecomp},
the definition of matching
\ref{defn: smooth transfer}, and Propositions
\ref{prop:uspectral} and \ref{prop:glspectral},
we have 
\[
    \sum_{\Pi} I_\Pi(f', \phi') =
    \sum_{V \in [\SHerm_n^\times(F)]} \sum_{\pi} 
    J^V_\pi(f^V, \phi_1^V \otimes \phi_2^V)
\]
where the first sum runs over cuspidal automorphic 
representations of $G'(\AA)$ with central
character trivial on $A_{G'}^{\infty}$ and the second sum runs over
cuspidal automorphic representations of $G^V(\AA)$ 
with central character trivial on $A_{G^V}^{\infty}$.

Let $\Sigma$ be the set of nonarchimedean places $v$
of $F$ which are split in $E$ and not equal to
$v_1, v_2$, such that
\begin{gather*}
    (f', \phi') = (\mathbf{1}_{\GL_n(\O_{L_v})}, \mathbf{1}_{\O_{E_v,n}}),\\ 
    (f^V, \phi_1^V \otimes \phi_2^V) =
    (\mathbf{1}_{\U(V)(\O_{K_v})}, \mathbf{1}_{\LL(\O_{F_v})} \otimes
    \mathbf{1}_{\LL(\O_{F_v})})
\end{gather*}
for all $V \in [\SHerm_n^\times(F)]$.

Note that if $\pi_v$ is not unramified for some
$v \in \Sigma$, then both sides of the desired identity
are zero. Thus, we assume that $\pi_v$ is unramified
for all $v \in \Sigma$.
Let $\Lambda_{\Pi_{\Sigma}}$ denote the character
of the split spherical Hecke algebra
\[
    \bigotimes_{v \in \Sigma}'
    \cH(\GL_n(L_v), \GL_n(\O_{L_v}))
\]
for $\Pi_\Sigma = \otimes_{v \in \Sigma}' \Pi_v$,
and similarly let $\lambda_{\pi_\Sigma}$ denote
the character of the split spherical Hecke algebra
\[
    \bigotimes_{v \in \Sigma}'
    \cH(\U(V)(K_v), \U(V)(\O_{K_v}))
\]
for $\pi_\Sigma = \otimes_{v \in \Sigma}' \pi_v$.

From Proposition \ref{prop:splittransfer}, we have a
transfer map
\[
    b\colon \bigotimes_{v \in \Sigma}'
    \cH(\GL_n(L_v), \GL_n(\O_{L_v})) \to
    \bigotimes_{v \in \Sigma}'
    \cH(\U(V)(K_v), \U(V)(\O_{K_v}))
\]
given by $b(\alpha) = \alpha_1 \ast \alpha_2^\vee$
where $\alpha = \alpha_1 \otimes \alpha_2$.
Then we have
\[
   \lambda_{\pi_\Sigma}(b(\alpha))
   = \Lambda_{\operatorname{BC}(\pi)_\Sigma}(\alpha) 
\]
for all $\alpha \in \bigotimes_{v \in \Sigma}'
\cH(\GL_n(L_v), \GL_n(\O_{L_v}))$.
The point is that $(f' \ast \alpha, \phi')$ and
$\{(f^V \ast b(\alpha), \phi_1^V
\otimes \phi_2^V)\}_{V \in [\SHerm_n^\times(F)]}$
are also good matching test functions, so
\[
    \sum_\Pi I_\Pi(f', \phi') \Lambda_{\Pi_\Sigma}(\alpha)
    = \sum_{V \in [\SHerm_n^\times(F)]} \sum_\pi
    J^V_\pi(f^V, \phi_1^V \otimes \phi_2^V)
    \Lambda_{\operatorname{BC}(\pi)_\Sigma}(\alpha).
\]

By \cite[Theorem~A]{ramakrishnan2018theorem},
if two cuspidal automorphic representations of
$\GL_n(\AA_L)$ are isomorphic at almost all primes of
$L$ which are of degree $1$ over $F$, then they are
isomorphic.
This, combined with Lemma \ref{lem:l^1_linearly_independent} yields the proof.
\end{proof}

\subsection{Local distributions}
Let $\Pi$ be a cuspidal automorphic representation
of $G'(\AA)=\GL_n(\mathbb A_L)$.
Let $\psi_L$ be the additive character of $\AA_L/L$ 
given by
\[
    \psi_L(x) = \psi_E(j_K(x - \sigma_{L/E}(x))).
\]
Let $\W(\Pi,\psi_L)$ be the Whittaker model of $\Pi$.

Let $v$ be a place of $F$. We also have a local
Whittaker model $\W(\Pi_v, \psi_{L_v})$.
Let $B_n$ be the standard upper triangular Borel of
$\GL_n$, and let $N_n$ be its unipotent radical.

\begin{definition}
For $W_{1,v}, W_{2,v} \in \W(\Pi_v, \psi_{L_v})$, let
\[
    \langle W_{1,v}, W_{2,v} \rangle_v
    =
    \int\limits_{N_{n-1}(L_v) \bs \GL_{n-1}(L_v)}
    W_{1,v}\left(\begin{pmatrix}
        g & \\ & 1
    \end{pmatrix}\right)
    \ol{W_{2,v}\left(\begin{pmatrix}
        g & \\ & 1
    \end{pmatrix}\right)}\,dg.
\]
\end{definition}

\begin{definition}(cf. Definition \ref{def:globalzetaint}) 
For $W_v \in \W(\Pi_v, \psi_{L_v})$ and $\phi_v' \in \cS(E_{v,n})$, let
\begin{equation}
\lambda_v(s, W_v, \mu_v, \phi_v')
    = \int\limits_{N_n(E_v)\bs \GL_n(E_v)} W_v(g)
    \ol{\mu_v(\det g) \phi_v'(e_n g)} \lvert \det g \rvert^s\, dg.   
\end{equation}
where $e_n = (0,\dots, 0, 1) \in E_n$.

For $W_v \in \W(\Pi_v, \psi_{L_v})$, define 
\begin{equation}
    \beta_v(W_v) = \int\limits_{N_{n-1}(K_v)\bs 
    \GL_{n-1}(K_v)}
    W_v\left(\begin{pmatrix}
    \epsilon_{n-1} h & \\ & 1 \end{pmatrix}\right)
    \eta_{L_v/K_v}(\det h)^{n+1}\, dh,
\end{equation}
where $\epsilon_{n-1}$ is the diagonal matrix
\[
    \epsilon_{n-1} = \begin{pmatrix}
        j_M^{n-1} & & \\ & \ddots & \\ & & j_M
    \end{pmatrix} 
    \in \GL_{n-1}(L_v).
\]
\end{definition}
Note that the matrix $\epsilon_{n-1}$ guarantees the integrand function is trivial on $N_{n-1}(K_v)$.

Let $\pi$ be a tempered cuspidal representation of $G^V(\mathbb A)$. Note that by the local twisted GGP conjecture \cite{le2025local}, for any place $v$, the vector space $\Hom_{\U(V)(F_v)}(\pi_v \otimes \ol{\omega_v}, \BC)$ is at most one dimensional. Moreover, if the period $\mathcal{P}$ is non-trivial on $\pi$, then it is one-dimensional for any place $v$. If this is the case, which we will assume from now on, 
choose a generator $\ell_v$ of $\Hom_{\U(V)(F_v)}(\pi_v \otimes \ol{\omega_v}, \BC)$.

\begin{definition}
    For $f_v \in C_c^\infty(G^V(F_v))$ and $\phi_{1,v} \otimes \phi_{2,v} \in \cS(\LL(F_v))^{\otimes 2}$, let
    \[
        J_{\pi_v}(f_v, \phi_{1,v} \otimes \phi_{2,v})
        = \sum_{\varphi_v} \frac{\ell_v(\pi_v(f_v)\varphi_v, \phi_{1,v})\ol{\ell_v(\varphi_v, \phi_{2,v})}}
        {\langle \varphi_v, \varphi_v \rangle_v},
    \]
    where $\varphi_v$ runs over an orthogonal basis of $\pi_v$.
\end{definition}

\begin{definition}
    For $f_v' \in C_c^\infty(G'(F_v))$ and $\phi_v' \in \cS(E_{v,n})$, let
    \[
        I_{\Pi_v}(s, f_v', \phi_v')
        = \sum_{W_v} 
        \frac{\lambda_v(s, \Pi_v(f_v')W_v, \mu_v, \phi_v')\ol{\beta_v(W_v)}}
        {\langle W_v, W_v \rangle_v},
    \]
    where $W_v$ runs over an orthogonal basis
    of $\W(\Pi_v, \psi_{L_v})$. 
\end{definition}

\begin{definition}
\label{def:postype}
We say that a test function on the unitary side is of 
\emph{positive type} if it is of the form
$(f_v, \phi_v \otimes \phi_v)$, where $f_v =
f_0 \ast f_0^\star$ for some
$f_0 \in C_c^\infty(G(F_v))$, 
and $\phi_v \in \cS(\LL(F_v))$. 
Here, $\ast$ denotes convolution, and
$f_0^\star(g) \coloneqq \ol{f_0(g^{-1})}$.
We can similarly define positive type global test
functions.
\end{definition}

Note that for any $f_1, f_2 \in C_c^\infty(G(F_v))$ and
$\phi_1, \phi_2 \in \cS(\LL(F_v))$, we have
\begin{align*}
    \sum_{\varphi_v} \ell_v(\pi_v(f_1)\varphi_v, \phi_1)
    \ol{\ell_v(\pi_v(f_2)\varphi_v, \phi_2)}
    &= \sum_{\varphi_v} \ell_v(\pi_v(f_1\ast f_2^\star) \varphi_v, \phi_1)
    \ol{\ell_v(\varphi_v, \phi_2)},
\end{align*}
where $\varphi_v$ runs over an orthonormal basis
of $\pi_v$.
Thus if $(f_v, \phi_v \otimes \phi_v)$ is a positive
type test function, then
\begin{equation}
\label{eq:postype}
    J_{\pi_v}(f_v, \phi_v \otimes \phi_v)
    = \sum_{\varphi_v} 
    \ell_v(\pi_v(f_0 \ast f_0^\star)\varphi_v,
    \phi_v)\ol{\ell_v(\varphi_v, \phi_v)}
    = \sum_{\varphi_v}
    \lvert \ell_v(\pi_v(f_0)\varphi_v, \phi_v)\rvert^2
    \ge 0.
\end{equation}

\begin{lem}
\label{lem:matrixcoeff}
    Let $v$ be a nonarchimedean place of $F$.
    Assume that $\ell_v \neq 0$. There exists
    a test function $(f_v, \phi_{1,v} \otimes \phi_{2,v})$
    such that 
    $J_{\pi_v}(f_v, \phi_{1,v} \otimes \phi_{2,v}) \neq 0$ 
    and $f_v$ is a truncated matrix coefficient of $\pi_v$.
\end{lem}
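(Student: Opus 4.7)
The plan is to use Schur orthogonality for the supercuspidal representation $\pi_v$ to build $f_v$ explicitly. First I would exploit $\ell_v \neq 0$ to pick $\varphi \in \pi_v$ and $\phi_v \in \cS(\LL(F_v))$ with $\ell_v(\varphi, \phi_v) \neq 0$, and then set $\phi_{1,v} = \phi_{2,v} = \phi_v$. My candidate is the truncated matrix coefficient
\[
    f_v(g) = \ol{\langle \pi_v(g)\varphi, \varphi\rangle_v} \cdot \mathbf{1}_{G(F_v)^1}(g).
\]
Because $\pi_v$ is supercuspidal with unitary central character, and because the center of $G = \Res_{K/F}\U(V_K)$ is anisotropic so that $Z(F_v) \cap G(F_v)^1$ is compact, the matrix coefficient is compactly supported on $G(F_v)^1$, so $f_v \in C_c^\infty(G(F_v))$ is indeed a truncated matrix coefficient of $\pi_v$ in the sense of Definition~\ref{def:ugood}\eqref{umatrixcoeff}.

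Next I would invoke Harish-Chandra's Schur orthogonality relations on $G(F_v)/Z(F_v)$: there exists a positive constant $C$, proportional to $\Vol(Z(F_v) \cap G(F_v)^1)$ divided by the formal degree of $\pi_v$, such that for every $\psi \in \pi_v$,
\[
    \pi_v(f_v)\psi = \int_{G(F_v)^1} \ol{\langle \pi_v(g)\varphi, \varphi\rangle_v}\, \pi_v(g)\psi\, dg = C\, \langle \psi, \varphi\rangle_v\, \varphi.
\]
Substituting into the definition of $J_{\pi_v}$ then gives
\[
    J_{\pi_v}(f_v, \phi_v \otimes \phi_v) = C\,\ell_v(\varphi, \phi_v) \sum_{\varphi_v'} \frac{\langle \varphi_v', \varphi\rangle_v\, \ol{\ell_v(\varphi_v', \phi_v)}}{\langle \varphi_v', \varphi_v'\rangle_v}.
\]

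Since $\pi_v$ is tempered unitary, the linear functional $\psi \mapsto \ol{\ell_v(\psi, \phi_v)}$ on $\pi_v$ is continuous, so the Hilbert-space expansion of $\varphi$ in the orthogonal basis is valid. Pairing this expansion against the functional collapses the inner sum to $\ol{\ell_v(\varphi, \phi_v)}$, yielding
\[
    J_{\pi_v}(f_v, \phi_v \otimes \phi_v) = C\, \lvert \ell_v(\varphi, \phi_v)\rvert^2 \neq 0,
\]
as desired. As a sanity check, taking $f_v$ of positive type in the sense of Definition~\ref{def:postype} is consistent with the nonnegativity derived from \eqref{eq:postype}.

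The main technical point to verify is that the presence of the truncation $\mathbf{1}_{G(F_v)^1}$ does not spoil the Schur orthogonality identity. This is tractable here because the center of $G$ is anisotropic: when $L_v$ is a field, $Z(F_v)$ is compact and $G(F_v)^1 = G(F_v)$, so the truncation is trivial and the usual Schur orthogonality on $G(F_v)/Z(F_v)$ gives the displayed identity after multiplication by $\Vol(Z(F_v))$. The split cases are handled analogously via the decomposition $G(F_v) = Z(F_v)G(F_v)^1$ and the fact that the unitary central character of $\pi_v$ absorbs the contribution from $Z(F_v)/(Z(F_v) \cap G(F_v)^1)$.
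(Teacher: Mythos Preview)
Your approach via Schur orthogonality is the standard one (the paper simply cites \cite[Lemma~6.14]{TGGP-Wang}), but there is a real gap in your handling of the split case. The claimed decomposition $G(F_v) = Z(F_v)G(F_v)^1$ fails exactly when $E_v/F_v$ is split --- which is the case at the place $v_1$ where the lemma is actually applied. There $G(F_v)\cong\GL_n(K_v)$, and $Z(F_v)G(F_v)^1$ consists of those $g$ with $\val_{K_v}(\det g)\in n\BZ$, a subgroup of index $n$ when $n>1$. So the integral over $G(F_v)^1$ does not reduce directly to Schur orthogonality on $G(F_v)/Z(F_v)$. Writing $\mathbf 1_{G(F_v)^1 Z(F_v)/Z(F_v)} = n^{-1}\sum_\chi\chi$ with $\chi$ ranging over characters of $G(F_v)/Z(F_v)G(F_v)^1\cong\BZ/n\BZ$ (pulled back to $G(F_v)$) and applying orthogonality between $\pi_v$ and its twists, one finds
\[
    \pi_v(f_v)\psi \;=\; C\sum_{\chi:\,\pi_v\cong\pi_v\otimes\chi}\langle\psi,A_\chi\varphi\rangle_v\,A_\chi\varphi,
\]
where $A_\chi$ is a unitary self-intertwiner and $A_1=\mathrm{id}$. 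Your rank-one formula is correct only when $\pi_v$ admits no nontrivial self-twist by an unramified character of order dividing $n$, which you have not assumed and which can genuinely fail for supercuspidals. (The global anisotropy of the center of $G$ over $F$ is a red herring for this local question.)

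The repair is short: substituting the corrected expression and collapsing the basis sum exactly as you do yields
\[
    J_{\pi_v}(f_v,\phi_v\otimes\phi_v)\;=\;C\sum_{\chi}\lvert\ell_v(A_\chi\varphi,\phi_v)\rvert^2\;\ge\;C\,\lvert\ell_v(\varphi,\phi_v)\rvert^2\;>\;0,
\]
so the conclusion survives once the extra nonnegative terms are acknowledged.
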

\begin{proof}
The proof is the same as that of 
\cite[Lemma~6.14]{TGGP-Wang}.
\end{proof}

\begin{lem}
\label{lem:rsssupp}
Let $v$ be a nonarchimedean place of $F$ which
is completely split in $L$ such that $\pi_v$
is supercuspidal. 
Assume that the distribution $J_{\pi_v}$ is
nonzero. 
Then there exists a test function $(f_v, \phi_v)$ 
with $\phi_v = \sum_{j=1}^r \phi_{1,v}^{(j)}
    \otimes \phi_{2,v}^{(j)}$
and an integer $k \ge 0$ such that 
\begin{enumerate}[(1)]
\item $J_{\pi_v}(f_v, \phi_v)$ is nonzero;
\item $f_v$ is supported on the regular
semisimple locus of $G(F_v)$ under the
$H(F_v) \times H(F_v)$-action;
\item if $[\zeta, z] \in G(F_v) \times V_v^\vee$ is 
such that $\zeta$ is $k$-Kottwitz as an
element of $\GL_n(L_v)$ (in the sense of Definition 
\ref{def:kkottwitz}), and 
\[
    f_v(g^{-1}\zeta h)\left(\sum_{j=1}^r
    \phi_{2,v}^{(j)} \otimes \ol{\omega_v(h^{-1} g)\phi_{1,v}^{(j)}}\right)^\ddagger(zh) \neq 0
\] 
for some $g, h \in H(F_v)$, then $[\zeta, z] \in 
(G(F_v) \times V_v^\vee)_\rss$.
\end{enumerate}
\end{lem}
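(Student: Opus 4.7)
The strategy is to start from a test function witnessing $J_{\pi_v} \ne 0$ and successively shrink the supports of $f_v$ and $\phi_v$ to force the joint support condition (3).

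By Lemma \ref{lem:matrixcoeff}, we can pick $(f_v^{(0)}, \phi_{1,v}^{(0)} \otimes \phi_{2,v}^{(0)})$ with $J_{\pi_v}(f_v^{(0)}, \phi_{1,v}^{(0)} \otimes \phi_{2,v}^{(0)}) \ne 0$ such that $f_v^{(0)}$ is a truncated matrix coefficient of the supercuspidal $\pi_v$. Since $v$ is completely split in $L$, we identify $G(F_v) \simeq \GL_n(F_v) \times \GL_n(F_v)$, with $H(F_v) \simeq \GL_n(F_v)$ embedded diagonally, and the regular semisimple locus $U^{\rss} \subset G(F_v)$ for the $H(F_v) \times H(F_v)$-action is Zariski open with measure-zero complement. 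To achieve conditions (1) and (2), let $W \subset U^{\rss}$ be a large compact open subset and set $f_v := f_v^{(0)} \cdot \mathbf{1}_W \in C_c^\infty(U^{\rss})$. The operator $\pi_v(f_v^{(0)}) - \pi_v(f_v)$ has norm tending to zero as $W$ exhausts $U^{\rss}$ in $L^1$-norm, so for $W$ sufficiently large we still have $J_{\pi_v}(f_v, \phi_{1,v}^{(0)} \otimes \phi_{2,v}^{(0)}) \ne 0$.

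For (3), I extend $(\phi_{1,v}, \phi_{2,v}) \mapsto J_{\pi_v}(f_v, \phi_{1,v} \otimes \phi_{2,v})$ bilinearly to finite sums, and push forward via the partial Fourier transform isomorphism $\cS(\LL(F_v))^{\otimes 2} \simeq \cS(V^\vee(F_v)) = C_c^\infty(V^\vee(F_v))$ (valid since $v$ is nonarchimedean) to obtain a nonzero continuous bilinear form $(f, \Phi) \mapsto J_{\pi_v}(f, \phi_v(\Phi))$ on $C_c^\infty(G(F_v)) \times C_c^\infty(V^\vee(F_v))$, where $\phi_v(\Phi)$ is any finite sum $\sum_j \phi_{1,v}^{(j)} \otimes \phi_{2,v}^{(j)} \in \cS(\LL(F_v))^{\otimes 2}$ whose partial Fourier transform realizes $\Phi$. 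By the $p$-adic Schwartz kernel theorem this corresponds to a nonzero distribution $T$ on $G(F_v) \times V^\vee(F_v)$. By the orbital integral expansion of Proposition \ref{prop:udecomp}, applied after passing to matrix coefficients, the distribution $T$ is supported on the regular semisimple locus $[Y_\rss]$. Pick $(\zeta_0, z_0) \in \supp T$ with $[\zeta_0, z_0] \in (G(F_v) \times V^\vee_v)_\rss$, and shrink to small open neighborhoods $U_1 \subset U^{\rss}$ of $\zeta_0$ and $U_2 \subset V^\vee(F_v)$ of $z_0$ small enough that $[\zeta, z]$ remains regular semisimple for all $\zeta \in H(F_v) U_1 H(F_v)$ and $z$ with $zh \in U_2$ for some $h \in H(F_v)$; this is possible because the reg ss locus is open and $H(F_v) \times H(F_v)$-invariant. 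Finally, by localization of distributions on $p$-adic product spaces, we find $f_v \in C_c^\infty(U_1)$ and $\Phi \in C_c^\infty(U_2)$ realized by some $\phi_v = \sum_j \phi_{1,v}^{(j)} \otimes \phi_{2,v}^{(j)}$ with $J_{\pi_v}(f_v, \phi_v) \ne 0$. Then (1), (2), and (3) all hold for any integer $k \ge 0$ (for instance $k = 0$), since the joint support of $f_v$ and $\phi_v^\ddagger$ already lies entirely in the regular semisimple locus.

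The principal obstacle is showing that the bilinear distribution $T$ has support meeting $(G(F_v) \times V^\vee)_\rss$, which I handle via the geometric orbital integral decomposition in Proposition \ref{prop:udecomp}: only regular semisimple orbits contribute to $J_{\pi_v}$, so $T$ is effectively supported on the open reg ss locus. A secondary challenge is the joint localization of $f_v$ and $\Phi$ preserving non-vanishing, which is settled by the $p$-adic kernel theorem together with the elementary fact that the support of a nonzero distribution on a product is a nonempty closed set meeting every open neighborhood of its points via rank-one tensors.
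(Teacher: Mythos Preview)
Your argument has a genuine gap in the treatment of condition (3), and the assertion that ``any $k \ge 0$'' works is precisely where it breaks.

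You localize the partial Fourier transform $\Phi = (\ol{\phi_1}\otimes\phi_2)^\ddagger$ to a small neighborhood $U_2 \subset V^\vee(F_v)$ and then reason as if the expression appearing in (3) were $\Phi(zh)$. It is not. For $g \ne h$ one has
\[
    \bigl(\ol{\omega_v(h^{-1}g)\phi_1}\otimes\phi_2\bigr)^\ddagger(zh)
    = \bigl(\ol{\omega_v(g)\phi_1}\otimes\omega_v(h)\phi_2\bigr)^\ddagger(z),
\]
and only the \emph{diagonal} $H$-action intertwines with the right-translation action on $V^\vee$ under $^\ddagger$. Thus the support of $\Phi$ in $V^\vee$ gives no control whatsoever over the support of $(\ol{\omega_v(h^{-1}g)\phi_1}\otimes\phi_2)^\ddagger$ once $h^{-1}g$ ranges over a noncompact subset of $H(F_v)$; and indeed $f_v(g^{-1}\zeta h)\ne 0$ constrains $\zeta$ only up to the full $H\times H$-action, not $h^{-1}g$ itself. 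This is exactly the reason the $k$-Kottwitz hypothesis is present in the statement: writing $\zeta = (\zeta_1,\zeta_2)$ under $G(F_v)\simeq \GL_n(F_v)\times\GL_n(F_v)$, the $k$-Kottwitz condition forces $\zeta_1$ to be a polynomial in $\zeta_1\zeta_2^{-1}$ with coefficients $p$-adically close to $(1,0,\dots,0)$, so that $g^{-1}\zeta h\in\Supp f_v$ pins $g^{-1}\zeta_1 g$ into a small congruence subgroup $K_m$ (compare the proof of Proposition~\ref{prop:Ksplittransfer}). Only then can one replace $\omega_v(h^{-1}g)$ by the identity up to $K_m$-invariance and reduce to the situation where the support of $\Phi$ actually governs condition (3). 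With $k=0$ this mechanism is unavailable and condition (3) simply fails for generic test functions.

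A second, independent problem is your appeal to Proposition~\ref{prop:udecomp} to conclude that the local distribution $J_{\pi_v}$ is supported on the regular semisimple locus. That proposition is a \emph{global} decomposition valid only for good test functions in the sense of Definition~\ref{def:ugood}, whose clause~(2) is precisely the condition you are trying to establish; invoking it here is circular. The paper instead cites \cite[Lemma~6.13]{TGGP-Wang}, where the argument proceeds directly in the completely split setting $G(F_v)\simeq\GL_n(F_v)\times\GL_n(F_v)$, exploiting supercuspidality of $\pi_v$ together with the $k$-Kottwitz reduction just described to produce $f_v$, $\phi_v$, and $k$ simultaneously.
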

\begin{proof}
This is shown in the proof of
{\cite[Lemma~6.13]{TGGP-Wang}}, once we identify
$G(F_v) = \U(V)(K_v)$ with $\GL_n(F_v) \times \GL_n(F_v)$.
Under this identification,
$\zeta$ being $k$-Kottwitz in the sense of Definition
\ref{def:kkottwitz} implies that $\zeta$ is
$k$-Kottwitz with respect to $\beta$
in the sense of \cite[Definition~4.10(2)]{TGGP-Wang}.
\end{proof}

\begin{lem}
\label{lem:archgood}
    Let $v$ be a place of $F$. 
    Then there exists $W_v \in \W(\Pi_v, \psi_{L_v})$ and $\phi_v' \in \cS(E_{v,n})$    such that 
    $\lambda_v(\frac12, W_v, \mu_v, \phi_v') \neq 0$.
\end{lem}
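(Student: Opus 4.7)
The plan is to invoke the standard theory of local Asai--Flicker zeta integrals from \cite{Flicker}. The key fact I will use is that local Asai L-functions have no zeros, so it will suffice to produce $W_v$ and $\phi_v'$ whose integral at $s = 1/2$ realizes a nonvanishing multiple of the local L-value.

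At a non-archimedean place $v$, the integral $\lambda_v(s, W_v, \mu_v, \phi_v')$ converges for $\Re(s) \gg 0$ and extends to a rational function in $q_v^{-s}$. By Flicker's theory, as $W_v$ ranges over $\W(\Pi_v, \psi_{L_v})$ and $\phi_v'$ over $\cS(E_{v,n})$, the resulting integrals span the fractional $\BC[q_v^{-s}, q_v^s]$-ideal generated by $L(s, \Pi_v, \As_{L_v/E_v} \otimes \mu_v^{-1})$. Since local L-functions are of the form $1/P(q_v^{-s})$ with $P(0) = 1$, they have no zeros; combined with temperedness of $\Pi_v$, which rules out poles at $s = 1/2$, the value $L(1/2, \Pi_v, \As_{L_v/E_v} \otimes \mu_v^{-1})$ is both finite and nonzero. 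A choice of $W_v, \phi_v'$ realizing this L-function then gives the desired non-vanishing.

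At an archimedean place $v$, the hypotheses of Theorem \ref{thm:mainthm} ensure that $E_v$ and $L_v$ are completely split, so $\Pi_v$ decomposes into a pair of $\GL_n$-representations and the Flicker integral degenerates to an archimedean $\GL_n \times \GL_n$ Rankin--Selberg integral. The non-vanishing of such integrals at $s = 1/2$, for generic tempered representations, is the classical theory of Jacquet--Shalika and Cogdell--Piatetski-Shapiro; I would take $W_v$ to be any generic archimedean Whittaker vector and $\phi_v'$ to approximate a delta function concentrated near $e_n$.

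The main obstacle I anticipate is the Flicker ideal characterization in fully non-split non-archimedean cases where $E_v/F_v$ is a field. Fortunately, assumption~(4) of Theorem \ref{thm:mainthm} forces $\Pi_v$ to be unramified at every place $v$ inert in both $E$ and $K$. In this situation the explicit unramified computation
\[
    \lambda_v(s, W_v^\circ, \mu_v, \mathbf{1}_{\O_{E_v,n}})
    = L(s, \Pi_v, \As_{L_v/E_v} \otimes \mu_v^{-1}),
\]
with $W_v^\circ$ the normalized spherical Whittaker function, suffices and bypasses any need for the general Flicker ideal theory. The remaining non-archimedean cases are either split (reducing again to standard Rankin--Selberg), or contain the supercuspidal places $v_1, v_2$ where specific choices of matrix coefficients produce the non-vanishing directly.
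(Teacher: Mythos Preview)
Your approach is different from the paper's: the paper simply cites \cite[Lemma~3.3.3]{BP21} and \cite{Yadav2025}, which give a direct non-vanishing argument (choose $W_v$ using the Kirillov model so the integral reduces to something visibly nonzero) valid uniformly at all places $v$, archimedean or not, without any case analysis and without invoking $L$-function theory. Your route instead goes through the fractional-ideal/GCD description of the local zeta integrals and then evaluates the local $L$-factor at $s=\tfrac12$.

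That route can be made to work, but your write-up has two real gaps. First, the lemma is stated for an arbitrary place $v$ with no hypotheses beyond those in \S5.4, yet your argument repeatedly imports the hypotheses of Theorem~\ref{thm:mainthm}: you need temperedness of $\Pi_v$ to exclude a pole of $L(s,\Pi_v,\As_{L_v/E_v}\otimes\mu_v^{-1})$ at $s=\tfrac12$, and you use the splitting assumptions at archimedean and inert places. You are therefore proving a weaker statement than the lemma asserts, adequate only for its application inside the proof of the main theorem.

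Second, your case analysis is confused about where the genuine (non-split) Asai integrals actually occur. The integral $\lambda_v$ is an Asai integral for the extension $L_v/E_v$, not $E_v/F_v$. Under the hypotheses of Theorem~\ref{thm:mainthm}, at a place $v$ inert in both $E$ and $K$ one has $E_v=K_v$ and hence $L_v=E_v\times E_v$, so $L_v/E_v$ is \emph{split} and you are in the Rankin--Selberg case, not the hard Asai case. The genuine Asai integrals instead arise when $v$ is split in $E$ but not in $K$ (then $L_v/E_v$ looks like $K_v/F_v$ in each factor), which is precisely the case you did not single out. This does not wreck the argument --- Flicker's local theory covers the non-split case too --- but it shows your case-by-case bookkeeping is not reliable as written. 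The direct argument cited by the paper sidesteps all of this.
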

\begin{proof}
The case when $\mu$ is trivial is proved in \cite[Lemma 3.3.3]{BP21} or \cite{Yadav2025}. The same proof applied to the case when $\mu$ is not trivial. 
\end{proof}

\begin{lem}
\label{lem:splitlocaldistrid} 
Let $v$ be a place of $F$ that is split in $E$. 
There is a nonzero 
constant $c$ such that 
\[
    I_{\Pi_v}(\tfrac12, f'_v, \phi_v')
    = c J_{\pi_v}(f_v, \phi_{1,v} \otimes \phi_{2,v})
\]
for all matching test functions $(f_v', \phi_v')$
and $(f_v, \phi_{1,v} \otimes \phi_{2,v})$.
\end{lem}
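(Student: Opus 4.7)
Since $v$ splits in $E$, we have canonical isomorphisms $E_v = F_v \times F_v$ and $L_v = K_v \times K_v$, and hence $H(F_v) \cong \GL_n(F_v)$, $G(F_v) \cong \GL_n(K_v)$, $H_1(F_v) \cong \GL_n(F_v)\times\GL_n(F_v)$, $G'(F_v) \cong \GL_n(K_v)\times\GL_n(K_v)$, $H_2(F_v) = \GL_n(K_v)$. Under these identifications, up to an explicit twist by a character of $L_v^\times$, we have $\Pi_v \cong \pi_v \boxtimes \pi_v^{c,\vee}$, so that $\W(\Pi_v,\psi_{L_v})$ factors as $\W(\pi_v,\psi_{K_v}) \otimes \W(\pi_v^{c,\vee},\psi_{K_v}^{-1})$, and the Weil representation $\omega_v$ becomes the standard $\GL_n(F_v)$-action on $\cS(F_{v,n})$ twisted by a character.

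The plan is to unwind both sides using split smooth transfer. By Definition~\ref{def:splittransfer}, the matching data take the form $(f'_v,\phi'_v) = (f'_1 \otimes f'_2,\ \phi_{1,v} \otimes \ol{\phi_{2,v}})$ and $(f_v,\phi_{1,v}\otimes \phi_{2,v})$ with $f_v = f'_1 \ast (f'_2)^\vee$. Fix an orthogonal basis of $\pi_v$ and take the corresponding factorized basis of $\W(\Pi_v,\psi_{L_v})$. On the general linear side, the Flicker--Rallis integral $\beta_v$ splits into two Whittaker-type pairings; combined with the convolution $f_v = f'_1 \ast (f'_2)^\vee$, these reassemble into a single matrix coefficient of $\pi_v$. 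On the unitary side, expanding $J_{\pi_v}$ via the $\U(V)(F_v)$-equivariance of $\ell_v$ under $\omega_v$ produces the very same matrix coefficient paired against the partial Fourier transform $(\ol{\phi_{1,v}}\otimes\phi_{2,v})^\ddagger$.

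To close the comparison, I would match the zeta integral $\lambda_v(\tfrac12,-,\mu_v,\phi'_v)$ with this pairing using the fact that in the split setting the two partial Fourier transforms $(-)^\dagger$ on $\cS(E_{v,n})$ and $(-)^\ddagger$ on $\cS(\LL(F_v))^{\otimes 2}$ coincide up to a fixed normalization, and the split transfer factor is trivial. The two sides then differ only by an explicit constant $c$ depending on measure conventions and the unramified factors in $\Delta^*_H$, $\Delta^*_{H_1}$, $\Delta^*_{H_2}$. The hard part will be bookkeeping: verifying that $c$ is genuinely \emph{uniform} in the test data and independent of the chosen orthogonal bases, so that the identity holds distributionally. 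Nonvanishing of $c$ is either transparent from the explicit formula, or verifiable a posteriori by producing a pair of matching test functions with $J_{\pi_v}\neq 0$ via Lemmas~\ref{lem:matrixcoeff} and \ref{lem:rsssupp}, and invoking nonvanishing of the Rankin--Selberg zeta integral at $s = 1/2$ for tempered $\Pi_v$.
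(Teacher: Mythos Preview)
Your setup is correct and you have correctly identified the split identifications, the form of the matching data from Definition~\ref{def:splittransfer}, and the factorization of $\W(\Pi_v,\psi_{L_v})$. You also correctly compute that $\beta_v$ becomes the inner product $\langle W'_{1,v}, W'_{2,v}\rangle_v$ after the shift by $\epsilon_{n-1}$, and that the convolution $f_v = f'_1 \ast (f'_2)^\vee$ collapses the double sum. All of this matches the paper.

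The gap is at the step you yourself flag as ``the hard part.'' You propose to match $\lambda_v(\tfrac12,-,\mu_v,-)$ with $\ell_v(-,-)\ol{\ell_v(-,-)}$ by direct computation via the partial Fourier transforms, and then check that the resulting constant is uniform in the test data. But $\ell_v$ is an \emph{abstract} generator of $\Hom_{\U(V)(F_v)}(\pi_v\otimes\ol{\omega_v},\CC)$; it is not given by an integral formula, so there is nothing to compute it against directly. What you call bookkeeping is in fact the entire content of the lemma. The paper resolves this in one stroke by invoking multiplicity one: by \cite[Theorem~B]{Sun2012multiplicity}, the space
\[
    \Hom_{\GL_n(F_v)\times\GL_n(F_v)}\bigl(\pi_v\otimes\ol{\omega_v}\otimes\ol{\pi_v}\otimes\omega_v,\ \CC\bigr)
\]
is one-dimensional. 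Both $\lambda_v(\tfrac12,-,\mu_v,-)$ (after the $\epsilon_{n-1}$-shift) and $\ell_v\otimes\ol{\ell_v}$ are nonzero elements of this space, hence proportional by a constant that is automatically independent of all test data and all basis choices. Nonvanishing of $c$ then comes for free, with no a posteriori argument needed.

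A minor point: the global constants $\Delta^*_H$, $\Delta^*_{H_1}$, $\Delta^*_{H_2}$ have no business appearing in a purely local statement; the constant $c$ here depends only on local normalizations.
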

\begin{proof}
Since $v$ is split in $E$, we have $L_v = K_v \times K_v$.
Then $\pi_v$ is a representation of
$\GL_n(K_v)$, and $\Pi_v = \pi_v \otimes \pi_v^\vee$
is a representation of $\GL_n(K_v) \times \GL_n(K_v)$.
Identify $\LL$ with $F_n$ as in Section 
\ref{sec:splittransfer}.
By the definition of matching at a split place given
in Proposition \ref{prop:splittransfer}, we have
\[
    f_{1,v}' \ast f_{2,v}'^\vee = f_v , \quad
    \phi_v' = \phi_{1,v} \otimes \ol{\phi_{2,v}}
\]
where $f_v \in C_c^\infty(\GL_n(K_v))$,
$\phi_{1,v} \otimes \phi_{2,v} \in \cS(F_n)\otimes
\cS(F_n)$, $f_v' = f_{1,v}' \otimes f_{2,v}' 
\in C_c^\infty(\GL_n(K_v) \times \GL_n(K_v))$,
and $\phi_v' \in \cS(F_n \times F_n)$.

Let $\psi_{K_v}$ be the additive character of $K_v$
given by
\[
    \psi_{K_v}(x) = \psi_{F_v}(j_K(x - \sigma_{K/F}(x))).
\]
Note that $\psi_{L_v}(x, y) = \psi_{K_v}(x)\psi_{K_v}(y)$.
Suppose $j_M = (j_0, -j_0) \in K \times K$, where
$j_0 \in K$.
Let 
\[
    \psi_{L_v}'(x, y) = \psi_{L_v}(j_0x, -j_0y).
\]
Then $\psi_{L_v}'(x, y) = \psi_{K_v}'(x)
\ol{\psi_{K_v}'(y)}$, where
$\psi_{K_v}'(x) = \psi_{K_v}(j_0x)$.

For $W_v \in \W(\Pi_v, \psi_{L_v})$, let
\[
    W_v'(g) \coloneqq W_v\left(\begin{pmatrix}
    \epsilon_{n-1} & \\ & 1
    \end{pmatrix} g\right).
\]
Then $W_v' \in \W(\Pi_v, \psi_{L_v}')
= \W(\pi_v, \psi_{K_v}') \otimes 
\ol{\W(\pi_v, \psi_{K_v}')}$. 
Suppose $W_v' = W_{1,v}' \otimes \ol{W_{2,v}'}$
for $W_{1,v}', W_{2,v}' \in \W(\pi_v, \psi_{K_v}')$.
Then we have
\[
    \beta_v(W_v) = \langle W_{1,v}', W_{2,v}' \rangle_v
\]
and
\[
    \langle W_v, W_v \rangle_v =
    \langle W_{1,v}', W_{1,v}'\rangle_v
    \langle W_{2,v}', W_{2,v}'\rangle_v.
\]
By \cite[Theorem~B]{Sun2012multiplicity}, the space
\[
    \Hom_{\GL_n(F_v) \times \GL_n(F_v)}(\pi_v 
    \otimes \ol{\omega_v} 
    \otimes \ol{\pi_v}
    \otimes \omega_v, \CC)
\]
has dimension $1$.
Thus, there exists a nonzero constant $c$ such that
\[
    \lambda_v(\tfrac12, \Pi_v(f_v')W_v, \mu_v, \phi_{1,v}
    \otimes \ol{\phi_{2,v}})
    = c\cdot \ell_v(\pi_v(f_{1,v}')W_{1,v}', 
    \phi_{1,v})\ol{\ell_v(\pi_v(\ol{f_{2,v}'}) W_{2,v}',
    \phi_{2,v})}.
\]

Thus,
\begin{align*}
    I_{\Pi_v}(\tfrac12, f_v', \phi_v')
    &=
    c\sum_{W_{1,v}'} \ell_v(\pi_v(f_{1,v}')
    W_{1,v}', \phi_{1,v})
    \ol{\ell_v(\pi_v(\ol{f_{2,v}'})W_{1,v}', \phi_{2,v})} \\
    &= c\sum_{W_{1,v}'} \ell_v(\pi_v(f_{1,v}'
    \ast f_{2,v}'^\vee)
    W_{1,v}', \phi_{1,v})
    \ol{\ell_v(W_{1,v}', \phi_{2,v})}\\
    &= c\sum_{W_{1,v}'} \ell_v(\pi_v(f_{v})
    W_{1,v}', \phi_{1,v})
    \ol{\ell_v(W_{1,v}', \phi_{2,v})}\\
    &= cJ_{\pi_v}(f_v, \phi_{1,v} \otimes \phi_{2,v}),
\end{align*}
where $W_{1,v}'$ runs over an orthonormal basis of
$\W(\pi_v, \psi_{K_v}')$.
\end{proof}

\section{Proof of the main theorem}
\label{section: proof of the main theorem}

\subsection{Review of Asai $L$-functions for $L/E$}

We recall the definition of the twisted Asai 
$L$-function, following \cite[Section~3.2]{BP21}.
Let $\Pi$ be a cuspidal automorphic representation
of $\GL_n(\AA_L)$. Let $v$ be a place of $E$.

Firstly, assume that $L_v$ is a field. Let $W_{L_v}'$ and $W_{E_v}'$ denote
the Weil--Deligne group of $L_v$ and $E_v$, 
respectively.
The local Langlands correspondence associates to $\Pi_v$ 
an admissible representation $\phi_v \colon W_{L_v'} \to \GL(M)$, 
where $M$ is an $n$-dimensional complex vector space.
Choose $s \in W_{E_v}' - W_{L_v}'$.
We define a representation $\As(\phi_v) \colon
W_{E_v}' \to \GL(M \otimes M)$ by
\[
    \As(\phi_v)(w) = \phi_v(w) \otimes \phi_v(sws^{-1}), \; w \in W_{E_v}', 
\]
\[
    \As(\phi_v)(s)(m_1 \otimes m_2) = m_2 \otimes
    \phi_v(s^2)(m_1), \; m_1 \otimes m_2 \in M \otimes M.
\]
We define the local Asai $L$-factor 
\[
    L(s, \Pi_v, \As_{L_v/E_v}) \coloneqq
    L(s, \As(\phi_v))
\]
where $L(s, \As(\phi_v))$ is the local $L$-factor
associated to $\As(\phi_v)$ as in
\cite[Section~3]{MR546607} and \cite[Section~2.2]{MR2730575}.

Secondly, assume that $L_v = E_v \times E_v$ is split. Then
$\Pi_v = \Pi_{1,v} \boxtimes \Pi_{2,v}$ for some
irreducible representations $\Pi_{1,v}$ and $\Pi_{2,v}$
of $\GL_n(E_v)$.
Let
\[
    L(s, \Pi_v, \As_{L_v/E_v})
    \coloneqq L(s, \Pi_{1,v} \times \Pi_{2,v})
    = L(s, \phi_{1,v} \otimes \phi_{2,v}),
\]
where $\phi_{1,v}$ and $\phi_{2,v}$ are the admissible
representations of $W_{E_v}'$ associated to
$\Pi_{1,v}$ and $\Pi_{2,v}$ by the local Langlands
correspondence.

Let $\mu$ be a Hecke character of $\AA_E^\times/E^\times$.
We view $\mu_v$ as a character of $W_{E_v}'$ via local
class field theory. We also define the
twisted local Asai $L$-factor
\[
    L(s, \Pi_v, \As_{L_v/E_v} \times \mu_v)
    \coloneqq L(s, \As(\phi_v) \otimes \mu_v)
\]
if $L_v$ is a field, and
\[
    L(s, \Pi_v, \As_{L_v/E_v} \times \mu_v)
    \coloneqq L(s, \Pi_{1,v} \times \Pi_{2,v}
    \otimes \mu_v)
\]
if $L_v = E_v \times E_v$.

\begin{definition}
Define the global twisted Asai L-function $(s \in \mathbb C)$
\[
    L(s, \Pi, \As_{L/E} \times \mu) = \prod_{v}
    L(s, \Pi_v, \As_{L_v/E_v} \times \mu_v),
\]
where $v$ runs over all places $v$ of $E$.
This product is absolutely convergent for
$\Re(s)$ sufficiently large.
\end{definition}

Recall that we have also defined the global
zeta integral $\lambda(s, \varphi, \mu, \phi')$
in Definition \ref{def:globalzetaint}.

\begin{theorem}[{\cite[Proposition, page 305; Theorem, page 309]{Flicker}, \cite{BP21}}]
\label{thm:asai}
There exists $\varphi \in \Pi$ and
$\phi' \in \cS(\AA_{E,n})$ such that
$\lambda(\frac12, \varphi, \mu, \phi') \neq 0$
if and only if $L(\frac12, \Pi, \As_{L/E}\times
\mu^{-1}) \neq 0$.
\end{theorem}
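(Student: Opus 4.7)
The plan is to establish Theorem \ref{thm:asai} via the standard Rankin--Selberg unfolding for the Flicker integral, following the strategy of \cite{Flicker} and \cite{BP21}, and then using our already-stated local non-vanishing input (Lemma \ref{lem:archgood}) to match local and global non-vanishing.

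First I would unfold the integral $\lambda(s,\varphi,\mu,\phi')$ for $\Re(s)$ sufficiently large. Writing $\varphi \in \Pi$ via its Whittaker expansion with respect to $\psi_L$, and unfolding the theta sum $\Theta_\mu(s, g, \phi')$ against the Fourier expansion on $H_1(F)\backslash H_1(\BA) = \GL_n(E)\backslash \GL_n(\BA_E)$, one collapses the quotient $N_n(E)\backslash \GL_n(E)$ and obtains
\[
    \lambda(s, \varphi, \mu, \phi') = \int_{N_n(\BA_E)\backslash \GL_n(\BA_E)} W_\varphi(g)\, \ol{\mu(\det g)\phi'(e_n g)}\,|\det g|^s\, dg
\]
for a pure tensor $\varphi$ with $W_\varphi = \prod_v W_v$, $\phi' = \otimes_v \phi_v'$. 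This factors as an Euler product $\prod_v \lambda_v(s, W_v, \mu_v, \phi_v')$, absolutely convergent for $\Re(s) \gg 0$.

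Next I would show that at every finite place $v$ where $\Pi_v$, $\mu_v$, $\psi_v$, $L_v/E_v$, and $\phi_v'$ are all unramified with $W_v$ the normalized spherical Whittaker function, the local integral computes the Asai $L$-factor:
\[
    \lambda_v(s, W_v, \mu_v, \phi_v') = L(s, \Pi_v, \As_{L_v/E_v}\times \mu_v^{-1}).
\]
This is the unramified Asai computation of Flicker. Combining the Euler product with meromorphic continuation of the Asai $L$-function (Shahidi/Flicker, or the integral representation itself), one obtains, for any choice of data, an identity
\[
    \lambda(s, \varphi, \mu, \phi') = L^S(s, \Pi, \As_{L/E}\times \mu^{-1})\cdot \prod_{v \in S} \lambda_v(s, W_v, \mu_v, \phi_v')
\]
for a sufficiently large finite set $S$ of places. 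The local zeta integrals $\lambda_v(s, W_v, \mu_v, \phi_v')/L(s, \Pi_v, \As_{L_v/E_v}\times \mu_v^{-1})$ are entire and can be made equal to any fixed entire function by varying $(W_v, \phi_v')$ (this is the standard local functional equation / GCD argument of Jacquet--Piatetski-Shapiro--Shalika type adapted to the Asai setting, as carried out in \cite{BP21} and \cite{Flicker}).

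For the forward implication, if $\lambda(\tfrac12, \varphi, \mu, \phi') \neq 0$ for some choice, specializing the above displayed Euler product identity to $s = \tfrac12$ shows that $L(\tfrac12, \Pi, \As_{L/E}\times \mu^{-1}) \neq 0$ (since local factors are nonzero at $s = \tfrac12$, as local Asai $L$-factors are nonzero there for tempered $\Pi_v$, and in general one uses the quotient by the local $L$-factor which is entire). For the converse, assume $L(\tfrac12, \Pi, \As_{L/E}\times \mu^{-1}) \neq 0$. By Lemma \ref{lem:archgood}, at every place $v$ we may choose $W_v$ and $\phi_v'$ so that the normalized local integral is nonzero at $s = \tfrac12$; at almost all places we may take the unramified data and obtain exactly the local Asai $L$-factor. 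Assembling these choices into a global $\varphi$ and $\phi'$ yields $\lambda(\tfrac12, \varphi, \mu, \phi') \neq 0$.

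The main obstacle is the unramified Asai computation and, more importantly, the assertion that the ratio of the local integral to the local Asai $L$-factor is entire and can be made nonzero at $s = \tfrac12$ at every place; the ramified/archimedean case of this is exactly the content of Lemma \ref{lem:archgood} (which we invoke), while the unramified unfolding is classical \cite{Flicker}. Once both are granted, the equivalence is immediate from the Euler product identity.
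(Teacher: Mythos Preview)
The paper does not supply its own proof of this theorem; it is stated with attribution to \cite{Flicker} and \cite{BP21} and used as a black box. Your sketch is essentially the argument of those references: unfold the Flicker integral against the Whittaker expansion to obtain the Euler product $\lambda(s,\varphi,\mu,\phi') = \prod_v \lambda_v(s,W_v,\mu_v,\phi_v')$, identify the unramified factors with $L(s,\Pi_v,\As_{L_v/E_v}\times\mu_v^{-1})$, and then use local non-vanishing (Lemma~\ref{lem:archgood}) at the remaining places.

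One small correction in your forward implication: you write that ``local Asai $L$-factors are nonzero at $s=\tfrac12$ for tempered $\Pi_v$,'' but local $L$-factors never vanish anywhere; the relevant point is that they have no \emph{pole} at $s=\tfrac12$ (so that $L^S(\tfrac12,\cdot)\neq 0$ implies $L(\tfrac12,\cdot)\neq 0$), which is guaranteed here by the Jacquet--Shalika bounds for unitary cuspidal $\Pi$, or by temperedness. Also, the claim that the normalized local integrals ``can be made equal to any fixed entire function'' is stronger than what you need or what is true; you only need that at each place some choice of $(W_v,\phi_v')$ makes $\lambda_v(\tfrac12,\cdot)$ nonzero, and that is exactly Lemma~\ref{lem:archgood}. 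With these adjustments your outline is correct.
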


\subsection{Proof of Main Theorem}

Recall

\mainthm*

\begin{proof}[Proof that (2) implies (1)]
Assume that the period integral $\P$ is not 
identically zero for some skew-Hermitian space $V$
and a cuspidal automorphic representation
$\pi$ of $\U(V)(\AA_K)$ such that $\operatorname{BC}
(\pi) = \Pi$.

By Theorem \ref{thm:asai}, in order to show
that $L(\frac{1}{2}, \Pi, \As_{L/E} \times \mu^{-1})$ is 
nonzero, it suffices to show there exists a test
function $(f', \phi')$ such that
\[ I_\Pi(f', \phi') \neq 0.\]
To show that such a test function exists,
we first find a test function 
$(f, \phi) \in C_c^\infty(G(\AA)) \times 
\cS(\LL(\AA))^{\otimes 2}$ which is good
in the sense of Definition \ref{def:ugood}
such that 
\[ J_\pi(f, \phi) \neq 0.\] 
Suppose $\P(\varphi, \wt\phi_0) \neq 0$ for some factorizable
$\varphi \in \pi$ and $\widetilde \phi_0 \in \cS(\LL(\AA))$.
By \eqref{eq:postype}, this
implies there exists a factorizable test 
function $(\widetilde f, \widetilde\phi)$ of 
positive type, with $\wt\phi = \wt\phi_0 \otimes
\wt\phi_0$,
such that $J_\pi(\widetilde f, \widetilde\phi) \neq 
0$.

We define a good test function 
\[
    (f, \phi) = (\otimes_v f_v, \otimes_v \phi_v)
\]
by modifying $(\wt f, \wt\phi)$ at certain
places as follows. In the process, we also
define a constant $k$ which will be used for choosing 
$k$-Kottwitz representatives.

Recall that $v_1$ and $v_2$ are two nonarchimedean
places of $F$ which are completely split in $L$,
such that $\pi_{v_1}$ and $\pi_{v_2}$ are
supercuspidal.

Let $\Sigma_{\inert}$ be the set of places $v$
of $F$ which are inert in $E$ and unramified
in $K$, such that
$\mu_v, \psi_v'$ are unramified,
$j \in \O_{E_v}^\times$, and $\Pi_v$ (and thus 
$\pi_v$) is unramified.

Let $\{v_3, \dots, v_m\}$ be the set of places of $v$
of $F$ such that $v \not\in \Sigma_\inert$
and $v$ is not split in $E$.
By assumptions \eqref{item:fieldsunr},
\eqref{item:charsunr}, and \eqref{item:piunr} of
the theorem, this set
is finite and all elements are split in $K$.

As $\Hom_{\U(V)(F_v)}(\pi_v \otimes \ol{\omega_{\psi_v',
\mu_v}}, \CC) \neq 0$ 
for all $v$, the local twisted GGP 
conjecture \cite{le2025local} tells us that $V$ is the
split skew-Hermitian space at all $v \in \Sigma_{\inert}$.
\begin{enumerate}[(1)]
\item \label{item:testfnunr}
For all $v \in \Sigma_{\inert}$, let
$(f_v, \phi_v)$ be the unramified
test function specified in Section \ref{sec:FL}. 
Since $\pi_v$ is unramified,
we have $J_{\pi_v}(f_v, \phi_v) \neq 0$.

\item 
Let $(f_{v_1}, \phi_{v_1})$
be a test function such that
$J_{\pi_{v_1}}(f_{v_1}, \phi_{v_1}) \neq 0$ and 
$f_{v_1}$ is a truncated matrix coefficient of a 
supercuspidal representation. 
Such a test function exists
by Lemma \ref{lem:matrixcoeff}, 
since $\Pi_{v_1}$ is supercuspidal, so
$\pi_{v_1}$ is also supercuspidal.

\item \label{item:kv2} Let $(f_{v_2}, \phi_{v_2})$ be a test function
and let $k \ge 0$ be an integer such that 
$J_{\pi_{v_2}}(f_{v_2}, \phi_{v_2}) \neq 0$, and
the regular semisimple support condition of Definition
\ref{def:ugood} \eqref{ursssupp}
is satisfied for all $k$-Kottwitz 
sets of representatives $R^V$.
These exist by Lemma \ref{lem:rsssupp}.

By Proposition \ref{prop:Ksplittransfer},
we can also assume $k$ is large enough so that
for $3 \le i \le m$, there exists a test function
$(f_{v_i}', \phi_{v_i}')$ on the general linear
side, which is a smooth
transfer of $(\wt f_{v_i}, \wt \phi_{v_i})$ for all
$k$-Kottwitz sets of representatives $S$
and $R^V$.

\item For all $v \not\in \{v_1, v_2\} \cup
\Sigma_\inert$, let $(f_v, \phi_{v}) = 
(\wt f_v, \wt\phi_v)$. Note that all such $v$
are either split in $E$ or an element of
$\{v_3, \dots, v_m\}$.
\end{enumerate}

Let $S$ and $R^V$ be compatible sets of global
representatives such that each element of $S$ 
or $R^V$ is Kottwitz at all places of $F$ which
are inert in $E$, 
and $k$-Kottwitz at $v_2, \dots, v_m$.
Such a choice of representatives exists by
Proposition \ref{prop:globalkottwitz}.
Let $(f', \phi')$ be a smooth transfer, as
defined in Definition \ref{defn: global smooth transfer},
of the collection of test functions
\[
    \{(f^{V'}, \phi^{V'} )\}_{V' \in [\SHerm_n^\times(F)]}
\]
where $(f^{V'}, \phi^{V'}) = 0$ 
for $V' \not\simeq V$ and $(f^{V}, \phi^{V}) = (f, \phi)$.
Because $(f_v, \phi_{v})$ is the unramified test 
function at all $v \in \Sigma_{\inert}$, such 
transfer $(f',\phi')$ exist by
Propositions \ref{prop:FL}, \ref{prop:splittransfer}, 
and \ref{prop:Ksplittransfer}.

Then $(f', \phi')$ is also good, so by
Proposition \ref{prop:identity}, 
\begin{align*}
    I_\Pi(f', \phi') 
    &= \sum_{\operatorname{BC}(\pi') = \Pi} J_{\pi'}(f, \phi) 
    \\&= 
    J_{\pi_{v_1}}(f_{v_1},\phi_{v_1})\cdot
    J_{\pi_{v_2}}(f_{v_2}, \phi_{v_2})
    \sum_{\operatorname{BC}(\pi') = \Pi}
    \prod_{v \ne v_1, v_2}
    J_{\pi_v'}(f_v, \phi_v).
\end{align*}
Note that each of the terms in the
sum on the second line is nonnegative because 
$(f_v, \phi_v)$ is of
positive type for $v \neq v_1, v_2$.
Furthermore,
\[
    \prod_{v \neq v_1, v_2} J_{\pi_v}(f_v, \phi_v)
    = \prod_{v \neq v_1, v_2}
    J_{\pi_v}(\wt f_v, \wt \phi_v) \cdot 
    \prod_{v \text{ inert in } E}
    \frac{J_{\pi_v}(f_v, \phi_v)}
    {J_{\pi_v}(\wt f_v, \wt\phi_v)} \neq 0
\]
since $J_{\pi}(\wt f, \wt \phi) \neq 0$ by assumption,
and the product over inert places is a finite
product of nonzero terms.
Thus, the term corresponding to $\pi$ in the sum is
nonzero, so $I_{\Pi}(f', \phi') \neq 0$.
\end{proof}

\begin{proof}[Proof that (1) implies (2)]
\leavevmode
By Theorem \ref{thm:asai},
$L(\frac{1}{2}, \Pi, \As_{L/E} \times \mu^{-1}) \neq 0$ 
implies that there exists 
$\varphi \in \Pi$, $\widetilde \phi' \in \cS(\AA_{E,n})$ such that
\[
    \lambda(\tfrac12, \varphi, \mu, \widetilde \phi') \neq 0.
\]
Furthermore, for each archimedean place $v$ of $F$,
by Lemma \ref{lem:archgood} applied
to the extension $K/F$, we can assume that
the function $\wt\phi_v'$ is 
of the form
$\phi_{1,v}' \otimes \phi_{2, v}'$ where
$\phi_{1,v}', \phi_{2,v}' \in \cS(F_{v,n})$.
Since $\Pi$ is Hermitian, by \cite{MR1344660}, 
there also exists 
$\varphi' \in \Pi$ such that $\beta(\varphi') \neq 0$.
Thus, there exists factorizable $\widetilde f'$
such that 
\[
    I_\Pi(\widetilde f', \widetilde \phi') \neq 0.
\]

As before, we define a test function
\[
    (f', \phi') = (\otimes_v f'_v, \otimes_v \phi_v'),
\]
which is good in the sense of 
Definition \ref{def:glgood}, by modifying 
$(\wt f', \wt\phi')$.
Let $\Sigma_{\inert}$ and $\{v_3, \dots, v_m\}$ 
be as in the proof of (2) implies (1).
As before, we will also define a constant $k$ used
for choosing $k$-Kottwitz representatives.
\begin{enumerate}[(1)]
\item
For all $v \in \Sigma_{\inert}$,
let $(f_v', \phi_v')$ be the unramified test function.
Since $\Pi_v$ is unramified, we have
$I_{\Pi_v}(f_v', \phi_v') \neq 0$.

\item By Lemma \ref{lem:splitlocaldistrid}, $I_{\Pi_v}(f_v', \phi_v') \neq 0$ implies that
$J_{\pi_v}$ is nonzero for every place
of $F$ which is split in $E$.
Thus by Lemma \ref{lem:matrixcoeff}, there exists
a test function $(f_{v_1}, \phi_{v_1})$
on the unitary side such that
$J_{\pi_{v_1}}(f_{v_1}, \phi_{v_1}) \neq 0$ and 
$f_{v_1}$ is a
truncated matrix coefficient of a supercuspidal
representation.
Let $(f_{v_1}', \phi_{v_1}')$ be the smooth transfer
of $(f_{v_1}, \phi_{v_1})$.

\item By Lemma \ref{lem:rsssupp}, 
there exists a test function $(f_{v_2}, \phi_{v_2})$
on the unitary side and an integer $k \ge 0$ such
that $J_{\pi_{v_2}}(f_{v_2}, \phi_{v_2}) \neq 0$
and condition \eqref{ursssupp} of Definition
\ref{def:ugood} is satisfied for all $k$-Kottwitz
sets of representatives $R^V$.
Let $(f_{v_2}', \phi_{v_2}')$ be the smooth transfer
of $(f_{v_2}, \phi_{v_2})$.

By Proposition \ref{prop:Ksplittransfer},
we can suppose $k$ is large enough so that for $3 \le i \le m$,
there exists a test function $(f_{v_i}, \phi_{v_i})$
on the unitary side which is a smooth transfer
of $(\wt f_{v_i}', \wt \phi_{v_i}')$ for all $k$-Kottwitz
sets of representatives $S$ and $R^V$.

\item For all $v \not\in \{v_1, v_2\}
\cup \Sigma_\inert$, let $(f_v', \phi_v') = (\wt f_v', \wt \phi_v')$.
\end{enumerate}

Let $S$ and $R^V$ be compatible sets
of representatives which are Kottwitz at all inert
places and $k$-Kottwitz at $v_2, \dots, v_m$.
Let 
\[
    \{(f^V, \phi^V)\}_{V \in [\SHerm_n^\times(F)]}
\] 
be a smooth transfer of $(f', \phi')$ as
in Definition \ref{defn: global smooth transfer}. 
Such a transfer exists locally at each place
by Propositions \ref{prop:FL}, 
\ref{prop:splittransfer}, and 
\ref{prop:Ksplittransfer}.

Then Proposition \ref{prop:identity} tells us that
\[
    I_\Pi(f', \phi') 
    = \sum_{V}
    \sum_{\operatorname{BC}(\pi) = \Pi} J_{\pi}(f^{V}, \phi^{V}) \neq 0.
\]
This implies that the period integral
$\P$ is nonzero for some skew-Hermitian space $V$ over $E$ and
some $\pi$ such that $\operatorname{BC}(\pi) = \Pi$.
\end{proof}


\bibliographystyle{alpha}
\bibliography{reference}

\end{document}